\documentclass[notitlepage, 12pt, a4paper]{article}

\usepackage[utf8]{inputenc}
\usepackage[T1]{fontenc}
\usepackage[natbib=true, backend=biber, style=numeric-comp, giveninits=true, maxbibnames=9]{biblatex}
\usepackage{csquotes}
\usepackage{amsmath}
\usepackage{amsfonts}
\usepackage{amssymb}
\usepackage{amsthm}
\usepackage[affil-it]{authblk}
\usepackage[normalem]{ulem}
\usepackage{dsfont}
\usepackage{float}
\usepackage{times}
\usepackage{bbm}
\usepackage{mathtools}
\usepackage{caption}
\usepackage{mathrsfs}
\usepackage{comment}
\usepackage{bm}
\usepackage{enumerate}
\usepackage{lipsum}
\usepackage{titling}
\usepackage{romannum}
\usepackage[colorlinks,linkcolor=blue,citecolor=blue,urlcolor=blue]{hyperref}
\usepackage[capitalise]{cleveref}
\usepackage[left=3cm, right=3cm, bottom=2cm, top=2cm]{geometry}
\usepackage[toc,page]{appendix}

\newcommand*\diff{\mathop{}\!\mathrm{d}}

\newcommand{\cadlag}{c\`adl\`ag }

\newcommand{\DR}{\ensuremath{D_\R}}
\newcommand{\MR}{\ensuremath{\mathbf{M}_{\le 1}(\R)}}

\newcommand{\N}{\ensuremath{\mathbb{N}}}

\newcommand{\R}{\ensuremath{\mathbb{R}}}

\newcommand{\1}{\ensuremath{\mathds{1}}}

\newcommand{\Expec}[1]{\mathbb{E}\left[#1\right]}
\newcommand\norm[1]{\left\lVert#1\right\rVert}
\newcommand\nnorm[1]{|#1|}
\newcommand\nnnorm[1]{\left|#1\right|} 

\newcommand{\prob}{\ensuremath\mathbb{P}}
\newcommand{\tBar}{\ensuremath\Bar{\mathcal{T}}_{\varepsilon}}
\newcommand{\osq}{\ensuremath o_{\textnormal{sq}}(1)}
\newcommand{\ad}{\ensuremath \partial_x^{-1}} 
\newcommand{\Teps}{\ensuremath \mathcal{T}_{\varepsilon}}
\newcommand{\E}{\mathbb{E}}

\newcommand{\ind}{\mathbbm{1}}
\newcommand{\pref}{\ensuremath{p_\mathrm{ref}}}
\newcommand{\pres}{\ensuremath{p_\mathrm{res}}}
\newcommand{\mures}{\ensuremath{\mu_{\mathrm{res}}}}
\newcommand{\suppref}{\cite{SUPPLEMENT_2025}}
\newcommand{\supprop}[1]{Proposition~#1 in the supplement~\suppref}
\newcommand{\supcor}[1]{Corollary~#1 in the supplement~\suppref}

\newtheorem{theorem}{Theorem}[section]
\newtheorem{lemma}[theorem]{Lemma}
\newtheorem{proposition}[theorem]{Proposition}
\newtheorem{corollary}[theorem]{Corollary}
\newtheorem{assumption}[theorem]{Assumption}

\theoremstyle{definition}

\theoremstyle{remark}
\newtheorem{remark}[theorem]{Remark}

\numberwithin{equation}{section}
\crefname{example}{Example}{Examples}

\begin{document}
\pagenumbering{arabic}
\title{A stochastic Fokker--Planck equation for the mean-field limit of a population of noisy integrate-and-fire neurons}

\author{Ben Hambly\thanks{Mathematical Institute, University of Oxford, United Kingdom, Email: hambly@maths.ox.ac.uk},
\and Alda\"ir Petronilia\thanks{Mathematical Institute, University of Oxford, United Kingdom, Email: aldairpetronilia@gmail.com},
\and Christoph Reisinger\thanks{Mathematical Institute, University of Oxford, United Kingdom,\\\emph{Email: christoph.reisinger@maths.ox.ac.uk}},
\and Andreas S{\o}jmark\thanks{Department of Statistics, London School of Economics, United Kingdom,\\\emph{Email: a.sojmark@lse.ac.uk}}}
\date{\today} 

\maketitle


\begin{abstract}
We study a densely connected system of excitatory integrate-and-fire neurons which are subject to common noise. The system incorporates a gradual transmission of action potentials and captures the re- and hyperpolarization phases through a random refractory period followed by a reset to a randomized level below the rest potential. As the number of neurons tends to infinity, we show that there is weak convergence to a unique membrane potential density governed by a stochastic Fokker--Planck equation with a well-defined spike transmission rate. The latter is driven by the mean cumulative spike count, which is non-differentiable but shown to satisfy a generalized flux condition. We obtain the uniqueness of the Fokker--Planck equation from energy estimates in the dual of the first Sobolev space. Finally, we give a conditional McKean--Vlasov representation of the membrane potential density as the law of a representative neuron given the common noise.
\end{abstract}

\section{Introduction}\label{sec: INTRODUCTION SECTION IN THE GENERALISED INTEGRATE AND FIRE NEURON MODELS}

When analysing a large population of neurons, each of which may be in a different internal state at any given time, a fruitful and much-used approach is to adopt a mean-field point of view and thus derive, at least informally, a continuity equation for the time evolution of the state distribution \citep[Part III]{neuronal_dyn}. In the case of integrate-and-fire models, which are the focus of the present paper, the state of each neuron is given by its membrane potential, so the limiting object is known as the membrane potential density. If the system is subject to noisy input currents, as we shall consider here, then the continuity equation will typically correspond to a suitable nonlinear and non-local Fokker--Planck equation \cite{Brunel2013}.

Mean-field approaches to the analysis of networks of spiking integrate-and-fire neurons, at the level of the membrane potential density, go at least as far back as the early 70's \cite{knight_1972}, followed by several important contributions in the early 90's, see e.g.~\cite{abbott1993asynchronous, Amit_1991, Treves_1993}. The formulations that we shall study here, however, are rooted in subsequent developments starting with the influential series of papers \cite{Amit_Brunel_1997, brunel2000dynamics, brunel1999fast}.

Specifically, we are interested in convergence to and well-posedness of the continuity equation for the membrane potential density, as we pass to the mean-field limit in fully connected networks of leaky integrate-and-fire models with noisy input currents and coupling through gap junctions, as studied in \cite{ostojic2009synchronization}. Following \cite{lewis2003dynamics, ostojic2009synchronization},  we let the latter be approximated by simple ohmic conductances between the membranes, and our starting point is thus the general model derived in \cite{ostojic2009synchronization}. Letting $X^i$ denote the membrane potential of the $i$-th neuron, and denoting by $\tau^i_k$ the $k$-th time that it fires a spike, the dynamics are of the form
\begin{equation}\label{eq:membrane_potentials}
	\mathrm{d}X^i_t = b(t)\diff t -\mu X^i_t \diff t +  \frac{\gamma}{N} \sum_{j\neq i} (X^j_t - X^i_t)\diff t + \frac{\alpha}{N}  \sum_{j\neq i} \sum_{k\ge 1} \diff H(t-\tau^j_k) + \sigma \diff B_t^i.
\end{equation}
Here, each $B^i$ is a Brownian motion, $H$ is the Heaviside function, and it is implicitly understood that: (i) $\tau^i_k$ is given by the $k$-th time that $X^i$ reaches a given firing threshold $x_{\text{th}}$, and (ii) $X^i$ is instantly reset to a rest potential $x_{\text{r}}<x_{\mathrm{th}}$ whenever it fires a spike.

In fact, $X^{i}$ should have a continuous voltage trace $X^{i,\text{spike}}_t \gg x_{\text{th}} $ for the duration $\Delta_T>0$ of each spike, with the spike transmission occurring as a fast ohmic conductance at rate $ \frac{\beta}{N}(X^{i,\text{spike}}_t- X^{j}_t)$ for a given $\beta>0$. As $\Delta _T$ is small and $X^{i,\text{spike}}_t \gg x_{\text{th}} > X^{j}_t$, the full amplitude (i.e., the current from $i$ to $j$ integrated over the spike duration $\Delta_T$) is approximated by an instantaneous pulse $\frac{\alpha}{N}$ at $\tau^i_k$ in \eqref{eq:membrane_potentials}, for a suitable $\alpha >0$. It is standard to treat the spikes themselves as stereotyped events, focusing instead on modelling the number and timing of spikes \citep{DiLorenzoPatriciaM2013Ster, neuronal_dyn}. However, it is desirable that the dynamics of the membrane potentials account for the fast but gradual shape of spike transmissions and the subsequent refractory periods. We shall return to this in Section \ref{subsect:stoch_FP}. 

\subsection{The Fokker--Planck approach and existing literature}

Following the mean-field formalism of \cite{Amit_Brunel_1997, brunel2000dynamics, brunel1999fast}, certain heuristics allow \cite{ostojic2009synchronization} to derive a Fokker--Planck equation for the membrane potential density $V_t$, which would correspond to a hydrodynamic limit of \eqref{eq:membrane_potentials}. It may be expressed as
\begin{equation}\label{eq:PDE}
	\begin{cases}
		\partial_t V_t(x) + \partial_x\bigl( (b(t,x,V_t) - \alpha f(t))V_t(x)\bigr) - \frac{\sigma^2}{2} \partial_{xx} V_t(x) =  f(t)\delta_{x_\text{r}} \\
		f(t) = -\frac{\sigma^2}{2}\partial_x V_t(x_\text{th}),\quad V_t(x_{\text{th}})=0,\quad V_t(-\infty)=0,
	\end{cases}
\end{equation}
for $x\in(-\infty,x_{\text{th}})$, where $\delta_{x_\text{r}}$ denotes a Dirac mass at the rest potential $x_\text{r}$ and $b(t,x,V_t)$ is of the form $- b(t) +\mu x - \gamma(\int y V_t(y)\diff y - x)$. Here $f(t)$ is the mean \emph{firing rate}, which also serves as the rate at which spikes are transmitted in the drift term of \eqref{eq:PDE}. By the second line of \eqref{eq:PDE}, $f(t)$ equals the probability flux across the firing threshold. Since this flux also appears as a singular probability source in the rest potential $x_\text{r}$ (spiking neurons are assumed to instantaneously reset to $x_\text{r}$), we have conservation of probability.

A rigorous notion of a global weak solution to \eqref{eq:PDE} was first formulated in \cite{caceres2011analysis} and it was shown that, depending on $\alpha>0$ and the initial condition $V_0$, such solutions may fail to exist due to a finite-time blow-up of the firing rate $f(t)$. Later, it was proved in \cite{carrillo2013classicalNonLinearFokkerPlanck} that, when $V_0$ is $C^1$ with $V_0(x_{\text{th}})=0$, there exists a unique classical solution up until the blow-up time. Conditions for ruling out a blow-up, based on the initial condition and properties of stationary states, were then explored in \cite{carillo_nonlinearity}. By
focusing on the increasing (mean) cumulative spike count $F(t)=\int_0^tf(s)\diff s$, it becomes possible to make sense of $F$ even if the firing rate $f$ fails to exist at some points. Exploiting this perspective, \cite{ delarue2015global, delarue2015particle} provide a generalized probabilistic notion of solution to \eqref{eq:PDE} in the form of a McKean--Vlasov SDE whose marginal densities satisfy \eqref{eq:PDE} if $F$ is $C^1$. Firstly, it was shown in \cite{delarue2015particle} that the empirical measures for a system of SDEs of the form \eqref{eq:membrane_potentials} have limit points that are \cadlag solutions of this McKean--Vlasov SDE globally in time and it was confirmed that any jumps of these limit points are `physical' in a suitable sense. Secondly, \cite{delarue2015global} established that, for any Dirac initial condition $V_0=\delta_{x_0}$, one can take $\alpha\in(0,1)$ small enough so that the McKean--Vlasov SDE is well-posed within the class of solutions for which the mean cumulative spike count $F$ is $C^1$.

Generalizing the approach of \cite{brunel2000dynamics}, \cite{Caceres_Perthame_2014} introduces refractory periods in \eqref{eq:PDE} and observes that finite-time blow-up still applies. A synaptic delay between the spiking (i.e., the exiting flux) and the spike transmission is studied in \cite{Caceres_roux_etal} and it is proved that there is then global well-posedness. Moreover, \cite{roux1} derives refined results for \eqref{eq:PDE} itself, concerning finite-time blow-up and conditions for global well-posedness. Recently, \cite{perthame_jump} has studied the continuation of solutions after blow-ups, based on passing to the limit in certain regularized formulations, but a corresponding uniqueness result for \eqref{eq:PDE} remains an open problem. Steady states and numerical analysis are explored in \cite{canizo} and \cite{Zhennan}. 

In the case of a single neuron, \cite{Liu_etal} studies the precise connection between the SDE for its membrane potential and the PDE \eqref{eq:PDE} for the marginal densities (with $b(x)=\mu x$ and $\alpha=0$). Working with a particular variant of \eqref{eq:PDE}, motivated by a diffusive approximation of point process spikes in \cite{brunel1999fast, brunel2000dynamics}, \cite{dou_zhou} studies a generalized notion of solution with jumps in the firing rate $f$ that now occur without an explosion of the flux across the boundary (due to the particular structure). By performing a time-change in terms of the (deterministic) firing rate, global existence and uniqueness is obtained. Independently, but based on related ideas, a delayed Poissonian version of \eqref{eq:PDE} is introduced in \cite{Taillefumier1}, also with blow-ups of $f$ now occurring without an explosion in flux, thus making the analysis more tractable than \eqref{eq:PDE} and enabling \cite{Taillefumier2} to prove existence and uniqueness in the presence of countably many jumps through a time-change argument. While no convergence results are derived, we note that the model is motivated by a variant of \eqref{eq:membrane_potentials}: between spikes, $\mathrm{d}X_t^i = b \mathrm{d}t + \mathrm{d}B^i_t$, but, at each $\tau^i_k$, the instantaneous pulse from $i$ to $j$ is now sampled independently at random from the Normal distribution with equal mean and variance $\alpha /N$, followed by a refractory period and then a reset.

\subsection{A generalized stochastic Fokker--Planck equation}\label{subsect:stoch_FP}

The component $b(t)\diff t + \sigma \diff B^i_t$ in \eqref{eq:membrane_potentials} is the net effect of all \emph{external} input currents flowing to neuron $i$ from sources outside the given network \cite{ostojic2009synchronization}. To arrive at \eqref{eq:PDE}, one assumes their fluctuations are uncorrelated across the population, so the $\{B^i\}_{i=1}^N$ are independent. As we discuss in Section \ref{sect:common_noise_IF} below, it can be important to incorporate correlation through a common noise, i.e., $B^i=\sqrt{1-\rho^2}W^i + \rho W^0$ for independent Brownian motions $\{W^i\}_{i=0}^N$. We will do this and show that the membrane potential density can then be characterized by a stochastic partial differential equation driven by $W^0$.

Beyond this, we extend the modelling of the \emph{internal} input current (from couplings) in \eqref{eq:membrane_potentials} in three directions, to better capture the real nature of action potentials. A given neuron fires a spike (i.e., triggers an action potential) whenever its membrane potential reaches the given threshold voltage $x_\text{th}$. The action potential then undergoes a de-, re-, and hyper-polarization phase. Without explicitly modelling these phases (which is rather the domain of Hodgkin--Huxley type conductance models \cite{Beeman2015}), we wish to account for their different effects on the system dynamics in a tractable way. Firstly, the initial depolarization phase is not instantaneous, but instead given by a rapid continuous increase in voltage (above $x_\text{th}>0$) that, in turn, is transmitted gradually to the other neurons. Moreover, the resulting currents flow continuously through electrical synapses, thus adding another rapid but gradual effect. Still denoting by $\alpha/N$ the total amplitude transmitted to each neuron, we therefore model the \emph{spike transmission} of the $k$-th spike of neuron $i$ by $t\mapsto \frac{\alpha}{N}\int_{0}^{t} \mathfrak{K}(t-s)H(s-\tau^i_k)\diff s$, for an appropriate kernel $\mathfrak{K}$ with $\Vert \mathfrak{K} \Vert_{L^1}=1$, in place of the idealized instantaneous pulse $t\mapsto \frac{\alpha}{N}H(t-\tau^i_k)$ in \eqref{eq:membrane_potentials}. Next, after a spike, the neuron enters an \emph{absolute refractory period}, where it cannot fire again, lasting at least until near the end of the repolarization phase. To capture this, we hold a spiking neuron in an auxiliary state for some (short) random amount of time. Finally, there is the relative refractory period during the hyperpolarization, where the membrane potential first undershoots its rest potential before gradually reverting to it. We model this by resetting to a randomly sampled position below the rest potential $x_\text{r} \le 0$ when it leaves the aforementioned auxiliary state (i.e., immediately after the absolute refractory period) and then, as in \eqref{eq:membrane_potentials}, the term $-\mu X^i_t \mathrm{d}t $ begins to enforce a return towards the rest potential.

At the start of Section \ref{sec: MAIN RESULTS SECTION IN THE GENERALISED INTEGRATE AND FIRE NEURON MODELS}, we give a precise formulation of the relevant particle system, incorporating the above considerations. Our results give rigorous meaning to the membrane potential density $V_t$ in $L^2(-\infty , x_{\text{th}})$ as the unique mean-field limit of this system. Instead of \eqref{eq:PDE}, it is characterized by the stochastic Fokker--Planck equation
\begin{equation}\label{eq:SPDE_simplified}
	\begin{aligned}
		\mathrm{d} V_t(x) & = -  \partial_x\bigl ( (b(t,x,V_t) - \alpha \mathfrak{f}_t )V_t(x)\bigr) \mathrm{d}t  + \frac{\sigma^2}{2} \partial_{xx}V_t(x) 	\diff t  \\
		&\qquad - \rho\sigma \partial_x V_t(x) \diff W^0_t  + \pres(x) \mathfrak{r}_t \diff t \\
		\mathfrak{f}_t & = \int_0^t \mathfrak{K} (t-s) \diff F_s ,\quad  \mathfrak{r}_t = \int_0^t \pref (t -s) \diff F_s,
	\end{aligned}
\end{equation}
in the sense of distributions on $(-\infty , x_{\text{th}})$, with the Dirichlet condition $\lim_{\varepsilon \downarrow 0} \langle V_t,\psi_\varepsilon \rangle =0$ and a generalized flux condition for the mean cumulative spike count $F$, namely
\begin{equation}\label{eq:weak_flux_cond}
	\E \bigl[ F_t - F_s|\, \mathcal{F}_s \bigr] =  \lim_{\varepsilon \downarrow 0} \frac{\sigma^2}{2} \int_s^t \E [ \bigl\langle V_u, \partial_{xx} \psi_\varepsilon \rangle |\, \mathcal{F}_s \bigr] \mathrm{d} u,\;\; \text{for all}\;\;s,t\ge 0,
\end{equation}
where $(\psi_\varepsilon)_{\varepsilon>0}$ is any family of smooth approximations of the Dirac mass at $x_\text{th}$ supported on $(x_\text{th},x_\text{th}+\varepsilon)$, and $\langle \cdot, \cdot \rangle$ denotes the $L^2$ inner product.

In \eqref{eq:SPDE_simplified}, we refer to $\mathfrak{f}_t$ as the \emph{spike transmission rate}. It is well-defined even if the mean spike count $F$ is not absolutely continuous but only of finite variation. Next, $\mathfrak{r}_t$ is the effective \emph{rate of resets} given in terms of $F$ and the \emph{refractory density} $\pref$, i.e., the probability density function for the length of the absolute refractory period. Finally, $\pres$ is the \emph{reset density}, i.e., the probability density function for the voltage level that the membrane potential is reset to after the absolute refractory period. If $F$ and $V$ were sufficiently regular, \eqref{eq:weak_flux_cond} would be equivalent to $\frac{\mathrm{d}}{\mathrm{d}t}F_t = -\frac{\sigma^2}{2}\partial_xV_t(x_\text{th})$ as in \eqref{eq:PDE}.

Closely related SPDE formulations have been considered in the mathematical and computational neuroscience literature, see e.g.~\cite{brunel1999fast, brunel2000dynamics, Mattia2002, Ullner2018}. However, these works all assume informally that $F$ is continuously differentiable, and they do not consider the approaches to spike transmission and refractory periods that we have introduced here. In particular, they generally take $\pres(x) = \delta_{x_\text{r}}(x)$, possibly after a fixed delay, and set $\mathfrak{f}_t = \mathfrak{r}_t = \frac{\mathrm{d}}{\mathrm{d}t} F_t$, so the SPDE is stated exactly as the PDE \eqref{eq:PDE} except for the additional noise term. This is not meaningful when $\rho \neq 0$, as $F$ will be nowhere absolutely continuous, and $V_t$ will fail to be differentiable at $x_\text{th}$ in general (see e.g.~\cite{Krylov2003} in a simpler setting). Thus, the formulation \eqref{eq:SPDE_simplified}-\eqref{eq:weak_flux_cond} is important for two reasons: firstly, it incorporates key features of an idealized action potential; secondly, it is critical in allowing us to establish well-posedness of the SPDE for the membrane potential density and confirm that it arises as the weak limit of the corresponding particle system (provided of course that the coefficients satisfy certain reasonable criteria). As far as the authors are aware, this constitutes the first rigorous treatment of the mean-field approach to excitatory networks of spiking integrate-and-fire neurons with input currents affected by common noise.

\subsection{Common noise in Integrate-and-Fire models}\label{sect:common_noise_IF}

As mentioned above, several works have informally derived SPDE versions of \eqref{eq:PDE} driven by  Brownian motion, due to input currents correlated through common noise. Specifically, in \cite[Section 3.5]{brunel1999fast}, \cite[Section 6.1]{brunel2000dynamics}, and \cite[Section F]{Mattia2002}, the input current for a given neuron is modelled as a diffusive approximation of the global spike activity within the network with two parts: an idiosyncratic noise capturing fluctuations of the spikes transmitted directly to the given neuron and a common noise due to intrinsic fluctuations in the spike activity of the whole network. If neurons share a negligible amount of common input, the latter effect vanishes as $N\rightarrow \infty$: with $\rho \sim 1/N$, we arrive at the PDE \eqref{eq:PDE}. However, a small noise effect in the limit is also considered, giving rise to a Brownian term in the limit, as in \eqref{eq:SPDE_simplified} with $\rho>0$. In \cite[Equation (4)]{Ullner2018}, this arises from every neuron in the system receiving a shared input current from a separate group of neurons.

Various related works could also be treated within our framework. Firstly,  \citep{kruscha2016partial,kruscha2015spike} study a system of $N$ leaky integrate-and-fire neurons of the form
\begin{equation}\label{eq:no_coupling}
	\diff X_t^i = -X_t^i\diff t + b\diff t + \sigma \sqrt{1 - \rho^2 }\diff {W}_t^i + \sigma \rho \diff {W}_t^0,
\end{equation}
between resets, to understand synchronized spiking behaviour in a homogeneous population driven by a common time-dependent stimulus, due to overlapping receptive fields, rather than couplings between the neurons. As noted in those works, such populations of uncoupled neurons with overlapping receptive fields are present, e.g., at the first stage of sensory input processing with examples including auditory nerve cells \citep{kandel2000principles}, olfactory receptor neurons in insects \citep{Galizia2014olfactory, Wilson2013olfactory} and electroreceptors in weakly electric fish \citep{Maler2009electricfish}. Secondly, \citep{doiron2003inhibitory} studies a leaky integrate-and-fire neuronal model with a common noise and an OU process for the independent dynamics. The common noise captures `spatiotemporal sensory input' or `spatially extended sensory signals in neural systems'. Thirdly,  \cite{lewis2006} studies how a common input current can lead to `synergistic' synchronization of the spike transmissions, through couplings modelled by ohmic conductances as in \eqref{eq:membrane_potentials}. Finally, \cite{doiron2004oscillatory, lindner2005theory} study the effect of the common noise in a version of \eqref{eq:no_coupling} with coupling through spike transmission, namely
\begin{equation}\label{eq:coupled_spikes}
	\diff X_t^i = -X_t^i \diff t + b  \diff t +  \sigma\sqrt{1 - \rho^2}\diff {W}_t^i + \sigma\rho \diff {W}_t^0 + \alpha \tilde{\mathfrak{f}}^N_t \diff t,
\end{equation}
where $\tilde{\mathfrak{f}}^N$ is defined similarly to the finite particle system analogue of $\mathfrak{f}$ from \eqref{eq:SPDE_simplified}.

\subsection{Key contributions and overview of our approach}

In the existing literature, the well-posedness results of \cite{inglis2015mean} and \cite{Caceres_roux_etal} come closest to the analysis we undertake here. The former work establishes global existence and uniqueness of solutions to a McKean--Vlasov SDE for a representative membrane potential \cite[Equation (2.2)]{inglis2015mean} with gradual spike transmission modelled by a suitable kernel in the same way as discussed above. This is derived by use of a cable equation to describe how the input current flows across a dendrite before reaching the (somatic) membrane potential, as in \cite{bressloff1997synchrony}. The kernel $\mathfrak{K}$ then arises from the fundamental solution of the cable equation. The other work \cite{Caceres_roux_etal} focuses instead on the Fokker--Planck point of view, as in the present paper. They also have non-instantaneous spike transmission, however it takes the slightly different form of a fixed non-zero delay. The resulting PDE for the membrane potential density \cite[Equations (1.1)--(1.3)]{Caceres_roux_etal} is recast as a free boundary problem to get local well-posedness through a fixed point argument similarly to \cite{Caceres_2011} and this is then extended globally through the use of supersolutions and estimates on the flux $\partial_x V_t(x_\mathrm{th})$. In \cite{inglis2015mean}, the analysis is based on a rather different but nonetheless related fixed point argument via estimates on the continuous firing rate $f$, building on ideas developed in \cite{delarue2015global}.

With the common noise, we no longer have a well-defined flux and firing rate, so the approaches of \cite{inglis2015mean, Caceres_roux_etal} do not generalize to our setting. Instead, we only work with $L^2$-integrability of the membrane potential density and prove well-posedness via suitable energy estimates in the dual of the first Sobolev space, based on a weak formulation of the stochastic Fokker--Planck equation. We also note that the above and other existing works do not consider random refractory periods. Thus, we must rely on a more careful construction of the underlying particle system and, in turn, a more delicate analysis is required in order to obtain convergence to the postulated mean-field limit. Refractory periods of various forms have been considered at the PDE level, see for example~\cite{Caceres_Perthame_2014, Caceres_refractory_delay}, but the convergence of a corresponding particle system was not considered. Moreover, the convergence to a (unique) mean-field limit has not previously been addressed for networked integrate-and-fire models with common noise. We conclude by providing a McKean--Vlasov characterization of the membrane potential density as the conditional law of a representative membrane potential. Compared to the analysis in \cite{inglis2015mean}, we have to deal with the common noise, we treat more general coefficients, and we place less strict assumptions on the transmission kernel and the initial condition.

For the proofs, we will build on the analysis in \cite{hamblyseanhalfline, hambly2019spde} which consider related SPDE problems set on a half-line. Our overall approach is inspired by these works, but significant new ideas are needed to handle our new setting. In the terminology of integrate-and-fire models, the setting of \cite{hambly2019spde} can be mapped to a situation where neurons are simply killed upon their first spike instead of undergoing a refractory period and being reinserted into the system. At the level of the particle system, new arguments are needed to obtain tightness when dealing with the additional terms arising from refractory periods and reinsertion. In particular, substantial technical work goes into controlling the amount of firing, the increments of the cumulative spike count, and the increments of the number of reinsertions. Moreover, in order to identify the  mean-field limit, the refractory periods and resets necessitate a more detailed convergence analysis that carefully exploits the way in which the particle system is constructed. Notably, we face a delicate new task of ascertaining that the firing times, refractory periods, and reset positions decorrelate as the number of neurons tends to infinity. To obtain $L^2$ estimates for the membrane potential density, we work directly with the weak formulation of the SPDE instead of relying on comparison with an auxiliary system as in previous works. Concerning uniqueness, our energy estimates must deal with a non-standard source term and a more complicated nonlinearity coming from how the cumulative spike count affects the membrane potential density. Furthermore, the energy estimates require new arguments to control the membrane potential density in expectation, which we achieve through a probabilistic comparison with an absorbed Brownian motion that is suitably restarted after each spike.


\section{Main results}\label{sec: MAIN RESULTS SECTION IN THE GENERALISED INTEGRATE AND FIRE NEURON MODELS}

Throughout the paper, we work on a fixed filtered probability space $(\Omega, \mathcal{F},(\mathcal{F}_t)_{t \ge 0},\prob)$, where the filtration $(\mathcal{F}_t)_{t \ge 0}$ is taken to satisfy the usual conditions. Furthermore, we will assume that this space supports an infinite sequence of random variables, $\{W^i,\varsigma_k^i,\xi_k^i\}_{i,k \ge 0}$, which are all mutually independent of each other. Here each $W^i$ is an $\mathcal{F}_t$-standard Brownian motion, while $\{\varsigma_k^i\}_{k, i}$ and $\{\xi_k^i\}_{k,i}$ are i.i.d.~random variables on $\R$ that are $\mathcal{F}_0$-measurable and satisfy \cref{ass: ASSUMPTIONS FROM SOJMARK SPDE PAPER APPLIED TO THE NEUROSCIENCE SETTING} below. For simplicity, when the indexing is not important, we will simply refer to $\xi_k^i$ and $\varsigma_k^i$ by $\xi$ and $\varsigma$ respectively.

The various features of the membrane potential dynamics, as discussed in Section \ref{subsect:stoch_FP}, are made precise in the following particle system. For simplicity of notation, we take the firing threshold to be $x_\mathrm{th}=0$. We study the system in the general form 
\begin{equation}\label{eq: GENERALISED FINITE PARTICLE SYSTEM IN THE INTEGRATE AND FIRE NEURON MODELS}
	\left\{
	\begin{array}{r@{{}={}}l}
		\diff{}X_t^{i} &\begin{array}[t]{@{}l}
			b(t,X_t^{i},\nu_t^{N},\mathfrak{f}_t^N)\ind_{\{X_t^i < 0\}}\diff{}t + \sigma(t,\,X_t^{i})\rho(t,\nu^N_t,\mathfrak{f}_t^N)\ind_{\{X_t^i < 0\}}\diff{}W_t^0\\[0.25em]
			+ \  
			\sigma(t,\,X_t^{i})\sqrt{1 \!-\! \rho^2(t,\nu^N_t,\mathfrak{f}_t^N)}\ind_{\{X_t^i < 0\}}\diff{}W_t^i - \diff \sum_{k \ge 1} \xi_k^i\ind_{[0,t]}(\tau_k^i + \varsigma_k^i), \\[0.25em]
		\end{array} \\[0.25em]
		\tau_k^{i} & \inf\{t > \tau_{k-1}^{i} + \varsigma_{k-1}^{i} \; : \; X_{t-}^{i} \ge 0\},\qquad \tau_0^{i} = 0,\\[0.25em]\nu_t^N & \frac{1}{N}\sum_{i=1}^N \delta_{X_t^{i}}\ind_{\{X_t^i < 0\}},\quad \mathfrak{f}_t^N = \int_0^t \mathfrak{K}(t-s) \diff{F_s^N}\\[0.25em]
		F_t^N & \frac{1}{N}\sum_{i=1}^N J_t^{i},\quad J_t^{i} = \sum_{k \ge 1} \ind_{[0,t]}(\tau_k^i).
	\end{array}
	\right.
\end{equation}
Here, $\tau^i_k$ is the $k$-th time that neuron $i$ fires and the random variable $\varsigma_k^i$ gives the corresponding refractory period, i.e., the amount of time after it fires during which neuron $i$ is not affected by any stimulus. Following this refractory period, the term $-\xi_k^i$ models the level to which the membrane potential of the neuron is reset. Note that the empirical measure $\nu_t^N$ tracks the neurons that are not currently deactivated, due to the refractory period. Moreover, we recall that $F_t^N$ gives the cumulative spike count up until time $t$ (rescaled by $1/N$) and $\mathfrak{f}_t^N$ is the corresponding spike transmission rate.

For our analysis, we shall also need the following notation
\begin{equation}\label{eq: THE F D N AND J D I WHICH HAVE BEEN EXTRACTED FROM FROM THE EQUATION ABOVE TO MAKE IT MORE STREAMLINED}
	F_t^{D,N} = \textstyle \frac{1}{N}\sum_{i=1}^N J_t^{D,i},\quad 
	J_t^{D,i} = \textstyle\sum_{k \ge 1} \ind_{[0,t]}(\tau_k^i + \varsigma_k^i),
\end{equation}
which counts the number of resets. Another key quantity is the empirical mean $M_t^N \coloneqq \langle\nu_t^N,\operatorname{Id}\rangle$. Recall that, in \eqref{eq:membrane_potentials}, \eqref{eq:PDE}, and \eqref{eq:SPDE_simplified}, the coefficients $\sigma$ and $\rho$ are constants, while the drift $b$ involves the mean $M^N$ and has linear growth in $X^i$ which our assumptions will allow for. The well-posedness of \eqref{eq: GENERALISED FINITE PARTICLE SYSTEM IN THE INTEGRATE AND FIRE NEURON MODELS} is addressed in \cref{sec: THE FINITE PARTICLE SYSTEM}.

\subsection{Assumptions and Notation}

Before stating our assumptions, we first introduce some basic notation. First of all, we fix an arbitrary finite time horizon $T > 0$. We will then phrase our results in terms of functional weak convergence on the arbitrary time interval $[0,T]$. Next, we let $\mathcal{P}(A)$ denote the set of probability measures on a measurable space $(A,\,\mathcal{A})$. Whenever $A$ is a metric space, we write $\mathcal{B}(A)$ for the associated Borel $\sigma$-algebra. Let further $\mathbf{M}_{\le 1}(A)$ denote the space of sub-probability measures, which we endow with the topology of weak convergence of measures. For any interval $I$ and metric space $X$, we let $C(I, X)$ denote the space of continuous functions from $I$ to $X$. Similarly, $D(I, X)$ denotes the space of \cadlag functions from $I$ to $X$. We will use the shorthand notation $\mathcal{C}_X$ and $D_X$ for $C(I, X)$ and $D(I, X)$, respectively, whenever the interval $I$ is clear.

We shall be working under suitable Lipschitz conditions with respect to the membrane potentials $X_t^i$, the empirical measure $\nu_t^N$, and the spike transmission rate $\mathfrak{f}_t^N$. Regarding the empirical measure, this will involve two distances on the space of sub-probability measures $\mathbf{M}_{\le 1}(\R)$, namely $d_0$ and $d_1$ defined below. We note that our assumptions allow for linear growth and include the cases discussed in the introduction.

\begin{assumption}[Structural assumptions]\label{ass: ASSUMPTIONS FROM SOJMARK SPDE PAPER APPLIED TO THE NEUROSCIENCE SETTING}
	We assume that the following structural conditions are satisfied by the particle system \eqref{eq: GENERALISED FINITE PARTICLE SYSTEM IN THE INTEGRATE AND FIRE NEURON MODELS}, for some $\gamma,\, R > 0$:
	\begin{enumerate}[(i)]
		\item \label{ass: ASSUMPTIONS FROM SOJMARK SPDE PAPER APPLIED TO THE NEUROSCIENCE SETTING ONE}(Growth and differentiability) The map $x \mapsto b(t,x,\mu,f)$ is $\mathcal{C}^2(\R)$ and $(t,x) \mapsto \sigma(t,x)$ is $\mathcal{C}^{1,2}([0,T]\times \R)$. Moreover, there exist $C_b,C_\sigma > 0$ such that
		\begin{align*}
			&\nnorm{b(t,x,\mu,f)} \le C_b(1 + \nnorm{x}+ \langle \mu,\nnorm{\cdot}\rangle + \nnorm{f}),\quad\nnorm{\partial_x^{(n)}b(t,x,\mu,f)}\le C_b,\quad n = 1,\,2,\\
			&\nnorm{\sigma(t,x)}\le C_\sigma,\quad\nnorm{\partial_t\sigma(t,x)}\le C_\sigma,\quad \nnorm{\partial_x^{(n)}\sigma(t,x)}\le C_\sigma,\quad n = 1,\,2.
		\end{align*}
		\item \label{ass: ASSUMPTIONS FROM SOJMARK SPDE PAPER APPLIED TO THE NEUROSCIENCE SETTING TWO}(Lipschitzness) There exists $C > 0$ such that
		\begin{align*}
			\nnorm{b(t,x,\mu,f) - b(t,\tilde{x},\tilde{\mu},\tilde{f})} &\le C(\nnorm{x - \tilde{x}} + d_0(\mu,\tilde{\mu}) + \nnorm{f - \tilde{f}}),\\
			\nnorm{\sigma(t,x) - \sigma(t,\tilde{x})} &\le C(\nnorm{x - \tilde{x}}),\\
			\nnorm{\rho(t,\mu,f) - \rho(t,\tilde{\mu},\tilde{f})} &\le C(d_1(\mu,\tilde{\mu}) + \nnorm{f - \tilde{f}}),
		\end{align*}
		where
		\begin{align*}
			d_0(\mu,\,\tilde{\mu}) &= \sup\left\{\nnnorm{\langle\mu - \tilde{\mu},\, \psi\rangle}\,:\, \norm{\psi}_{\operatorname{Lip}} \le 1,\, \nnnorm{\psi(0)} \le 1\right\},\\
			d_1(\mu,\,\tilde{\mu}) &= \sup\left\{\nnnorm{\langle\mu - \tilde{\mu},\, \psi\rangle}\,:\, \norm{\psi}_{\operatorname{Lip}} \le 1,\, \norm{\psi}_{\infty} \le 1\right\}.
		\end{align*}
		\item \label{ass: ASSUMPTIONS FROM SOJMARK SPDE PAPER APPLIED TO THE NEUROSCIENCE SETTING THREE} (Non-degeneracy) $C_\sigma$ above can be chosen such that $0 < C_\sigma^{-1} \le \sigma(t,x)$ and that $0 \le \rho(t,\mu,f)\le 1 - \gamma$.
		\item \label{ass: ASSUMPTIONS FROM SOJMARK SPDE PAPER APPLIED TO THE NEUROSCIENCE SETTING FIVE} (Spike transmission) The kernel $\mathfrak{K}$ is non-negative with $\norm{\mathfrak{K}}_1 = 1$ and $\mathfrak{K} \in \mathcal{W}^{1,1}_0(\R_+)$, the Sobolev space with one weak derivative in $L^1$ and zero trace. 
		\item \label{ass: ASSUMPTIONS FROM SOJMARK SPDE PAPER APPLIED TO THE NEUROSCIENCE SETTING FOUR}(Random inputs)
		The random variables $\{X_0^i, W^i,\varsigma_k^i,\xi_k^i\}_{i,k \ge 0}$ are all mutually independent of each other, and the  $\{X_0^i\}_{i\ge 1}$, $\{W^i\}_{i\ge 0}$, $\{\varsigma_k^i\}_{i,k \ge 0}$, and $\{\xi_k^i\}_{i,k \ge 0}$ form i.i.d.~sequences. The common law $\nu_0$ of the starting values $X_0^i$ has a density $V_0$ in $L^2(-\infty,0)$ and
		\begin{equation*}
			\nu_0(-\infty,-\lambda) = O(e^{-\gamma \lambda^2}) \quad \text{as} \quad \lambda \to \infty.
		\end{equation*}
		The common law of the refractory periods $\varsigma_k^i$ has a density $\pref \in \mathcal{W}^{1,1}_0(\R_+)$. Writing $\mu_\mathrm{res}$ for the common law of the reset positions $-\xi^{i}_k$, we have $\operatorname{supp}(\mu_\mathrm{res}) \subseteq [-R,-\gamma]$.
	\end{enumerate}
\end{assumption}

To establish uniqueness of the limiting SPDE, stated in \eqref{eqn: THE LIMITTING SPDE EQUATION} below, we restrict ourselves to a class of processes that have sufficient regularity. Naturally, every limit point for the particle system \eqref{eq: GENERALISED FINITE PARTICLE SYSTEM IN THE INTEGRATE AND FIRE NEURON MODELS} will be shown to live in this class. Moreover, we will show that the unique solution is measurable with respect to the filtration generated by the Brownian motion driving the SPDE, but we stress that the uniqueness arguments do not rely on this.

\begin{assumption}[Conditions for uniqueness]\label{ass: ASSUMPTIONS NEEDED TO PROVE UNIQUENESS WHICH PARALLEL THOSE FROM THE SPDE PAPER}
	Uniqueness will be established within the class of processes $(\tilde{\nu},\tilde{F})$ that belong to the product space $D_{\mathbf{M}_{\le 1}(\R)} \times D_\R$ and satisfy the following conditions:
	\begin{enumerate}[(i)]
		\item \label{ass: ASSUMPTIONS NEEDED TO PROVE UNIQUENESS WHICH PARALLEL THOSE FROM THE SPDE PAPER ONE} (Support on $\R_-$) For every $t \in [0,T]$, $\tilde{\nu}_t$ is supported in $\R_- = (-\infty,0]$.
		\item \label{ass: ASSUMPTIONS NEEDED TO PROVE UNIQUENESS WHICH PARALLEL THOSE FROM THE SPDE PAPER TWO} (Mean spike count) The mean cumulative spike count $\tilde{F}$ is increasing and obeys the relation \[
		\tilde{\nu}_t(-\infty,0) + \tilde{F}_t - \int_0^t \pref(t - s)\tilde{F}_s \diff s = 1.
		\]         
		\item \label{ass: ASSUMPTIONS NEEDED TO PROVE UNIQUENESS WHICH PARALLEL THOSE FROM THE SPDE PAPER THREE} (Exponential tails) For every $\delta > 0$, we have 
		\begin{equation*}
			\E\left[\sup_{t \le T} \tilde{\nu}_t(-\infty,-a)\right] = o(\exp\{-\delta a\}) \quad \textnormal{as} \quad a \to  \infty.
		\end{equation*}
		\item \label{ass: ASSUMPTIONS NEEDED TO PROVE UNIQUENESS WHICH PARALLEL THOSE FROM THE SPDE PAPER FOUR} (Dirichlet boundary decay) There exists a $\beta > 0$ such that \[\E\int_0^T \tilde{\nu}_t(-\varepsilon,0)\diff t = o(\varepsilon^{1 + \beta}) \quad \textnormal{as} \quad \varepsilon \to  0.\]
		\item \label{ass: ASSUMPTIONS NEEDED TO PROVE UNIQUENESS WHICH PARALLEL THOSE FROM THE SPDE PAPER FIVE} (Spatial concentration) There exist $C,\delta > 0$ such that \[\E\int_0^T (\tilde{\nu}_t(a,b))^2\diff t \le C\nnnorm{a - b}^{\delta} \quad \quad \forall \, a,b \in \R.\]
	\end{enumerate}
\end{assumption}

We note that \cref{ass: ASSUMPTIONS NEEDED TO PROVE UNIQUENESS WHICH PARALLEL THOSE FROM THE SPDE PAPER} \eqref{ass: ASSUMPTIONS NEEDED TO PROVE UNIQUENESS WHICH PARALLEL THOSE FROM THE SPDE PAPER TWO} captures the way in which neurons that fire are gradually reinserted into the system. This is a natural formulation for the limiting equation as $\tilde{F}_t$ represents the cumulative mass of neurons that have fired up to time $t$. Similarly, $\int_0^t \pref(t - s)\tilde{F}_s \diff s$ is the mass of the system that has been reinserted by time $t$. Hence, their difference represents the mass of particles that is currently in the refractory period. The mass of particles in their refractory period and the mass of active particles must sum to one. We also stress that \cref{ass: ASSUMPTIONS NEEDED TO PROVE UNIQUENESS WHICH PARALLEL THOSE FROM THE SPDE PAPER} \eqref{ass: ASSUMPTIONS NEEDED TO PROVE UNIQUENESS WHICH PARALLEL THOSE FROM THE SPDE PAPER TWO} can be seen to be equivalent to 
\begin{align}
	\tilde{F}_t &= 1 - \tilde{\nu}_t(-\infty,0) \notag \\
	&+ \sum_{n \ge 1} \int_0^t\!\int_0^{t_1}\!\!\!\!\cdots\!\int_0^{t_n}\!\!(1 - \tilde{\nu}_{t_n}(-\infty,0))\pref(t - t_n)\prod_{j = 1}^{n-1}\pref(t_j - t_{j+1})\diff t_1 \cdots \diff t_n. \label{eq: REWRITING THE FIRING FUNCTION ONLY IN TERMS OF THE MEAUSRE}
\end{align}
In particular, we may view $\tilde{F}$ as a functional of $\tilde{\nu}$, so it will be possible to characterize the SPDE solution in terms of only the sub-probability measure-valued process $\tilde{\nu}$.

\subsection{The SPDE for the Membrane Potential Density}\label{subsect:SPDE_main_results}

Rather than working directly with $D_{\mathbf{M}_{\le 1}}=D_{\mathbf{M}_{\le 1}(\mathbb{R})}$, we will work on the product space $(D_{\mathscr{S}^\prime},M_1) \times (D_{\R},M_1) \times (\mathcal{C}_\R,\norm{\cdot}_{\infty})$ to establish tightness and weak convergence of the tuple ${(\nu^N,F^N,W^0)}_N$. Here, $\mathscr{S}$ is the space of Schwartz functions on $\mathbb{R}$, the space of rapidly decreasing infinitely differentiable functions, and $\mathscr{S}^\prime$ is its dual, the space of tempered distributions. Thus, $(D_{\mathscr{S}^\prime},M_1)$ is the space of $\mathscr{S}^\prime$-valued \cadlag processes on $[0,T]$, equipped with the $M_1$-topology as constructed in \citep{ledger2016skorokhod}. We stress that the empirical measures $\nu^N=(\nu^N_t)_{t\in[0,T]}$ live in this space and we shall recover the limit as a sub-probability measure valued process. The space $(D_{\mathscr{S}^\prime},M_1)$ has good properties and the $M_1$-topology will be very useful for our setting, as it allows us to exploit the monotonicity of the cumulative spike count $F^N$. We refer to \citep{ledger2016skorokhod} for details on the space $(D_{\mathscr{S}^\prime},M_1)$.

Our first main result confirms that weak limit points of the particle system \eqref{eq: GENERALISED FINITE PARTICLE SYSTEM IN THE INTEGRATE AND FIRE NEURON MODELS} are characterized by a general version of the stochastic Fokker--Planck equation \eqref{eq:SPDE_simplified}.

\begin{theorem}[Tightness and limiting SPDE]\label{thm: THE LIMITING SPDE THEOREM MAIN RESULT OF THE WORK}
	Let \cref{ass: ASSUMPTIONS FROM SOJMARK SPDE PAPER APPLIED TO THE NEUROSCIENCE SETTING} be satisfied. Then $\{(\nu^N,F^N,W^0)\}_N$ is tight on $(D_{\mathscr{S}^\prime},M_1) \times (D_{\R},M_1) \times (\mathcal{C}_\R,\norm{\cdot}_{\infty})$. Moreover, for any limit point $(\nu, F, W^0)$, $\nu$ belongs to  $D_{\mathbf{M}_{\le 1}}$ with probability 1, the pair $(\nu,F)$ satisfies \cref{ass: ASSUMPTIONS NEEDED TO PROVE UNIQUENESS WHICH PARALLEL THOSE FROM THE SPDE PAPER}, and, with probability 1, the triple $(\nu,F,W^0)$ obeys the SPDE
	\begin{align}\label{eqn: THE LIMITTING SPDE EQUATION}
		\begin{split}
			\diff{}\langle\nu_t,\,\phi\rangle =\,&
			\langle\nu_t,\,b(t,\cdot,\nu_t,\mathfrak{f}_t)\partial_x\phi\rangle\diff{t}
			+\frac{1}{2}\langle\nu_t,\sigma^2(t,\cdot)\partial_{xx}\phi\rangle\diff{t}\\
			&+\langle\nu_t,\,\sigma(t,\cdot)\rho(t,\nu_t,\mathfrak{f}_t)\partial_x\phi\rangle\diff{W_t^0}
			+ \langle \mures, \phi \rangle \mathfrak{r}_t \diff{t} - \phi(0)\diff{}F_t ,
		\end{split}
	\end{align}
	for all $\phi \in \mathscr{S}$, with initial condition $\nu_0(\mathrm{d}x)=V_0(x) \diff x$, and with
	\[
	\mathfrak{r}_t = \int_0^t \pref (t -s) \mathrm{d} F_s \quad \text{and}\quad \mathfrak{f}_t = \int_0^t \mathfrak{K}(t-s)\diff F_s.
	\]
	Here, $\mathfrak{K}$ is the spike transmission kernel, $\pref$ is the density of the refractory period, and $\mures$ is the law of the reset values of the membrane potential, as per Assumption \ref{ass: ASSUMPTIONS FROM SOJMARK SPDE PAPER APPLIED TO THE NEUROSCIENCE SETTING}.
\end{theorem}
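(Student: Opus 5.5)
The proof splits into three stages: tightness, identification of the limiting equation \eqref{eqn: THE LIMITTING SPDE EQUATION}, and verification of the regularity properties in \cref{ass: ASSUMPTIONS NEEDED TO PROVE UNIQUENESS WHICH PARALLEL THOSE FROM THE SPDE PAPER}.

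\emph{Tightness.} I would follow the scheme of \cite{hambly2019spde}. By \citep{ledger2016skorokhod}, tightness of $\nu^N$ in $(D_{\mathscr{S}'},M_1)$ reduces to tightness of $\langle \nu^N,\phi\rangle$ in $(D_\R,M_1)$ for each $\phi\in\mathscr{S}$. Applying It\^o's formula to $\phi(X^i_t)$ between firing and reset times and averaging over $i$ gives the finite-$N$ identity
\[
\langle \nu^N_t,\phi\rangle = \langle \nu^N_0,\phi\rangle + \int_0^t \langle \nu^N_s, b\,\partial_x\phi + \tfrac12 \sigma^2\partial_{xx}\phi\rangle \diff s + \int_0^t\langle \nu^N_s,\sigma\rho\,\partial_x\phi\rangle \diff W^0_s + I^N_t + \frac1N\sum_i\int_0^t\phi(-\xi)\diff J^{D,i}_s - \int_0^t \phi(X^i_{\tau-})\diff F^N_s .
\]
Here $I^N$ is the idiosyncratic martingale, of size $O(N^{-1/2})$. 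The remaining terms split into a part that is H\"older continuous and two monotone pieces driven by $F^N$ and $F^{D,N}$, and such a decomposition is exactly what the $M_1$ criterion tolerates. The key inputs are therefore:
\begin{enumerate}[(a)]
\item uniform moment bounds on $F^N_T$, the number of spikes, obtained from the fact that after each reset at level $\le-\gamma$ a neuron needs a nondegenerate Brownian excursion of height $\gamma$ to fire again, with the drift controlled by linear growth and a Gr\"onwall argument on $M^N$ and $\mathfrak{f}^N\le \|\mathfrak K\|_\infty F^N_T$ (using $\mathfrak K\in\mathcal W^{1,1}_0$);
\item control of the oscillation of $F^N$, which is monotone and hence $M_1$-tight once bounded;
\item continuity of $F^{D,N}$ in the limit, which follows because $\varsigma$ has a density: $F^{D,N}_t\approx\int_0^t P(\varsigma\le t-s)\diff F^N_s$ by a law of large numbers, since the $\varsigma^i_k$ are independent of the past firing times.
\end{enumerate}
Tightness of $W^0$ is trivial.

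\emph{Identification.} Pass to a Skorokhod representation of a convergent subsequence, restricted to continuity times of $F$, which are co-countable. Then:
\begin{enumerate}[(a)]
\item $I^N\to0$ in $L^2$.
\item $\phi(X^i_{\tau^i_k-})=\phi(0)$, which yields the term $-\phi(0)\diff F$.
\item The stochastic integral converges by the standard argument (e.g.\ Kurtz--Protter) using the Lipschitz assumptions on $\rho$ in $d_1$, and $\mathfrak f^N\to\mathfrak f$ pointwise. The latter holds because integrating by parts gives $\mathfrak f^N_t=\int_0^t\mathfrak K'(t-s)F^N_s\diff s$ with $\mathfrak K(0)=0$, and this is continuous in $F$ under $M_1$.
\item The reset term is the delicate one. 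I would write it as $\langle\mures,\phi\rangle\int_0^t\pref(t-s)F^N_s$-type terms plus a martingale built from the fresh independent variables $(\varsigma^i_k,\xi^i_k)$. These variables are $\mathcal F_0$-measurable, so this needs care: I would enlarge the filtration to reveal $\varsigma^i_k,\xi^i_k$ only at $\tau^i_k$, resp.\ $\tau^i_k+\varsigma^i_k$, show that this preserves the Brownian properties, and then show that the compensated sums have variance $O(F^N_T/N)$.
\end{enumerate}
This decorrelation step is the main obstacle, and I would spend most of the effort there.

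\emph{Properties.} The relation in \cref{ass: ASSUMPTIONS NEEDED TO PROVE UNIQUENESS WHICH PARALLEL THOSE FROM THE SPDE PAPER}(ii) is the $N\to\infty$ limit of the mass balance $\nu^N_t(\R)=1-F^N_t+F^{D,N}_t$, combined with the law of large numbers for $F^{D,N}$ above. Support on $\R_-$ holds by construction. The exponential tails follow from the Gaussian tail of $\nu_0$, the bounded reset support, and Gaussian drift/diffusion estimates. For the Dirichlet decay (iv) and spatial concentration (v), I would compare each active particle with an absorbed Brownian motion restarted after each reset and use boundary estimates of order $\varepsilon^2$ for the killed heat kernel, uniformly in $N$; the estimates pass to the limit by Fatou and portmanteau. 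Finally, an $L^2$ estimate from the weak formulation with $\phi$ approximating $\ind_{(a,b)}$ gives $\nu_t\ll$ Lebesgue, and hence the claimed regularity.
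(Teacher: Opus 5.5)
Your architecture is the paper's: you decompose $\langle\nu^N,\phi\rangle$, reduce to one-dimensional projections in $M_1$, decorrelate $(\xi^i_k,\varsigma^i_k)$ from earlier spike times, pass to the limit in the mass balance, and compare with a restarted absorbed Brownian motion. The gap is in the tightness step, which is exactly where reinsertion makes this model differ from the killed case. It rests on two claims that fail as stated.

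\emph{Two monotone pieces.} $M_1$ does not tolerate ``a H\"older part plus two monotone pieces''. The terms $-\phi(0)F^N$ and $\mathcal{G}^{2,N}_t=\frac1N\sum_{i,k}\phi(-\xi^i_k)\ind_{[0,t]}(\tau^i_k+\varsigma^i_k)$ enter with opposite signs, and $\mathcal{G}^{2,N}$ is not monotone at all unless $\phi$ has a sign on $[-R,-\gamma]$. A drop followed, after a time vanishing with $N$, by a comparable rise gives an $M_1$ oscillation that no window size controls. What you need is that $\mathcal{G}^{2,N}$ be \emph{C-tight}, i.e.\ have an asymptotically vanishing uniform modulus of continuity. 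Then adding it to the $M_1$-tight sequence $\langle\hat\nu^N,\phi\rangle-\phi(0)F^N$ preserves tightness, and addition is continuous at the limit. Your item (c) points this way, but ``continuity of $F^{D,N}$ in the limit'' presupposes the tightness it should deliver. The paper gets it from three ingredients (\cref{prop: TIGHTNESS OF THE TUPLE OF THE INDIVIDUAL COMPONENTS OF THE DECOMPOSITION OF NU N APPLIED TO PHI}): the supplement's bound $\E|F^{D,N}_t-F^{D,N}_s|^2\le C(|t-s|N^{-1}+|t-s|^{1+\beta})$, a piecewise-constant approximation on a grid of mesh $T/N$, and Kolmogorov's criterion in the limit.

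\emph{Endpoints.} ``Monotone and hence $M_1$-tight once bounded'' ignores the endpoint conditions of the $M_1$ tightness criterion: $t\mapsto\ind_{[1/N,T]}(t)$ is bounded and monotone but not relatively compact in $(D_\R,M_1)$. You must show $\lim_{\delta\to0}\limsup_N\prob[F^N_\delta+F^N_T-F^N_{T-\delta}>\varepsilon]=0$, i.e.\ no macroscopic firing near $t=0$ or $t=T$. That needs the initial density and a boundary-mass estimate like the one you plan for (iv).

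\emph{Moment bound.} In (a), bounding the drift via $\mathfrak{f}^N\le\norm{\mathfrak K}_\infty F^N_T$ in order to bound $F^N_T$ is circular. The refractory periods alone give $J^i_T\le 1+\#\{n:\sum_{k\le n}\varsigma^i_k<T\}$, which is i.i.d.\ in $i$ with all moments finite.

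\emph{Repair via your LLN, and comparison.} Your law-of-large-numbers idea does close the first gap, and then gives a genuinely different, more economical route. Restricted to $\{\tau^i_k<\tau^j_\ell\}$, the times $\tau^i_k,\tau^j_\ell$ and the event itself coincide with their counterparts in the system where particle $j$ is not reset at its $\ell$-th spike. Together with $(\xi^i_k,\varsigma^i_k)$, they are therefore functions of primitives independent of $(\xi^j_\ell,\varsigma^j_\ell)$. This coupling, as in \cref{lem: INDEPENDENCE BETWEEN THE STOPPING TIME AND THE REFRACTORY PERIOD AND THE RESET POINT}, replaces your filtration manipulation, which would shrink rather than enlarge the filtration anyway.

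Set $P_\varsigma(u)=\prob[\varsigma\le u]$. The centred terms $\ind_{[0,t]}(\tau^i_k+\varsigma^i_k)-P_\varsigma(t-\tau^i_k)\ind_{[0,t]}(\tau^i_k)$ are pairwise orthogonal, so $\E|F^{D,N}_t-\int_0^t\pref(t-s)F^N_s\diff s|^2\le\E[F^N_t]/N$. Now $F^{D,N}$ is nondecreasing and $t\mapsto\int_0^t\pref(t-s)F^N_s\diff s$ is Lipschitz with constant $2\norm{\pref}_\infty F^N_T$. A grid argument therefore upgrades the bound to $\sup_{t\le T}$-convergence in probability. Since the increments of $\mathcal{G}^{2,N}$ are at most $\norm{\phi}_\infty$ times those of $F^{D,N}$, this yields in one stroke both C-tightness of $\mathcal{G}^{2,N}$ and identification of its limit as $\E[\phi(-\xi)]\int_0^\cdot\pref(\cdot-s)F_s\diff s$. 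It needs only $\sup_N\E[F^N_T]<\infty$.

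The paper separates the two tasks. Tightness comes from the supplement's increment bound. Identification comes from showing that limits of the empirical measures of $(\xi^i_k,\tau^i_k,\varsigma^i_k)$ are product measures (\cref{prop: ASYMPTOTIC DECORRELATION OF THE HITTING TIMES RESETS AND RANDOM WAITING TIMES}), then passing to the limit in finite-dimensional distributions (\cref{prop: LIMIT POINT REPRESENTATION OF THE DELAYED FIRING FUNCTION}). That last step uses the exponential decay of $\prob[\tau^i_k\le T]$ in $k$ to truncate the sum over spikes.

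\emph{Two further steps your sketch passes over.}
\begin{itemize}
\item $\mathscr{S}'$-convergence does not give $d_0(\nu^N_t,\nu_t)\to0$ or $d_1(\nu^N_t,\nu_t)\to0$, which you need to pass to the limit in $b$ and $\rho$, since mass can escape to $-\infty$. The paper uses uniform tail bounds plus an Arzel\`a--Ascoli reduction (\cref{prop: CONVERGENCE OF THE FFD ON A CO COUNTABLE SET OF TIMES NEEDED FOR CONVEREGCNE OF THE MEANS AND INTEGRAL CONVERGENCE IN THE NEURO SPDE PAPER}). The same tail control shows the limit is a c\`adl\`ag $\mathbf{M}_{\le1}(\R)$-valued process.
\item Kurtz--Protter in its usual form needs $J_1$-convergence of the integrands, which is not available here. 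The $\diff W^0$ term should instead be identified, as the paper does, from convergence of $\int_0^\cdot H^N_s\diff s$ and $\int_0^\cdot(H^N_s)^2\diff s$ for the bounded integrands $H^N_s=\langle\nu^N_s,\sigma\rho\,\partial_x\phi\rangle$.
\end{itemize}
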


It is implicit in \eqref{eqn: THE LIMITTING SPDE EQUATION} that $(\nu, F)$ is adapted to a filtration for which $W^0$ is a Brownian motion. As regards the models discussed in the introduction, $\mathfrak{f}$ is the mean spike transmission rate and $\mathfrak{r}$ is the mean reset rate, both of which are defined in terms of the change in the mean cumulative spike count $F$. One can observe that, by taking a test function $\phi \in \mathscr{S}$ to approximate $1$ at the origin (i.e., the firing threshold $x_\mathrm{th}=0$) and $0$ elsewhere, the weak formulation \eqref{eqn: THE LIMITTING SPDE EQUATION} encodes a relationship between $F$ and the flux of the membrane potential distribution $\nu$ at the origin. This is made precise in the following statement, which confirms  the condition \eqref{eq:weak_flux_cond} from the introduction.

\begin{proposition}[Weak flux condition]\label{lem: THE WEAK FLUX CONDITION}
	Let $(\nu, F)$ be any solution to the SPDE \eqref{eqn: THE LIMITTING SPDE EQUATION} and let Assumptions \ref{ass: ASSUMPTIONS FROM SOJMARK SPDE PAPER APPLIED TO THE NEUROSCIENCE SETTING} and \ref{ass: ASSUMPTIONS NEEDED TO PROVE UNIQUENESS WHICH PARALLEL THOSE FROM THE SPDE PAPER} be in force. 
	If $\psi_\varepsilon$ is a smooth approximation of $\ind_{\{0\}}$, then it holds almost surely and in $L^1$ that
	\begin{equation*}
		\E\left[\left.F_t - F_s\,\right| \mathcal{F}_s\right]   = \frac{1}{2}\lim_{\varepsilon \downarrow 0} \int_s^t \E\left[\left. \langle \nu_u,\partial_x[\sigma(u,\cdot)^2\partial_x \psi_\varepsilon] \rangle \,\right| \mathcal{F}_s\right] \diff u.
	\end{equation*}
\end{proposition}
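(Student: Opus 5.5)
The plan is to test the SPDE \eqref{eqn: THE LIMITTING SPDE EQUATION} against a family $\psi_\varepsilon \in \mathscr{S}$ with $\psi_\varepsilon \equiv 1$ near the origin on $[-\varepsilon/2,\infty)$ (or more precisely equal to $1$ on a neighbourhood of $[-\varepsilon/2, 0]$ and smooth), vanishing on $(-\infty,-\varepsilon]$, with $0\le\psi_\varepsilon\le1$, $|\partial_x\psi_\varepsilon|\lesssim \varepsilon^{-1}$ and $|\partial_{xx}\psi_\varepsilon|\lesssim \varepsilon^{-2}$. Since $\psi_\varepsilon(0)=1$, the SPDE gives, between times $s$ and $t$,
\begin{align*}
F_t - F_s = {}& \langle \nu_s,\psi_\varepsilon\rangle - \langle \nu_t,\psi_\varepsilon\rangle + \int_s^t \langle \nu_u, b\,\partial_x\psi_\varepsilon\rangle \diff u + \frac12\int_s^t \langle \nu_u, \sigma^2\partial_{xx}\psi_\varepsilon\rangle \diff u \\
& + \int_s^t \langle \nu_u,\sigma\rho\,\partial_x\psi_\varepsilon\rangle \diff W^0_u + \int_s^t \langle \mures,\psi_\varepsilon\rangle \mathfrak{r}_u \diff u .
\end{align*}
The statement's integrand is $\tfrac12\partial_x[\sigma^2\partial_x\psi_\varepsilon] = \tfrac12\sigma^2\partial_{xx}\psi_\varepsilon + \sigma\partial_x\sigma\,\partial_x\psi_\varepsilon$, so I add and subtract the term $\int_s^t\langle\nu_u,\sigma\partial_x\sigma\,\partial_x\psi_\varepsilon\rangle\diff u$. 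Then I take $\E[\cdot\,|\,\mathcal{F}_s]$. The stochastic integral has zero conditional expectation, since its integrand is bounded by $C\varepsilon^{-1}$ for fixed $\varepsilon$. What remains is to show that every other term except the second-order one tends to zero in $L^1$ as $\varepsilon\downarrow0$.

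\textbf{Boundary terms.} Because $\operatorname{supp}\mures\subseteq[-R,-\gamma]$, we have $\langle\mures,\psi_\varepsilon\rangle=0$ once $\varepsilon<\gamma$, so the reset term vanishes identically. The term $\langle \nu_s,\psi_\varepsilon\rangle$ is bounded by $\nu_s(-\varepsilon,0]$, which uses the support condition in Assumption~\ref{ass: ASSUMPTIONS NEEDED TO PROVE UNIQUENESS WHICH PARALLEL THOSE FROM THE SPDE PAPER}(i). This is the delicate point, because Assumption~\ref{ass: ASSUMPTIONS NEEDED TO PROVE UNIQUENESS WHICH PARALLEL THOSE FROM THE SPDE PAPER}(iv) controls only the time integral of $\nu_t(-\varepsilon,0)$, not its value at fixed times. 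I would handle this by averaging. I integrate the identity in $s$ over $[s,s+h]$ and in $t$ over $[t,t+h]$, or more simply I first prove the identity for a.e.\ $s,t$ along a subsequence $\varepsilon_n$, where $\E\,\nu_u(-\varepsilon_n,0]\to0$ for a.e.\ $u$ (this follows from (iv) via Fubini). I then extend to all $s,t$ using right-continuity of $F$ and of $\nu$ in the appropriate sense. The atom at $0$ causes no trouble: the relation in (ii) together with the Dirichlet decay in (iv) forces $\nu_t(\{0\})=0$ for a.e.\ $t$. Alternatively, one can use that $\langle\nu_t,\psi_\varepsilon\rangle\le\nu_t(-\varepsilon,0]\downarrow\nu_t(\{0\})$ by dominated convergence, together with the fact that càdlàg paths and (iv) give $\nu_t(\{0\})=0$ for all $t$ a.s.

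\textbf{First-order terms.} These terms involve $b\,\partial_x\psi_\varepsilon$ and $\sigma\partial_x\sigma\,\partial_x\psi_\varepsilon$. The integrand is supported in $(-\varepsilon,0]$, where $|b|\le C(1+\varepsilon+\langle\nu_u,|\cdot|\rangle+|\mathfrak{f}_u|)$. Both $\langle\nu_u,|\cdot|\rangle$ and $\mathfrak{f}_u$ are integrable, the first by (iii) and the second because $\mathfrak K\in \mathcal W^{1,1}_0$ is bounded and $F_T$ has finite expectation. So these terms are bounded by $C\varepsilon^{-1}\int_s^t(1+\langle\nu_u,|\cdot|\rangle+|\mathfrak f_u|)\,\nu_u(-\varepsilon,0)\diff u$.

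This is the main obstacle: pairing the growth factor with the $o(\varepsilon^{1+\beta})$ decay, since (iv) is stated for $\E\int\nu_u(-\varepsilon,0)\diff u$ without a weight. I would apply Hölder's inequality with exponents $p,q$ close to $1$, using $\nu_u(-\varepsilon,0)\le1$ to write $\nu_u(-\varepsilon,0)=\nu_u(-\varepsilon,0)^{1/p}\nu_u(-\varepsilon,0)^{1/q}\le \nu_u(-\varepsilon,0)^{1/p}$. This gives a bound
\[
\varepsilon^{-1}\bigl(\E\textstyle\int\nu_u(-\varepsilon,0)\diff u\bigr)^{1/p}\,\bigl(\E\int(1+\langle\nu_u,|\cdot|\rangle+|\mathfrak f_u|)^q\diff u\bigr)^{1/q} = o(\varepsilon^{(1+\beta)/p-1}),
\]
which tends to zero for $p<1+\beta$. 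The second factor is finite provided the moments of $F_T$ and of $\sup_t\langle\nu_t,|\cdot|\rangle$ are finite. The latter follows from (iii), and the former from relation (ii) with a Grönwall/renewal bound, since $\pref$ is bounded so that $F_t\le 1+\|\pref\|_\infty\int_0^tF_s\diff s$ pathwise.

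Conditional expectations converging in $L^1$ then give the limit in $L^1$. Moreover, the left-hand side does not depend on $\varepsilon$, so the limit on the right exists and equals it; this yields the almost sure statement as well, along any sequence by identification.
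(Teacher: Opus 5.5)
Your overall route is the paper's. You test with $\psi_\varepsilon$, discard the reset term once $\varepsilon<\gamma$, and remove the $\diff W^0$-integral under $\E[\,\cdot\,|\,\mathcal{F}_s]$. You then control the first-order terms by H\"older against the Dirichlet decay, and rewrite $\tfrac12\sigma^2\partial_{xx}\psi_\varepsilon$ as $\tfrac12\partial_x[\sigma^2\partial_x\psi_\varepsilon]$ minus a first-order term. The step that fails as written is the boundary term $\langle\nu_t,\psi_\varepsilon\rangle$.

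Since $\nu_t(-\varepsilon,0)\downarrow 0$ for \emph{every} finite measure, you have $\langle\nu_t,\psi_\varepsilon\rangle\to\nu_t(\{0\})$. The whole issue is therefore whether $\nu$ charges the threshold at the two fixed times $s,t$, and none of your justifications settle this:
\begin{itemize}
\item Conditions (ii) and (iv) of \cref{ass: ASSUMPTIONS NEEDED TO PROVE UNIQUENESS WHICH PARALLEL THOSE FROM THE SPDE PAPER} are stated on the open sets $(-\infty,0)$ and $(-\varepsilon,0)$, so they are blind to $\nu_t(\{0\})$.
\item Weak c\`adl\`ag regularity cannot rule out an atom at an exceptional time. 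Mass piling up near $0$ converges weakly to a multiple of $\delta_0$, and portmanteau only gives $\nu_t(\{0\})\ge\limsup_n\nu_{t_n}(\{0\})$.
\item The `a.e.\ $s,t$, then right-continuity' extension cannot work. Given your first-order estimates, for any $s<t$ the $\varepsilon$-limit of your right-hand side is $\E[F_t-F_s+\nu_t(\{0\})-\nu_s(\{0\})\,|\,\mathcal{F}_s]$. So the identity at a given time is equivalent to the absence of an atom there, and cannot be transported from good times.
\end{itemize}
Also, $\psi_\varepsilon$ must be compactly supported (not $\equiv 1$ on $[-\varepsilon/2,\infty)$) to lie in $\mathscr{S}$.

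The missing ingredient is a mass balance. Test the SPDE with $\chi(\cdot/n)$, where $\chi\in\mathcal{C}_c^\infty$ equals $1$ on $[-1,1]$, and let $n\to\infty$. The derivative terms, including the $\diff W^0$-integral, vanish by (iii), the linear growth of $b$ and your bound on $F$; the reset term becomes $\int_0^t\mathfrak{r}_u\diff u$. By Fubini, and using $F_0=0$ (which is (ii) at $t=0$), this gives $\nu_t(\R)=1-F_t+\int_0^t\pref(t-u)F_u\diff u$. By (ii) the right-hand side equals $\nu_t(-\infty,0)$. Both sides are c\`adl\`ag, so $\nu_t(\{0\})=0$ for all $t$, almost surely; alternatively, invoke the $L^2$ density of \cref{prop:L2_density}. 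Then $\langle\nu_t,\psi_\varepsilon\rangle\le\nu_t(-\varepsilon,0]\downarrow 0$, and conditional monotone convergence disposes of both boundary terms, almost surely and in $L^1$, with no rate needed.

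This is where you genuinely differ from the paper. Its proof is carried out only for limit points of the particle system. It controls $\langle\nu^*_t,\psi_\varepsilon\rangle$ via the pointwise-in-time bound $\E[\nu^*_t(-\varepsilon,0)]=O(t^{-\delta/2}\varepsilon^{1+\beta})$ inherited from \cref{prop: BOUNDARY DECAY OF THE EMPIRICAL MEAUSRES IN EXPECTATION}, plus Borel--Cantelli. It also uses sub-Gaussianity of $F^*$ where you use the deterministic Gr\"onwall bound from (ii). Once repaired, your argument covers arbitrary solutions, as the statement asserts.

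Finally, the almost-sure claim needs more than `identification': $L^1$ convergence only yields a.s.\ convergence along subsequences. For the first-order terms, put $G_u=1+\langle\nu_u,|\cdot|\rangle+|\mathfrak{f}_u|$ and $H_\varepsilon:=\varepsilon^{-1}\int_0^T G_u\,\nu_u(-\varepsilon,0)\diff u$. Then $H_\varepsilon\le 2H_{2^{-k}}$ for $\varepsilon\in[2^{-k-1},2^{-k}]$, and your H\"older bound on $\E H_{2^{-k}}$ is summable in $k$. Hence $\sup_{\varepsilon<1}H_\varepsilon$ is integrable and conditional dominated convergence applies.
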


Next, we are able to establish a pathwise uniqueness result for the SPDE \eqref{eqn: THE LIMITTING SPDE EQUATION} within the class of solutions satisfying \cref{ass: ASSUMPTIONS NEEDED TO PROVE UNIQUENESS WHICH PARALLEL THOSE FROM THE SPDE PAPER}. Crucially,  the latter is satisfied by all limit points of the particle system in view of the first part of Theorem \ref{thm: THE LIMITING SPDE THEOREM MAIN RESULT OF THE WORK}.

\begin{theorem}[Uniqueness]\label{thm: UNIQUENESS PROP}
	Let \cref{ass: ASSUMPTIONS FROM SOJMARK SPDE PAPER APPLIED TO THE NEUROSCIENCE SETTING} be satisfied, and let $(\nu, F, W^0)$ and $(\tilde{\nu}, \tilde{F}, W^0)$ be any two solutions to \eqref{eqn: THE LIMITTING SPDE EQUATION} satisfying \cref{ass: ASSUMPTIONS NEEDED TO PROVE UNIQUENESS WHICH PARALLEL THOSE FROM THE SPDE PAPER} on the same probability space. Then, with probability $1$,
	\begin{equation*}
		\nu_t(A) = \tilde{\nu}_t(A) \quad \textnormal{and} \quad F_t = \tilde{F}_t \quad \forall \, t \in [0,T],\;  \forall A \in \mathcal{B}(\R).
	\end{equation*}
\end{theorem}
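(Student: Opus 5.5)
We would prove Theorem~\ref{thm: UNIQUENESS PROP} by an energy estimate in the dual Sobolev space $H^{-1}$ for the difference $\Delta_t := \nu_t - \tilde\nu_t$, in the spirit of \cite{hambly2019spde}. By Assumption~\ref{ass: ASSUMPTIONS NEEDED TO PROVE UNIQUENESS WHICH PARALLEL THOSE FROM THE SPDE PAPER}(ii) and the representation \eqref{eq: REWRITING THE FIRING FUNCTION ONLY IN TERMS OF THE MEAUSRE}, $F$ is a functional of $\nu$. Thus $F_t-\tilde F_t$, and hence $\mathfrak f_t-\tilde{\mathfrak f}_t$ and $\mathfrak r_t-\tilde{\mathfrak r}_t$, can be controlled by the mass difference $\sup_{s\le t}|\nu_s(-\infty,0)-\tilde\nu_s(-\infty,0)|$, and through integration by parts against $\mathfrak K,\pref\in\mathcal W^{1,1}_0$ by $\int_0^t |F_s-\tilde F_s|\,\mathrm{d}s$ (Volterra/Gronwall on the renewal equation).

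\textbf{Step 1: smoothing and $L^2$ regularity.} Because the measures are only known in $D_{\mathbf M_{\le1}}$, we first mollify: take the heat kernel $G_\delta$ and $\phi=G_\delta(x-\cdot)$ restricted to the half-line, with Dirichlet-type cutoff near $0$. We then apply the SPDE \eqref{eqn: THE LIMITTING SPDE EQUATION} with the test functions $\ad$ of the mollified difference, so that Itô's formula gives $\mathrm{d}\|\ad(\Delta^\delta_t)\|_{L^2}^2$. The boundary term $-\phi(0)\,\mathrm{d}F_t$ is killed or controlled using the Dirichlet boundary decay (iv). The tails are handled via (iii), and the spatial concentration (v) lets us pass $\delta\to0$ while obtaining $L^2$ bounds on the densities in expectation.

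\textbf{Step 2: energy estimate.} For the linear part, coercivity $\sigma\ge C_\sigma^{-1}$ together with $\rho\le1-\gamma$ yields the estimate
\[
\|\ad\Delta_t\|^2+c\int_0^t\|\Delta_s\|^2_{L^2}\,\mathrm{d}s\le \text{(martingale)}+C\int_0^t\|\ad\Delta_s\|^2\,\mathrm{d}s+\text{(nonlinear terms)}.
\]
The nonlinear terms come from Lipschitzness of $b,\rho$ in $(\mu,f)$, bounded by $d_0(\nu_s,\tilde\nu_s)+|\mathfrak f_s-\tilde{\mathfrak f}_s|$ times $\|\nu\|_{L^2}$-type norms, and from the source $\langle\mures,\cdot\rangle(\mathfrak r-\tilde{\mathfrak r})$.

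\textbf{Step 3: closing the loop.} We need $|F_t-\tilde F_t|$ bounded by $\|\ad\Delta\|$ plus an absorbable fraction of $\int\|\Delta\|^2$. The mass term $\langle\Delta_t,1\rangle$ can be written via $\ad\Delta_t$ at $-\infty$ versus $0$, and using the decay near $0$ gives a bound by $\|\ad\Delta\|_{L^2}$ and $\|\Delta\|_{L^2}$. The $\|\nu\|_{L^2}$ weights are random, so we would stop at $\tau_n=\inf\{t:\int_0^t\|\nu_s\|^2+\|\tilde\nu_s\|^2\,\mathrm{d}s\ge n\}$ and apply a stochastic Gronwall argument to conclude $\Delta=0$ up to $\tau_n$. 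Finiteness of $\E\int\|\nu\|_{L^2}^2$ then gives $\tau_n\to T$, hence $F=\tilde F$ by (ii).

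\textbf{Main obstacle.} We expect the main obstacle to be the nonlinear feedback through $F$: the difference $F-\tilde F$ is non-differentiable, enters both the drift via $\mathfrak f$ and the source via $\mathfrak r$, and is linked to the flux at $0$ only weakly (Proposition~\ref{lem: THE WEAK FLUX CONDITION}). Controlling the required expectation-level $L^2$ bounds on the densities is the delicate part. We would first try a comparison of the representative neuron with an absorbed Brownian motion restarted at each reset, using the non-degeneracy and the fact that resets land in $[-R,-\gamma]$.
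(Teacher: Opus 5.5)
Your skeleton matches the paper's. You set up an $H^{-1}$ energy estimate for $\partial_x^{-1}\mathcal T_\varepsilon\Delta_t$ using the image kernel $G_\varepsilon(x,y)=p_\varepsilon(x-y)-p_\varepsilon(x+y)$. This kernel kills $-\phi(0)\,\mathrm{d}F_t$ exactly, and the boundary decay (iv) makes image terms such as $\langle\nu_t,b_t\,p_\varepsilon(x+\cdot)\rangle$ negligible. You use coercivity from $\sigma\ge C_\sigma^{-1}$ and $\rho\le1-\gamma$, and the renewal relation (ii) to reduce $F-\tilde F$, $\mathfrak f-\tilde{\mathfrak f}$ and $\mathfrak r-\tilde{\mathfrak r}$ to the mass difference. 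That mass difference is indeed controlled by $|u(0)|^2\le 2\|u\|_2\|u'\|_2$ with $u=\partial_x^{-1}\Delta_t$.

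However, the loop does not close as you describe. The coefficients $b$ and $\rho$ also depend on the measure, through $d_0(\nu_s,\tilde\nu_s)$ and $d_1(\nu_s,\tilde\nu_s)$, and your Step 3 only handles the $F$-feedback. These metrics are not dominated by the $H^{-1}$ norm: pairing $\Delta_s$ with a Lipschitz test function and integrating by parts gives an $L^1(\R_-)$ norm of $\partial_x^{-1}\Delta_s$, and in $d_0$ the test functions grow linearly. The paper truncates at $-\lambda$ and interpolates, using a variant of Lemma 5.2 in \cite{hambly2019spde}:
$d_0(\nu_s,\tilde\nu_s)\le c\lambda(1+\delta^{-1})\|\partial_x^{-1}\mathcal T_\varepsilon\Delta_s\|_2+c\delta^{1/2}\|\Delta_s\|_{L^2}+f_s(\lambda)+g_s(\varepsilon)$, with $\E\int_0^T f_s(\lambda)^2\,\mathrm{d}s\le c_a e^{-a\lambda}$ for every $a>0$. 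The bound for $d_1$ is similar, with $\sqrt{\lambda+\delta^{-1}}$ in place of $\lambda(1+\delta^{-1})$. The $\delta\|\Delta_s\|_{L^2}^2$ piece is absorbed into the coercive term, and Gronwall then yields a bound of order $e^{C_n\lambda T-a\lambda}$. Only because (iii) holds for \emph{every} exponential rate can you take $a>C_nT$ and let $\lambda\to\infty$. This race is the actual closing argument and the real role of (iii); no stochastic Gronwall applies before it.

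The second gap is the $L^2$ regularity. Your Step 1 asserts it but does not produce it: the computation you describe is the $H^{-1}$ estimate for the difference, not a bound on $\nu$ itself. Your fallback, a comparison with a restarted absorbed Brownian motion, is unavailable here. The theorem concerns arbitrary SPDE solutions in the uniqueness class, and these carry no representative neuron; the conditional McKean--Vlasov representation is itself deduced from uniqueness. The paper instead proceeds as follows.
\begin{enumerate}
\item It tests the SPDE with the whole-line kernel $p_\varepsilon(x-\cdot)$, so that $-2(\bar{\mathcal T}_\varepsilon\nu_t)\,p_\varepsilon\,\mathrm{d}F_t\le 0$ can simply be dropped.
\item It rewrites $\mathfrak r_t=\int_0^t\pref^{\prime}(t-u)F_u\,\mathrm{d}u$.
\item It handles the reset source, which is spatially singular since $\mures$ may be a point mass, via $\int(\bar{\mathcal T}_\varepsilon\nu_s)(x)\,p_\varepsilon(x+z)\,\mathrm{d}x=\bar{\mathcal T}_{2\varepsilon}\nu_s(-z)\lesssim\|\partial_x\bar{\mathcal T}_\varepsilon\nu_s\|_2$ plus a negligible term, uniformly in $z\in[\gamma,R]$. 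This is absorbed by Young's inequality into the coercive gradient term.
\end{enumerate}
This gives $\sup_{t\le T}\|V_t\|_2<\infty$, and the supremum in time matters. The $\rho$-term contributes $\|\nu_s\|_2^2\,d_1(\nu_s,\tilde\nu_s)^2$, whose $\delta\|\Delta_s\|_2^2$ part can be absorbed with a fixed $\delta$ only if $\|\nu_s\|_2$ is bounded uniformly in $s$. Your localization $\int_0^t\|\nu_s\|_2^2\,\mathrm{d}s\le n$ is therefore too weak for this argument. The paper stops instead when $\sup_{s\le t}\sup_\varepsilon\|\mathcal T_\varepsilon\nu_s\|_2^2$, $\sup_{s\le t}|M_s|^2$ or $\sup_{s\le t}F_s^2$ exceeds $n$.
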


Based on this result, we can finally deduce that we also have uniqueness in law and, in turn, we get full weak convergence to a unique strong solution.

\begin{theorem}[Mean-field limit]\label{thm: CONDITIONAL LLN STATING THAT WE CONVERGE TO A UNIQUE LIMIT POINT THAT DEPENDS ON THE COMMON NOISE}
	The SPDE \eqref{eqn: THE LIMITTING SPDE EQUATION} admits a unique law on $(D_{\mathscr{S}^\prime},M_1) \times (D_{\R},M_1) \times (\mathcal{C}_\R,\norm{\cdot}_{\infty})$ and $(\nu^N,F^N,W^0)$ converges weakly to this law as $N\rightarrow \infty$. Furthermore, in the limit, there exists a measurable map $Q: \mathcal{C}_\R \to D_{\mathscr{S}^\prime}\times D_\R$ such that $(\nu, F) = Q(W^0)$. Finally, we also have that the empirical means $M^N=\langle \nu^N, \mathrm{Id} \rangle$ converge weakly to the mean $M$ given by $M_t = \langle \nu_t , \mathrm{Id} \rangle$.
\end{theorem}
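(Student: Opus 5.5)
This follows the Yamada--Watanabe / Gyöngy--Krylov route: pathwise uniqueness (\cref{thm: UNIQUENESS PROP}) plus existence of weak solutions (\cref{thm: THE LIMITING SPDE THEOREM MAIN RESULT OF THE WORK}) should give uniqueness in law, a strong solution, and convergence of the whole sequence.

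\emph{Step 1: weak existence and uniqueness in law.} By \cref{thm: THE LIMITING SPDE THEOREM MAIN RESULT OF THE WORK}, any subsequential limit $(\nu,F,W^0)$ solves \eqref{eqn: THE LIMITTING SPDE EQUATION} and satisfies \cref{ass: ASSUMPTIONS NEEDED TO PROVE UNIQUENESS WHICH PARALLEL THOSE FROM THE SPDE PAPER}. Take two such limit laws $\mathbb{Q}^1,\mathbb{Q}^2$ and disintegrate each with respect to the Wiener marginal, writing $\mathbb{Q}^j(\mathrm{d}w,\mathrm{d}y)=\mathbb{P}^W(\mathrm{d}w)\,q^j(w,\mathrm{d}y)$. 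Then build the product space
\[
\bar\Omega=\mathcal{C}_\R\times(D_{\mathscr{S}'}\times D_\R)^2,\qquad \bar{\mathbb{P}}(\mathrm{d}w,\mathrm{d}y_1,\mathrm{d}y_2)=\mathbb{P}^W(\mathrm{d}w)\,q^1(w,\mathrm{d}y_1)\,q^2(w,\mathrm{d}y_2).
\]
Its filtration is generated by the canonical coordinates. We must check that $w$ stays a Brownian motion for this joint filtration, that both triples still solve the SPDE, and that \cref{ass: ASSUMPTIONS NEEDED TO PROVE UNIQUENESS WHICH PARALLEL THOSE FROM THE SPDE PAPER} holds. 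The last point is immediate, since every condition there is a statement about the law of a single pair.

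The main obstacle is the compatibility, or immersion, condition: increments $W^0_{t+\cdot}-W^0_t$ must be independent of $\mathcal{F}_t$ on the joint space. I would first show in the limit that $W^0_{t+\cdot}-W^0_t$ is independent of $\sigma(\nu_s,F_s,W^0_s:s\le t)$. For the particle system this holds by construction, because $(\nu^N,F^N)$ up to time $t$ is a functional of $W^0|_{[0,t]}$ together with the independent inputs $\{X_0^i,W^i,\varsigma^i_k,\xi^i_k\}$. The independence then passes to weak limits by testing bounded continuous functionals; for $M_1$-continuity I would restrict to continuity points $t$ and use right-continuity. This gives the conditional independence needed for $q^j(w,\cdot)$ restricted to $[0,t]$ to depend only on $w|_{[0,t]}$, so $w$ is an $\bar{\mathcal{F}}_t$-Brownian motion. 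The stochastic integral in \eqref{eqn: THE LIMITTING SPDE EQUATION} can then be identified on $\bar\Omega$ by the usual martingale characterisation: the quadratic variation and the covariation with $w$ are determined by the law.

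Applying \cref{thm: UNIQUENESS PROP} gives $y_1=y_2$ $\bar{\mathbb{P}}$-a.s. Since the two kernels are conditionally independent given $w$, this forces $q^1(w,\cdot)=q^2(w,\cdot)=\delta_{Q(w)}$ for a measurable map $Q$. Hence $\mathbb{Q}^1=\mathbb{Q}^2$, and $(\nu,F)=Q(W^0)$.

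\emph{Step 2: full convergence.} Tightness from \cref{thm: THE LIMITING SPDE THEOREM MAIN RESULT OF THE WORK}, together with uniqueness of the limit law, shows that every subsequence has a further subsequence converging to the same law. So the whole sequence $(\nu^N,F^N,W^0)$ converges weakly.

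\emph{Step 3: empirical means.} The map $\mu\mapsto\langle\mu,\mathrm{Id}\rangle$ is not continuous on $\mathscr{S}'$, so I truncate. Let $\mathrm{Id}_a$ be a smooth bounded function equal to $\mathrm{Id}$ on $[-a,0]$ and constant below $-a-1$, and pair it with a Schwartz cutoff. By the continuous mapping theorem, $\langle\nu^N,\mathrm{Id}_a\rangle\Rightarrow\langle\nu,\mathrm{Id}_a\rangle$ in $(D_\R,M_1)$, using that $\langle\nu^N,\phi\rangle$ lies in $D_\R$. The truncation error satisfies
\[
\E\Bigl[\sup_{t\le T}\langle\nu^N_t,\nnorm{\cdot}\ind_{(-\infty,-a)}\rangle\Bigr]\to0
\]
uniformly in $N$. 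This follows from the Gaussian initial tails in \cref{ass: ASSUMPTIONS FROM SOJMARK SPDE PAPER APPLIED TO THE NEUROSCIENCE SETTING}, the bounded reset support $[-R,-\gamma]$, and linear-growth moment bounds. In the limit the same bound follows from Fatou's lemma, or from the exponential tails in \cref{ass: ASSUMPTIONS NEEDED TO PROVE UNIQUENESS WHICH PARALLEL THOSE FROM THE SPDE PAPER}. A standard approximation argument (Billingsley, Theorem 3.2) then gives $M^N\Rightarrow M$.
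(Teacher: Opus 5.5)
Your outline is correct, and its core matches the paper's proof. Existence from \cref{thm: THE LIMITING SPDE THEOREM MAIN RESULT OF THE WORK} and pathwise uniqueness from \cref{thm: UNIQUENESS PROP} are combined by a Yamada--Watanabe argument to give uniqueness in law and $(\nu,F)=Q(W^0)$. Tightness plus a unique limit law then gives convergence of the full sequence. The paper uses Kurtz's formulation (Borel maps $\Psi^i(W,\Xi^i)$ with independent uniforms $\Xi^i$, plus a lemma converting a.s.\ equality into $\Psi^i(W,\Xi^i)=Q(W)$) rather than your product of disintegrations, but it is the same construction.

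Two points in your Step 1 need adjusting.
\begin{itemize}
\item \emph{Existence of the kernels.} You disintegrate directly on $(D_{\mathscr{S}'},M_1)\times(D_\R,M_1)$. This is not a Polish space (only its compact subsets are metrizable), so the kernels $q^j$ do not come for free. That is why the paper's proof first realises the limit as a random variable in $D_{\mathbf{M}_{\le 1}}$ with the Kolmogorov $\sigma$-algebra. That space is Polish under $J_1$ with Borel equal to Kolmogorov, and only then is Kurtz's framework invoked. You need that step, or an argument that the laws are Radon and carried by a $\sigma$-compact union of metrizable compacts. The diagonal argument and the measurability of $Q$ are then fine, since the Kolmogorov $\sigma$-algebra is countably generated and separates points.
\item \emph{Scope of uniqueness.} The theorem asserts a unique law among all solutions of \eqref{eqn: THE LIMITTING SPDE EQUATION} satisfying \cref{ass: ASSUMPTIONS NEEDED TO PROVE UNIQUENESS WHICH PARALLEL THOSE FROM THE SPDE PAPER}, whereas your Step 1 only compares limit laws. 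For a general solution, independence of $W^0_{t+\cdot}-W^0_t$ from $\sigma(\nu_s,F_s,W^0_s:s\le t)$ is already part of the solution concept, since $(\nu,F)$ is adapted to a filtration for which $W^0$ is a Brownian motion. So your product construction works for arbitrary solutions, and the detour through the particle system is unnecessary.
\end{itemize}

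Your Step 3 is a genuinely different route. The paper gets $M^N\Rightarrow M$ by re-running the tightness proofs for $\langle\nu^N,\mathrm{Id}\rangle$, with $\norm{\phi}_\infty$ replaced by $\sup_{[-R,R]}\nnorm{\phi}$ using the compact support of the resets. It combines this with the co-countable finite-dimensional convergence of \cref{prop: CONVERGENCE OF THE FFD ON A CO COUNTABLE SET OF TIMES NEEDED FOR CONVEREGCNE OF THE MEANS AND INTEGRAL CONVERGENCE IN THE NEURO SPDE PAPER}. Your truncation plus Billingsley's approximation theorem bypasses that machinery and works. The uniform-in-$N$ estimate follows directly from \cref{prop: BOUNDS ON THE SUP OF THE P TH MOMENTS OF THE SUP NORM OF X}, since $\E[\sup_{t\le T}\langle\nu^N_t,\nnorm{\cdot}\ind_{(-\infty,-a)}\rangle]\le a^{-1}\sup_i\E[\sup_{t\le T}\nnorm{X^i_t}^2]$, and the $M_1$ distance is dominated by the uniform distance. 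The cleanest truncation is $\mathrm{Id}_a(x)=x\chi(x/a)$, with $\chi\in\mathcal{C}_c^\infty(\R;[0,1])$ equal to $1$ on $[-1,1]$. Then $\mathrm{Id}_a\in\mathscr{S}$ directly and $\nnorm{\mathrm{Id}-\mathrm{Id}_a}\le\nnorm{\cdot}\ind_{(-\infty,-a)}$ on $(-\infty,0]$, with no separate Schwartz cutoff needed.
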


As part of the uniqueness proof, we confirm in Proposition \ref{prop:L2_density} that the measure $\nu_t$ has a (random) density $V_t$ in $L^2$, for each $t\ge 0$, such that $\sup_{t \in [0,T]} \norm{V_t}_2 $ is finite with probability $1$. Consequently, we have shown that the empirical measures of the finite particle system \eqref{eq: GENERALISED FINITE PARTICLE SYSTEM IN THE INTEGRATE AND FIRE NEURON MODELS} converge weakly to a unique membrane potential density $V_t \in L^2(-\infty,0)$ which evolves according to the stochastic Fokker--Planck equation \eqref{eqn: THE LIMITTING SPDE EQUATION}.

Our final result confirms that the membrane potential density can be represented as the conditional law of a certain conditional McKean--Vlasov SDE. This extends the findings of \cite{inglis2015mean} to the common noise setting, and it makes a precise connection between the SDE point of view in \cite{inglis2015mean} and the Fokker--Planck point of view in \cite{Caceres_roux_etal}.

\begin{theorem}[Conditional McKean--Vlasov representation]\label{thm: CONDITIONAL MCKEAN VLASOV FORMATION FOR NEURO MODEL WITH DELAYS AND RESETS}
	Let $(\nu, F,W^0)$ be the unique strong solution to the SPDE \eqref{eqn: THE LIMITTING SPDE EQUATION}. Then, for any Brownian motion $W$ and random variables $\{\xi_k\}_k$ and $\{\varsigma_k\}_k$, all mutually independent and independent of both $X_0$ and $W^0$, we have
	\begin{equation*}
		\nu_t = \prob\bigl[\left.X_t \in \cdot\,,\, X_t < 0\,\right| W^0\bigr],
	\end{equation*}
	for $t\ge 0$, where $X$ is the unique solution to the conditional McKean--Vlasov diffusion
	\begin{equation}\label{eq: LIMITTING MCKEAN VLASOV SYSTEM IN THE INTEGRATE AND FIRE NEURON MODELS}
		\left\{
		\begin{array}{r@{{}={}}l}
			\diff X_t &\begin{array}[t]{@{}l}
				b(t,X_t,\nu_t,\mathfrak{f}_t)\ind_{\{X_t< 0\}}\diff{}t + \sigma(t,X_t)\rho(t,\nu_t,\mathfrak{f}_t)\ind_{\{X_t < 0\}}\diff{}W_t^0 \\[0.35em]
				+ 
				\sigma(t,X_t)\sqrt{1 \!-\! \rho^2(t,\nu_t,\mathfrak{f}_t)}\ind_{\{X_t< 0\}}\diff{}W_t
				- \diff \sum_{k \ge 1} \xi_k\ind_{[0,t]}(\tau_k + \varsigma_k) ,\\[0.35em]
			\end{array} \\[0.35em]
			\tau_k & \inf\{t > \tau_{k-1} + \varsigma_{k-1} \; : \; X_{t-} \ge 0\},\quad \tau_0 = 0,\\[0.35em]
			\mathfrak{f}_t  & \int_0^t \mathfrak{K}(t-s)\diff{F_s},\quad  F_t = \sum_{k\ge1} \prob\left[\left.\tau_k \le t\,\right|W^0\right].
		\end{array}
		\right.
	\end{equation}
\end{theorem}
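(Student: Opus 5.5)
We prove the conditional McKean--Vlasov representation by freezing the environment, solving a linear conditional problem, and then applying uniqueness. Let $(\nu,F,W^0)$ be the unique strong solution from \cref{thm: CONDITIONAL LLN STATING THAT WE CONVERGE TO A UNIQUE LIMIT POINT THAT DEPENDS ON THE COMMON NOISE}, so that $(\nu,F)=Q(W^0)$. The processes $\nu$ and $\mathfrak{f}=\int_0^\cdot \mathfrak{K}(\cdot-s)\diff F_s$ are then $W^0$-adapted, and we insert them as \emph{given} inputs into \eqref{eq: LIMITTING MCKEAN VLASOV SYSTEM IN THE INTEGRATE AND FIRE NEURON MODELS}. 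This produces a linear SDE with random, $W^0$-measurable coefficients, together with resets and refractory periods. It has a unique strong solution $\hat X$, built inductively between successive firing/reset epochs exactly as for the finite particle system in \cref{sec: THE FINITE PARTICLE SYSTEM}. Between epochs, the coefficients are Lipschitz in $x$ by \cref{ass: ASSUMPTIONS FROM SOJMARK SPDE PAPER APPLIED TO THE NEUROSCIENCE SETTING}. Moreover, $\varsigma_k$ and $\xi_k$ are independent of everything else, and $\xi_k\ge\gamma$, so only finitely many epochs occur on $[0,T]$.

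Next we define $\hat\nu_t=\prob[\hat X_t\in\cdot,\hat X_t<0\mid W^0]$ and $\hat F_t=\sum_k\prob[\hat\tau_k\le t\mid W^0]$. Applying Itô's formula to $\phi(\hat X_t)\ind_{\{\text{active}\}}$ for $\phi\in\mathscr S$ and conditioning on $W^0$ yields the following terms:
\begin{itemize}
\item the idiosyncratic martingale term in $W$ vanishes, since $W$ is independent of $W^0$;
\item the $W^0$-integral commutes with the conditional expectation by a stochastic Fubini argument;
\item each firing contributes $-\phi(0)\diff\hat F_t$;
\item each reset contributes $\phi(-\xi_k)$ at time $\hat\tau_k+\varsigma_k$.
\end{itemize}
For the reset term, independence of $\varsigma_k,\xi_k$ from $(\hat\tau_k,W^0)$ gives
\[
\E\Bigl[\sum_k \phi(-\xi_k)\ind_{\{\hat\tau_k+\varsigma_k\le t\}}\Bigm| W^0\Bigr]=\langle\mures,\phi\rangle\int_0^t\int_0^s \pref(s-u)\diff\hat F_u\diff s .
\]
Hence $(\hat\nu,\hat F)$ solves the \emph{linear} version of \eqref{eqn: THE LIMITTING SPDE EQUATION}, in which the coefficients are evaluated at the frozen $(\nu,\mathfrak f)$ but the measure and the boundary term involve $(\hat\nu,\hat F)$. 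The initial condition is $\hat\nu_0=\nu_0$.

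It remains to show $(\hat\nu,\hat F)=(\nu,F)$. Since $(\nu,F)$ solves the same frozen linear equation, this follows from a linear analogue of \cref{thm: UNIQUENESS PROP}. We first check that $(\hat\nu,\hat F)$ satisfies \cref{ass: ASSUMPTIONS NEEDED TO PROVE UNIQUENESS WHICH PARALLEL THOSE FROM THE SPDE PAPER}:
\begin{itemize}
\item Mass balance (ii) is immediate from the construction: active mass plus refractory mass equals one.
\item Exponential tails (iii) follow from the Gaussian tails of $\nu_0$, bounded resets in $[-R,-\gamma]$, and linear growth of $b$.
\item Boundary decay (iv) and spatial concentration (v) follow by the same comparison/density arguments used for the particle limit points, namely comparison with an absorbed Brownian motion restarted after each spike, using non-degeneracy of $\sigma\sqrt{1-\rho^2}\ge C_\sigma^{-1}\sqrt{1-(1-\gamma)^2}$.
\end{itemize}
The energy estimate in the dual of $H^1$ from the proof of \cref{thm: UNIQUENESS PROP} then applies to the difference $\hat\nu-\nu$. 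It is in fact simpler here, because the nonlinearity in $\nu,\mathfrak f$ is frozen. Only the boundary/source coupling through $\hat F-F$ remains, and this is controlled via \eqref{eq: REWRITING THE FIRING FUNCTION ONLY IN TERMS OF THE MEAUSRE}, which expresses $F$ as a functional of $\nu_\cdot(-\infty,0)$. A Grönwall argument gives $\hat\nu=\nu$ and $\hat F=F$. Therefore $\hat X$ solves the fully coupled equation \eqref{eq: LIMITTING MCKEAN VLASOV SYSTEM IN THE INTEGRATE AND FIRE NEURON MODELS}, and its conditional law is $\nu_t$.

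For uniqueness of $X$, take any solution $X'$ with associated $(\nu',F')$. By the Itô/conditioning computation above, $(\nu',F')$ solves the nonlinear SPDE \eqref{eqn: THE LIMITTING SPDE EQUATION} and satisfies \cref{ass: ASSUMPTIONS NEEDED TO PROVE UNIQUENESS WHICH PARALLEL THOSE FROM THE SPDE PAPER} by the same verification. \cref{thm: UNIQUENESS PROP} then gives $(\nu',F')=(\nu,F)$, and pathwise uniqueness of the frozen linear SDE gives $X'=\hat X$.

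The main obstacle is verifying the regularity conditions (iv) and (v) for $(\hat\nu,\hat F)$, since this requires repeating the delicate boundary and density estimates at the level of a single conditioned particle. A secondary difficulty is justifying the stochastic Fubini step and measurability in the reset term.
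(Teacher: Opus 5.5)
Your proposal is correct and follows the paper's own outline essentially step by step. You freeze $(\nu,F)=Q(W^0)$, construct the single-neuron solution by the inductive scheme of \cref{prop: EXISTENCE OF A PATHWISE UNIQUE SOLUTION TO THE FINITE PARTICLE SYSTEM}, get the mass balance from $1=\ind_{\{X_t=0\}}+\ind_{\{X_t<0\}}$, use It\^o's formula and conditioning on $W^0$ to obtain the frozen linear SPDE, and conclude by the linear version of the $H^{-1}$ uniqueness argument; your one addition is an explicit argument for uniqueness of $X$ itself (via \cref{thm: UNIQUENESS PROP} and pathwise uniqueness of the frozen SDE), which the paper asserts but does not spell out.
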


\begin{proof} In essence, this is a consequence of the above results, so we outline the proof already here, even if it will involve some arguments that are only to be developed later on in the paper. First of all, 
	\cref{prop: EXISTENCE OF A PATHWISE UNIQUE SOLUTION TO THE FINITE PARTICLE SYSTEM} shows how to iteratively construct a solution to the finite particle system \eqref{eq: LIMITTING MCKEAN VLASOV SYSTEM IN THE INTEGRATE AND FIRE NEURON MODELS}. A straightforward modification of this procedure allows us to also construct a solution, say $\tilde{X}$, to the SDE \eqref{eq: LIMITTING MCKEAN VLASOV SYSTEM IN THE INTEGRATE AND FIRE NEURON MODELS}, where the pair $(\nu, F)$, appearing in the coefficients of the system, is given exogenously by a realised limit point of the particle system. For such a solution, we can then define $\tilde{\nu}_t = \prob[\tilde{X}_t \in \cdot, \tilde{X}_t < 0\mid W^0]$ along with the corresponding firing process $\tilde{F}$. Next, we can observe that
	\begin{equation*}
		1 = \ind_{\{\tilde{X}_t = 0\}} + \ind_{\{\tilde{X}_t < 0\}} = \sum_{k \ge 1} \ind_{[0,t]}(\tilde{\tau}_k) - \sum_{k \ge 1} \ind_{[0,t]}(\tilde{\tau}_k + \varsigma_k) + \ind_{\{\tilde{X}_t < 0\}}.
	\end{equation*}
	Taking expectations conditional on $W^0$ in this expression, we obtain
	\begin{equation*}
		\tilde{\nu}_t(-\infty,0) + \tilde{F}_t - \int_0^t \pref(t - s)\tilde{F}_s \diff s = 1.
	\end{equation*}
	Thus, the pair $(\tilde{\nu},\tilde{F})$ constructed above satisfies \cref{ass: ASSUMPTIONS NEEDED TO PROVE UNIQUENESS WHICH PARALLEL THOSE FROM THE SPDE PAPER} \eqref{ass: ASSUMPTIONS NEEDED TO PROVE UNIQUENESS WHICH PARALLEL THOSE FROM THE SPDE PAPER TWO}. Based on the results in Section \ref{sec: THE FINITE PARTICLE SYSTEM}, it is an easy exercise to confirm that $(\tilde{\nu},\tilde{F})$ satisfies the remaining conditions of \cref{ass: ASSUMPTIONS NEEDED TO PROVE UNIQUENESS WHICH PARALLEL THOSE FROM THE SPDE PAPER} as well. Hence, as the pair $(\nu, F)$ is $W^0$-measurable by \cref{thm: CONDITIONAL LLN STATING THAT WE CONVERGE TO A UNIQUE LIMIT POINT THAT DEPENDS ON THE COMMON NOISE}, we may argue as in \citep[Section~9]{hamblyseanhalfline} and apply It\^o's formula to $\phi(\tilde{X}_t)$ for $\phi \in \mathscr{S}$ to show that $\tilde{\nu}$ solves a linear version of SPDE \eqref{eqn: THE LIMITTING SPDE EQUATION} where the nonlinear dependence on $\tilde{\nu}$ itself is replaced by the exogenously given $\nu$. Then, by uniqueness of this linear SPDE, which follows from the proofs of \cref{thm: UNIQUENESS PROP} and \cref{thm: CONDITIONAL LLN STATING THAT WE CONVERGE TO A UNIQUE LIMIT POINT THAT DEPENDS ON THE COMMON NOISE}, we finally conclude that we must have $\tilde{\nu} = \nu$ and $\tilde{F} = F$ which completes the proof.
\end{proof}

\subsection{Overview of the rest of the paper}

The remaining sections are concerned with the proofs of the above results.

In Section \ref{sec: THE FINITE PARTICLE SYSTEM}, we construct a unique solution to the particle system, derive certain bounds for the empirical measures, and estimate the increments of the expected cumulative spike count as well as the expected number of reinsertions.

In Section \ref{sec: TIGHTNESS AND EXISTENCE SECTION IN THE GENERALISED INTEGRATE AND FIRE NEURON MODELS} we show that the empirical measures and other relevant quantities are tight for Skorokhod's $M_1$-topology, and we then characterize the weak limit points as solutions to an SPDE. Moreover, we derive key regularity properties for the limit points which we need for our uniqueness arguments.

Finally, Section \ref{sec: UNIQUENESS SECTION IN THE GENERALISED INTEGRATE AND FIRE NEURON MODELS} proves the uniqueness of solutions to the SPDE. This goes via  suitable energy estimates for mollified solutions. We also confirm that the unique solution is adapted to the driving noise only.

In Sections \ref{sec: TIGHTNESS AND EXISTENCE SECTION IN THE GENERALISED INTEGRATE AND FIRE NEURON MODELS} and \ref{sec: UNIQUENESS SECTION IN THE GENERALISED INTEGRATE AND FIRE NEURON MODELS}, we make heavy use of several uniform estimates on the tail and boundary behaviour of the particles in the particle system \eqref{eq: GENERALISED FINITE PARTICLE SYSTEM IN THE INTEGRATE AND FIRE NEURON MODELS}. We defer this analysis to the supplement \cite{SUPPLEMENT_2025}, which we refer to throughout. In the case without a common noise and with more restrictive assumptions on the coefficients and the initial data, some of the results in this supplement are related to estimates obtained in \cite{delarue_supplement} and \cite{inglis2015mean}, but their techniques do not extend to our setting.


\section{Properties of the Finite Particle System}\label{sec: THE FINITE PARTICLE SYSTEM}

This section addresses the well-posedness of the particle system, the independence between stopping times and the refractory periods, asymptotic decorrelation of key quantities, and finally recalls some key results from the supplement \suppref.

\subsection{Well-Posedness of the Particle System}

We begin by showing that the finite particle system for the membrane potentials is well-posed. That is, we can construct a solution and there is pathwise uniqueness.

\begin{proposition}\label{prop: EXISTENCE OF A PATHWISE UNIQUE SOLUTION TO THE FINITE PARTICLE SYSTEM}
	There exists a pathwise unique solution to the particle system \eqref{eq: GENERALISED FINITE PARTICLE SYSTEM IN THE INTEGRATE AND FIRE NEURON MODELS}.
\end{proposition}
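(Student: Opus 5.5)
The plan is to construct the solution iteratively between successive \emph{events} (firing times and reset times), on each interval solving a classical SDE system with path-dependent but locally Lipschitz coefficients, and then to concatenate the pieces. Pathwise uniqueness follows by the same induction.

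Fix $N$ and put $\theta_0=0$. Suppose a solution has been built up to a stopping time $\theta_m$, with $F^N$ known on $[0,\theta_m]$ and the set of neurons in refractory state known. On $[\theta_m,\infty)$, freeze the refractory neurons at the value $0$. For the active neurons, consider the ordinary (non-reset) SDE system
\[
\diff Y^i_t = b(t,Y^i_t,\nu^Y_t,\mathfrak{f}_t)\diff t+\sigma(t,Y^i_t)\rho(t,\nu^Y_t,\mathfrak{f}_t)\diff W^0_t+\sigma(t,Y^i_t)\sqrt{1-\rho^2(t,\nu^Y_t,\mathfrak{f}_t)}\diff W^i_t .
\]
Here $\nu^Y_t$ is the empirical measure of the active particles together with the frozen ones, with the indicators $\ind_{\{\cdot<0\}}$ evaluated on the current configuration, and $\mathfrak{f}_t=\int_0^{\theta_m}\mathfrak{K}(t-s)\diff F^N_s$. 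The quantity $\mathfrak{f}$ is a known adapted continuous process, because $\mathfrak{K}\in\mathcal{W}^{1,1}_0$ and $F^N$ has no new jumps before the next firing.

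While every active particle stays strictly negative, the empirical measure is $\frac1N\sum\delta_{Y^i}$ restricted to the active indices. By Assumption~\ref{ass: ASSUMPTIONS FROM SOJMARK SPDE PAPER APPLIED TO THE NEUROSCIENCE SETTING}(ii), $d_0$ and $d_1$ between two such empirical measures are bounded by $\frac1N\sum|Y^i-\tilde Y^i|$. The coefficients are therefore globally Lipschitz in $(Y^1,\dots,Y^N)$ with linear growth, and the system has a unique strong solution. For $\sqrt{1-\rho^2}$ the Lipschitz property needs $\rho\le 1-\gamma$, which keeps $1-\rho^2\ge\gamma$ away from zero, and this is why non-degeneracy is used.

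Next define $\theta_{m+1}$ as the minimum of two stopping times. The first is the first hitting time of $[0,\infty)$ by an active $Y^i$. The second is the first reset time $\tau^j_k+\varsigma^j_k$ after $\theta_m$ among refractory neurons; these are $\mathcal{F}_{\theta_m}$-measurable data because the $\varsigma$ are $\mathcal{F}_0$-measurable. On $[\theta_m,\theta_{m+1})$ set $X^i=Y^i$.

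At $\theta_{m+1}$ there are two cases.
\begin{itemize}
\item If a particle hits $0$, record $\tau^i_k=\theta_{m+1}$, increase $F^N$ by $1/N$ (simultaneously for every particle at $0$), and move the particle into the refractory state.
\item If a refractory period ends, reset the particle to $-\xi^i_k\le-\gamma<0$.
\end{itemize}
In either case the new configuration has all active particles strictly negative, so the construction restarts.

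The main obstacle is showing that $\theta_m\to\infty$ a.s., so that there is no accumulation of events in finite time. Jumps of $F^N$ do not feed back instantaneously, since $\mathfrak{f}^N$ is continuous in $t$. Hence no cascade occurs: a firing does not push other particles over the threshold at the same instant. Simultaneous hitting by two independently driven particles has probability zero by non-degeneracy of the idiosyncratic noise, but even if it happens the count is finite.

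To rule out infinitely many events on $[0,T]$, I would argue per particle. After each reset a particle sits at level $\le-\gamma$, and between resets it is a diffusion with bounded $\sigma$ and drift of linear growth. The growth is controlled because $|\mathfrak{f}^N|\le\|\mathfrak{K}\|_\infty F^N$, or, more carefully, $\mathfrak{f}^N_t\le\sup|\mathfrak{K}|\cdot F^N_t$ with $\mathfrak{K}\in\mathcal{W}^{1,1}\subset L^\infty$. Localizing on $\{\sup_{t\le T}|X|+F^N_T\le L\}$, each return from $-\gamma$ to $0$ takes a time that stochastically dominates an i.i.d.\ positive sequence, namely hitting times of level $\gamma$ for a diffusion with coefficients bounded by $L$-dependent constants. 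The refractory periods $\varsigma$ are i.i.d.\ and a.s.\ positive. Infinitely many firings of one particle before $T$ would require the sum of such i.i.d.\ positive variables to be finite, which has probability zero by the strong law. A standard localization then removes the constant $L$, using a Gronwall bound on $\E\sup|X|^2$ in terms of $F^N_T$, which in turn grows at most like the number of events.

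Finally, pathwise uniqueness holds because any solution must coincide with the constructed one on each $[\theta_m,\theta_{m+1}]$. This follows by induction from strong uniqueness of the Lipschitz SDE system on each interval and the deterministic jump rules at the event times.
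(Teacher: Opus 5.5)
Your proposal follows essentially the same route as the paper. Both build the solution iteratively between firing and reset times, solve a globally Lipschitz SDE system on each interval, and apply the same two-case update at each event. Both rule out accumulation of events because consecutive firings of a particle are separated by at least $\varsigma_k^i$, and the partial sums of these diverge a.s.\ by the strong law.

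That refractory-period argument alone already suffices, and it is exactly the paper's. You can therefore drop your extra localization via return times from $-\gamma$ to $0$. As written that step is also circular, since localizing on $\{F^N_T\le L\}$ presupposes that only finitely many events occur on $[0,T]$.
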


\begin{proof}
	By \cref{ass: ASSUMPTIONS FROM SOJMARK SPDE PAPER APPLIED TO THE NEUROSCIENCE SETTING}, our coefficients are globally Lipschitz. Our strategy is as follows. As the value of $J^i$ is constant between hitting times of the boundary at $0$, we have that $F^N$ is constant between hitting times. Therefore, we may iteratively construct a pathwise unique solution to \eqref{eq: GENERALISED FINITE PARTICLE SYSTEM IN THE INTEGRATE AND FIRE NEURON MODELS} by viewing the coefficients as functions of time, $t$, and the diffusions that are below $0$ on the intervals of constancy of $J^i$.
	
	To be precise, we define the process $\mathbf{X}^{N} = (X^1,\ldots,X^N)$ for $t \in [0, \tau^1 \wedge T]$ to be the solution to the vector-valued SDE
	\begin{equation*}
		\diff{}X_t^{i} = b(t,X_t^{i},\nu_t^{N},0)\diff{}t + \sigma(t,\,X_t^{i})\sqrt{1 - \rho^2(t,\nu^N_t,0)}\diff{}W_t^i + \sigma(t,\,X_t^{i})\rho(t,\nu^N_t,0)\diff{}W_t^0. 
	\end{equation*}
	This process is well-defined by standard results for Lipschitz coefficients, as guaranteed by Assumption \ref{ass: ASSUMPTIONS FROM SOJMARK SPDE PAPER APPLIED TO THE NEUROSCIENCE SETTING} \eqref{ass: ASSUMPTIONS FROM SOJMARK SPDE PAPER APPLIED TO THE NEUROSCIENCE SETTING TWO} and \eqref{ass: ASSUMPTIONS FROM SOJMARK SPDE PAPER APPLIED TO THE NEUROSCIENCE SETTING THREE}. Consequently, there exists a unique strong solution. Now, we set $\tau^{1,\,i} = \inf \{t \in [0,T]\; ; \; X_t^i \ge 0 \}$ ($\inf\{\emptyset\} = \infty$). Clearly, we have $0 < \tau^{1,i} $ a.s., so $0 < \inf\{\tau^{1,i}\} $ a.s. Set $\tau^1 = \inf\{\tau^{1,i}\}$ and $i^1 = \operatorname{argmin} \{\tau^{1,i}\}$. On the event $\{\tau^1 < \infty\}$ we set 
	\begin{equation*}
		J_t^i = 0,\,t \in [0,\,\tau^{1}),\qquad \textnormal{and} \qquad J_{t}^i = 
		\begin{cases*}
			$1$ & if  $i = i^1$, $t \ge \tau^1$, \\
			$0$ & if $i \neq i^1$, $t \ge \tau^1$.
		\end{cases*}
	\end{equation*}
	We define the random index set $\mathcal{I}_{\textnormal{ref}}^1 = \{i^1\}$, which represents the set of diffusions that are currently deactivated, and set $\tau^{2,i^1} = \tau^1 + \varsigma_{J_{\tau^1}^{i^1}}^{i^1}$, the time when particle $i^1$ will be reinserted back into the system. 
	
	For the induction step, we assume that we have defined a sequence of stopping times $\tau^1,\ldots,\tau^k$ and $\tau^{k+1,i}$ for $i \in \mathcal{I}_{\textnormal{ref}}^k$. On the event $\{\tau^k < \infty\}$, we define $\mathbf{X}^{N}$ for $t \in (\tau^k,\tau^{k+1}\wedge T]$ to be the solution to the vector-valued SDE
	\begin{align*}
		X_t^{i} =& X_{\tau^k}^i + \int_{\tau^k}^t b(s,X_s^{i},\nu_s^{N},\mathfrak{f}_s^N)\diff{}s + \int_{\tau^k}^t \sigma(s,\,X_s^{i})\sqrt{1 - \rho^2(s,\nu^N_s,\mathfrak{f}_s^N)}\diff{}W_s^i \\
		&+ \int_{\tau^k}^t \sigma(s,\,X_s^{i})\rho(s,\nu^N_s,\mathfrak{f}_s^N)\diff{}W_s^0
	\end{align*}
	for $i \not\in \mathcal{I}_{\textnormal{ref}}^k$, where $\mathfrak{f}_t^N = (\mathfrak{K}^\prime \ast F^N)_t$. For $i \in \mathcal{I}_{\textnormal{ref}}^k$, we set $X_t^i = 0$. This process is well-defined by standard results for Lipschitz coefficients; therefore, there exists a unique strong solution. 
	
	For $ i \not \in \mathcal{I}_{\textnormal{ref}}^k$, we set $\tau^{k+1,i} = \inf \{t \in [0,T]\; ; \; X_t^i \ge 0 \}$  ($\inf\{\emptyset\} = \infty$). Set $\tau^{k+1} = \inf\{\tau^{k+1,i}\}$ and $i^{k+1} = \operatorname{argmin}\{\tau^{k+1,\,i}\}$. On the event $\{\tau^{k + 1} = \infty\}$, we set $J_t^i = J_{\tau^k}^i$ for $t \ge \tau^k$. On the event $\{\tau^{k + 1} < \infty\}$, we shall proceed in two cases.
	
	\noindent \underline{Case 1: $i^{k+1} \in \mathcal{I}_{\textnormal{ref}}^k$}\\[1 ex]
	In this case, we are reinserting a particle back into the system; therefore, we need to update the values $J^i$ and the particle being reinserted, $X^{i^{k+1}}$. As no new active particles hit the boundary, set $J_t^i = J_{\tau^k}^i$ for $t \ge \tau^{k + 1}$ for any $i$. Furthermore, we set $X_{\tau^{k+1}}^{i^{k+1}} = -\xi^{i^{k+1}}_{J_{\tau^{k+1}}^{i^{k+1}}}$. Next, we update the set of deactivated particles by defining $\mathcal{I}_{\textnormal{ref}}^{k +1 } = \mathcal{I}_{\textnormal{ref}}^k \setminus \{i^{k+1}\}$. Lastly, we set $\tau^{k+2,i} = \tau^{k+1,i}$ for $i \in \mathcal{I}_{\textnormal{ref}}^{k+1}$.
	
	\noindent \underline{Case 2: $i^{k+1} \not\in \mathcal{I}_{\textnormal{ref}}^k$}\\[1 ex]
	In this case, an active particle has hit the boundary. Hence, we first update the values of $J^i$ to 
	\begin{equation*}
		J_{t}^i = \begin{cases*}
			J_{\tau^k}^i + 1 &if  $i = i^{k+1}$, $t \ge \tau^{k+1}$, \\
			J_{\tau^k}^i &if $i \neq i^{k+1}$, $t \ge \tau^{k+1}$,
		\end{cases*}
	\end{equation*}
	and set $\mathcal{I}_{\textnormal{ref}}^{k +1} = \mathcal{I}_{\textnormal{ref}}^k \cup \{i^{k+1}\}$. Lastly, we set $\tau^{k+2,i^{k+1}} = \tau^{k+1} + \varsigma^{i^{k+1}}_{J_{\tau^{k+1}}^{i^{k+1}}}$ and $\tau^{k+2,i} = \tau^{k+1,i}$ for $i \in \mathcal{I}_{\textnormal{ref}}^{k+1}$.
	
	Hence, by the principle of induction, we have iteratively constructed a pathwise unique solution. In our construction, we acquired a sequence of increasing stopping times $\{\tau^k\}_k$. We would like to show that these stopping times do not accumulate in finite time, hence our construction will be a solution on $[0,T]$. Let $\tau^* = \lim_{k \to \infty} \tau^k$. On the event $\{\tau^* \le T\}$, there must be a particle $i$ which has hit $0$ infinitely many times in finite time. That is
	\[\{\tau^* \le T\} \subset \bigcup_{i = 1}^N \left\{\lim_{k \to \infty} \tau_k^i \le T\right\}.\]
	As a particle needs to wait at least a $\varsigma_k^i$ amount of time between hitting times, it is clear that 
	\[\{\tau^* \le T\} \subset \bigcup_{i = 1}^N \left\{\lim_{k \to \infty} \tau_k^i \le T\right\} \subset \bigcup_{i = 1}^N \left\{\sum_{k \ge 1} \varsigma_k^i \le T\right\}.\] 
	As $\{\varsigma_k^i\}$ are i.i.d. positive random variables with a density by \cref{ass: ASSUMPTIONS FROM SOJMARK SPDE PAPER APPLIED TO THE NEUROSCIENCE SETTING} \eqref{ass: ASSUMPTIONS FROM SOJMARK SPDE PAPER APPLIED TO THE NEUROSCIENCE SETTING FOUR}, the Strong Law of Large Numbers implies that $\left\{\sum_{k \ge 1} \varsigma_k^i \le T\right\}$ is a null set for every $i$. Hence, $\tau^* = \infty$ almost surely.
\end{proof}

\begin{remark}[Relationship between $\nu^N,\,F^N$ and $F^{D,N}$]\label{rem: REMARK ON THE RELATIONSHIP BETWEEN NU, THE FIRING FUNCTION AND THE DELAYED FIRING FUNCTION}
	We observe that by \eqref{eq: GENERALISED FINITE PARTICLE SYSTEM IN THE INTEGRATE AND FIRE NEURON MODELS} and \eqref{eq: THE F D N AND J D I WHICH HAVE BEEN EXTRACTED FROM FROM THE EQUATION ABOVE TO MAKE IT MORE STREAMLINED}, for any $i = 1,\ldots,N$, $J_t^i - J_t^{D,i} = \ind_{\{X_t^i = 0\}}$. Taking an average over $i$, we have $\nu_t^N(-\infty,0) + F_t^N - F_t^{D,N} = 1$ for any $N \in \N$.
\end{remark} 

\subsection{Independence Properties and Asymptotic Decorrelation}

We can observe that $\tau_k^i$ is independent of $\varsigma_k^i$ and $\xi_k^i$. To see this, consider the particle system $\tilde{\mathbf{X}}^{N} = (\tilde{X}^1,\ldots,\tilde{X}^N)$ constructed in exactly the same fashion as $\mathbf{X}^{N} = (X^1,\ldots,X^N)$, as outlined in \cref{prop: EXISTENCE OF A PATHWISE UNIQUE SOLUTION TO THE FINITE PARTICLE SYSTEM}. The only difference is that when $\tilde{X}^i$ hits the boundary for the $k^{\textnormal{th}}$ time, it is not reset. We also let $\tilde{\tau}_k^i$ denote the corresponding stopping times. As both $\tilde{\mathbf{X}}^{N}$ and ${\mathbf{X}}^{N}$ have a pathwise unique solution between the intervals of constancy of the firing function, we must have that $\tau_k^i = \tilde{\tau}_k^i$ almost surely. Now, it is clear that $\varsigma_k^i$ and $\xi_k^i$ are independent of $\tilde{\tau}_k^i$ as $\tilde{X}^i$ is not reset after it hits the boundary for the $k^{\textnormal{th}}$ time. Hence, we must have $\tau_k^i$ is independent of $\varsigma_k^i$ and $\xi_k^i$. Furthermore, as the paths of $\tilde{\mathbf{X}}^{N} $ and ${\mathbf{X}}^{N} $ must correspond before time $\tau_k^i$, $\{\tau_\ell^j < \tau^i_k \} = \{\tilde{\tau}_\ell^j < \tilde{\tau}^i_k \}$. Hence, the event $\{\tau_\ell^j < \tau^i_k \}$ is independent of $\varsigma_k^i$ and $\xi_k^i$. These two observations give us the following lemma.

\begin{lemma}[Independence of stopping times and refractory period]\label{lem: INDEPENDENCE BETWEEN THE STOPPING TIME AND THE REFRACTORY PERIOD AND THE RESET POINT}
	For any fixed $i,\,j,\,\ell,$ and $k$, we have that $\varsigma_k^i$ and $\xi_k^i$ are independent of $\tau_k^i$ and the event $\{\tau_\ell^j < \tau^i_k \}$.
\end{lemma}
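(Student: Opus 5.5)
The plan is to make rigorous the coupling sketched just before the statement. Fix $i,k$. I will build an auxiliary system $\tilde{\mathbf{X}}^N$ with the same iterative scheme as in \cref{prop: EXISTENCE OF A PATHWISE UNIQUE SOLUTION TO THE FINITE PARTICLE SYSTEM}. It is driven by the same $X_0^j$, $W^j$, $W^0$, $\varsigma^j_m$, $\xi^j_m$. The one change is that the $k$-th refractory period and reset of particle $i$ are replaced by fixed deterministic values (say $\varsigma=1$, $\xi=\gamma$), or equivalently particle $i$ is frozen after its $k$-th hitting time.

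By construction, $\tilde{\mathbf{X}}^N$ and all its stopping times $\tilde\tau^j_\ell$ are measurable functions of
\[
\mathcal{G}^{i,k} := \sigma\bigl(\{X_0^j,W^j\}_{j},\,W^0,\,\{\varsigma_m^j,\xi_m^j\}_{(j,m)\neq(i,k)}\bigr).
\]
By \cref{ass: ASSUMPTIONS FROM SOJMARK SPDE PAPER APPLIED TO THE NEUROSCIENCE SETTING} \eqref{ass: ASSUMPTIONS FROM SOJMARK SPDE PAPER APPLIED TO THE NEUROSCIENCE SETTING FOUR}, this $\sigma$-algebra is independent of the pair $(\varsigma_k^i,\xi_k^i)$. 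Measurability follows because each step of the construction (solving a Lipschitz SDE on a random interval, taking first hitting times, taking argmins, updating index sets) is a measurable operation on the inputs. So $\tilde\tau^i_k$ and $\ind_{\{\tilde\tau^j_\ell<\tilde\tau^i_k\}}$ are $\mathcal{G}^{i,k}$-measurable, and hence independent of $(\varsigma_k^i,\xi_k^i)$.

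The second step is to show that $\tau^i_k=\tilde\tau^i_k$ and $\{\tau^j_\ell<\tau^i_k\}=\{\tilde\tau^j_\ell<\tilde\tau^i_k\}$ almost surely. Recall that the construction proceeds through the ordered global event times $\tau^1<\tau^2<\cdots$, each either a hit or a reinsertion. I will argue by induction on the step index $m$ that, on the event where the $k$-th hit of particle $i$ has not yet happened by $\tau^m$, the two systems agree on $[0,\tau^m]$. This includes agreement of the paths, the counters $J^j$, the refractory index sets $\mathcal{I}^m_{\mathrm{ref}}$, and the scheduled reinsertion times.

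The induction step rests on three facts. Up to and including the step at which particle $i$ hits for the $k$-th time, the variables $\varsigma^i_k,\xi^i_k$ are never consulted. Between consecutive event times both systems solve the same Lipschitz SDE with the same data, so pathwise uniqueness forces equality. The next event time and its argmin are then computed from identical quantities.

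It follows that the two systems coincide on $[0,\tau^i_k]$, and in particular at the step where particle $i$ makes its $k$-th hit. Hence $\tau^i_k=\tilde\tau^i_k$. On $[0,\tau^i_k)$ the hitting times $\tau^j_\ell$ lying there coincide with $\tilde\tau^j_\ell$, and if $\tau^j_\ell\ge\tau^i_k$ the same holds for $\tilde\tau^j_\ell\ge\tilde\tau^i_k$. This gives the equality of events, including the case $\tau^i_k=\infty$ with the usual conventions. Combined with the first step, it proves the lemma.

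The main obstacle is the bookkeeping in the induction. The subtle point is that $\varsigma^i_k$ is first used at time $\tau^i_k$ itself, to schedule $\tau^i_k+\varsigma^i_k$, so the argument must make sure that nothing before or at that step depends on it. I would handle this by phrasing the induction up to and including the step at which the $k$-th hit of $i$ is registered. I would also check that ties are null events: simultaneous hits by two particles have probability zero by non-degeneracy of $\sigma$, and a hit coinciding with a reinsertion time has probability zero because $\pref$ has a density. These ties have to be excluded so that the argmin is well defined and identical in both systems.
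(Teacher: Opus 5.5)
Your proposal is correct and follows the paper's argument. The paper likewise compares with an auxiliary system that never uses $(\varsigma^i_k,\xi^i_k)$ (there, particle $i$ is simply not reset after its $k$-th hit), notes that this system is independent of $(\varsigma^i_k,\xi^i_k)$, and uses pathwise uniqueness between event times to identify $\tau^i_k$ and $\{\tau^j_\ell<\tau^i_k\}$ with their auxiliary counterparts. Your $\sigma$-algebra $\mathcal{G}^{i,k}$ and step-by-step induction make the paper's few lines rigorous. One simplification: you need not show ties are null, because any fixed deterministic tie-breaking rule is applied to identical data in both systems up to and including the step at $\tau^i_k$; also, your claim that hits a.s. avoid reinsertion times ``because $p_{\mathrm{ref}}$ has a density'' would itself need an independence statement of exactly the kind being proved.
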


Moreover, we have the following asymptotic result on the decorrelation of the spike times, the duration of refractory periods, and the reset positions. This will be at the very core of our weak convergence arguments.

\begin{proposition}[Asymptotic decorrelation]\label{prop: ASYMPTOTIC DECORRELATION OF THE HITTING TIMES RESETS AND RANDOM WAITING TIMES}
	Let $\mu^{N,k} \coloneqq N^{-1}\sum_{i =  1}^N \delta_{\xi_k^i,\tau_k^i,\varsigma_k^i}$. Then $\{\mu^{N,k}\}_N$ is tight on $\mathcal{P}(\Bar{\R}^3)$, where $\Bar{\R}$ is the one-point compactification of $\R$. For any limit point, denoted by $\mu^{*,k}$, we have $\mu^{*,k} = \prob[\xi \in \diff x] \otimes \mu_2^{*,k}(\diff y) \otimes \prob[\varsigma \in \diff z]$ almost surely.
\end{proposition}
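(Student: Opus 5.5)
Tightness is immediate: $\bar{\R}^3$ is compact, so $\mathcal{P}(\bar{\R}^3)$ is compact in the weak topology. Any family of random elements of $\mathcal{P}(\bar{\R}^3)$ is therefore tight, since the laws live on a compact Polish space. (We set $\tau_k^i=\infty$ when the $k$-th spike does not occur, which is why the compactification is needed.)

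To identify limit points, I would use a convergence-determining class of product test functions $g(x)h(y)u(z)$, with $g,h,u$ continuous on $\bar{\R}$ and taken from a countable dense family. It then suffices to show, for each such triple,
\[
\E\Bigl[\bigl|\langle \mu^{N,k}, g\otimes h\otimes u\rangle - \E[g(\xi)]\,\E[u(\varsigma)]\,\langle \mu^{N,k}_2, h\rangle\bigr|^2\Bigr] \to 0,
\]
where $\mu^{N,k}_2 = N^{-1}\sum_i \delta_{\tau_k^i}$ is the second marginal. Along a subsequence, $\mu^{N,k}_2$ converges jointly to the marginal $\mu^{*,k}_2$. The above $L^2$ estimate then passes to the limit and gives
\[
\langle\mu^{*,k},g\otimes h\otimes u\rangle = \E[g(\xi)]\,\E[u(\varsigma)]\,\langle \mu^{*,k}_2,h\rangle
\]
almost surely, simultaneously for all triples in the countable family. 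This identifies $\mu^{*,k}$ as the product measure. The first and third marginals of the limit are exactly the laws of $\xi$ and $\varsigma$, by the ordinary law of large numbers for i.i.d. sequences.

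For the $L^2$ estimate, write $\bar g=g-\E g(\xi)$ and $\bar u = u-\E u(\varsigma)$ and decompose
\[
g(\xi^i_k)u(\varsigma^i_k) - \E g\,\E u = \bar g(\xi_k^i)\bar u(\varsigma_k^i) + \E g\,\bar u(\varsigma_k^i)+\E u\,\bar g(\xi^i_k).
\]
It suffices to show that, for each centred bounded quantity $Z^i$ (any of the three terms above, all functions of $(\xi^i_k,\varsigma^i_k)$ with mean zero),
\[
\E\Bigl|N^{-1}\sum_i Z^i h(\tau_k^i)\Bigr|^2 = N^{-2}\sum_{i,j}\E\bigl[Z^iZ^jh(\tau^i_k)h(\tau^j_k)\bigr] \to 0.
\]
The diagonal terms contribute $O(1/N)$. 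The main obstacle is the off-diagonal terms $i\neq j$. The difficulty is that $\tau_k^j$ may depend on $(\xi_k^i,\varsigma_k^i)$ when neuron $i$ fires and resets before $j$ fires its $k$-th spike, so $Z^i$ and $\tau_k^j$ are not independent.

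I would split the off-diagonal term on the events $\{\tau_k^j<\tau_k^i\}$ and $\{\tau_k^i<\tau_k^j\}$; ties have probability zero, by non-degeneracy. On $\{\tau^j_k<\tau^i_k\}$ I would use the unreset system $\tilde{\mathbf X}^N$ from the discussion before \cref{lem: INDEPENDENCE BETWEEN THE STOPPING TIME AND THE REFRACTORY PERIOD AND THE RESET POINT}, in which neuron $i$ is not reset at its $k$-th spike. On this event $\tau^j_k$, $\tau^i_k$ and the indicator coincide with their tilde versions, and these are independent of $(\xi^i_k,\varsigma^i_k)$. Conditioning on everything except $(\xi^i_k,\varsigma^i_k)$ then kills the term because $\E Z^i=0$. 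On the complementary event, the same argument with the roles of $i$ and $j$ swapped is applied to $Z^j$ instead: $\tau_k^i$, the event $\{\tau_k^i<\tau_k^j\}$ and $Z^i$ are all determined before $(\xi^j_k,\varsigma^j_k)$ is used.

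The remaining subtlety is that $Z^i$ itself involves $(\xi^i_k,\varsigma^i_k)$, so the joint independence of $(\xi^j_k,\varsigma^j_k)$ from $(\tau^i_k,\tau^j_k,\xi^i_k,\varsigma^i_k,\ind_{\{\tau^i_k<\tau^j_k\}})$ must be checked. I would obtain it by running a doubly modified system in which neuron $j$ is not reset at its $k$-th spike. This system uses the variables $(\xi^i_k,\varsigma^i_k)$ but never $(\xi^j_k,\varsigma^j_k)$, and it agrees with the true system up to time $\tau^j_k$; this extends the pathwise-uniqueness argument behind \cref{lem: INDEPENDENCE BETWEEN THE STOPPING TIME AND THE REFRACTORY PERIOD AND THE RESET POINT}. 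With this, every off-diagonal term vanishes exactly, so the variance is $O(1/N)$, and the conclusion follows.
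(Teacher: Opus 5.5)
Your argument is essentially the paper's. Tightness comes from compactness of $\mathcal{P}(\bar{\R}^3)$. The main estimate is an $L^2$ computation whose diagonal part is $O(1/N)$. The off-diagonal terms are killed by splitting on the order of $\tau_k^i,\tau_k^j$ and using the system in which one neuron is not reset at its $k$-th spike.

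The one step that is wrong as written is ``ties have probability zero, by non-degeneracy''. You set $\tau_k^i=\infty$ whenever neuron $i$ has no $k$-th spike in $[0,T]$. So the event $\{\tau_k^i=\tau_k^j=\infty\}$ has positive probability, and by \cref{prop: EXPONENTIAL DECAY OF THE HITTING PROBABILITIES} its probability is close to $1$ for large $k$. Its contribution to $N^{-2}\sum_{i\neq j}\E[Z^iZ^jh(\tau_k^i)h(\tau_k^j)\,\cdot\,]$ is therefore a priori $O(1)$. It cannot be discarded without an argument.

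It does vanish, using your own tool: run the first case on $\{\tau_k^j\le\tau_k^i\}$ instead of $\{\tau_k^j<\tau_k^i\}$.
\begin{itemize}
\item The system in which neuron $i$ is not reset at its $k$-th spike agrees with the true one on $[0,\tau_k^i]$, and on all of $[0,T]$ when $\tau_k^i=\infty$.
\item Hence $\tau_k^i$, $\tau_k^j$ and $\ind_{\{\tau_k^j\le\tau_k^i\}}$ coincide with their modified counterparts.
\item These counterparts, together with $Z^j$, are independent of $(\xi_k^i,\varsigma_k^i)$.
\item Conditioning on all inputs other than $(\xi_k^i,\varsigma_k^i)$ then gives zero, because $\E Z^i=0$.
\end{itemize}
With this non-strict split the two events partition the sample space, so you also no longer need any claim that finite ties are null. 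The paper's proof splits only on the strict events and is silent about ties, so the same repair is needed there as well.
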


\begin{proof}
	As $\Bar{\R}$ is compact and the product of compact sets is compact in the product topology, $\Bar{\R}^3$ is a compact metric space. Consequently, $\mathcal{P}(\Bar{\R}^3)$ is a compact metric space. Trivially, $\{\mu^{N,k}\}_N$ is tight on $\mathcal{P}(\Bar{\R}^3)$. 
	
	We fix a limit point, $\mu^{*,k}$, along a subsequence, denoted by $N$ for simplicity. Let $\Pi^{*,k}$ denote the law of $\mu^{*,k}$ and $\pi_i : \Bar{\R}^3 \to \Bar{\R}$ be the projection onto the $i^{\textnormal{th}}$ component. Then by the Continuous Mapping Theorem, $\pi_i^\#\mu^{N,k} \implies \pi_i^\#\mu^{*,k}$ for every $i$. First, we fix $i = 1$ and choose a function $f : \Bar{\R} \to \R$, continuous and bounded. Let $G: \R \to \R$ be a continuous function. Then by the Continuous Mapping Theorem,
	\begin{equation}\label{eq: FIRST EQUATION IN ASYMPTOTIC DECORRELATION PROOF}
		\E\left[G(\langle\pi_1^\#\mu^{N,k},f\rangle)\right] \to  \E\left[G(\langle\pi_1^\#\mu^{*,k},f\rangle)\right].
	\end{equation}
	By definition, $\E[G(\langle\pi_1^\#\mu^{N,k},f\rangle)]  = \E[G({N}^{-1}\sum_{i=1}^{N}f(\xi_k^i))]$. Therefore, by the Law of Large Numbers, as $\xi_k^i$ are i.i.d., and the Dominated Convergence Theorem,
	\begin{equation}\label{eq: SECOND EQUATION IN ASYMPTOTIC DECORRELATION PROOF}
		\E\left[G({N}^{-1}\sum_{i=1}^{N}f(\xi_k^i))\right] \to  \E\left[G(\E[f(\xi)])\right] = G(\E[f(\xi)]).  
	\end{equation}
	Therefore, we have shown by \eqref{eq: FIRST EQUATION IN ASYMPTOTIC DECORRELATION PROOF} and \eqref{eq: SECOND EQUATION IN ASYMPTOTIC DECORRELATION PROOF},
	\begin{equation}\label{eq: THIRD EQUATION IN ASYMPTOTIC DECORRELATION PROOF}
		\E\left[G(\langle\pi_1^\#\mu^{*,k},f\rangle)\right] = G(\E[f(\xi)]).
	\end{equation}
	As \eqref{eq: THIRD EQUATION IN ASYMPTOTIC DECORRELATION PROOF} holds for any continuous function $G$, by replacing $G$ with $\tilde{G}(\cdot) = (\cdot  - \E[f(\xi)])^2$, it follows from \eqref{eq: THIRD EQUATION IN ASYMPTOTIC DECORRELATION PROOF} that $\langle\pi_1^\#\mu^{*,k},f\rangle = \E[f(\xi)]$ almost surely. Therefore, by employing a Dynkin's Lemma argument, we deduce $(\pi_1^\#\mu^{*,k})(\diff x) = \prob[\xi \in \diff x]$. Similarly, we deduce $(\pi_3^\#\mu^{*,k})(\diff z) = \prob[\varsigma \in \diff z]$.
	
	It remains only to show that the limiting measure is a product measure. For every $a_1,a_2,a_3: \Bar{\R} \to \R$, continuous and bounded,
	\begin{align*}
		&\E\left[\nnnorm{\langle\mu^{*,k},a_1\otimes a_2 \otimes a_3\rangle - \prod_{i = 1}^3\langle\pi_i^\#\mu^{*,k},a_i\rangle}^2\right]\\
		&\qquad \qquad = \E\left[\nnnorm{\langle\mu^{*,k},a_1\otimes a_2 \otimes a_3\rangle - \E[a_1(\xi)]\langle\pi_2^\#\mu^{*,k},a_2\rangle \E[a_3(\varsigma)]}^2\right]\\
		&\qquad \qquad = \lim_{N \to  \infty}\E\left[\nnnorm{\langle\mu^{N,k},a_1\otimes a_2 \otimes a_3\rangle - \E[a_1(\xi)]\langle\pi_2^\#\mu^{N,k},a_2\rangle \E[a_3(\varsigma)]}^2\right]\\
		&\qquad \qquad = \lim_{N \to  \infty} \E\left[\nnnorm{{N}^{-1}\sum_{i = 1}^{N} a_1(\xi_k^i)a_2(\tau_k^i)a_3(\varsigma_k^i )- N^{-1} \sum_{i =1}^{N} a_2(\tau_k^i) \E[a_1(\xi)]\E[a_3(\varsigma)]}^2\right]\\
		&\qquad \qquad = \lim_{N \to  \infty} N^{-2} \sum_{i,j = 1}^N \E\left[a_2(\tau_k^i)a_2(\tau_k^j)\Delta_i\Delta_j\right],
	\end{align*}
	where $\Delta_i \coloneqq a_1(\xi_k^i)a_3(\varsigma_k^i) - \E[a_1(\xi)]\E[a_3(\varsigma)]$. Recall that by construction, on the event that $\tau_k^j$ has not occurred yet, i.e. $\{\tau_k^i < \tau_k^j\}$, $(\xi_k^j,\varsigma_k^j)$ is independent of $(\tau_k^i,\tau_k^j,\xi_k^i,\varsigma_k^i)$ for $i \neq j$. Therefore,
	\begin{equation}\label{eq: FOURTH EQUATION IN ASYMPTOTIC DECORRELATION PROOF}
		\E\left[a_2(\tau_k^i)a_2(\tau_k^j)\Delta_i\Delta_j\ind_{\{\tau_k^i < \tau_k^j\}}\right] = \E\left[a_2(\tau_k^i)a_2(\tau_k^j)\Delta_i\ind_{\{\tau_k^i < \tau_k^j\}}\right]\E\left[\Delta_j\right] = 0.
	\end{equation}
	The last equality follows from the independence between $\xi_k^i$ and $\varsigma_k^i$. Equation \eqref{eq: FOURTH EQUATION IN ASYMPTOTIC DECORRELATION PROOF} holds true if we swap $i$ and $j$. Hence,
	\begin{equation*}
		\E\left[\nnnorm{\langle\mu^{*,k},a_1\otimes a_2 \otimes a_3\rangle - \prod_{i = 1}^3\langle\pi_i^\#\mu^{*,k},a_i\rangle}^2\right] = \lim_{N \to  \infty} N^{-2} \sum_{i= 1}^N \E\left[a_2(\tau_k^i)^2\Delta_i^2\right] = 0.
	\end{equation*}
	The last equality follows from the boundedness of the $a_i$'s. Therefore, by the Monotone Class Theorem, we may conclude that $\mu^{*,k} = \prob[\xi \in \diff x] \otimes \mu_2^{*,k}(\diff y) \otimes \prob[\varsigma \in \diff z]$ almost surely, where $\mu_2^{*,k} = \pi_2^\#\mu^{*,k}$.
\end{proof}

\subsection{Boundary and Tail Estimates}

In this section, we recall some key estimates from the supplement \cite{SUPPLEMENT_2025}, as they will be used frequently in the proofs that follow.

\begin{proposition}[Uniform moment bound]\label{prop: BOUNDS ON THE SUP OF THE P TH MOMENTS OF THE SUP NORM OF X}
	For any $p \ge 1$, we have 
	\begin{equation*}
		\E\left[\sup_{0\le t \le T}\nnorm{X_t^i}^p + \left(J_T^i\right)^p + \left(F_T^N\right)^p\right] \le C_{T,\,p},
	\end{equation*}
	where $C_{T,\,p} > 0$ is independent of $i$ and $N$.
\end{proposition}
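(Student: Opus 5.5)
The three quantities will be bounded separately: first $J^i_T$, then $F^N_T$ from it, and finally $\sup_t\nnorm{X^i_t}$ by a Gronwall argument. The feedback term $\mathfrak{f}^N$ enters the drift with linear growth, so the spike count has to be controlled before the potentials. Throughout, $C$ denotes a constant independent of $i$ and $N$.

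\textbf{Bounding the spike count.} Between consecutive spikes of neuron $i$ there is a refractory period $\varsigma^i_k$ followed by an excursion from the reset level $-\xi^i_k\le-\gamma$ back up to $0$. The refractory periods alone give the pathwise bound $J^i_T\le 1+\#\{n:\sum_{k\le n}\varsigma^i_k\le T\}$. This is a renewal count with i.i.d.\ positive summands, so it has finite moments of all orders. Since $\varsigma$ has a density it is not concentrated at $0$, and a standard Chernoff bound gives
\[
\prob\Bigl[\sum_{k\le n}\varsigma_k\le T\Bigr]\le e^{\lambda T}\bigl(\E e^{-\lambda\varsigma}\bigr)^n,
\]
which decays geometrically in $n$ once $\lambda$ is large. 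Hence $\E[(J^i_T)^p]\le C_{T,p}$, uniformly in $i$ and $N$ because the $\varsigma^i_k$ are identically distributed. Then $F^N_T=N^{-1}\sum_i J^i_T$, and convexity of $x\mapsto x^p$ gives $(F^N_T)^p\le N^{-1}\sum_i(J^i_T)^p$, so $\E[(F^N_T)^p]$ obeys the same bound. This step is clean because the refractory periods decouple the count from the potentially explosive diffusive dynamics.

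\textbf{Bounding the potentials.} By Assumption~\ref{ass: ASSUMPTIONS FROM SOJMARK SPDE PAPER APPLIED TO THE NEUROSCIENCE SETTING}(\ref{ass: ASSUMPTIONS FROM SOJMARK SPDE PAPER APPLIED TO THE NEUROSCIENCE SETTING FIVE}) and the integration by parts $\mathfrak{f}^N=\mathfrak{K}'\ast F^N$ (using zero trace), we have $\nnorm{\mathfrak{f}^N_t}\le\norm{\mathfrak{K}'}_1F^N_T$. Each $X^i$ is either $0$ (refractory) or evolves by the SDE started from $X^i_0$ or from $-\xi^i_k\in[-R,-\gamma]$. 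On any such excursion,
\[
\nnorm{X^i_t}\le \nnorm{X^i_0}+R+\int_0^t C_b\bigl(1+\nnorm{X^i_s}+M^{N,\mathrm{abs}}_s+C F^N_T\bigr)\diff s+\sup_{u\le t}\Bigl|\int_{\sigma_i(u)}^u\sigma\,\diff B^i\Bigr|,
\]
where $\sigma_i(u)$ is the start of the current excursion and $M^{N,\mathrm{abs}}_s=\langle\nu^N_s,\nnorm{\cdot}\rangle\le N^{-1}\sum_j\nnorm{X^j_s}$. The stochastic integral over a random starting point is dominated by $2\sup_{u\le T}\bigl|\int_0^u\sigma\ind_{\{X<0\}}\diff B^i\bigr|$, whose $p$-th moment is bounded by BDG since $\sigma\le C_\sigma$. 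For Gronwall, set $Y_t=\sup_{s\le t}N^{-1}\sum_j\nnorm{X^j_s}^p$, or, more simply, average over $j$ first and then take $p$-th powers and expectations. Gronwall then controls the empirical average. Plugging this back into the single-particle inequality bounds $\E\sup_t\nnorm{X^i_t}^p$, using that $X^i_0$ has Gaussian tails and hence all moments.

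\textbf{Main obstacle.} The delicate point is the Gronwall step, because of the jumps. The pathwise inequality must be written carefully across resets so that jumps never increase $\nnorm{X}$ beyond $R$: reset values lie in $[-R,-\gamma]$ and the refractory value is $0$. The plan is to bound $\nnorm{X^i_t}\le \nnorm{X^i_0}\vee R+\int_0^t\nnorm{b}\diff s+2\sup_u\bigl|\int_0^u\sigma\ind\,\diff B^i\bigr|$ directly, using that on each excursion the increments are bounded by the whole-interval integrals. This works since the drift is integrated with its absolute value, and it avoids any dependence on the number of excursions.
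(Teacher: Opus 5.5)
Your proof is correct. The paper does not prove this statement in the text; it quotes it from Corollary~2.2 of the supplement. So I can only check your argument on its own terms and against the tools the paper does display.

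\textbf{Spike count.} The bound $J^i_T\le 1+\#\{n\ge1:\sum_{k=1}^n\varsigma^i_k\le T\}$ follows from $\tau^i_{k+1}\ge\tau^i_k+\varsigma^i_k$. It is the quantitative version of the Strong Law argument used in the proof of \cref{prop: EXISTENCE OF A PATHWISE UNIQUE SOLUTION TO THE FINITE PARTICLE SYSTEM} to rule out accumulation of spikes. Your Chernoff bound works for every $\lambda>0$, not only for large $\lambda$, since $\varsigma>0$ a.s.\ gives $\E e^{-\lambda\varsigma}<1$.

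\textbf{Potentials.} The pathwise bound $\sup_{r\le t}\nnorm{X^i_r}\le\nnorm{X^i_0}+R+\int_0^t\nnorm{b}\diff s+2\sup_{u\le t}\nnorm{M^i_u}$ holds, where $M^i$ is the combined noise integral with $\langle M^i\rangle_T\le C_\sigma^2T$. Together with $\nnorm{\mathfrak{f}^N_t}\le\norm{\mathfrak{K}'}_1F^N_T$ (valid because $\mathfrak{K}(0)=0$ and $F^N_0=0$), it closes with a two-stage Gronwall. The one thing to fix is the order of operations:
\begin{enumerate}[(i)]
\item apply Gronwall pathwise to $A_t=N^{-1}\sum_j\sup_{r\le t}\nnorm{X^j_r}$, which is finite a.s.\ because each $X^j$ is c\`adl\`ag on $[0,T]$;
\item then apply Gronwall pathwise to the single particle;
\item only then take $p$-th moments, using Jensen on the averages and BDG.
\end{enumerate}
If you take expectations before Gronwall, as your ``average, take $p$-th powers and expectations, then Gronwall'' phrasing suggests, you would need a priori finiteness of $\E[A_T^p]$ or a localization.

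\textbf{What this route buys.} Controlling the spike count purely through the refractory periods decouples it from the dynamics. There is then no circularity with the feedback through $\mathfrak{f}^N$, and uniformity in $N$ is automatic. The price is weaker tails: a renewal count only has factorial-type tails, e.g.\ $\prob[\sum_{k\le n}\varsigma_k\le T]\le(\norm{\pref}_\infty T)^n/n!$, using that $\pref$ is bounded. By contrast, \cref{prop: EXPONENTIAL DECAY OF THE HITTING PROBABILITIES} uses the reset gap $\gamma$ and the bound $C_\sigma$ to get Gaussian decay in $k$. That stronger decay is what the sub-Gaussian tail of $F^N_t$, invoked in the proof of \cref{prop: THE LIMITING PROCESS WILL SATISFY THE ASSUMPTIONS WE HAVE SET OUT}, requires; already for $N=1$ a refractory-only count cannot give it. For the moment bound in the statement, your weaker tails are entirely sufficient.
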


\begin{proposition}[Exponential decay in $k$ of hitting times]\label{prop: EXPONENTIAL DECAY OF THE HITTING PROBABILITIES}
	For any $i,k \ge 1$ and for all $p > 1$, sufficiently close to $1$, we have
	\begin{equation*}
		\prob\left[\tau_k^i \le T\right] \le C e^{-\frac{(k-1)^2\gamma^2(p-1)}{2pTC_\sigma^2}},
	\end{equation*}
	for some constant $C > 0$ that depends on $p$ and $T$ but holds uniformly in $i,\,k$, and $N$, and $ \gamma$ is the constant in \cref{ass: ASSUMPTIONS FROM SOJMARK SPDE PAPER APPLIED TO THE NEUROSCIENCE SETTING} \eqref{ass: ASSUMPTIONS FROM SOJMARK SPDE PAPER APPLIED TO THE NEUROSCIENCE SETTING FOUR}.
\end{proposition}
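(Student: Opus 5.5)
The plan is to exploit the reset mechanism: every reset places the neuron at $-\xi_k^i\le-\gamma$ by \cref{ass: ASSUMPTIONS FROM SOJMARK SPDE PAPER APPLIED TO THE NEUROSCIENCE SETTING} \eqref{ass: ASSUMPTIONS FROM SOJMARK SPDE PAPER APPLIED TO THE NEUROSCIENCE SETTING FOUR}. So, on $\{\tau_k^i\le T\}$, the particle completes $k-1$ full excursions from a level $\le-\gamma$ up to $0$, all inside $[0,T]$. Once the drift is removed, these excursions must be produced by a continuous martingale whose quadratic variation is at most $C_\sigma^2T$. Gaussian tails for such a martingale then give the $\exp(-c(k-1)^2\gamma^2/T)$ decay. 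The factor $(p-1)/p$ in the exponent indicates that the drift is removed by Girsanov combined with Hölder's inequality, at the price of a power of the probability.

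\emph{Step 1 (martingale reduction).} Write $N_t=\int_0^t \ind_{\{X_s^i<0\}}\sigma(s,X_s^i)\bigl(\rho\,\diff W^0_s+\sqrt{1-\rho^2}\,\diff W^i_s\bigr)$, with $\rho$ evaluated at $(s,\nu^N_s,\mathfrak{f}^N_s)$. This is a continuous martingale with $\diff\langle N\rangle_t\le C_\sigma^2\diff t$, and it is constant during refractory periods. Let $D_t=\int_0^t \ind_{\{X_s^i<0\}}b(s,X^i_s,\nu^N_s,\mathfrak{f}^N_s)\diff s$. On each active interval $[\tau_j^i+\varsigma_j^i,\tau_{j+1}^i]$, $j=1,\dots,k-1$, the increment of $N+D$ is at least $\gamma$. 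Because $N$ and $D$ are frozen in between, summing these increments telescopes. Hence, on $\{\tau_k^i\le T\}$,
\[
(N+D)_{\tau_k^i}-(N+D)_{\tau_1^i+\varsigma_1^i}\ge (k-1)\gamma .
\]

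\emph{Step 2 (Girsanov and Hölder).} Since $1-\rho^2\ge\gamma(2-\gamma)>0$ and $\sigma\ge C_\sigma^{-1}$, the drift can be absorbed into $W^i$ via $\theta_s=\ind_{\{X_s^i<0\}}\,b/(\sigma\sqrt{1-\rho^2})$. Under the new measure $\mathbb{Q}$, the process $N+D$ is a $\mathbb{Q}$-martingale with the same quadratic-variation bound. For any event $A$, Hölder gives
\[
\prob[A]=\E_{\mathbb{Q}}\Bigl[\tfrac{\diff\prob}{\diff\mathbb{Q}}\ind_A\Bigr]\le \E_{\mathbb{Q}}\Bigl[\bigl(\tfrac{\diff\prob}{\diff\mathbb{Q}}\bigr)^{p/(p-1)}\Bigr]^{(p-1)/p}\,\mathbb{Q}[A]^{1/p}\quad\text{or its dual form}.
\]
I would choose the exponents so that the probability under the driftless measure appears raised to $(p-1)/p$. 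The remaining density moment is a constant $C(p,T)$. Note the direction needed: the bound must be on $\prob$ in terms of $\mathbb{Q}$-probability.

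\emph{Step 3 (Gaussian estimate).} Under $\mathbb{Q}$, Dambis--Dubins--Schwarz represents $N+D$ as a Brownian motion $B$ run up to a time $\le C_\sigma^2T$. Therefore
\[
\mathbb{Q}[\tau_k^i\le T]\le \mathbb{Q}\Bigl[\sup_{0\le s\le t\le C_\sigma^2T}(B_t-B_s)\ge (k-1)\gamma\Bigr]\le C\,e^{-(k-1)^2\gamma^2/(2C_\sigma^2T)} .
\]
The last inequality is the standard tail of the maximal upward increment of Brownian motion (via reflection, with the polynomial prefactor absorbed into $C$). Combining with Step 2 yields the claimed bound, with $C$ independent of $i$, $k$ and $N$.

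\emph{Main obstacle.} I expect the hardest part to be controlling the density moment in Step 2 uniformly in $N$. The drift has linear growth in $X^i$, in $\langle\nu^N_t,|\cdot|\rangle$ and in $\mathfrak{f}^N_t$, so Novikov's condition is not automatic. I would first localize at the exit time of $\sup_t|X^i_t|+F^N_T$ from a large level. I would then control exponential moments of $\int_0^T|\theta_s|^2\diff s$ on short time subintervals, using the Gaussian tail of $\nu_0$ and a Gronwall argument. Here $\mathfrak{f}^N$ is bounded through $\|\mathfrak{K}\|_\infty F^N_T$, which requires exponential control of $F^N_T$; that follows from the same counting argument as above applied to each particle. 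Finally, the moment bounds of \cref{prop: BOUNDS ON THE SUP OF THE P TH MOMENTS OF THE SUP NORM OF X} remove the localization. The freedom to take $p$ close to $1$ keeps the required exponential moment of the Girsanov density small, which is why the statement only claims the bound for such $p$.
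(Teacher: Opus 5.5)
Your route is correct. It counts the $k-1$ excursions from below $-\gamma$ pathwise, removes the drift by Girsanov in the $W^i$ direction, applies H\"older in the form $\prob[A]\le \E_{\mathbb{Q}}[(\diff\prob/\diff\mathbb{Q})^p]^{1/p}\,\mathbb{Q}[A]^{(p-1)/p}$ with $p$ close to $1$, and uses the Gaussian tail of the maximal upward increment of a martingale with quadratic variation at most $C_\sigma^2T$; this is exactly what produces the $(p-1)/p$ factor in the stated bound, whose proof the paper defers to its supplement, though I cannot confirm the supplement's details. The one point to state carefully is where the uniform-in-$N$ sub-Gaussian control of $\sup_t|X^i_t|$, $\sup_t\langle\nu^N_t,|\cdot|\rangle$ and $F^N_T$ comes from: it is needed for $\E[(\diff\prob/\diff\mathbb{Q})^{p-1}]\le C$, and should be taken from your pathwise Step~1 inequality $\gamma(J^i_t-1)^+\le\int_0^t|b(s,X^i_s,\nu^N_s,\mathfrak{f}^N_s)|\diff s+2\sup_{s\le t}|N_s|$ (with $N$ your martingale for particle $i$), averaged over $i$ and closed by Gr\"onwall, rather than from the hitting-time estimate itself, which would be circular.
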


\begin{proposition}[Concentration and tail estimate]\label{cor: REGULARITY CONTROL ON THEM EMPIRICAL MEASURES}
	The empirical measures $\nu^N$ satisfy, uniformly in $N \ge 1$ and $t \in (0,T]$,
	\begin{equation}\label{eq: REFULARITY PROPERTIES OF THE EMPIRICAL MEASURES OF THE NEURO PARTICLE SYSTEM WITH JUMPS}
		\begin{cases}
			\exists \varepsilon > 0 \qquad &\E\left[\nu_t^N(-\infty,-a)\right] = O(e^{-\varepsilon a^2})\quad \text{as}\quad a \to \infty,\\
			\exists \delta \in (0,1] \qquad &\E\left[\nu_t^N(a,b)\right] \le Ct^{-\delta/2}\nnorm{b-a}^\delta \quad \text{for}\quad a < b < 0.
		\end{cases}
	\end{equation}
\end{proposition}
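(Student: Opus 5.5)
We prove the two estimates in \eqref{eq: REFULARITY PROPERTIES OF THE EMPIRICAL MEASURES OF THE NEURO PARTICLE SYSTEM WITH JUMPS} separately. By exchangeability, $\E[\nu_t^N(A)] = \prob[X_t^1\in A,\, X_t^1<0]$, so both bounds reduce to estimates for the law of a single tagged particle, uniform in $N$. The plan is to decompose according to the last reset before time $t$: the particle is either still in its initial excursion (started from $X_0^1\sim\nu_0$), or it was last reinserted at time $\tau_k^1+\varsigma_k^1\le t$ at position $-\xi_k^1\in[-R,-\gamma]$. Since $\mu_{\mathrm{res}}$ is compactly supported and \cref{prop: EXPONENTIAL DECAY OF THE HITTING PROBABILITIES} gives $\prob[\tau_k^1\le T]$ decaying like $e^{-ck^2}$, the sum over $k$ converges. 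It therefore suffices to bound each excursion piece uniformly in $k$.

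\textbf{Tail estimate.} On each excursion the particle is a diffusion with drift bounded by $C_b(1+|x|+\langle\nu^N_t,|\cdot|\rangle+|\mathfrak{f}^N_t|)$ and bounded diffusion coefficient. Reaching below $-a$ requires either an initial value below $-a/2$, which has probability $O(e^{-\gamma a^2/4})$ by \cref{ass: ASSUMPTIONS FROM SOJMARK SPDE PAPER APPLIED TO THE NEUROSCIENCE SETTING} \eqref{ass: ASSUMPTIONS FROM SOJMARK SPDE PAPER APPLIED TO THE NEUROSCIENCE SETTING FOUR} (reset positions are bounded by $R$), or a downward displacement of size $a/2$. For the latter, Gronwall applied to the linear-growth drift gives $\sup_t|X_t|\le C(1+|X_0|+\sup_t|M_t|+A_T)e^{CT}$. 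Here $M$ is a martingale with bounded quadratic variation, so it has Gaussian tails. The additional term $A_T$ comes from $\int_0^T(\langle\nu^N_s,|\cdot|\rangle+|\mathfrak{f}^N_s|)\diff s$. Since $\mathfrak{K}\in\mathcal{W}^{1,1}_0$, we have $|\mathfrak{f}^N_s|\le\|\mathfrak{K}'\|_1 F^N_T$.

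This coupling term is the main obstacle: a crude moment bound from \cref{prop: BOUNDS ON THE SUP OF THE P TH MOMENTS OF THE SUP NORM OF X} only gives polynomial decay. I would try to avoid it by using a one-sided comparison. Only the downward part of the drift matters for reaching $-a$, and the nonlinearity $\mathfrak{f}$ is excitatory in the models of interest. Failing that, I would obtain Gaussian concentration for $\langle\nu^N,|\cdot|\rangle$ and $F^N$ through a Laplace-transform estimate, using the fact that the jumps of $F^N$ have size $1/N$, together with \cref{prop: EXPONENTIAL DECAY OF THE HITTING PROBABILITIES}. Either way, the goal is $\prob[X^1_t<-a]\le Ce^{-\varepsilon a^2}$.

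\textbf{Concentration estimate.} Condition on the common noise $W^0$ together with all other particles' data, and write the tagged particle through a Lamperti transform $Y=\int^X\sigma^{-1}$. The idiosyncratic noise has coefficient $\sqrt{1-\rho^2}\ge\sqrt{\gamma}>0$, so conditionally $Y$ is a nondegenerate Itô process driven by $W^1$ with the remaining terms treated as a random drift. Girsanov's theorem, with the exponential moments obtained above, compares it with a Brownian motion started from the excursion's initial point and killed at $0$. For a Gaussian density at time $s$ one has $\prob[B_s\in(a,b)]\le C s^{-1/2}|b-a|$. Hölder's inequality to absorb the Girsanov density lowers the exponent to some $\delta\in(0,1]$, giving $Cs^{-\delta/2}|b-a|^\delta$, where $s$ is the time since the last reset.

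For the initial excursion, $s=t$. For resets, the time since reset is $t-\tau_k-\varsigma_k$, and this is handled using \cref{lem: INDEPENDENCE BETWEEN THE STOPPING TIME AND THE REFRACTORY PERIOD AND THE RESET POINT}. That lemma says $\varsigma_k$ is independent of $\tau_k$, so the law of the reset time has density bounded by $\|\pref\|_\infty<\infty$ (as $\pref\in\mathcal{W}^{1,1}$). Integrating $(t-r)^{-\delta/2}$ against a bounded density then gives a finite constant. Summing over $k$ with the decay from \cref{prop: EXPONENTIAL DECAY OF THE HITTING PROBABILITIES} yields $\E[\nu^N_t(a,b)]\le Ct^{-\delta/2}|b-a|^\delta$ uniformly in $N$.
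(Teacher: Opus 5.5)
Your overall plan is sound: exchangeability, a Gronwall bound for the tail, a Lamperti--Girsanov--H\"older comparison with a Brownian motion restarted at each reset, the bounded density of $\tau_k+\varsigma_k$ via \cref{lem: INDEPENDENCE BETWEEN THE STOPPING TIME AND THE REFRACTORY PERIOD AND THE RESET POINT}, and summation in $k$ via \cref{prop: EXPONENTIAL DECAY OF THE HITTING PROBABILITIES}. It is consistent with the comparison to an absorbed Brownian motion restarted after each spike that the paper describes. However, two steps fail as written.

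\textbf{The coupling term in the tail estimate is never controlled.} The one-sided comparison is unavailable. \cref{ass: ASSUMPTIONS FROM SOJMARK SPDE PAPER APPLIED TO THE NEUROSCIENCE SETTING} imposes no sign or monotonicity on $b$ in $(\mu,f)$, so nothing is excitatory, and the $\langle\mu,|\cdot|\rangle$-dependence can push downwards. The Laplace-transform idea also points at the wrong mechanism: the bound must hold for $N=1$ as well, so concentration from jumps of size $1/N$ cannot be what drives it.

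What you need, and what your Girsanov step also uses, is uniform sub-Gaussianity of $\sup_{t\le T}|X^1_t|$, $\sup_{t\le T}\langle\nu^N_t,|\cdot|\rangle$ and $F^N_T$. This comes from convexity.
\begin{enumerate}
\item Since $\prob[J^i_T\ge k]=\prob[\tau^i_k\le T]\le Ce^{-c(k-1)^2}$, you get $\E[e^{\theta(J^i_T)^2}]\le K$ uniformly for small $\theta>0$.
\item Jensen's inequality for the convex map $x\mapsto e^{\theta x^2}$ gives $\E[e^{\theta(F^N_T)^2}]\le\frac1N\sum_i\E[e^{\theta(J^i_T)^2}]\le K$, whatever the dependence between particles.
\item Set $U^j_t=\sup_{s\le t}|X^j_s|$ and let $M^j$ be the martingale part of $X^j$. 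Pathwise, $U^j_t\le K^j+C_b\int_0^t(U^j_s+\langle\nu^N_s,|\cdot|\rangle)\,\mathrm{d}s$, where $K^j=|X^j_0|+R+2\sup_{s\le T}|M^j_s|+C_bT(1+\norm{\mathfrak{K}'}_1F^N_T)$.
\item This inequality closes after averaging over $j$, because $\langle\nu^N_s,|\cdot|\rangle\le\frac1N\sum_jU^j_s$. Gronwall and the same Jensen argument then make $\frac1N\sum_jU^j_T$, and hence $U^1_T$, uniformly sub-Gaussian.
\end{enumerate}

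\textbf{The conditioning in the concentration step is not legitimate.} Through $\nu^N$ and $F^N$, the other particles' paths depend on $W^1$, so $W^1$ is not a Brownian motion given them. Moreover, given $W^0$, the term $\int\sigma\rho\,\mathrm{d}W^0$ is not absolutely continuous, so Girsanov cannot remove it as a random drift.

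For an expectation bound you need no conditioning. By L\'evy's characterization, $B=\int\rho\,\mathrm{d}W^0+\int\sqrt{1-\rho^2}\,\mathrm{d}W^1$ is an $(\mathcal{F}_t)$-Brownian motion. So between resets $X^1$ solves $\mathrm{d}X_t=b_t\,\mathrm{d}t+\sigma(t,X_t)\,\mathrm{d}B_t$ with an adapted drift of linear growth. Only $\sigma\ge C_\sigma^{-1}$ is used here; the bound $\rho\le1-\gamma$ matters only for estimates conditional on $W^0$. The Lamperti transform and the Girsanov--H\"older step then go through with the Gaussian moments above, applying Novikov on short time intervals.

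Finally, when summing over resets, the H\"older factor from the Girsanov density is an $\mathcal{F}_{\tau_k+\varsigma_k}$-measurable random variable correlated with $(t-\tau_k-\varsigma_k)^{-\delta/2}$. You therefore need one more H\"older inequality before integrating against the density of $\tau_k+\varsigma_k$, which is at most $\norm{\pref}_\infty\prob[\tau_k\le T]$. This lowers $\delta$ once more but keeps the sum over $k$ finite.
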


\begin{proposition}[Boundary estimate]\label{prop: BOUNDARY DECAY OF THE EMPIRICAL MEAUSRES IN EXPECTATION}
	There exists a $\delta \in (0,1]$ and $\beta > 0$ such that uniformly in $N$ and $t \in (0,T]$, we have 
	\begin{equation*}
		\E\left[\nu_t^N(-\varepsilon,0)\right] = t^{-\delta/2}O(\varepsilon^{ 1 + \beta})\quad \text{as}\quad \varepsilon \to  0.
	\end{equation*}
\end{proposition}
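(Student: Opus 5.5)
By exchangeability, $\E[\nu_t^N(-\varepsilon,0)] = \prob[X_t^1\in(-\varepsilon,0),\,X_t^1<0]$, so it suffices to bound the probability that a single tagged neuron is active and sits in the boundary layer at time $t$, uniformly in $N$. The plan is to split according to which excursion the neuron is on at time $t$:
\[
\prob[X^1_t\in(-\varepsilon,0)] \le \sum_{k\ge 0}\prob\bigl[X_t^1\in(-\varepsilon,0),\ \tau_k^1+\varsigma_k^1\le t<\tau_{k+1}^1\bigr],
\]
with the convention $\tau_0^1+\varsigma_0^1=0$. On the $k$-th excursion the neuron is a diffusion started afresh, either from $X_0^1\sim V_0$ (for $k=0$) or from the reset level $-\xi_k^1\in[-R,-\gamma]$ (for $k\ge1$). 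It then evolves without being killed up to time $t$.

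\textbf{Reductions.} By \cref{prop: EXPONENTIAL DECAY OF THE HITTING PROBABILITIES}, the terms with $k\ge K_\varepsilon\approx C|\log\varepsilon|^{1/2}$ contribute $O(\varepsilon^{2})$. Hence only logarithmically many excursions remain, which costs at most a factor $|\log\varepsilon|^{1/2}$ and can be absorbed by slightly lowering $\beta$.

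Within an excursion, the martingale part has quadratic variation $\sigma(t,X_t)^2\diff t$, because $\rho^2+(1-\rho^2)=1$. This is a deterministic function of $(t,X)$ even though $\rho$ is random. We therefore apply the Lamperti transform $Y_t=\int_{X_t}^0\sigma(t,y)^{-1}\diff y$, which by \cref{ass: ASSUMPTIONS FROM SOJMARK SPDE PAPER APPLIED TO THE NEUROSCIENCE SETTING} \eqref{ass: ASSUMPTIONS FROM SOJMARK SPDE PAPER APPLIED TO THE NEUROSCIENCE SETTING ONE}, \eqref{ass: ASSUMPTIONS FROM SOJMARK SPDE PAPER APPLIED TO THE NEUROSCIENCE SETTING THREE} is bi-Lipschitz. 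This turns $Y$ into a positive process of the form $\diff Y=\theta_t\diff t+\diff \beta_t$ with a standard Brownian motion $\beta$ (a mixture of $W^0$ and $W^1$). The event $\{X_t\in(-\varepsilon,0)\}$ becomes $\{Y_t\in(0,C\varepsilon)\}$. The adapted drift $\theta$ satisfies $|\theta_t|\le C(1+|X_t|+\langle\nu^N_t,|\cdot|\rangle+|\mathfrak{f}^N_t|)$. Since $\mathfrak{K}\in\mathcal W^{1,1}_0$ is bounded, we have $|\mathfrak{f}^N_t|\le\|\mathfrak{K}\|_\infty F^N_T$.

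\textbf{Localisation and Girsanov.} Let $L=\varepsilon^{-\eta}$ with small $\eta>0$, and stop at $\varrho_L=\inf\{s:|\theta_s|>L\}$. By \cref{prop: BOUNDS ON THE SUP OF THE P TH MOMENTS OF THE SUP NORM OF X} and Markov's inequality with high moments, $\prob[\varrho_L\le T]=O(\varepsilon^{m})$ for any $m$. The stopped drift is bounded by $L$, so Girsanov applies on $[\tau_k^1+\varsigma^1_k,t]$, and H\"older gives
\[
\prob[A]\le \E[Z^{q}]^{1/q}\,\mathbb Q[A]^{1/p},\qquad \tfrac1p+\tfrac1q=1,
\]
with $\E[Z^q]\le e^{C qL^2T}$. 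Choosing $q$ large but $L^2q\lesssim |\log\varepsilon|$ (possible with $\eta$ small, or by splitting $[0,T]$ into short intervals) keeps this prefactor at most $\varepsilon^{-\eta'}$, with $p$ close to $1$. Under $\mathbb Q$, $Y$ is a Brownian motion killed at $0$. We condition on the restart time and position via the strong Markov property; \cref{lem: INDEPENDENCE BETWEEN THE STOPPING TIME AND THE REFRACTORY PERIOD AND THE RESET POINT} ensures that $\xi_k^1$ is independent of the restart time.

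\textbf{Explicit kernel bounds.} The killed heat kernel satisfies $p^0_s(y,z)\le C\,\frac{yz}{s^{3/2}}e^{-(y-z)^2/(2s)}\wedge \frac{C}{\sqrt s}$. For restarts from $y\ge\gamma/C$ this yields $\int_0^{C\varepsilon}p^0_s(y,z)\diff z\le C\varepsilon^2$ uniformly in $s$. For $k=0$ we use Cauchy--Schwarz against $V_0\in L^2$ together with $\int_0^{C\varepsilon}p^0_t(y,z)\diff z\le C\min(\varepsilon, \varepsilon^2 y t^{-1})t^{-1/2}$. This gives a bound $C t^{-\delta/2}\varepsilon^{3/2}$, which accounts for the factor $t^{-\delta/2}$. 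Raising the result to the power $1/p$ and multiplying by $\varepsilon^{-\eta'}$ leaves $t^{-\delta/2}\varepsilon^{(3/2)/p-\eta'-}=t^{-\delta/2}O(\varepsilon^{1+\beta})$ for a suitable choice of $p,\eta,\eta'$.

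\textbf{Main obstacle.} The main obstacle is the Girsanov step. The drift is unbounded and depends on the whole system, including $\mathfrak f^N$ and $\nu^N$, and the H\"older exponent both shrinks the power of $\varepsilon$ and inflates the exponential-moment prefactor. The localisation level $L=\varepsilon^{-\eta}$ must be balanced against the moment bounds so that every loss is only a small power of $\varepsilon$. If the exponential moment turns out to be too costly, the first fallback is to apply Girsanov on short subintervals and chain the estimates via the Markov property. A second fallback is to replace Girsanov by a comparison argument: couple with a Brownian motion with constant drift $\pm L$ and use explicit killed kernels for drifted Brownian motion.
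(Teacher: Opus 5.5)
The overall architecture matches the strategy the paper indicates for this estimate, which it defers to its supplement and describes as a comparison with an absorbed Brownian motion restarted after each spike. That architecture consists of the tagged neuron via exchangeability, the excursion decomposition, truncation at $K_\varepsilon\sim|\log\varepsilon|^{1/2}$ via \cref{prop: EXPONENTIAL DECAY OF THE HITTING PROBABILITIES}, the Lamperti transform, and comparison with a Brownian motion killed at $0$ and restarted at each reset. The gap is at the step you flag yourself, and as written it does not close.

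With $L=\varepsilon^{-\eta}$, the H\"older prefactor $\E_{\mathbb{Q}}[(\diff\prob/\diff\mathbb{Q})^{q}]^{1/q}$ is of order $\exp(CqL^2T)=\exp(CqT\varepsilon^{-2\eta})$. This is not bounded by $\varepsilon^{-\eta'}$ for any fixed $\eta>0$, so the requirement $qL^2\lesssim|\log\varepsilon|$ cannot hold with $L=\varepsilon^{-\eta}$.

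Taking $L\sim|\log\varepsilon|^{1/2}$ instead breaks the localisation. \cref{prop: BOUNDS ON THE SUP OF THE P TH MOMENTS OF THE SUP NORM OF X} only gives polynomial moments, so $\prob[\varrho_L\le T]\le C_mL^{-m}=C_m|\log\varepsilon|^{-m/2}$, which is nowhere near $O(\varepsilon^{1+\beta})$. Even with sub-Gaussian tails for $\sup_s|\theta_s|$ (which you would have to prove), the localisation and the prefactor pull the constant in $L=c|\log\varepsilon|^{1/2}$ in opposite directions. The result is a smallness condition linking the tail constant, $q$ and $T$ that you cannot impose.

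Your fallback of chaining Girsanov over short subintervals does not repair this if it means propagating an estimate through all $T/h$ windows. Each H\"older step multiplies the constant and divides the exponent of $\varepsilon$ by $p$.

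The missing idea is to change measure only on the last window before $t$. This works because the killed-kernel bound you need is uniform in the starting point, so no chaining is required.

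\begin{itemize}
\item Take $h=L^{-2}=\varepsilon^{2\eta}$ with $\eta<1$. For $t>h$, apply Girsanov with the stopped drift only on $[t-h,t]$, conditionally on $\mathcal{F}_{t-h}$. This needs no Markov property of $X^1$ alone, and the prefactor becomes $e^{Cq}$.
\item On $\{\tau_k^1+\varsigma_k^1\le t-h\}$, use $p^0_h(y,z)\le\tfrac{2yz}{h}\,p_h(y-z)$ to get $\sup_{y>0}\int_0^{C\varepsilon}p^0_h(y,z)\,\diff z\le C\varepsilon^2/h=C\varepsilon^{2-2\eta}$. This is valid since $\varepsilon\le\sqrt h$.
\item On $\{\tau_k^1+\varsigma_k^1\in(t-h,t]\}$, use the strong Markov property at the restart together with your bound $C\varepsilon^2$, which is uniform over restart levels in $[\gamma/C,CR]$.
\end{itemize}

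This gives $\prob[A_k]\le e^{Cq}(C\varepsilon^{2-2\eta})^{1/p}+O(\varepsilon^m)$ for each $k<K_\varepsilon$, and summing over $k$ gives $O(\varepsilon^{1+\beta})$. The initial density and the factor $t^{-\delta/2}$ enter only when $t\le h$, where changing measure on all of $[0,t]$ again costs only $e^{Cq}$.

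Two smaller points:
\begin{itemize}
\item In that $k=0$ estimate, $t^{-1/2}\min(\varepsilon,\varepsilon^2y/t)$ is not in $L^2(0,\infty)$. You must keep the Gaussian factor $e^{-(y-C\varepsilon)^2/(2t)}$ before applying Cauchy--Schwarz against $V_0$.
\item \cref{lem: INDEPENDENCE BETWEEN THE STOPPING TIME AND THE REFRACTORY PERIOD AND THE RESET POINT} is unnecessary, since the bound over reset levels is uniform. It would not transfer automatically anyway, because it is a statement under $\prob$, not under $\mathbb{Q}$.
\end{itemize}
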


These results are established in Corollary 2.2, Proposition 2.8, Proposition 3.2, and Corollary 3.3 of the supplementary note \suppref.


\section{Tightness and SPDE Characterization of Limit Points}\label{sec: TIGHTNESS AND EXISTENCE SECTION IN THE GENERALISED INTEGRATE AND FIRE NEURON MODELS}

The aim of this section is to show that the limit points are within the support of solutions to SPDE \eqref{eqn: THE LIMITTING SPDE EQUATION}. To achieve this, we pass to the limit of the empirical distribution for the finite particle system (along a suitable subsequence) and then employ results on the convergence of (stochastic) integrals. First, we present the following proposition, which describes the evolution equation for the empirical distribution of the finite particle system.

\begin{proposition}\label{prop: PROP ON THE FINITE DIMENSIONAL EVOLUTION EQUATION AND THE DECAY OF THE NOISE IN THE LP NORM}
	Given $N \ge 1$, for all $\phi \in \mathcal{C}^2$, the space of twice continuously differentiable functions on $\R$, we have that
	\begin{align}
		\diff{}\langle\nu_t^N,\,\phi\rangle =&
		\frac{1}{2}\langle\nu_t^N,\,\sigma^2(t,\,\cdot)\partial_{xx}\phi\rangle\diff{t}
		+\langle\nu_t^N,\,b(t,\cdot,\nu_t^N,\mathfrak{f}_t^N)\partial_x\phi\rangle\diff{t} \label{eq: STOCHASTIC EVOLUTION EQUATION} \\
		&+\langle\nu_t^N,\,\sigma(t,\cdot)\rho(t,\nu_t^N,\mathfrak{f}_t^N)\partial_x\phi\rangle\diff{W_t^0}
		+ \diff \frac{1}{N}\sum_{i = 1}^N\sum_{k \ge 1} \phi(-\xi_k^i)\ind_{[0,t]}(\tau_k^i+ \varsigma_k^i) \notag \\
		&- \phi(0)\diff{}F_t^N
		+\frac{1}{N}\sum_{i  =1}^N \sigma(t,X_t^i)\sqrt{1 - \rho^2(t,\nu_t^N,\mathfrak{f}_t^N)}\partial_x\phi(X_t^i)\ind_{\{X_t^i < 0\}}\diff{}W_t^i. \notag
	\end{align}
\end{proposition}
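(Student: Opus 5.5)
The plan is to apply Itô's formula to $\phi(X^i_t)$ for each fixed $i$, separately on each random interval of the construction in \cref{prop: EXISTENCE OF A PATHWISE UNIQUE SOLUTION TO THE FINITE PARTICLE SYSTEM}, and then average over $i$. The key observation is that $\langle \nu^N_t,\phi\rangle = \frac1N\sum_i \phi(X^i_t)\ind_{\{X^i_t<0\}}$. The indicator only switches at the spike times $\tau^i_k$ (from $1$ to $0$) and at the reinsertion times $\tau^i_k+\varsigma^i_k$ (from $0$ to $1$).

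Fix $i$ and consider the process $Y^i_t := \phi(X^i_t)\ind_{\{X^i_t<0\}}$. On an active interval $[\tau^i_{k-1}+\varsigma^i_{k-1}, \tau^i_k)$ we have $X^i_t<0$, with $X^i$ starting at $-\xi^i_{k-1}$, or at $X^i_0$ when $k=1$. There $X^i$ is a continuous semimartingale with drift $b(t,X^i_t,\nu^N_t,\mathfrak{f}^N_t)$ and diffusion coefficients $\sigma\rho$ against $W^0$ and $\sigma\sqrt{1-\rho^2}$ against $W^i$. Itô's formula for $\phi\in\mathcal C^2$ then yields the drift term $b\,\partial_x\phi$, the second-order term $\tfrac12\sigma^2(\rho^2+1-\rho^2)\partial_{xx}\phi=\tfrac12\sigma^2\partial_{xx}\phi$, and the two stochastic integrals. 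Because $W^0$ and $W^i$ are independent, there is no cross-variation term.

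At a spike time $\tau^i_k$, continuity of $X^i$ on the active interval gives $X^i_{\tau^i_k-}=0$. Hence $Y^i$ jumps from its left limit $\phi(0)$ to $0$, which is a jump of size $-\phi(0)\,\Delta J^i_{\tau^i_k}$. During the refractory period $Y^i\equiv 0$, since $X^i=0$ there and the indicator vanishes. At the reinsertion time $\tau^i_k+\varsigma^i_k$, $Y^i$ jumps from $0$ to $\phi(-\xi^i_k)$; here we use $-\xi^i_k\le-\gamma<0$ by \cref{ass: ASSUMPTIONS FROM SOJMARK SPDE PAPER APPLIED TO THE NEUROSCIENCE SETTING}\eqref{ass: ASSUMPTIONS FROM SOJMARK SPDE PAPER APPLIED TO THE NEUROSCIENCE SETTING FOUR}. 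Gluing the pieces together, and writing every Lebesgue and stochastic integrand with the factor $\ind_{\{X^i_s<0\}}$ (which is harmless because the refractory set contributes nothing), we obtain
\[
Y^i_t - Y^i_0 = \int_0^t \ind_{\{X^i_s<0\}}\Bigl(b\,\partial_x\phi+\tfrac12\sigma^2\partial_{xx}\phi\Bigr)(s,X^i_s)\diff s + \int_0^t \ind_{\{X^i_s<0\}}\sigma\rho\,\partial_x\phi\diff W^0_s
\]
\[
+ \int_0^t \ind_{\{X^i_s<0\}}\sigma\sqrt{1-\rho^2}\,\partial_x\phi\diff W^i_s + \sum_{k\ge1}\phi(-\xi^i_k)\ind_{[0,t]}(\tau^i_k+\varsigma^i_k) - \phi(0)J^i_t .
\]

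Averaging over $i$ and using $\frac1N\sum_i \ind_{\{X^i_t<0\}} g(X^i_t)=\langle \nu^N_t,g\rangle$ turns the drift, second-order and $W^0$ terms into pairings against $\nu^N_t$. For the $W^0$ term this relies on the fact that $\rho(t,\nu^N_t,\mathfrak{f}^N_t)$ does not depend on $i$ and can be pulled out. The $-\phi(0)J^i$ terms average to $-\phi(0)F^N_t$, and the $W^i$ integrals remain as the idiosyncratic noise. This gives exactly \eqref{eq: STOCHASTIC EVOLUTION EQUATION}.

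The main technical point is the gluing across infinitely many random intervals. Itô's formula is applied up to each stopping time $\tau^k$ of the construction, and \cref{prop: EXISTENCE OF A PATHWISE UNIQUE SOLUTION TO THE FINITE PARTICLE SYSTEM} shows that these times do not accumulate before $T$, so only finitely many pieces occur almost surely. A second point is that $\phi$ need not be bounded, so the stochastic integrals may only be local martingales. This is fine for an identity of semimartingales, and \cref{prop: BOUNDS ON THE SUP OF THE P TH MOMENTS OF THE SUP NORM OF X} provides integrability whenever it is needed later.
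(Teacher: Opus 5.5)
Your proposal is correct and follows essentially the same route as the paper: It\^o's formula for each particle, accounting for the drop of size $\phi(0)$ at each spike time and the jump to $\phi(-\xi^i_k)$ at each reinsertion, then averaging over $i$. The only difference is bookkeeping. The paper applies It\^o's formula with jumps to $\phi(X^i_t)$ itself, which jumps only at reinsertion times and produces $-\phi(0)\diff J^{D,i}_t$, and then converts using $\phi(X^i_t)\ind_{\{X^i_t<0\}}=\phi(X^i_t)-\phi(0)\ind_{\{X^i_t=0\}}$ and $J^i_t-J^{D,i}_t=\ind_{\{X^i_t=0\}}$; you instead track the jumps of $\phi(X^i_t)\ind_{\{X^i_t<0\}}$ directly on the glued random intervals.
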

\begin{proof}
	For any $\phi \in \mathcal{C}^2$, we can apply It\^o's formula with jumps \citep[Chapter~II, Theorem~31]{protter2005stochastic} to $\phi(X_t^i)$. Since each $X^i$ given by \eqref{eq: GENERALISED FINITE PARTICLE SYSTEM IN THE INTEGRATE AND FIRE NEURON MODELS} has \cadlag paths and will jump to $-\xi_k^i$ only at times $\tau_k^i + \varsigma_k^i$, and is equal to $0$ as we approach $(\tau_k^i + \varsigma_k^i)$ from below, by construction, this leads to
	\begin{align}
		\phi(X_t^i) =& \phi(X_0^i) +   \int_{0}^t\phi^{\prime}(X_{s}^i)
		b(s,\,X_s^i,\nu_s^{N},\mathfrak{f}_s^N)\ind_{\{X_s^i < 0 \}}\diff{}s\notag\\
		&+ \int_{0}^t\phi^{\prime}(X_{s}^i)\sigma(s,\,X_s^i)\ind_{\{X_s^i < 0\}}\left(\sqrt{1 - \rho^2(s,\nu_s^N,\mathfrak{f}_s^N)}\diff{}W_s^i + \rho(s,\nu_s^N,\mathfrak{f}_s^N)\diff{}W_s^0\right)\notag\\
		&-\int_{0+}^t\phi^{\prime}(X_{s-}^i)\diff{}\sum_{k \ge 1} \xi_k^i\ind_{[0,t]}(\tau_k^i + \varsigma_k^i)+ \frac{1}{2}\int_{0}^t\phi^{\prime\prime}(X_{s}^i)\sigma^2(s,X_s^i)\ind_{\{X_s^i < 0\}}\diff{}s \notag\\
		&+ \sum_{k \ge 1} \{\phi(-\xi_k^i) - \phi(0) - \phi^{\prime}(0)(-\xi_k^i)\}\ind_{[0,t]}(\tau_k^i + \varsigma_k^i).\label{eq: GENERALISED ITO FORMULA FOR MCKEAN VLASOV SDE EXPANDED}
	\end{align}
	Noting that $\int_{0+}^t\phi^{\prime}(X_{s-}^i)\diff{}\sum_{k \ge 1} \xi_k^i\ind_{[0,t]}(\tau_k^i + \varsigma_k^i) = \sum_{k \ge 1} \phi^{\prime}(0)\xi^i_k\ind_{[0,t]}(\tau_k^i + \varsigma_k^i)$, \eqref{eq: GENERALISED ITO FORMULA FOR MCKEAN VLASOV SDE EXPANDED} simplifies to
	\begin{align}
		\diff{}\phi(X_t^i) =& \phi^{\prime}(X_{t}^i)b(t,\,X_t^i,\nu_t^{N},\mathfrak{f}_t^N)\ind_{\{X_t^i < 0 \}}\diff{}t +\frac{1}{2}\phi^{\prime\prime}(X_{t}^i)\sigma^2(t,X_t^i)\ind_{\{X_t^i < 0 \}}\diff{}t\notag\\
		&+ \phi^{\prime}(X_{t}^i)\sigma(t,X_t^i)\ind_{\{X_t^i < 0\}}\left(\sqrt{1 - \rho^2(t,\nu_t^N,\mathfrak{f}_t^N)}\diff{}W_t^i + \rho(t,\nu_t^N,\mathfrak{f}_t^N)\diff{}W_t^0\right)\notag\\
		&+ \diff\sum_{k \ge 1} \phi(-\xi_k^i)\ind_{[0,t]}(\tau_k^i + \varsigma_k^i) - \phi(0)\diff J_t^{D,i}. \label{eq: GENERALISED ITO FORMULA FOR MCKEAN VLASOV SDE SIMPLIFIED}
	\end{align}
	It is clear that $\phi(X_t^i)\ind_{\{X_t^{i} < 0\}} = \phi(X_t^i) - \phi(0)\ind_{\{X_t^{i} = 0\}}$ as $X^i$ lives on $(-\infty,0]$. Furthermore, by Remark \ref{rem: REMARK ON THE RELATIONSHIP BETWEEN NU, THE FIRING FUNCTION AND THE DELAYED FIRING FUNCTION}, $J_t^i = \ind_{\{X_t^i = 0\}} + J_t^{D,i}$. Therefore, we may rewrite \eqref{eq: GENERALISED ITO FORMULA FOR MCKEAN VLASOV SDE SIMPLIFIED} as
	\begin{align}
		\diff{}\phi(X_t^i)\ind_{\{X_t^i < 0\}} &= \phi^{\prime}(X_{t}^i)b(t,\,X_t^i,\nu_t^{N},\mathfrak{f}_t^N)\ind_{\{X_t^i < 0 \}}\diff{}t +\frac{1}{2}\phi^{\prime\prime}(X_{t}^i)\sigma^2(t,X_t^i)\ind_{\{X_t^i < 0 \}}\diff{}t\notag\\
		&+ \phi^{\prime}(X_{t}^i)\sigma(t,X_t^i)\ind_{\{X_t^i < 0\}}\left(\sqrt{1 - \rho^2(t,\nu_t^N,\mathfrak{f}_t^N)}\diff{}W_t^i + \rho(t,\nu_t^N,\mathfrak{f}_t^N)\diff{}W_t^0\right)\notag\\
		&+ \diff\sum_{k \ge 1} \phi(-\xi_k^i)\ind_{[0,t]}(\tau_k^i + \varsigma_k^i) - \phi(0)\diff J_t^i. \label{eq: GENERALISED ITO FORMULA FOR MCKEAN VLASOV SDE SIMPLIFIED AND INCLUDING THE INDICATOR THAT IT IS LESS THAN 0}
	\end{align}
	After averaging over $i \in \{1,\,\ldots,\,N\}$, we obtain \eqref{eq: STOCHASTIC EVOLUTION EQUATION}.
\end{proof}

The next step is to show the existence of limit points. We show that the set of empirical measures is tight in a suitable topological space; that is, the space of $\mathscr{S}^\prime$-valued \cadlag processes on $[0,T]$, equipped with the $M_1$-topology, as done in \citep{hamblyseanhalfline,hambly2019spde}. Having obtained the evolution equation \eqref{eq: STOCHASTIC EVOLUTION EQUATION} for the empirical distribution, observe that we can decompose it into a continuous part, the instantaneous firing function, and the delayed reinsertion firing function. We write this decomposition as
\begin{equation}\label{eq: DECOMPOSITON OF EMPIRICAL DISTRIBUTION INTO CONTINUOUS PART AND JUMPS}
	\langle\nu_t^N,\phi\rangle = \langle\hat{\nu}_t^N,\phi\rangle - \phi(0)F_t^N + \frac{1}{N}\sum_{i = 1}^N\sum_{k \ge 1} \phi(-\xi_k^i)\ind_{[0,t]}(\tau_k^i+ \varsigma_k^i)
\end{equation}

\subsection{Tightness of the Finite Particle System}\label{sec: TIGHTNESS OF THE FINITE PARTICLE SYSTEM IN THE SPDE NEURON MODELS SECTION}

We first obtain control on the increments of $\hat{\nu}_t^N$.

\begin{proposition}\label{prop: LEMMA ON THE CONTROL OF THE INCREMENTS OF THE PROJECTIONS}
	For all $s,\,t \in [0,T]$ and $\phi \in \mathcal{C}_b^2$, it holds uniformly in $N \ge 1$ and $i \in \{1,\ldots,N\}$ that
	\begin{equation*}
		\E\left[\nnnorm{\langle\hat{\nu}_t^N - \hat{\nu}_s^N,\phi\rangle}^4\right] \le C_T\norm{\partial_x\phi}_{\mathcal{C}^1}^4 \nnnorm{t - s}^2.
	\end{equation*}
	Furthermore, uniformly in $N$,
	\begin{equation*}
		\E\left[\sup_{t \le T}\nnnorm{\langle\hat{\nu}_t^N,\phi\rangle}^4\right] \le C_T\norm{\partial_x\phi}_{\mathcal{C}^1}^4.
	\end{equation*}
\end{proposition}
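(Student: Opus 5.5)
From the decomposition \eqref{eq: DECOMPOSITON OF EMPIRICAL DISTRIBUTION INTO CONTINUOUS PART AND JUMPS} and \cref{prop: PROP ON THE FINITE DIMENSIONAL EVOLUTION EQUATION AND THE DECAY OF THE NOISE IN THE LP NORM}, the process $\langle\hat{\nu}^N,\phi\rangle$ is continuous and satisfies
\begin{align*}
\langle\hat{\nu}_t^N - \hat{\nu}_s^N,\phi\rangle = &\int_s^t \tfrac12\langle\nu_u^N,\sigma^2(u,\cdot)\partial_{xx}\phi\rangle\diff u + \int_s^t\langle\nu_u^N, b(u,\cdot,\nu_u^N,\mathfrak{f}_u^N)\partial_x\phi\rangle\diff u\\
&+\int_s^t\langle\nu_u^N,\sigma(u,\cdot)\rho(u,\nu^N_u,\mathfrak{f}^N_u)\partial_x\phi\rangle\diff W^0_u + \frac1N\sum_{i=1}^N\int_s^t \sigma\sqrt{1-\rho^2}\,\partial_x\phi(X_u^i)\ind_{\{X_u^i<0\}}\diff W_u^i.
\end{align*}
The jump terms, namely $F^N$ and the reinsertion sum, are exactly those removed in $\hat\nu^N$. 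I would therefore bound the fourth moment of each of the four terms separately and combine them with $(a_1+\dots+a_4)^4\le 64\sum a_j^4$.

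For the second-order term, $\nu^N_u$ is a sub-probability measure and $\sigma$ is bounded, so the integrand is bounded by $\tfrac12 C_\sigma^2\norm{\partial_{xx}\phi}_\infty$. The term is therefore at most $C\norm{\partial_x\phi}_{\mathcal{C}^1}|t-s|$, and its fourth power is bounded by $C|t-s|^4\le C_T|t-s|^2$. For the drift term, \cref{ass: ASSUMPTIONS FROM SOJMARK SPDE PAPER APPLIED TO THE NEUROSCIENCE SETTING}\eqref{ass: ASSUMPTIONS FROM SOJMARK SPDE PAPER APPLIED TO THE NEUROSCIENCE SETTING ONE} gives
\[
|\langle\nu_u^N,b\,\partial_x\phi\rangle|\le C_b\norm{\partial_x\phi}_\infty\bigl(1+2\sup_{i,u}|X^i_u|+|\mathfrak{f}^N_u|\bigr),
\]
where the second $\langle\nu,|\cdot|\rangle$ is also bounded by the average of $|X^i_u|$, which I dominate by $\frac1N\sum_i\sup_u|X^i_u|$ rather than a sup over $i$, to keep uniformity in $N$. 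For the transmission rate, $|\mathfrak{f}^N_u|\le\norm{\mathfrak{K}}_\infty F^N_T$, and $\mathfrak{K}\in\mathcal{W}^{1,1}_0$ implies $\mathfrak{K}$ is bounded. Alternatively, writing $\mathfrak{f}^N=\mathfrak{K}'\ast F^N$ gives $|\mathfrak{f}^N_u|\le\norm{\mathfrak{K}'}_1F^N_T$. Applying H\"older in time, $\bigl|\int_s^t g\bigr|^4\le|t-s|^4\sup g^4$, and then Jensen on the average, \cref{prop: BOUNDS ON THE SUP OF THE P TH MOMENTS OF THE SUP NORM OF X} with $p=4$ bounds the expectation by $C_T\norm{\partial_x\phi}^4_\infty|t-s|^4$.

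For the common-noise integral, the integrand is bounded by $C_\sigma\norm{\partial_x\phi}_\infty$ since $\rho\le1$. BDG then gives a bound of order $\E[(\int_s^t C\,\diff u)^2]=C|t-s|^2$, which is where the exponent $2$ comes from. For the idiosyncratic term, BDG gives
\[
\E\Bigl[\Bigl(\tfrac1{N^2}\sum_i\int_s^t\sigma^2(1-\rho^2)(\partial_x\phi)^2\ind\,\diff u\Bigr)^2\Bigr]\le C N^{-2}\norm{\partial_x\phi}_\infty^4|t-s|^2,
\]
because the $W^i$ are independent and the cross-variations vanish. This is uniform in $N$, and in fact it decays. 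Combining the four estimates proves the first claim.

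For the supremum bound, I would write $\langle\hat\nu^N_t,\phi\rangle=\langle\nu^N_0,\phi\rangle+(\text{the increments above with }s=0)$. The initial term is at most $\norm{\phi}_\infty$. Strictly this requires $\norm{\phi}_\infty$, so I would either absorb it into the constant, reading the norm as the $\mathcal{C}^2_b$ norm, or note that it is handled by taking differences; I expect the authors intend the former. The four integral terms are treated with supremum-in-time versions of the same estimates: the Lebesgue integrals are bounded pathwise by $T\sup g$, and the stochastic integrals by the BDG supremum inequality. The step needing the most care is the drift, where uniform-in-$N$ control of $\sup_u|X^i_u|$ and of $\mathfrak{f}^N$ via $F^N_T$ has to be routed through \cref{prop: BOUNDS ON THE SUP OF THE P TH MOMENTS OF THE SUP NORM OF X}. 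The remaining terms are routine.
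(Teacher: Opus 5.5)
Your proposal is correct and follows the same route as the paper, whose proof simply applies BDG and Jensen to the four terms of the continuous part. The only difference is the source of the uniform-in-$N$ control of the drift: you use the uniform moment bound proposition together with Jensen on the empirical average, whereas the paper cites Corollary 2.4 of the supplement.

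Your caveat about the initial term $\langle\nu_0^N,\phi\rangle$ in the supremum bound is justified, and it is the first of your two readings that works. For constant $\phi\equiv c$, one has $\langle\hat\nu^N_t,\phi\rangle\equiv c$ while $\norm{\partial_x\phi}_{\mathcal{C}^1}=0$, so the second estimate must include $\norm{\phi}_\infty$ (or the full $\mathcal{C}^2_b$ norm). ``Taking differences'' would not remove that term.
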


\begin{proof}
	The proof follows from repeated applications of the Burkholder--Davis--Gundy inequality and Jensen's inequality, where we rely on \supcor{2.4} to ensure the bound holds uniformly in $N$.
\end{proof}

With control on the increments of $\hat{\nu}^N$ that holds uniformly in $N$, we are able to show that the individual components from the decomposition in \eqref{eq: DECOMPOSITON OF EMPIRICAL DISTRIBUTION INTO CONTINUOUS PART AND JUMPS} are tight. This is the content of the following proposition.

\begin{proposition}\label{prop: TIGHTNESS OF THE TUPLE OF THE INDIVIDUAL COMPONENTS OF THE DECOMPOSITION OF NU N APPLIED TO PHI}
	\sloppy{The sequences $\{N^{-1}\sum_{i = 1}^N\sum_{k \ge 1} \phi(-\xi_k^i)\ind_{[0,t]}(\tau_k^i+ \varsigma_k^i)\}_{N \ge 1}$ and $\{\langle\hat{\nu}_t^N,\phi\rangle - \phi(0)F_t^N\}_{N \ge 1}$ are tight on $(D_\R,M_1)$.}
\end{proposition}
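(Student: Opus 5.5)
The plan is to treat the two sequences separately. The reset process will turn out to be C-tight, while $\langle\hat{\nu}^N,\phi\rangle - \phi(0)F^N$ is a C-tight process plus a monotone one, for which the $M_1$-topology is well suited. Throughout, $\phi\in\mathcal{C}^2_b$ is fixed and we use the characterization of tightness on $(D_\R,M_1)$ from Skorokhod/Whitt. It requires three things: uniform control of $\sup_{t\le T}|\cdot|$, smallness of the $M_1$-oscillation functional $w_\delta$, and smallness of the oscillation near the endpoints $0$ and $T$.

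\emph{Reset process.} Write $R^N_t = N^{-1}\sum_i\sum_k \phi(-\xi^i_k)\ind_{[0,t]}(\tau^i_k+\varsigma^i_k)$. We have $|R^N_t-R^N_s|\le \norm{\phi}_\infty (F^{D,N}_t - F^{D,N}_s)$, and $\sup_t|R^N_t|\le\norm{\phi}_\infty F^N_T$, which has bounded moments by Proposition~\ref{prop: BOUNDS ON THE SUP OF THE P TH MOMENTS OF THE SUP NORM OF X}. So it suffices to control the increments of $F^{D,N}$.

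By Lemma~\ref{lem: INDEPENDENCE BETWEEN THE STOPPING TIME AND THE REFRACTORY PERIOD AND THE RESET POINT}, $\varsigma^i_k$ is independent of $\tau^i_k$. Moreover $\pref\in\mathcal{W}^{1,1}_0(\R_+)$ is bounded. Hence
\[
\prob[\tau^i_k+\varsigma^i_k\in(s,t],\ \tau^i_k\le T]\le \norm{\pref}_\infty|t-s|\,\prob[\tau^i_k\le T]^{1/2}\cdots,
\]
and summing over $k$ with the Gaussian decay of Proposition~\ref{prop: EXPONENTIAL DECAY OF THE HITTING PROBABILITIES} gives $\E[F^{D,N}_t-F^{D,N}_s]\le C|t-s|$.

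I will then upgrade this to a second-moment bound $\E[(F^{D,N}_t-F^{D,N}_s)^2]\le C|t-s|^2 + C|t-s|/N$. For the off-diagonal terms $i\ne j$, I would order the two firing times and condition on everything except the later particle's refractory period. This is the same independence device used in the proof of Proposition~\ref{prop: ASYMPTOTIC DECORRELATION OF THE HITTING TIMES RESETS AND RANDOM WAITING TIMES}, and it again extracts a factor $\norm{\pref}_\infty|t-s|$ from each of the two resets. Since the jumps of $R^N$ are of size at most $\norm{\phi}_\infty/N$, a Billingsley-type moment criterion (Theorem 13.5 style, with the $1/N$ term absorbed because the jumps vanish) yields C-tightness in $J_1$, and hence tightness in $M_1$.

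\emph{Second process.} By Proposition~\ref{prop: LEMMA ON THE CONTROL OF THE INCREMENTS OF THE PROJECTIONS} and Kolmogorov's criterion, $\{\langle\hat\nu^N,\phi\rangle\}_N$ is C-tight with uniformly bounded fourth moment of the supremum. The process $-\phi(0)F^N$ is monotone, so its $M_1$-oscillation $w_\delta$ vanishes identically, and its supremum is controlled by Proposition~\ref{prop: BOUNDS ON THE SUP OF THE P TH MOMENTS OF THE SUP NORM OF X}. For a sum $Y+Z$ with $Z$ monotone, the $M_1$-oscillation satisfies $w_\delta(Y+Z)\le 2\,\omega_\delta(Y)$, where $\omega_\delta$ is the uniform modulus of continuity. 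This follows because the oscillation is measured around segments $[z(t_1),z(t_3)]$ that contain $z(t_2)$. The interior condition is therefore inherited from the C-tightness of $\hat\nu^N$, and it remains to handle the endpoints.

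At $T$ one can extend time slightly beyond $T$ (the particle system is defined on a larger horizon) or simply use the same monotone bound. At $0$ we need $\lim_{\delta\to0}\limsup_N\prob[F^N_\delta>\eta]=0$. I expect this to be the main obstacle. The plan is to split particles according to whether $X^i_0\in(-\varepsilon,0)$. The expected mass there is $\nu_0(-\varepsilon,0)\le \varepsilon^{1/2}\norm{V_0}_2$, which is small. The remaining particles must travel a distance $\varepsilon$ in time $\delta$, which has probability $O(e^{-c\varepsilon^2/\delta})$ by the linear-growth drift bound and the bounded diffusion coefficient. Before time $\delta$ only first spikes matter when $\delta$ is small, since $\varsigma$ has a density and $\xi\ge\gamma$. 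Taking $\varepsilon=\delta^{1/3}$ then gives $\E[F^N_\delta]\to0$ uniformly in $N$, and Markov's inequality finishes the argument.
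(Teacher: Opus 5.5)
\textbf{Gap: the endpoint condition at $T$.} Your overall architecture matches the paper's. The monotone part $-\phi(0)F^N$ contributes nothing to the interior $M_1$ oscillation; the paper cites Avram--Taqqu for essentially your bound $w_\delta(Y+Z)\le\omega_{2\delta}(Y)$. The term $\langle\hat\nu^N,\phi\rangle$ is handled by its fourth-moment increment bound. The reset term is controlled through second moments of increments of $F^{D,N}$, which the paper imports from the supplement rather than deriving.

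The gap is the right endpoint. The $M_1$ tightness criterion on $[0,T]$ (Whitt, Theorem~12.12.3) requires, with $Y^N_t:=\langle\hat\nu^N_t,\phi\rangle-\phi(0)F^N_t$,
\[
\lim_{\delta\to0}\limsup_{N\to\infty}\prob\Bigl[\sup_{t\in(T-\delta,T)}\bigl|Y^N_t-Y^N_T\bigr|>\eta\Bigr]=0 .
\]
For the monotone part this is exactly $\lim_{\delta}\limsup_N\prob[F^N_T-F^N_{T-\delta}>\eta]=0$. Neither of your suggestions gives it.
\begin{itemize}
\item ``The same monotone bound'' only removes the interior oscillation $w_\delta$. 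The endpoint oscillation of a monotone path is $F^N_T-F^N_{T-\delta}$ itself, which is the quantity you must show is small.
\item Extending the horizon to $[0,T']$ makes $T$ an interior point, but restriction $D[0,T']\to D[0,T]$ is not $M_1$-continuous at paths that jump at $T$. For example, the monotone paths $\ind_{[T-1/n,T']}$ converge in $(D[0,T'],M_1)$, yet their restrictions violate the endpoint condition and are not relatively compact in $(D[0,T],M_1)$.
\end{itemize}
So you need a genuine estimate ruling out a macroscopic amount of firing piling up just before the fixed time $T$. The paper gets this, for both endpoints, from Proposition~4.4 of the supplement. Your $t=0$ argument can be adapted, but at time $T-\delta$ you no longer have the deterministic $L^2$ datum $V_0$. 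You would need to:
\begin{itemize}
\item bound $\E[\nu^N_{T-\delta}(-\varepsilon,0)]$ uniformly in $N$ using \cref{cor: REGULARITY CONTROL ON THEM EMPIRICAL MEASURES} or \cref{prop: BOUNDARY DECAY OF THE EMPIRICAL MEAUSRES IN EXPECTATION};
\item note that neurons in their refractory period at $T-\delta$, or reinserted during $[T-\delta,T]$, re-enter below $-\gamma$;
\item apply the travel-distance bound to the remaining neurons.
\end{itemize}
This endpoint is therefore not the easy one.

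\textbf{Smaller points on the reset term.} For $i\neq j$, the conditioning you describe yields only one factor $|t-s|$. After integrating out the later neuron's $\varsigma^j_\ell$, the earlier neuron's $\varsigma^i_k$ is no longer independent of $\{\tau^j_\ell\le T\}$, because the reset of neuron $i$ feeds back into neuron $j$'s dynamics through $\nu^N$. To keep summability in $\ell$ you must use H\"older, for instance
\[
\E\bigl[\ind_{\{\tau^i_k+\varsigma^i_k\in(s,t]\}}J^j_T\bigr]\le\prob[\tau^i_k+\varsigma^i_k\in(s,t]]^{1/p}\,\E[(J^j_T)^q]^{1/q}.
\]
This gives $C|t-s|^{1+\beta}+C|t-s|/N$ with $\beta<1$, not $|t-s|^2$. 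That is exactly the supplement's bound, and it suffices.

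The $|t-s|/N$ term is also not disposed of just because jumps are $O(1/N)$; you need an argument below scale $T/N$. One option is the paper's piecewise-constant approximation on the grid $t_j=jT/N$: monotonicity of $F^{D,N}$ bounds the sup-distance by $\max_j(F^{D,N}_{t_{j+1}}-F^{D,N}_{t_j})$, whose second moment is $O(N^{-1}+N^{-\beta})$. Another is a three-point moment condition on products of adjacent increments, where the diagonal (same-reset) term does not appear.
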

\begin{proof}
	To facilitate the presentation of this proof, we introduce the following notation:
	\begin{equation}\label{eq: DEINITION OF MATHCAL G1N AND MATHCAL G2N DEFINITIONS}
		\mathcal{G}_t^{1,N} \coloneqq \langle\hat{\nu}_t^N,\phi\rangle - \phi(0)F_t^N \quad \textnormal{and} \quad \mathcal{G}_t^{2,N} \coloneqq \frac{1}{N}\sum_{i = 1}^N\sum_{k \ge 1} \phi(-\xi_k^i)\ind_{[0,t]}(\tau_k^i+ \varsigma_k^i).
	\end{equation}
	As $\phi$ is fixed, we do not include the explicit dependence of $\mathcal{G}^{1,N}$ and $\mathcal{G}^{2,N}$ on $\phi$ in this notation. We first consider the sequence $\{\mathcal{G}^{1,N}\}_{N \ge 1}$. We will verify that $\{\mathcal{G}^{1,N}\}_{N \ge 1}$ satisfies the conditions of \citep[Theorem~12.12.3]{whitt2002stochastic}. The first condition is satisfied, as for any $\lambda > 0$ we have
	\begin{equation*}
		\prob\left[\sup_{t \le T}\nnnorm{\mathcal{G}_t^{1,N}} > 2 \lambda \right] \le \prob\left[\sup_{t \le T}\nnnorm{\langle\hat{\nu}_t^N,\phi\rangle} > \lambda \right] + \prob\left[\nnnorm{F_T^N} > \lambda/\norm{\phi}_\infty \right] = O(\lambda^{-1}),
	\end{equation*}
	where the last equality follows from Markov's inequality and holds uniformly in $N$ by \cref{prop: BOUNDS ON THE SUP OF THE P TH MOMENTS OF THE SUP NORM OF X} and \cref{prop: LEMMA ON THE CONTROL OF THE INCREMENTS OF THE PROJECTIONS}. As $F^N$ is monotone, it is immaterial to the $M_1$-oscillatory function. Hence, for the second condition, by \citep[Theorem~1]{AVRAM198963}, it is sufficient to verify that
	\begin{equation}\label{eq: TIGHTNESS PROOF OF THE TUPLE EQUATION ONE}
		\Expec{\nnnorm{\langle\hat{\nu}_t^N,\phi\rangle - \langle\hat{\nu}_s^N,\phi\rangle}^4} = O(\nnnorm{t-s}^2) \qquad \forall s,t \in [0,T]
	\end{equation}
	and that, for every $\varepsilon > 0$,
	\begin{equation}\label{eq: TIGHTNESS PROOF OF THE TUPLE EQUATION TWO}
		\lim_{\delta \to 0} \limsup_{N \to \infty} \prob\left[\sup_{t \in (0,\delta)} \nnnorm{\mathcal{G}_t^{1,N} - \mathcal{G}_0^{1,N}} + \sup_{t \in (T-\delta,T)} \nnnorm{\mathcal{G}_t^{1,N} - \mathcal{G}_T^{1,N}} > \varepsilon\right] = 0.
	\end{equation}
	This approach is similar to that used in \citep[Proposition~4.2]{hambly2019spde}, \citep[Proposition~4.1, Proposition~4.2]{ledger2016skorokhod} and \citep[Proposition~5.1]{hamblyseanhalfline}. Equation \eqref{eq: TIGHTNESS PROOF OF THE TUPLE EQUATION ONE} has been shown in \cref{prop: LEMMA ON THE CONTROL OF THE INCREMENTS OF THE PROJECTIONS}. By the decomposition of $\mathcal{G}^{1,N}$ given in \eqref{eq: DEINITION OF MATHCAL G1N AND MATHCAL G2N DEFINITIONS},
	\begin{align*}
		\prob\left[\sup_{t \in (0,\delta)} \nnnorm{\mathcal{G}_t^{1,N} - \mathcal{G}_0^{1,N}} > \varepsilon/2\right] 
		\le& \prob\left[\sup_{t \in (0,\delta)} \nnorm{\langle\hat{\nu}_t^N,\phi\rangle - \langle\hat{\nu}_0^N,\phi\rangle}> \varepsilon/4\right]\\
		&+ \prob\left[F_\delta^N > \frac{\varepsilon}{4\norm{\phi}_\infty}\right]
	\end{align*}
	and likewise for the supremum over $t\in (T - \delta,T)$. Therefore, by first employing Markov's inequality, \eqref{eq: TIGHTNESS PROOF OF THE TUPLE EQUATION TWO} follows from \supprop{4.4} and \cref{prop: LEMMA ON THE CONTROL OF THE INCREMENTS OF THE PROJECTIONS}. Hence, $\{\mathcal{G}^{1,N}\}_{N \ge 1}$ is tight on $(D_\R,M_1)$.
	
	To show the tightness of $\{\mathcal{G}^{2,N}\}_{N \ge 1}$, we shall also show that the sequence satisfies the conditions of \citep[Theorem~12.12.3]{whitt2002stochastic}. However, here the method deviates from the literature cited above. We first observe that for any $t,s \ge 0$,
	\begin{equation}\label{eq: UPPERBOUNDS ON G2N AND A UPPERBOUND ON THE INCREMENTS OF G2N}
		\nnnorm{\mathcal{G}_t^{2,N}} \le \norm{\phi}_\infty F_t^{D,N} \qquad \textnormal{ and } \qquad \nnnorm{\mathcal{G}_t^{2,N} - \mathcal{G}_s^{2,N}} \le \norm{\phi}_\infty \nnnorm{F_t^{D,N} - F_s^{D,N}}.
	\end{equation}
	The first condition is satisfied, as for any $\lambda > 0$ we have
	\begin{equation*}
		\prob\left[\sup_{t \le T}\nnnorm{\mathcal{G}_t^{2,N}} > \lambda \right] \le \prob\left[F_T^{D,N} > \lambda/\norm{\phi}_{\infty} \right] = O(\lambda^{-1}),
	\end{equation*}
	where the last equality follows from Markov's inequality and holds uniformly in $N$ by \cref{prop: BOUNDS ON THE SUP OF THE P TH MOMENTS OF THE SUP NORM OF X}. For the second condition, we need to work slightly harder as the increments are of order 1 in $t$ and $s$ by \supprop{4.2}. Hence, we cannot immediately deduce that the condition is satisfied by applying \citep[Theorem~1]{AVRAM198963}. To verify that the second condition of \citep[Theorem~12.12.3]{whitt2002stochastic} is satisfied, we must show that
	\begin{equation}\label{eq: FIRST EQUATION TO SHOW THE SECOND CONDITION FOR THE SECOND PART OF THE DECOMPOSTION OF NU N}
		\lim_{\delta \to 0} \limsup_{N \to \infty} \prob\left[w_{\delta}(\mathcal{G}^{2,N}) \ge \varepsilon \right] = 0,
	\end{equation}
	and
	\begin{equation}\label{eq: SECOND EQUATION TO SHOW THE SECOND CONDITION FOR THE SECOND PART OF THE DECOMPOSTION OF NU N}
		\lim_{\delta \to 0} \limsup_{N \to \infty} \prob\left[\sup_{t \in (0,\delta)} \nnnorm{\mathcal{G}_t^{2,N} - \mathcal{G}_0^{2,N}} + \sup_{t \in (T-\delta,T)} \nnnorm{\mathcal{G}_t^{2,N} - \mathcal{G}_T^{2,N}} > \varepsilon\right] = 0
	\end{equation}
	for any $\varepsilon > 0$, where $w_\delta$ is the $M_1$-oscillatory function defined by
	\begin{align*}
		w_\delta(x) \coloneqq \sup \{H_x(t_1,t_2,t_3)\,:\, (t - \delta)\vee 0 \le t_1\le t_2\le t_3 \le (t + \delta) \wedge T,\,0 \le t \le T \}, 
	\end{align*}
	where $H_x(t_1,t_2,t_3) \coloneqq \inf_{\lambda \in [0,1]} \nnnorm{x_{t_2} - (1 - \lambda)x_{t_1} - \lambda x_{t_3}}$ for any $x \in D_\R$. Equation \eqref{eq: SECOND EQUATION TO SHOW THE SECOND CONDITION FOR THE SECOND PART OF THE DECOMPOSTION OF NU N} follows immediately from \eqref{eq: UPPERBOUNDS ON G2N AND A UPPERBOUND ON THE INCREMENTS OF G2N} and \supprop{4.1}. 
	
	To show \eqref{eq: FIRST EQUATION TO SHOW THE SECOND CONDITION FOR THE SECOND PART OF THE DECOMPOSTION OF NU N}, the strategy is as follows. We shall construct a piecewise linear function that approximates $\mathcal{G}^{2,N}$. Our approximations will be sufficiently close to $\mathcal{G}^{2,N}$ using the sup-norm, and we will have sufficient control over the $M_1$-oscillatory function of the approximations. To begin, we define $t_j^N \coloneqq jT/N$. Therefore, the piecewise linear approximation to $\mathcal{G}^{2,N}$ is
	\begin{equation*}
		\tilde{\mathcal{G}}^{2,N} \coloneqq \sum_{j = 0}^{N-1} \mathcal{G}_{t_j^N}^{2,N}\ind_{[t_j^N,t_{j+1}^N)} + \mathcal{G}_{T}^{2,N}\ind_{\{T\}}.
	\end{equation*} 
	We observe that for any $0 \le t_1 \le t_2 \le t_3 \le T$,
	\begin{align*}
		H_{\mathcal{G}^{2,N}}(t_1,t_2,t_3) &= \inf_{\lambda \in [0,1]} \nnnorm{\mathcal{G}^{2,N}_{t_2} - (1 - \lambda)\mathcal{G}^{2,N}_{t_1} - \lambda \mathcal{G}^{2,N}_{t_3}}\\
		&\le \inf_{\lambda \in [0,1]} \nnnorm{\tilde{\mathcal{G}}^{2,N}_{t_2} - (1 - \lambda)\tilde{\mathcal{G}}^{2,N}_{t_1} - \lambda \tilde{\mathcal{G}}^{2,N}_{t_3}} + 3\norm{\mathcal{G}^{2,N}-\tilde{\mathcal{G}}^{2,N}}_{\infty}\\
		&\le H_{\tilde{\mathcal{G}}^{2,N}}(t_1,t_2,t_3) + 3\norm{\mathcal{G}^{2,N}-\tilde{\mathcal{G}}^{2,N}}_{\infty}.
	\end{align*}
	Hence, $w_{\delta}(\mathcal{G}^{2,N}) \le w_{\delta}(\tilde{\mathcal{G}}^{2,N}) + 3\norm{\mathcal{G}^{2,N}-\tilde{\mathcal{G}}^{2,N}}_{\infty}$. Now, to estimate the second term, by definition, for any $t \in [t_j^N,t_{j+1}^N)$,
	\begin{equation*}
		\nnnorm{\mathcal{G}_t^{2,N}-\tilde{\mathcal{G}}_t^{2,N}} = \nnnorm{\mathcal{G}_t^{2,N}- \mathcal{G}_{t_j^N}^{2,N}} \le \norm{\phi}_\infty \nnnorm{F_t^{D,N} - F_{t_j^N}^{D,N}} \le \norm{\phi}_\infty \nnnorm{F_{t_{j+1}^N}^{D,N} - F_{t_j^N}^{D,N}},
	\end{equation*}
	where the penultimate inequality follows from \eqref{eq: UPPERBOUNDS ON G2N AND A UPPERBOUND ON THE INCREMENTS OF G2N} and the last inequality follows from the monotonicity of $F^{D,N}$. Therefore, we may conclude that
	\begin{align*}
		\E\left[\norm{\mathcal{G}^{2,N}-\tilde{\mathcal{G}}^{2,N}}_{\infty}^2\right] &\le \sum_{j = 0}^{N-1}\norm{\phi}_\infty^2  \E \left[\nnnorm{F_{t_{j+1}^N}^{D,N} - F_{t_j^N}^{D,N}}^2\right]\\
		&\le \sum_{j = 0}^{N-1} C \norm{\phi}_\infty^2(\nnnorm{t_{j+1}^N-t_{j}^N}N^{-1} + \nnnorm{t_{j+1}^N- t_{j}^N}^{1 + \beta})\\
		& \le C(N^{-1} + N^{-\beta}),
	\end{align*}
	where the penultimate inequality follows from \supprop{4.2} and the constant $C$ is independent of $N$. By applying Markov's inequality,
	\begin{equation}\label{eq: LIMITS ON THE PROBABILITY OF THE SUP NORM OF THE DIFFERENCE BETWEEN TWO APPROXIMATIONS}
		\lim_{\delta \to 0} \limsup_{N \to \infty} \prob\left[\norm{\mathcal{G}^{2,N}-\tilde{\mathcal{G}}^{2,N}}_{\infty} \ge \varepsilon \right] = 0
	\end{equation}
	for any $\varepsilon > 0$. Now, to bound the oscillations of $\tilde{\mathcal{G}}^{2,N}$, choose a $t_1\le t_2 \le t_3 < T$. If we suppose there is a $j$ such that $t_i \in [t_j^N,t_{j+2}^N)$ for every $i$, then it is immediate that $H_{\tilde{\mathcal{G}}^{2,N}}(t_1,t_2,t_3) = 0$. Hence, trivially,
	\begin{equation}\label{eq: BOUNDS OF THE M1 OSCILLATORY FUNCTION OF OUR APPROXIAMTION}
		\prob\left[H_{\tilde{\mathcal{G}}^{2,N}}(t_1,t_2,t_3) \ge \varepsilon\right] \le 2 \norm{\phi}_{\infty}^2 C (T^{-1} + 1) \varepsilon^{-2}\nnnorm{t_1  -t_3}^{1+\beta}
	\end{equation} 
	uniformly in $N$ for any $\varepsilon >0$, where $C$ and $\beta$ are the constants from \supprop{4.2}. In the case when there is some $j,\tilde{j} \in [1:N]$ with $j < \tilde{j}$ and $m \ge 2$ such that $t_1 \in [t_j^N,t_{j+1}^N)$, $t_3 \in [t_{j+m}^N,t_{j+m+1}^N)$ and $t_2 \in [t_{\tilde{j}}^N,t_{{\tilde{j}}+1}^N) \subset [t_j^N,t_{j+m+1}^N)$, then as $H_{\tilde{\mathcal{G}}^{2,N}}(t_1,t_2,t_3)$ only takes the values $0,\, \nnnorm{\mathcal{G}_{t_1}^{2,N} - \mathcal{G}_{t_2}^{2,N}}$ or $\nnnorm{\mathcal{G}_{t_3}^{2,N} - \mathcal{G}_{t_2}^{2,N}}$, we observe that
	\begin{align*}
		\E\left[H_{\tilde{\mathcal{G}}^{2,N}}(t_1,t_2,t_3)^2\right] &\le \E\left[\nnnorm{\tilde{\mathcal{G}}_{t_1}^{2,N} - \tilde{\mathcal{G}}_{t_2}^{2,N}}^2 + \nnnorm{\tilde{\mathcal{G}}_{t_3}^{2,N} - \tilde{\mathcal{G}}_{t_2}^{2,N}}^2\right]\\
		&= \E\left[\nnnorm{\mathcal{G}_{t_j^N}^{2,N} - \mathcal{G}_{t_{\tilde{j}}^N}^{2,N}}^2 + \nnnorm{\mathcal{G}_{t_{j+m}^N}^{2,N} - \mathcal{G}_{t_{\tilde{j}}^N}^{2,N}}^2\right]\\
		&\le \norm{\phi}_{\infty}^2 \E\left[\nnnorm{F_{t_j^N}^{D,N} - F_{t_{\tilde{j}}^N}^{D,N}}^2 + \nnnorm{F_{t_{j+m}^N}^{D,N} - F_{t_{\tilde{j}}^N}^{D,N}}^2\right]\\
		&\le \norm{\phi}_{\infty}^2 C \Biggl[N^{-1}\nnnorm{t_j^N - t_{\tilde{j}}^N} + N^{-1}\nnnorm{t_{j+m}^N - t_{\tilde{j}}^N} \\
		&\hspace{2.2cm} + \nnnorm{t_j^N - t_{\tilde{j}}^N}^{1 + \beta} + \nnnorm{t_{j+m}^N - t_{\tilde{j}}^N}^{1 + \beta} \Biggr]\\
		&\le 2 \norm{\phi}_{\infty}^2 C \left[N^{-1}\left(\frac{(m-1)T}{N}\right) + \left(\frac{(m-1)T}{N}\right)^{1 + \beta} \right]\\
		&\le 2 \norm{\phi}_{\infty}^2 C \left[N^{-1}\nnnorm{t_3 - t_1} + \nnnorm{t_3 - t_1}^{1 + \beta} \right]\\ \intertext{As $(t_3 - t_1) > TN^{-1}$ by assumption,}
		&\le 2 \norm{\phi}_{\infty}^2 C \left[T^{-1}\nnnorm{t_3 - t_1}^2 + \nnnorm{t_3 - t_1}^{1 + \beta} \right]\\
		&\le 2 \norm{\phi}_{\infty}^2 C (T^{-1} + 1)\nnnorm{t_3 - t_1}^{1 + \beta}.
	\end{align*}
	The above computation also holds in the case when $t_3 = T$. Therefore, \eqref{eq: BOUNDS OF THE M1 OSCILLATORY FUNCTION OF OUR APPROXIAMTION} holds for any $0 \le t_1 \le t_2 \le t_3 \le T$. In view of \eqref{eq: BOUNDS OF THE M1 OSCILLATORY FUNCTION OF OUR APPROXIAMTION}, the assumptions of \citep[Theorem~1]{AVRAM198963} are satisfied, and we may conclude that
	\begin{equation}\label{eq: BOUNDS ON THE PROBABILITY OF THE M1 OSCILLATIONS OF APPROXIAMTIONS}
		\prob\left[w_{\delta}(\tilde{\mathcal{G}}^{2,N}) \ge \varepsilon \right] = O(\delta^\beta\varepsilon^{-2})
	\end{equation}
	uniformly in $N$ for any $\varepsilon > 0$. By combining \eqref{eq: LIMITS ON THE PROBABILITY OF THE SUP NORM OF THE DIFFERENCE BETWEEN TWO APPROXIMATIONS} and \eqref{eq: BOUNDS ON THE PROBABILITY OF THE M1 OSCILLATIONS OF APPROXIAMTIONS}, we deduce that \eqref{eq: FIRST EQUATION TO SHOW THE SECOND CONDITION FOR THE SECOND PART OF THE DECOMPOSTION OF NU N} holds. This verifies the second condition of \citep[Theorem~12.12.3]{whitt2002stochastic}, and hence $\{\mathcal{G}^{2,N}\}_{N \ge 1}$ is tight.
\end{proof}

As the components from decomposition \eqref{eq: DECOMPOSITON OF EMPIRICAL DISTRIBUTION INTO CONTINUOUS PART AND JUMPS} are tight, we may use this result to deduce the tightness of their sum.

\begin{proposition}\label{prop: TIGTHNESS OF THE TUPLE WITH THE MEASURES FIRIING FUNCTION AND COMMON BROWNIAN MOTION}
	The sequence $\{(\nu^N,F^N,W^0)\}_{N\ge 1}$ is tight on $(D_{\mathscr{S}^\prime},M_1) \times (D_{\R},M_1) \times (\mathcal{C}_\R,\norm{\cdot}_{\infty})$. Furthermore, any subsequence of $\{(\nu^N,F^N,W^0)\}_{N \ge 1}$ has a further subsequence that converges weakly to a limit point $(\nu^*,F^*,W^0)$.
\end{proposition}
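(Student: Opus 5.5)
We reduce tightness of the $\mathscr{S}^\prime$-valued processes to tightness of their real-valued projections, via Mitoma-type results for $(D_{\mathscr{S}^\prime},M_1)$. By \citep[Theorem~3.2]{ledger2016skorokhod}, $\{\nu^N\}_N$ is tight on $(D_{\mathscr{S}^\prime},M_1)$ as soon as $\{\langle\nu^N,\phi\rangle\}_N$ is tight on $(D_\R,M_1)$ for every $\phi\in\mathscr{S}$. Fix $\phi\in\mathscr{S}\subset\mathcal{C}_b^2$. By the decomposition \eqref{eq: DECOMPOSITON OF EMPIRICAL DISTRIBUTION INTO CONTINUOUS PART AND JUMPS}, $\langle\nu^N,\phi\rangle=\mathcal{G}^{1,N}+\mathcal{G}^{2,N}$, and both summands are tight on $(D_\R,M_1)$ by \cref{prop: TIGHTNESS OF THE TUPLE OF THE INDIVIDUAL COMPONENTS OF THE DECOMPOSITION OF NU N APPLIED TO PHI}.

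The main obstacle is that addition is not continuous on $(D_\R,M_1)$, so tightness of the summands does not directly give tightness of the sum. We therefore verify the criterion of \citep[Theorem~12.12.3]{whitt2002stochastic} for the sum directly.

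\emph{Compact containment.} This is immediate from the sup bounds on each piece: the moment bounds of \cref{prop: LEMMA ON THE CONTROL OF THE INCREMENTS OF THE PROJECTIONS} and \cref{prop: BOUNDS ON THE SUP OF THE P TH MOMENTS OF THE SUP NORM OF X}.

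\emph{Oscillation.} The proof of \cref{prop: TIGHTNESS OF THE TUPLE OF THE INDIVIDUAL COMPONENTS OF THE DECOMPOSITION OF NU N APPLIED TO PHI} in fact shows that $\mathcal{G}^{2,N}$ is uniformly close to a process with small $M_1$-oscillation. More usefully, \eqref{eq: UPPERBOUNDS ON G2N AND A UPPERBOUND ON THE INCREMENTS OF G2N} together with the increment bound of \supprop{4.2} gives $\E\bigl[\sup_{|t-s|\le\delta}|\mathcal{G}^{2,N}_t-\mathcal{G}^{2,N}_s|^2\bigr]$ small, through the grid approximation $\tilde{\mathcal{G}}^{2,N}$, up to an $o(1)$ error in $N$. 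For the continuous part, $\langle\hat\nu^N,\phi\rangle$ has $\mathcal{C}$-tight increments by the Kolmogorov-type bound in \cref{prop: LEMMA ON THE CONTROL OF THE INCREMENTS OF THE PROJECTIONS}. Since $H_{x+y}\le H_x+2\sup|y_{t}-y_{t'}|$ over the relevant window, the $M_1$-oscillation of the sum is controlled by:
\begin{itemize}
\item the $M_1$-oscillation of the monotone term $-\phi(0)F^N$, which is zero;
\item plus the uniform moduli of continuity of $\langle\hat\nu^N,\phi\rangle$ and of the grid-approximated $\mathcal{G}^{2,N}$.
\end{itemize}
The grid increments of $\mathcal{G}^{2,N}$ are of size $O(\delta^{(1+\beta)/2})$ in $L^2$, and \citep[Theorem~1]{AVRAM198963} upgrades these increment estimates to the required oscillation bound. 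The boundary conditions at $0$ and $T$ follow as in \eqref{eq: TIGHTNESS PROOF OF THE TUPLE EQUATION TWO} and \eqref{eq: SECOND EQUATION TO SHOW THE SECOND CONDITION FOR THE SECOND PART OF THE DECOMPOSTION OF NU N}. Hence $\{\langle\nu^N,\phi\rangle\}_N$ is tight on $(D_\R,M_1)$.

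\emph{The other components.} $\{F^N\}_N$ consists of increasing processes with $\sup_N\E[F^N_T]<\infty$ by \cref{prop: BOUNDS ON THE SUP OF THE P TH MOMENTS OF THE SUP NORM OF X}, so its $M_1$-oscillation vanishes. The endpoint condition follows from \supprop{4.4}, so $\{F^N\}_N$ is tight on $(D_\R,M_1)$. $W^0$ has a fixed law on $\mathcal{C}_\R$, hence is trivially tight.

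Tightness of each marginal implies tightness of the triple on the product space. The product of these separable metrizable spaces is again metrizable and separable (cf.~\citep{ledger2016skorokhod} for $D_{\mathscr{S}^\prime}$). Prokhorov's theorem then gives that every subsequence has a further weakly convergent subsequence. Since the third marginal law is constant, the limit has the form $(\nu^*,F^*,W^0)$ with $W^0$ a Brownian motion.
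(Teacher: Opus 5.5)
\textbf{The gap is in your final step.} To extract weakly convergent subsequences you claim that the product space is separable and metrizable, citing \citep{ledger2016skorokhod} for $D_{\mathscr{S}^\prime}$. This is false. $(D_{\mathscr{S}^\prime},M_1)$ is not metrizable: already $\mathscr{S}^\prime$, which sits inside it as the constant paths, is not. Ledger only shows that its \emph{compact subsets} are metrizable \citep[Theorem~3.1(iii)]{ledger2016skorokhod} and that it is completely regular \citep[Proposition~2.7(iii)]{ledger2016skorokhod}. So the classical Prokhorov theorem on metric spaces is unavailable, and in a non-metrizable space tightness alone does not let you extract convergent subsequences.

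This is exactly the point the paper's proof of the second claim handles. $(D_{\mathscr{S}^\prime},M_1)$ and the Polish factors are topological Radon spaces, so the product is a completely regular topological Radon space whose compact subsets are metrizable. The Prokhorov-type theorem \citep[Section~5, Theorem~2]{smolyanov1976measures}, used as in the proof of \citep[Theorem~3.2]{ledger2016skorokhod}, then shows that every uniformly tight sequence has a weakly convergent subsequence. Replacing your last paragraph with this argument repairs the proof.

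Your tightness argument for the projections $\langle\nu^N,\phi\rangle$ differs from the paper's and is sound. The paper first extracts a joint limit $(\mathcal{G}^{1,*},\mathcal{G}^{2,*})$. It transfers the increment bound of \supprop{4.2} to $\mathcal{G}^{2,*}$ on a co-countable set of times and gets continuity of $\mathcal{G}^{2,*}$ from Kolmogorov. It then uses continuity of addition at such limit points \citep[Theorem~12.7.3]{whitt2002stochastic} to pass the sum through the continuous mapping theorem.

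You instead verify \citep[Theorem~12.12.3]{whitt2002stochastic} for the sum directly. You use $H_{x+y}(t_1,t_2,t_3)\le H_x(t_1,t_2,t_3)+\sup_{r,r'\in[t_1,t_3]}|y_r-y_{r'}|$ with $x=-\phi(0)F^N$ (monotone, so $H_x=0$) and $y=\langle\hat\nu^N,\phi\rangle+\mathcal{G}^{2,N}$. This avoids identifying limits and in fact shows that $\mathcal{G}^{2,N}$ is asymptotically $\mathcal{C}$-tight.

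Your appeal to Avram--Taqqu at that point is misplaced, however. An $M_1$-oscillation bound for $\mathcal{G}^{2,N}$ would not help in your inequality, because $H$ is not subadditive; what you need is the uniform modulus of $\mathcal{G}^{2,N}$. A union bound over grid blocks of mesh $\delta$ gives it: $\E[\max_k (F^{D,N}_{t_{k+2}}-F^{D,N}_{t_k})^2]\le (T/\delta)\,C\,(2\delta/N+(2\delta)^{1+\beta})\le C(N^{-1}+\delta^{\beta})$. Combined with monotonicity of $F^{D,N}$ and \eqref{eq: UPPERBOUNDS ON G2N AND A UPPERBOUND ON THE INCREMENTS OF G2N}, this controls the modulus directly. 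Compact containment is also trivial here, since $|\langle\nu^N_t,\phi\rangle|\le\norm{\phi}_\infty$.
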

\begin{proof}
	As tightness of the marginals implies tightness of the joint distribution, it will be sufficient to show that each component is individually tight. Trivially, $\{W^0\}_{N}$ is tight as $W^0$ does not change with $N$. We now turn our attention to $\{F^N\}_N$. As monotone functions are immaterial to the $M_1$-oscillatory function, by \supprop{4.4} and \cref{prop: BOUNDS ON THE SUP OF THE P TH MOMENTS OF THE SUP NORM OF X}, we may conclude that $\{F^N\}_N$ satisfies the assumptions of \citep[Theorem~12.12.3]{whitt2002stochastic}. Therefore, $\{F^N\}_N$ is tight.
	
	For the tightness of $\{\nu^N\}_N$, it suffices to show that $\{\nu^N(\phi)\}_N$ is tight in $(D_{\R},M_1)$ for any $\phi \in \mathscr{S}$. Since $(D_{\R},M_1)$ is a Polish space, by Prokhorov's Theorem, tightness is equivalent to sequential pre-compactness. By \eqref{eq: DEINITION OF MATHCAL G1N AND MATHCAL G2N DEFINITIONS}, we may write $\nu^N(\phi) = \mathcal{G}^{1,N} + \mathcal{G}^{2,N}$. The sequence $\{(\mathcal{G}^{1,N},\mathcal{G}^{2,N})\}_N$ is tight in $(D_{\R},M_1) \times (D_{\R},M_1)$ by \cref{prop: TIGHTNESS OF THE TUPLE OF THE INDIVIDUAL COMPONENTS OF THE DECOMPOSITION OF NU N APPLIED TO PHI}. Therefore, for any subsequence $N_j$, we may choose a further subsequence, also denoted by $N_j$ for simplicity, such that $(\mathcal{G}^{1,N},\mathcal{G}^{2,N})$ converges weakly to $ (\mathcal{G}^{1,*},\mathcal{G}^{2,*})$. An application of the Continuous Mapping Theorem, the Portmanteau Theorem, and \supprop{4.2} allows us to deduce that
	\begin{equation}\label{eq: BOUNDS ON INCREMENTS OF THE DELAYED FIRING FUNCTION IN THE TIGHTNESS PROOF}
		\E\nnnorm{\mathcal{G}^{2,*}_t - \mathcal{G}^{2,*}_s}^2 \le C\nnnorm{t-s}^{1 + \beta}
	\end{equation}
	for a co-countable set of times $t,s$ with constants $C$ and $\beta$ that are independent of $t$ and $s$. By Kolmogorov's Continuity Theorem, $\mathcal{G}^{2,*}$ has continuous paths almost surely. Therefore, addition is a $\operatorname{Law}((\mathcal{G}^{1,*},\mathcal{G}^{2,*}))$-almost sure continuous map by the continuity of $\mathcal{G}^{2,*}$ and \citep[Theorem~12.7.3]{whitt2002stochastic}. Hence, by the Continuous Mapping Theorem,
	\begin{equation*}
		\nu^{N_j}(\phi) = \mathcal{G}^{1,N_j} + \mathcal{G}^{2,N_j} \implies \mathcal{G}^{1,*} + \mathcal{G}^{2,*}.
	\end{equation*}
	Therefore, $\{\nu^N(\phi)\}_N$ is sequentially pre-compact and hence, by the Portmanteau Theorem, tight in $(D_\R,M_1)$.
	
	\sloppy{For the second claim, the proof follows along the lines of \citep[Theorem~3.2]{ledger2016skorokhod}. Both $(D_{\mathscr{S}^\prime},M_1)$ and any Polish space are topological Radon spaces, by \citep[Section~3,Exercise~4]{smolyanov1976measures} and \citep[Section~3,Exercise~1]{smolyanov1976measures}, respectively. Hence, $(D_{\mathscr{S}^\prime},M_1) \times (D_{\R},M_1) \times (\mathcal{C}_\R,\norm{\cdot}_{\infty})$ is a topological Radon space. As every compact subset of $(D_{\mathscr{S}^\prime},M_1)$ is metrizable (\citep[Theorem~3.1 (iii)]{ledger2016skorokhod}) and it is a completely regular space (\citep[Proposition~2.7 (iii)]{ledger2016skorokhod}), it is clear that every compact subset of $(D_{\mathscr{S}^\prime},M_1) \times (D_{\R},M_1) \times (\mathcal{C}_\R,\norm{\cdot}_{\infty})$ is metrizable, and it is a completely regular space as Polish spaces are completely regular spaces and the product of completely regular spaces is a completely regular space. Therefore, the second claim follows from \citep[Section~5,Theorem~2]{smolyanov1976measures}.}
\end{proof}

\subsection{Limit Point Representation and Further Properties}\label{sec: LIMIT POINT REPRESENTATION AND PROPERTIES IN THE NEURO SPDE PAPER} 
In view of \cref{prop: TIGTHNESS OF THE TUPLE WITH THE MEASURES FIRIING FUNCTION AND COMMON BROWNIAN MOTION}, we fix a limit point and denote it by $(\nu^*,F^*,W^0)$. We now aim to show that the effects from the reinsertion of the particles are smoothed out by the refractory period, and we see that particles are continuously reinserted into the system in the limit.

\begin{proposition}[Limit point representation]\label{prop: LIMIT POINT REPRESENTATION OF THE DELAYED FIRING FUNCTION}
	Fix a $\phi \in \mathscr{S}$ and let $(\nu^N,F^N,W^0)$ be a subsequence, still indexed by $N$, along which $(\nu^N,F^N,W^0) \implies (\nu^*,F^*,W^0)$. Then on $(D_{\R},M_1) \times (D_{\R},M_1)$, we have $(F^N,N^{-1}\sum_{i = 1}^N\sum_{k \ge 1} \phi(-\xi_k^i)\ind_{[0,t]}(\tau_k^i+ \varsigma_k^i)) \implies (F^*,\E[\phi(-\xi)]\int_0^\cdot \pref(\cdot - s)F^*_s \diff s)$.
\end{proposition}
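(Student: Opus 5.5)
The plan is to split $\mathcal{G}^{2,N}_t = N^{-1}\sum_{i}\sum_{k} \phi(-\xi_k^i)\ind_{[0,t]}(\tau_k^i+\varsigma_k^i)$ into a part driven by the empirical measures $\mu^{N,k}$ of \cref{prop: ASYMPTOTIC DECORRELATION OF THE HITTING TIMES RESETS AND RANDOM WAITING TIMES}. Writing $g_t(x,y,z) = \phi(-x)\ind_{\{y+z\le t\}}$, we have $\mathcal{G}^{2,N}_t = \sum_{k\ge1}\langle \mu^{N,k}, g_t\rangle$. First we truncate the sum over $k$ at some level $K$. By \cref{prop: EXPONENTIAL DECAY OF THE HITTING PROBABILITIES},
\[
\E\Bigl[\sup_{t\le T}\bigl|\textstyle\sum_{k>K}\langle\mu^{N,k},g_t\rangle\bigr|\Bigr]\le \norm{\phi}_\infty\sum_{k>K}\prob[\tau_k^i\le T],
\]
and this is summable and uniformly small in $N$. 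Hence it suffices to treat finitely many $k$.

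Next we identify the joint limit of $(F^N,\mu^{N,1},\dots,\mu^{N,K})$. Since $\mathcal P(\bar\R^3)$ is compact, we may pass to a further subsequence along which $(\nu^N,F^N,W^0,\mu^{N,1},\dots,\mu^{N,K})$ converges jointly. By \cref{prop: ASYMPTOTIC DECORRELATION OF THE HITTING TIMES RESETS AND RANDOM WAITING TIMES}, each limit has the form $\mu^{*,k}=\mathrm{Law}(\xi)\otimes\mu^{*,k}_2\otimes\mathrm{Law}(\varsigma)$. We then link the middle marginals to $F^*$. Because $F^N_t=\sum_k\langle \pi_2^\#\mu^{N,k},\ind_{[0,t]}\rangle$, the continuity of $F^*$ at co-countably many $t$, together with the same truncation in $k$, gives $F^*_t=\sum_k \mu_2^{*,k}[0,t]$ at continuity points of $F^*$, and hence everywhere by right-continuity. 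The main technical obstacle is that $g_t$ is discontinuous, so it is not a legitimate test function for weak convergence. We handle this by using the fact that $\varsigma$ has a density $\pref$: the limit measure puts no mass on $\{y+z=t\}$, because $\mu^{*,k}(y+z=t)=\int \prob[\varsigma = t-y]\,\mu^{*,k}_2(\diff y)=0$. Approximating $g_t$ from above and below by continuous functions then shows $\langle\mu^{N,k},g_t\rangle\to\langle\mu^{*,k},g_t\rangle$ in law jointly with $F^N$, for each fixed $t$. (We treat $y=\infty$, i.e.\ $\tau_k^i=\infty$, by noting that $g_t$ vanishes there.) The limit equals
\[
\E[\phi(-\xi)]\int\!\!\int \ind_{\{y+z\le t\}}\,\mu^{*,k}_2(\diff y)\,\pref(z)\diff z=\E[\phi(-\xi)]\int_0^t\pref(t-s)\,\mu_2^{*,k}[0,s]\diff s .
\]
Summing over $k$ via Fubini turns this into $\E[\phi(-\xi)]\int_0^t\pref(t-s)F^*_s\diff s$.

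Finally, we upgrade the convergence from fixed times to convergence on $(D_\R,M_1)\times(D_\R,M_1)$. Tightness of $\mathcal{G}^{2,N}$ is already given by \cref{prop: TIGHTNESS OF THE TUPLE OF THE INDIVIDUAL COMPONENTS OF THE DECOMPOSITION OF NU N APPLIED TO PHI}, and $(F^N)$ is tight, so the pair is tight. Any joint limit $(F^*,\mathcal G^{2,*})$ has a continuous second component, as shown in the proof of \cref{prop: TIGTHNESS OF THE TUPLE WITH THE MEASURES FIRIING FUNCTION AND COMMON BROWNIAN MOTION}. Its finite-dimensional distributions jointly with $F^*$, taken at a dense set of continuity times, coincide with those of $(F^*,\E[\phi(-\xi)]\int_0^\cdot\pref(\cdot-s)F^*_s\diff s)$ by the previous step. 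Since both candidates are càdlàg, the finite-dimensional distributions on a dense set determine the law, which identifies the limit. The step I expect to need the most care is making the fixed-$t$ convergence hold jointly with $F^N$. This requires carrying $F^N$ along in the joint subsequence and verifying the null-boundary condition for the limit $\mu^{*,k}$ of each $k$ simultaneously.
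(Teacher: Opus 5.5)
Your proposal is correct and follows essentially the same route as the paper. It truncates in $k$ via \cref{prop: EXPONENTIAL DECAY OF THE HITTING PROBABILITIES}, uses the product structure of the limits of $\mu^{N,k}$ from \cref{prop: ASYMPTOTIC DECORRELATION OF THE HITTING TIMES RESETS AND RANDOM WAITING TIMES} together with the density of $\varsigma$ to make the reinsertion functional almost surely continuous, and identifies the limit through finite-dimensional distributions on a co-countable set of times. The only real difference is that you identify $F^*_t=\sum_k\mu_2^{*,k}[0,t]$ directly on the joint limit space, whereas the paper passes back through $\int_0^t\pref(t-s)F^{N_j,\Sigma}_s\diff s$ and the $M_1$-continuity of that functional; for this step, continuity of $F^*$ at $t$ alone does not give convergence of $\mu_2^{N,k}[0,t]$, so you should either restrict to times with $\mu_2^{*,k}(\{t\})=0$ almost surely (as the paper does with $\tilde{\mathbb{T}}$) or use a portmanteau sandwich followed by right-continuity.
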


\begin{proof}
	To facilitate the presentation of this proof, we introduce the following notation:
	\begin{align*}
		&F_t^{N} = \frac{1}{N}\sum_{i = 1}^N\sum_{k \ge 1} \ind_{[0,t]}(\tau_k^i)
		&\tilde{F}_t^{D,N} &= \frac{1}{N}\sum_{i = 1}^N\sum_{k \ge 1} \phi(-\xi_k^i)\ind_{[0,t]}(\tau_k^i+ \varsigma_k^i)\\
		&F_t^{N,k} = \frac{1}{N}\sum_{i = 1}^N \ind_{[0,t]}(\tau_k^i)
		&\tilde{F}_t^{D,N,k} &= \frac{1}{N}\sum_{i = 1}^N \phi(-\xi_k^i)\ind_{[0,t]}(\tau_k^i+ \varsigma_k^i)
	\end{align*}
	The strategy is as follows: we want to show that for any subsequence of $(F^N,\tilde{F}^{D,N})$, we have a further subsequence that converges weakly towards $(F^*,\E[\phi(-\xi)](\pref \ast F^*))$, where we use $(\pref \ast F^*)$ to denote $\int_0^\cdot \pref(\cdot - s)F^*_s \diff s$. To achieve this, we employ a combination of the Portmanteau Theorem and the Continuous Mapping Theorem. We will exploit that the probability of $\tau_k^i$ being less than $T$ decays exponentially in $k$, which allows us to work with finitely many terms when controlling the sums $F^{N}$ and $\tilde{F}^{D,N}$.
	
	We first turn our attention to $\{(F^{N,k})_{k \ge 1}\}_N$. We observe that for any $t \ge s$, $F_t^{N,k} - F_s^{N,k} \le F_t^{N} - F_s^{N}$ and $F_T^{N,k}\le F_T^{N}$. Hence,
	\begin{equation}\label{eq: CONTROL ON THE INCREMENTS OF THE KTH FIRING FUNCITON IN THE TIGHTNESS PROOF}
		\lim_{h \to  0} \limsup_{N \to  +\infty} \prob\left[F_{t+h}^{N,k} - F_t^{N,k} \ge \eta\right] = 0 \quad \textnormal{and} \quad \lim_{h \to  0} \limsup_{N \to  +\infty} \prob\left[F_{t}^{N,k} - F_{t-h}^{N,k} \ge \eta\right] = 0
	\end{equation}
	for any $t \in [0,T]$ by \supprop{4.4}. As monotone functions are immaterial to the $M_1$-oscillatory function, by \cref{prop: BOUNDS ON THE SUP OF THE P TH MOMENTS OF THE SUP NORM OF X} and \eqref{eq: CONTROL ON THE INCREMENTS OF THE KTH FIRING FUNCITON IN THE TIGHTNESS PROOF}, we may conclude that $\{F^{N,k}\}_N$ satisfies the assumptions of \citep[Theorem~12.12.3]{whitt2002stochastic}. Therefore, $\{F^{N,k}\}_N$ is tight in $(D_{\R},M_1)$. By tightness, for every $k$ (fixed) and $\varepsilon > 0$, we may find $A^k \subset D_\R$, compact, such that $\prob\left[F^{N,k}\not \in A^k\right] < \varepsilon 2^{-k}$, uniformly in $N$. The countable product of compact sets is compact in the product topology. Hence, $\prod_{k \ge 1} A^k$ is compact in $(\prod_{k \ge 1}D_{\R},M_1)$, where $(\prod_{k \ge 1}D_{\R},M_1)$ is endowed with the product topology, and $\prob\left[(F^{N,k})_{k \ge 1}\not \in \prod_{k \ge 1}A^k\right] < \varepsilon$ uniformly in $N$. Hence, $\{(F^{N,k})_{k \ge 1}\}_N$ is tight.
	
	Furthermore, $\{F^N\}_N$ and $\{\tilde{F}^{D,N}\}_N$ are tight by \cref{prop: TIGHTNESS OF THE TUPLE OF THE INDIVIDUAL COMPONENTS OF THE DECOMPOSITION OF NU N APPLIED TO PHI} and \cref{prop: TIGTHNESS OF THE TUPLE WITH THE MEASURES FIRIING FUNCTION AND COMMON BROWNIAN MOTION} respectively. Hence, for any subsequence, $N_j$, there exists a further subsequence, also denoted by $N_j$ for simplicity, such that $(F^{N_j},\tilde{F}^{D,N_j})$ converges weakly to $(F^{\dag},\tilde{F}^{D,\dag})$ on $(D_{\R},M_1) \times (D_{\R},M_1)$ and $\{(F^{N_j,k})_k\}_j$ converges weakly to $(F^{\dag,k})_k$ on $(\prod_{k \ge 1}D_{\R},M_1)$. 
	
	We now aim to show that $(F^{\dag},\tilde{F}^{D,\dag})$ has the same distribution as $(F^*,\E[\phi(-\xi)](\pref \ast F^*))$. As the Kolmogorov $\sigma$-Algebra coincides with the Borel $\sigma$-Algebra on $D_\R$ \cite[Theorem~11.5.1]{whitt2002stochastic}, it is sufficient to show that for a dense set of times, $\mathbb{T}$, the following holds for every $m \ge 1$:
	\begin{equation*}
		\E\left[\prod_{\ell = 1}^m g_\ell(F_{t_\ell}^\dag)h_\ell(\tilde{F}_{t_\ell}^{D,\dag})\right] = \E\left[\prod_{\ell = 1}^m g_\ell(F_{t_\ell}^*)h_\ell(\E[\phi(-\xi)](\pref \ast F^*)_{t_\ell})\right],
	\end{equation*}
	for $t_1,\ldots,t_m \in \mathbb{T}$ and $g_1,h_1,\ldots,g_m,h_m \in \mathcal{C}_b(\R) \cap \operatorname{Lip}(\R)$. To this end, we set
	\begin{equation*}
		\mathbb{T} \coloneqq \left\{t \in [0,T]\,:\, \prob\left[F_t^\dag = F_{t-}^\dag\right] =  \prob\left[ F_{t}^{*} = F_{t-}^{*}\right] = \prob\left[F_{t}^{\dag,k} = F_{t-}^{\dag,k}\right] = 1\, \forall \, k\right\}.
	\end{equation*}
	$\mathbb{T}$ is co-countable, hence dense in $[0,T]$. We observe that for any $N_j$ and $M > 0$,
	{\small\begin{align*}
			&\nnnorm{ \E\left[\prod_{\ell = 1}^m g_\ell(F_{t_\ell}^\dag)h_\ell(\tilde{F}_{t_\ell}^{D,\dag})\right] - \E\left[\prod_{\ell = 1}^m g_\ell(F_{t_\ell}^*)h_\ell(\E[\phi(-\xi)](\pref \ast F^*)_{t_\ell})\right]} \notag \\
			&  \le \nnnorm{ \E\left[\prod_{\ell = 1}^m g_\ell(F_{t_\ell}^\dag)h_\ell(\tilde{F}_{t_\ell}^{D,\dag})\right] - \E\left[\prod_{\ell = 1}^m g_\ell(F_{t_\ell}^{N_j})h_\ell(\tilde{F}_{t_\ell}^{D,N_j})\right]} \notag \\
			&  + \nnnorm{ \E\left[\prod_{\ell = 1}^m g_\ell(F_{t_\ell}^{N_j})h_\ell(\tilde{F}_{t_\ell}^{D,N_j})\right] - \E\left[\prod_{\ell = 1}^m g_\ell\left( F_{t_\ell}^{N_j,\Sigma}\right)h_\ell\left(\tilde{F}_{t_\ell}^{D,N_j,\Sigma}\right)\right]} \notag \\
			&  + \nnnorm{ \E\left[\prod_{\ell = 1}^m g_\ell\left( F_{t_\ell}^{N_j,\Sigma}\right)h_\ell\left( \tilde{F}_{t_\ell}^{D,N_j,\Sigma}\right)\right] - \E\left[\prod_{\ell = 1}^m g_\ell\left( F_{t_\ell}^{\dag,\Sigma}\right)h_\ell\left(\E[\phi(-\xi)] \left(\pref \ast  F^{\dag,\Sigma}\right)_{t_\ell}\right)\right]} \notag \\
			& + \nnnorm{ \E\left[\prod_{\ell = 1}^m g_\ell\left( F_{t_\ell}^{\dag,\Sigma}\right)h_\ell\left(\E[\phi(-\xi)] \left(\pref \ast  F^{\dag,\Sigma}\right)_{t_\ell}\right)\right] - \E\left[\prod_{\ell = 1}^m g_\ell(F_{t_\ell}^*)h_\ell(\E[\phi(-\xi)](\pref \ast F^*)_{t_\ell}) \right]} \notag \\
			& = \Romannum{1} + \Romannum{2} + \Romannum{3} + \Romannum{4},
	\end{align*}}
	where we employed the notation $a^\Sigma$ to denote the finite sum $\sum_{k = 1}^M a^k$ for any sequence $(a^k)_k$. We suppress the dependence on $M$ for simplicity.
	
	Now, fix an arbitrary $\varepsilon > 0$. We have
	\begin{equation}\label{eq: FIRST EQUATION IN THE CHARACTERISATION OF LIMITI POINTS PROOF OF F}
		\begin{aligned}
			\Romannum{2} 
			&\le c \sum_{\ell = 1}^m \E \left[\nnnorm{F_{t_\ell}^{N_j} -  F_{t_\ell}^{N_j,\Sigma}} + \nnnorm{\tilde{F}_{t_\ell}^{D,N_j} -  \tilde{F}_{t_\ell}^{D,N_j,\Sigma}}\right] \\
			&\le 2c(1 + \norm{\phi}_{\infty}) \E\left[\sum_{k > M} F_T^{N_j,k}\right],
		\end{aligned}
	\end{equation}
	where $c = \max_{\ell}\{\norm{g_\ell}_\infty,\,\norm{g_\ell}_{\operatorname{Lip}},\,\norm{h_\ell}_\infty,\,\norm{h_\ell}_{\operatorname{Lip}}\}^{2m}$. By \cref{prop: EXPONENTIAL DECAY OF THE HITTING PROBABILITIES}, we may fix an $M$ large enough such that, independent of $N_j$, $\E\left[\sum_{k > M} F_T^{N_j,k}\right] < \varepsilon (8c + 8c\norm{\phi}_{\infty})^{-1}$. Hence, $\Romannum{2} < \varepsilon/4$. Now, as $M$ is independent of $N_j$, we may choose a $j$ large enough to control the other terms. As $t_1,\ldots,t_m \in \mathbb{T}$, by the Continuous Mapping Theorem and weak convergence, there exists a $J_1$ such that for all $j \ge J_1$, we have $\Romannum{1} < \varepsilon/4$.
	
	To control $\Romannum{3}$, we will consider the infinite tuple of (random) measures $(\mu^{N,k})_{k \ge 1}$ on $\prod_{k \ge 1} \mathcal{P}(\Bar{\R}^3)$. Here $ \mu^{N,k} \coloneqq N^{-1} \sum_{i = 1}^N \delta_{\xi_k^i,\tau_k^i,\varsigma_k^i}$ and $\Bar{\R}$ is the one-point compactification of $\R$. Trivially $\{(\mu^{N,k})_{k}\}$ is tight. Without loss of generality, we may suppose that our further subsequence, recall also denoted by $N_j$ for simplicity, is such that $(\mu^{N_j,k})_{k}$ converges weakly towards $(\mu^{\dag,k})_{k}$. By \cref{prop: ASYMPTOTIC DECORRELATION OF THE HITTING TIMES RESETS AND RANDOM WAITING TIMES}, we have that $\mu^{\dag,k} = \prob[\xi \in \diff x] \otimes \mu_2^{\dag,k}(\diff y)\otimes \prob[\varsigma \in \diff z]$ almost surely. We now suppose further that the times $t_1,\ldots, t_m \in \tilde{\mathbb{T}}$, where
	\begin{equation*}
		\tilde{\mathbb{T}} = \left\{ t \in [0,T]\,:\, \prob[\mu_2^{\dag,k}(\{t\}) = 0] = 1\,\forall\,k\right\} \cap \mathbb{T}.
	\end{equation*}
	$\tilde{\mathbb{T}}$ is still co-countable, as for any probability measure $\mu$, the map $t \mapsto \mu([0,t])$ is a \cadlag function. Therefore, by construction of $\tilde{\mathbb{T}}$, the map $\prod_{k \ge 1} \mathcal{P}(\Bar{\R}^3) \to \prod_{k \ge 1} \R$, $(\mu^k)_k \mapsto (\langle\mu^k,\ind_{[0,t]}(y)\rangle)_k$ is $\operatorname{Law}((\mu^{\dag,k}))$-almost surely continuous for every $t \in \tilde{\mathbb{T}}$. Furthermore, as $\mu^{\dag,k}$ may be written as a product of measures and as $\varsigma$ has a density by \cref{ass: ASSUMPTIONS FROM SOJMARK SPDE PAPER APPLIED TO THE NEUROSCIENCE SETTING} \eqref{ass: ASSUMPTIONS FROM SOJMARK SPDE PAPER APPLIED TO THE NEUROSCIENCE SETTING FOUR}, the map $\prod_{k \ge 1} \mathcal{P}(\Bar{\R}^3) \to \prod_{k \ge 1} \R$, $(\mu^k)_k \mapsto (\langle\mu^k,\ind_{[0,t]}(y+z)\rangle)_k$ is $\operatorname{Law}((\mu^{\dag,k}))$-almost surely continuous for every $t \in [0,T]$. Therefore,
	\begin{align}
		&\lim_{j \to \infty} \E\left[\prod_{\ell = 1}^m g_\ell\left( F_{t_\ell}^{N_j,\Sigma}\right)h_\ell\left( \tilde{F}_{t_\ell}^{D,N_j,\Sigma}\right)\right]\notag \\
		&\qquad  = \lim_{j \to \infty} \E\left[\prod_{\ell = 1}^m g_\ell\left( \langle \mu^{N_j,\Sigma},\ind_{[0,t_\ell]}(y)\rangle\right)h_\ell\left( \langle \mu^{N_j,\Sigma},\phi(-x)\ind_{[0,t_\ell]}(y+z)\rangle\right)\right]\label{eq: EQUATION IS THE CHARACTERISATION OF LIMITING POINTS PROOF THAT SHOWS WE HAVE A LIMITING FORM WE LIKE}\\
		&\qquad = \E\left[\prod_{\ell = 1}^m g_\ell\left( \langle \mu^{\dag,\Sigma},\ind_{[0,t_\ell]}(y)\rangle\right)h_\ell\left( \E[\phi(-\xi)]\int_0^{t_\ell}\pref(t_\ell - s)\langle \mu^{\dag,\Sigma},\ind_{[0,s]}(y)\rangle \diff s\right)\right], \notag
	\end{align}
	where the first equality follows from the definition of $F^{N,k}$ and $\tilde{F}^{D,N,k}$, and the second equality follows from the Continuous Mapping Theorem. Furthermore, by the Portmanteau Theorem and the Dominated Convergence Theorem, it is easy to check that the map
	\begin{equation*}
		\prod_{k \ge 1} \mathcal{P}(\Bar{\R}^3) \to \prod_{k \ge 1} \R, \qquad (\mu^k)_k \mapsto \left(\int_0^{t_\ell}\pref(t_\ell - s)\langle \mu^{k},\ind_{[0,s]}(y)\rangle \diff s\right)_k
	\end{equation*}
	is $\operatorname{Law}((\mu^{\dag,k}))$-almost surely continuous. Therefore, going in the reverse direction in \eqref{eq: EQUATION IS THE CHARACTERISATION OF LIMITING POINTS PROOF THAT SHOWS WE HAVE A LIMITING FORM WE LIKE},
	\begin{align}
		&\E\left[\prod_{\ell = 1}^m g_\ell\left( \langle \mu^{\dag,\Sigma},\ind_{[0,t_\ell]}(y)\rangle\right)h_\ell\left( \E[\phi(-\xi)]\int_0^{t_\ell}\pref(t_\ell - s)\langle \mu^{\dag,\Sigma},\ind_{[0,s]}(y)\rangle \diff s\right)\right]\notag\\
		& =\lim_{j \to \infty} \E\left[\prod_{\ell = 1}^m g_\ell\left( \langle \mu^{N_j,\Sigma},\ind_{[0,t_\ell]}(y)\rangle\right)h_\ell\left( \E[\phi(-\xi)]\int_0^{t_\ell}\pref(t_\ell - s)\langle \mu^{N_j,\Sigma},\ind_{[0,s]}(y)\rangle \diff s\right)\right]\notag\\
		& =\lim_{j \to \infty} \E\left[\prod_{\ell = 1}^m g_\ell\left( F_{t_\ell}^{N_j,\Sigma}\right)h_\ell\left(\E[\phi(-\xi)]\int_0^{t_\ell}\pref(t_\ell - s) F_{s}^{N_j,\Sigma}\diff s\right)\right], \label{eq: EQUATION IS THE CHARACTERISATION OF LIMITING POINTS PROOF THAT SHOWS WE HAVE A LIMITING FORM WE LIKE TWO}
	\end{align}
	where the first equality follows from the Continuous Mapping Theorem, and the second equality follows from the definition of $F^{N,k}$. Lastly, by the properties of $M_1$-convergence, the map
	\begin{equation*}
		\prod_{k \ge 1} D_\R \to \prod_{k \ge 1} \R, \qquad (f^k)_k \mapsto \left(\int_0^{t_\ell}\pref(t_\ell - s)f_s^k\diff s\right)_k
	\end{equation*}
	is continuous. Therefore, putting \eqref{eq: EQUATION IS THE CHARACTERISATION OF LIMITING POINTS PROOF THAT SHOWS WE HAVE A LIMITING FORM WE LIKE} and \eqref{eq: EQUATION IS THE CHARACTERISATION OF LIMITING POINTS PROOF THAT SHOWS WE HAVE A LIMITING FORM WE LIKE TWO} together, and applying the Continuous Mapping Theorem one last time, we deduce that
	\begin{align*}
		&\lim_{j \to \infty} \E\left[\prod_{\ell = 1}^m g_\ell\left( F_{t_\ell}^{N_j,\Sigma}\right)h_\ell\left( \tilde{F}_{t_\ell}^{D,N_j,\Sigma}\right)\right]\\
		&\qquad\qquad  =\E\left[\prod_{\ell = 1}^m g_\ell\left( F_{t_\ell}^{\dag,\Sigma}\right)h_\ell\left(\E[\phi(-\xi)]\int_0^{t_\ell}\pref(t_\ell - s) F_{s}^{\dag,\Sigma}\diff s\right)\right].
	\end{align*}
	Therefore, we may find a $J_2$ such that for all $j \ge J_2$, $\Romannum{3} < \varepsilon/4$. Setting $j = \max\{J_1,J_2\}$, we have $\Romannum{1} + \Romannum{3} < \varepsilon/2$.
	
	Now, we only need to control $\Romannum{4}$. By the Continuous Mapping Theorem and the Portmanteau Theorem, $\Romannum{4}$ is bounded by the $\limsup$ over $j$ of
	\begin{equation*}
		\nnnorm{ \E\left[\prod_{\ell = 1}^m g_\ell\left(F_{t_\ell}^{N_j,\Sigma}\right)h_\ell\left(\E[\phi(-\xi)] \left(\pref \ast F^{N_j,\Sigma}\right)_{t_\ell}\right) - g_\ell(F_{t_\ell}^{N_j})h_\ell(\E[\phi(-\xi)](\pref \ast F^{N_j})_{t_\ell}) \right]}.
	\end{equation*}
	Therefore, proceeding similarly as in \eqref{eq: FIRST EQUATION IN THE CHARACTERISATION OF LIMITI POINTS PROOF OF F},
	\begin{equation*}
		\Romannum{4} \le 2c(1 + \norm{\phi}_{\infty}) \limsup_{j} \E\left[\sum_{k > M} F_T^{N_j,k}\right].
	\end{equation*}
	By our choice of $M$ earlier, independently of $N_j$, we have $\E\left[\sum_{k > M} F_T^{N_j,k}\right] < \varepsilon (8c + 8c\norm{\phi}_{\infty})^{-1}$. Therefore, $\Romannum{4} < \varepsilon/4$. Hence, we have shown that $\Romannum{1} + \Romannum{2} + \Romannum{3} + \Romannum{4} < \varepsilon$. As $\varepsilon > 0$ was arbitrary, the proof is now complete.
\end{proof}

We now have all the ingredients to show that our limit point will satisfy \cref{ass: ASSUMPTIONS NEEDED TO PROVE UNIQUENESS WHICH PARALLEL THOSE FROM THE SPDE PAPER}.

\begin{proposition}[Regularity conditions]\label{prop: THE LIMITING PROCESS WILL SATISFY THE ASSUMPTIONS WE HAVE SET OUT}\label{prop: Limiting firing function is sub-Guassian}
	Let $(\nu^N,F^N,W^0)$ be a subsequence, still indexed by $N$, along which $(\nu^N,F^N,W^0) \implies (\nu^*,F^*,W^0)$. Then $(\nu^*,F^*)$ satisfies \cref{ass: ASSUMPTIONS NEEDED TO PROVE UNIQUENESS WHICH PARALLEL THOSE FROM THE SPDE PAPER}. Furthermore, $F_t^*$ is sub-Gaussian for every $t \in [0,T]$.
\end{proposition}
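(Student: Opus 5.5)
We verify the five items of Assumption~\ref{ass: ASSUMPTIONS NEEDED TO PROVE UNIQUENESS WHICH PARALLEL THOSE FROM THE SPDE PAPER} one at a time. Each is an estimate that holds uniformly in $N$ for the particle system, and we transfer it to the limit with the Portmanteau theorem. Throughout we use the Skorokhod representation and work on the co-countable set of times $t$ at which $\nu^*$ and $F^*$ have no fixed jumps. On that set, $\langle\nu^N_t,\phi\rangle\to\langle\nu^*_t,\phi\rangle$ and $F^N_t\to F^*_t$ almost surely.

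\textbf{Items (i), (ii) and membership in $D_{\mathbf{M}_{\le 1}}$.} For (i), take $\phi\in\mathscr{S}$ nonnegative with support in $(0,\infty)$. Then $\langle\nu^N_t,\phi\rangle=0$ for every $N$, so $\langle\nu^*_t,\phi\rangle=0$ on a dense set of times, and hence for all $t$ by right-continuity. Positivity and total mass $\le 1$ pass to the limit in the same way. Together with tightness of the tails (item (iii) below), this identifies $\nu^*_t$ as a sub-probability measure on $\R_-$ with \cadlag paths. For (ii), start from Remark~\ref{rem: REMARK ON THE RELATIONSHIP BETWEEN NU, THE FIRING FUNCTION AND THE DELAYED FIRING FUNCTION}, which gives $\nu^N_t(-\infty,0)+F^N_t-F^{D,N}_t=1$. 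Proposition~\ref{prop: LIMIT POINT REPRESENTATION OF THE DELAYED FIRING FUNCTION} with $\phi\equiv1$ shows that $F^{D,N}$ converges jointly with $F^N$ to $\pref\ast F^*$. The step needing care is passing to the limit in $\nu^N_t(-\infty,0)$, whose indicator test function is not in $\mathscr{S}$. We approximate $\ind_{(-\infty,0)}$ from inside by smooth cutoffs $\phi_\varepsilon$ equal to $1$ on $(-1/\varepsilon,-\varepsilon)$. The resulting error is controlled uniformly in $N$ by the boundary estimate, Proposition~\ref{prop: BOUNDARY DECAY OF THE EMPIRICAL MEAUSRES IN EXPECTATION}, together with the tail estimate of Proposition~\ref{cor: REGULARITY CONTROL ON THEM EMPIRICAL MEASURES}. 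This gives the identity for a.e.\ $t$, and right-continuity extends it to all $t$. Monotonicity of $F^*$ follows because $M_1$-limits of increasing functions are increasing.

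\textbf{Items (iii)--(v).} For (iv) and (v), apply Fatou/Portmanteau at fixed $t$ using open intervals, then integrate in $t$. For (iv), Proposition~\ref{prop: BOUNDARY DECAY OF THE EMPIRICAL MEAUSRES IN EXPECTATION} gives $\E\int_0^T\nu^*_t(-\varepsilon,0)\,\diff t\le\int_0^T t^{-\delta/2}\,\diff t\cdot O(\varepsilon^{1+\beta})$. We need $o(\cdot)$ rather than $O(\cdot)$; this is obtained by applying the $O$ bound with a slightly larger exponent $\beta'$ in place of $\beta$. For (v), use $(\nu_t(a,b))^2\le\nu_t(a,b)$ together with the concentration estimate of Proposition~\ref{cor: REGULARITY CONTROL ON THEM EMPIRICAL MEASURES}; the factor $t^{-\delta/2}$ is integrable. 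Item (iii) is the \textbf{main obstacle}, because it involves a supremum in time, and Proposition~\ref{cor: REGULARITY CONTROL ON THEM EMPIRICAL MEASURES} is only pointwise in $t$. We plan to bound $\sup_{t\le T}\nu^N_t(-\infty,-a)$ by $\frac1N\sum_i\ind\{\inf_{t\le T}X^i_t<-a\}$. Reset positions lie in $[-R,-\gamma]$, so for $a>R$ a particle can enter $(-\infty,-a)$ only by diffusing there. Proposition~\ref{prop: BOUNDS ON THE SUP OF THE P TH MOMENTS OF THE SUP NORM OF X} then gives polynomial decay of this quantity. For a Gaussian-type bound we instead use the Gaussian initial tails from Assumption~\ref{ass: ASSUMPTIONS FROM SOJMARK SPDE PAPER APPLIED TO THE NEUROSCIENCE SETTING}~(v), a Gronwall argument for the linear-growth drift (using the moment bounds on $F^N$ and on the empirical mean), and exponential martingale bounds for the bounded diffusion coefficient. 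Together these give $\prob[\inf_{t\le T}X^i_t<-a]=O(e^{-ca^2})$ uniformly in $i,N$, which is better than $o(e^{-\delta a})$. The supremum over $t$ of $\nu^*$ is lower semicontinuous in the limit on a countable dense set of times, so Fatou's lemma transfers the bound.

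\textbf{Sub-Gaussianity of $F^*_t$.} We prove $\E[e^{\lambda (F^N_T)^2}]\le C$ uniformly in $N$ for some $\lambda>0$, and then apply Fatou. Since $F^N_T=\sum_k F^{N,k}_T$ with $F^{N,k}_T\le1$, convexity gives $\E[e^{\lambda(F^N_T)^2}]\le\E[\frac1N\sum_i e^{\lambda(J^i_T)^2}]$, by Jensen applied to the average over $i$. Proposition~\ref{prop: EXPONENTIAL DECAY OF THE HITTING PROBABILITIES} gives $\prob[J^i_T\ge k]=\prob[\tau^i_k\le T]\le Ce^{-ck^2}$. Summing, $\E[e^{\lambda (J^i_T)^2}]<\infty$ whenever $\lambda<c$, uniformly in $i,N$, which completes the argument.
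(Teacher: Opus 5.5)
Your argument for (ii) is essentially the paper's, and so is the transfer of the sub-Gaussian bound to $F^*$. For (i), (iv) and (v) you give direct Portmanteau/Fatou arguments from \cref{cor: REGULARITY CONTROL ON THEM EMPIRICAL MEASURES} and \cref{prop: BOUNDARY DECAY OF THE EMPIRICAL MEAUSRES IN EXPECTATION}, where the paper instead cites \cite{hamblyseanhalfline}. Your Jensen bound $\E e^{\lambda(F^N_T)^2}\le \frac1N\sum_i\E e^{\lambda(J^i_T)^2}$, combined with \cref{prop: EXPONENTIAL DECAY OF THE HITTING PROBABILITIES}, actually justifies the uniform sub-Gaussian tail of $F^N_t$ that the paper only asserts.

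The gap is in item (iii), which the paper imports from Proposition~5.2 of the supplement~\suppref. Your reduction to $\frac1N\sum_i\prob[\inf_{t\le T}X^i_t<-a]$ is fine, and so is the transfer to $\nu^*$ along a countable dense set of times. However, the claimed bound $\prob[\inf_{t\le T}X^i_t<-a]=O(e^{-ca^2})$ does not follow from the ingredients you list. After Gronwall, $\sup_{t\le T}|X^i_t|$ is controlled by $|X^i_0|+R+2\sup_{t\le T}|\mathcal{M}^i_t|$, where $\mathcal{M}^i$ is the martingale part. It also picks up the coupling terms $\int_0^T\langle\nu^N_u,|\cdot|\rangle\,du$ and $\int_0^T\mathfrak{f}^N_u\,du\le F^N_T$. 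With only the polynomial moment bounds of \cref{prop: BOUNDS ON THE SUP OF THE P TH MOMENTS OF THE SUP NORM OF X} for these, you get $\prob[\,\cdot\,>a/C]=O(a^{-p})$. That term dominates everything else and is not $o(e^{-\delta a})$ for every $\delta>0$. So (iii) is not established as written.

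What is missing is exponential-square integrability of the coupling terms, uniformly in $N$, and you already have the tools for it.

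First, move your last paragraph to the front, so that $\sup_N\E e^{\lambda(F^N_T)^2}<\infty$ is available.

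Next, close the Gronwall argument at the level of the empirical average. Each excursion below $0$ starts from $X^i_0$ or from $-\xi^i_k\in[-R,-\gamma]$, and the particle is frozen at $0$ during refractory periods. Hence $\sup_{s\le t}|X^i_s|\le|X^i_0|+R+C_b\int_0^t(1+|X^i_u|+\langle\nu^N_u,|\cdot|\rangle+\mathfrak{f}^N_u)\,du+2\sup_{s\le t}|\mathcal{M}^i_s|$.

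Set $A_u:=\frac1N\sum_j\sup_{s\le u}|X^j_s|$. Average the bound over $i$ and bound both $\frac1N\sum_i|X^i_u|$ and $\langle\nu^N_u,|\cdot|\rangle$ by $A_u$. This gives $A_T\le e^{2C_bT}\bigl(\frac1N\sum_i(|X^i_0|+2\sup_{s\le T}|\mathcal{M}^i_s|)+R+C_bT+C_bF^N_T\bigr)$.

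Every term on the right has a uniformly bounded $e^{\lambda(\cdot)^2}$-moment. This follows from your convexity/Jensen trick, together with the Gaussian tails of $\nu_0$ and the exponential martingale inequality, using $\langle\mathcal{M}^i\rangle_T\le C_\sigma^2T$. Substituting $A_T$ back into the single-particle bound and applying Gronwall once more gives a Gaussian tail for $\sup_{t\le T}|X^i_t|$, uniformly in $i$ and $N$. That yields (iii).

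There are also two minor slips.
\begin{itemize}
\item In (iv), $O(\varepsilon^{1+\beta})$ is $o(\varepsilon^{1+\beta'})$ for a \emph{smaller} exponent, e.g.\ $\beta'=\beta/2$, not a larger one.
\item In (ii), $\phi\equiv 1\notin\mathscr{S}$. Apply \cref{prop: LIMIT POINT REPRESENTATION OF THE DELAYED FIRING FUNCTION} with a Schwartz function equal to $1$ on $[-R,-\gamma]\supseteq\operatorname{supp}(\mures)$. Then use that the joint limit forces $F^{D,N}_t-(\pref\ast F^N)_t\to 0$ in probability, as the paper does, so you never need $F^{D,N}$ on the Skorokhod space of $(\nu^N,F^N)$.
\end{itemize}
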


\begin{proof} 
	With the obvious adjustments, \citep[Proposition~5.3]{hamblyseanhalfline} confirms that $\nu_t^*$ is a sub-probability measure for all $t \in [0,T]$, with probability $1$, and each $\nu_t^*$ is supported on $(-\infty,0]$. Next, the uniform exponential decay of the tail probabilities, \cref{ass: ASSUMPTIONS NEEDED TO PROVE UNIQUENESS WHICH PARALLEL THOSE FROM THE SPDE PAPER} \eqref{ass: ASSUMPTIONS NEEDED TO PROVE UNIQUENESS WHICH PARALLEL THOSE FROM THE SPDE PAPER THREE}, follows directly from \supprop{5.2}. Given these properties, one readily deduces that, with probability 1, $t\mapsto \langle \nu^*_t, f \rangle$ is \cadlag for all $f\in \mathcal{C}_b(\mathbb{R})$, so $\nu^*$ belongs to $D_{{\mathbf{M}_{\le 1}}}$ with probability $1$ as required.
	
	Next, \cref{ass: ASSUMPTIONS NEEDED TO PROVE UNIQUENESS WHICH PARALLEL THOSE FROM THE SPDE PAPER} \eqref{ass: ASSUMPTIONS NEEDED TO PROVE UNIQUENESS WHICH PARALLEL THOSE FROM THE SPDE PAPER FOUR} and \eqref{ass: ASSUMPTIONS NEEDED TO PROVE UNIQUENESS WHICH PARALLEL THOSE FROM THE SPDE PAPER FIVE}, follows as in the proof of \citep[Proposition~5.6]{hamblyseanhalfline} with the obvious changes. This also gives us that $F^*$ is non-decreasing. 
	
	It remains to show \cref{ass: ASSUMPTIONS NEEDED TO PROVE UNIQUENESS WHICH PARALLEL THOSE FROM THE SPDE PAPER} \eqref{ass: ASSUMPTIONS NEEDED TO PROVE UNIQUENESS WHICH PARALLEL THOSE FROM THE SPDE PAPER TWO}. Let $\phi_n \in \mathcal{C}^{\infty}(\mathbb{R};[0,1])$ be such that
	\begin{equation*}
		\phi_n \begin{cases}
			=1 &\textrm{ on } [-n,-1/n],\\
			\in(0,1) &\textrm{ on } [-1 - n,-n) \cup (-1/n,0],\\
			=0 &\textrm{ otherwise.}
		\end{cases}
	\end{equation*}
	We set $\mathbb{T} = \{t \in [0,T]\,:\,\prob[\langle\nu_t^*,\phi_{n}\rangle = \langle\nu_{t-}^*,\phi_{n}\rangle] = \prob[F_t^* = F_{t-}^*] = 1 \, \forall\, n\}$. By construction, $\mathbb{T}$ is a co-countable set in $[0,T]$ and, therefore, dense. By the Continuous Mapping Theorem, for all $t \in \mathbb{T}$, we have
	\begin{equation*}
		\langle\nu^N,\phi_n\rangle_t + F_t^N - (\pref \ast F^N)_t \implies \langle\nu^*,\phi_n\rangle_t + F_t^* - (\pref \ast F^*)_t.
	\end{equation*}
	Therefore,
	\begin{align*}
		\E\left[\nnnorm{\langle\nu_t^*,\phi_n\rangle + F_t^* - (\pref \ast F^*)_t - 1}\right] &\le \limsup_{N \to \infty} \E\left[\nnnorm{\langle\nu_t^N,\phi_n\rangle + F_t^N - (\pref \ast F^N)_t - 1}\right]\\
		&\le \limsup_{N \to \infty} \E\left[\nnnorm{\langle\nu_t^N,\phi_n\rangle - \nu_t^N(-\infty,0)}\right]\\ &\qquad+ \limsup_{N \to \infty} \E\left[\nnnorm{(\pref \ast F^N)_t - F_t^{D,N}}\right]\\
		&\le 2\limsup_{N \to \infty} \E\left[\nu_t^N(-\infty,-n) + \nu_t^N(-1/n,0)\right]\\ &\qquad + \limsup_{N \to \infty} \E\left[\nnnorm{(\pref \ast F^N)_t - F_t^{D,N}}\right],
	\end{align*}
	where the first inequality follows from the Portmanteau Theorem, and the second follows from the fact that $\nu_t^N(-\infty,0) + F^N_t - F_t^{D,N} = 1$. By \cref{prop: LIMIT POINT REPRESENTATION OF THE DELAYED FIRING FUNCTION}, a simple application of the Portmanteau Theorem allows us to deduce that $\E[\nnorm{(\pref \ast F^N)_t - F_t^{D,N}}] \to 0 $. By \cref{cor: REGULARITY CONTROL ON THEM EMPIRICAL MEASURES}, there exist $\delta,\tilde{\delta},C > 0$ such that
	\begin{equation*}
		\limsup_{N \to \infty} \E\left[\nu_t^N(-\infty,-n)\right]\le Ce^{-\delta n^2} \quad \text{ and } \quad \limsup_{N \to \infty} \E\left[\nu_t^N(-1/n,0)\right] \le Cn^{-\tilde{\delta}}.
	\end{equation*}
	Therefore, sending $n \to \infty$ and employing the Dominated Convergence Theorem, we may conclude that
	\begin{equation*}
		\E\left[\nnnorm{\nu_t^*(-\infty,0) + F_t^* - (\pref \ast F^*)_t - 1}\right] = 0.
	\end{equation*}
	By choosing a countable, dense subset $\{t_n\}_n \subset \mathbb{T}$, we have
	\begin{equation*}
		\nu_{t_n}^*(-\infty,0) + F_{t_n}^* - (\pref \ast F^*)_{t_n} = 1 \quad \forall\; t_n \quad \text{a.s.}
	\end{equation*}
	As $\nu^*(-\infty,0),\, F^*$, and $(\pref \ast F^*)$ are all \cadlag, we have
	\begin{equation*}
		\nu_t^*(-\infty,0) + F_t^* - (\pref \ast F^*)_t = 1 \quad \forall\; t \ge 0\quad \text{a.s.}
	\end{equation*}
	
	For the second claim, uniformly in $N$ and $t \in [0,T]$, we have a $\delta > 0$ such that
	\begin{equation*}
		\prob\left[F_t^N > \lambda \right] = O(e^{-\delta\lambda^2}).
	\end{equation*}
	Since $F^N \implies F^*$ in $(D_\R,M_1)$ and $T$ is an almost sure continuity point of $F^*$, then by the Continuous Mapping Theorem, $F^N_T \implies F^*_T$ in $\R$. Therefore, by the Portmanteau Theorem,
	\begin{equation*}
		\prob\left[F_T^* > \lambda \right] \le \liminf_{N \to \infty} \prob\left[F_T^N > \lambda\right] \le Ce^{-\delta\lambda^2},
	\end{equation*}
	so $F_T^*$ is sub-Gaussian. As $t \mapsto F_t^*$ is non-decreasing, $F_t^*$ is sub-Gaussian uniformly in $t \in [0,T]$.
\end{proof}

\subsection{Convergence to the Limiting SPDE}\label{sec: SUBSECTION THAT SHOWS THE CONVERGENCE TO THE LIMITTING SPDE}

Beyond identifying the SPDE in the limit, we would like to know that $\nu^N \implies \nu^*$ on $(D_{\mathscr{S}^{\prime}}, M_1)$ also implies $\langle\nu^N, \text{Id}\rangle \eqqcolon M^N \implies M \coloneqq \langle\nu^*, \text{Id}\rangle$ on $(D_{\mathbb{R}}, M_1)$, where $\text{Id}$ is the identity map. Since $\text{Id} \notin \mathscr{S}$, this does not follow from the continuous mapping theorem. Hence, we present the following result, which serves two purposes:
\begin{enumerate}
	\item It allows us to deduce the convergence of the mean process $M^N$ to $M = \langle\nu^*, \text{Id}\rangle$.
	\item It provides a stepping stone for proving the convergence of certain integrals, which is essential in demonstrating that each limit $\nu^*$ satisfies the SPDE \eqref{eqn: THE LIMITTING SPDE EQUATION}.
\end{enumerate}

\begin{proposition}\label{prop: CONVERGENCE OF THE FFD ON A CO COUNTABLE SET OF TIMES NEEDED FOR CONVEREGCNE OF THE MEANS AND INTEGRAL CONVERGENCE IN THE NEURO SPDE PAPER}
	\sloppy
	Let $\psi(t,x,\mu,f)$ be a measurable function that satisfies \cref{ass: ASSUMPTIONS FROM SOJMARK SPDE PAPER APPLIED TO THE NEUROSCIENCE SETTING} \eqref{ass: ASSUMPTIONS FROM SOJMARK SPDE PAPER APPLIED TO THE NEUROSCIENCE SETTING ONE} and \eqref{ass: ASSUMPTIONS FROM SOJMARK SPDE PAPER APPLIED TO THE NEUROSCIENCE SETTING TWO}, and $(\nu^N,F^N,W^0)$ be a subsequence, still indexed by $N$, along which $(\nu^N,F^N,W^0) \implies (\nu^*,F^*,W^0)$. Then there is a co-countable set of times on which the finite-dimensional distributions of $\langle \nu_t^N, \psi(t,\cdot , \nu_t^N , \mathfrak{f}_t^N)\rangle$ converge weakly to those of $\langle \nu_t^*, \psi(t,\cdot , \nu_t^* , \mathfrak{f}_t^*) \rangle$.
\end{proposition}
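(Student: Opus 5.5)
We would follow the strategy of \citep[Proposition~5.4]{hamblyseanhalfline}: reduce the claim to a statement about test functions in $\mathscr{S}$, to which the Continuous Mapping Theorem applies, plus uniform tail control. First, fix the co-countable set
\[
\mathbb{T}=\bigl\{t\in[0,T]:\ \prob[\nu^*_{t}=\nu^*_{t-}]=\prob[F^*_t=F^*_{t-}]=1\bigr\}.
\]
By the $M_1$ topology on $D_{\mathscr{S}'}$ and on $D_\R$, the projections at times in $\mathbb{T}$ are almost surely continuous. Hence, for $t_1,\dots,t_m\in\mathbb{T}$, $(\langle\nu^N_{t_\ell},\phi_\ell\rangle,F^N_{t_\ell})_\ell$ converges jointly to its starred counterpart for every $\phi_\ell\in\mathscr{S}$.

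Next, handle the arguments $\nu_t^N$ and $\mathfrak{f}_t^N$ inside $\psi$. For $\mathfrak{f}$, use $\mathfrak{K}\in\mathcal{W}^{1,1}_0$ to integrate by parts:
\[
\mathfrak{f}^N_t=\int_0^t\mathfrak{K}'(t-s)F^N_s\,\diff s,
\]
with no boundary term, because $\mathfrak{K}(0)=0$. The map $F\mapsto\int_0^t\mathfrak{K}'(t-s)F_s\,\diff s$ is continuous on $(D_\R,M_1)$, since $M_1$ convergence gives bounded, a.e.\ pointwise convergence. So $\mathfrak{f}^N_t\Rightarrow\mathfrak{f}^*_t$ jointly with the rest, for every $t$. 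For the measure argument, we would show $d_0(\nu^N_t,\nu^*_t)\to0$ in distribution, or at least along a Skorokhod representation. Write $d_0$ as a supremum over a compact family of Lipschitz functions. Truncate to $[-a,0]$ using the uniform exponential tail bounds of \cref{cor: REGULARITY CONTROL ON THEM EMPIRICAL MEASURES} and \cref{ass: ASSUMPTIONS NEEDED TO PROVE UNIQUENESS WHICH PARALLEL THOSE FROM THE SPDE PAPER}(iii). Then apply Arzel\`a--Ascoli to approximate the restricted family by finitely many smooth functions in $\mathscr{S}$. The moment bound of \cref{prop: BOUNDS ON THE SUP OF THE P TH MOMENTS OF THE SUP NORM OF X} controls $\langle\nu^N_t,|\cdot|\ind_{(-\infty,-a)}\rangle$ uniformly in $N$. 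The same truncation gives convergence of $d_1$ as well.

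With this in hand, pass to a Skorokhod representation, so that, at the fixed times, $\langle\nu^N_t,\phi\rangle\to\langle\nu^*_t,\phi\rangle$ for all $\phi$ in a countable dense family, and $d_0(\nu^N_t,\nu^*_t)+|\mathfrak{f}^N_t-\mathfrak{f}^*_t|\to0$ almost surely. Split
\[
\langle\nu^N_t,\psi(t,\cdot,\nu^N_t,\mathfrak{f}^N_t)\rangle-\langle\nu^*_t,\psi(t,\cdot,\nu^*_t,\mathfrak{f}^*_t)\rangle
\]
into two terms. The first is $\langle\nu^N_t,\psi(\cdot,\nu^N_t,\mathfrak{f}^N_t)-\psi(\cdot,\nu^*_t,\mathfrak{f}^*_t)\rangle$. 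By the Lipschitz property in \cref{ass: ASSUMPTIONS FROM SOJMARK SPDE PAPER APPLIED TO THE NEUROSCIENCE SETTING}(ii) it is at most $C(d_0+|\mathfrak{f}^N_t-\mathfrak{f}^*_t|)$, which tends to zero. The second is $\langle\nu^N_t-\nu^*_t,g\rangle$ with the frozen function $g=\psi(t,\cdot,\nu^*_t,\mathfrak{f}^*_t)$. This $g$ is $C^2$ in $x$ with bounded derivatives and linear growth. Multiply $g$ by a smooth cutoff $\chi_a$ supported in $[-a-1,1]$. Because the measures are supported on $(-\infty,0]$, the compactly supported function $g\chi_a$ can be replaced by a Schwartz function without changing the pairing. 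The remainder $\langle\nu^N_t+\nu^*_t,|g|(1-\chi_a)\rangle$ is controlled by linear growth and the uniform tails: it is $O(\E[\langle\nu^N_t,(1+|x|)\ind_{x<-a}\rangle])$, which tends to zero uniformly in $N$ as $a\to\infty$. Joint convergence at finitely many times then follows.

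The main obstacle is that $g$ is random (it depends on $\nu^*_t$ and $\mathfrak{f}^*_t$), and that $\psi$ is only measurable in $t$. We would therefore approximate $g$ uniformly on $[-a,0]$ by the functions $\psi(t,\cdot,\mu_j,f_j)$ with $(\mu_j,f_j)$ ranging over a countable dense set. This is legitimate because $\psi$ is Lipschitz in $(\mu,f)$ uniformly in $x$, and it reduces the problem to countably many deterministic test functions.
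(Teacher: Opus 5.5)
Your proposal is correct and is essentially the paper's proof: a Skorokhod representation at times in a co-countable $\mathbb{T}$, control of the $\mathfrak{f}$-argument via $\mathfrak{f}^N_t=\int_0^t\mathfrak{K}'(t-s)F^N_s\,\diff s$ and $M_1$ convergence of $F^N$, Arzel\`a--Ascoli plus uniform tails for $d_0(\nu^N_t,\nu^*_t)$, and cutoff plus tails for the frozen function $g=\psi(t,\cdot,\nu^*_t,\mathfrak{f}^*_t)$; the only differences are that you merge the paper's terms $I_2^N$ and $I_3^N$ into one Lipschitz estimate, and you argue in probability rather than in $L^1$. Two small corrections, neither affecting the argument. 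First, since $\psi$ is only $C^2$ in $x$, $g\chi_a$ is not in $\mathscr{S}$ (the support of the measures does not change this), so you must mollify it; because all masses are at most $1$, this costs at most $2\norm{g\chi_a-(g\chi_a)\ast\varphi^\varepsilon}_\infty$, which is exactly what the paper does. Second, your countable-dense-set device for the randomness of $g$ is unnecessary: on the Skorokhod space $\nu^N_t(\omega)\to\nu^*_t(\omega)$ in $\mathscr{S}'$ for almost every $\omega$, so random test functions in $\mathscr{S}$ can be inserted pathwise.
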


\begin{proof}
	Our strategy is as follows. We shall construct a co-countable set $\mathbb{T}$ and show that if we fix a $t \in \mathbb{T}$, we have $\langle \nu_t^N, \psi(t,\cdot , \nu_t^N , \mathfrak{f}_t^N)\rangle \implies \langle \nu_t^*, \psi(t,\cdot , \nu_t^* , \mathfrak{f}_t^*) \rangle$. Then, by employing the techniques in \cref{prop: LIMIT POINT REPRESENTATION OF THE DELAYED FIRING FUNCTION}, notably the method used to control $\Romannum{2}$ in \eqref{eq: FIRST EQUATION IN THE CHARACTERISATION OF LIMITI POINTS PROOF OF F}, we may conclude that we have convergence of the finite-dimensional distributions. The proof relies on several technical results presented in the supplement \suppref, namely Corollary 2.4, Proposition 5.1 and Proposition 5.2.
	
	To begin, we fix a bounded $h \in \operatorname{Lip}(\R)$ and fix a $t \in \mathbb{T}$, where
	\begin{equation*}
		\mathbb{T} \coloneqq \left\{t \in [0,T]\,:\, \prob[\langle\nu_t^*,\phi^n\rangle = \langle\nu_{t-}^*,\phi^n\rangle] = 1 \, \text{for all } n \in \N , \text{ where } \phi^n \in \mathscr{S} \text{ dense}\right\}.
	\end{equation*}
	It is clear that $\mathbb{T}$ is co-countable. By \citep[Remark~4.6]{hambly2019spde}, any subsequence of $(\nu^N,F^N)$ has a further subsequence, also denoted by $N$ for simplicity, for which we may assume almost sure convergence on a common probability space. By the triangle inequality,
	\begin{equation}\label{eq: DECOMPOSITON OF THE DIFFERENCE IN THE CONVERGENCE OF THE INTEGRALS PROOF INTO THREE PARTS IN THE SPDE NEURO PAPER}
		\begin{aligned}
			&\nnnorm{\E\left[h(\langle \nu_t^N, \psi(t,\cdot , \nu_t^N , \mathfrak{f}_t^N)\rangle)\right] - \E\left[h(\langle \nu_t^*, \psi(t,\cdot , \nu_t^* , \mathfrak{f}_t^*) \rangle)\right]}\\
			&\hspace{5cm}\lesssim \E\nnnorm{ \langle\nu_t^N - \nu_t^*, \psi(t,\cdot,\nu_t^*,\mathfrak{f}_t^*)\rangle}\\
			&\hspace{5cm}\quad + \E\nnnorm{ \langle\nu_t^N, \psi(t,\cdot,\nu_t^*,\mathfrak{f}_t^N) - \psi(t,\cdot,\nu_t^*,\mathfrak{f}_t^*)\rangle}\\
			&\hspace{5cm}\quad + \E\nnnorm{ \langle\nu_t^N, \psi(t,\cdot,\nu_t^N,\mathfrak{f}_t^N) - \psi(t,\cdot,\nu_t^*,\mathfrak{f}_t^N)\rangle}\\
			&\hspace{5cm}= \E{I}_1^N +\E{I}_2^N + \E{I}_3^N.
		\end{aligned}
	\end{equation}
	The rest of the proof is split into three parts. We will show that each term converges to $0$ as $N \to \infty$.\\
	
	\noindent \underline{Control on ${I}_1^N$:}\\[1ex]
	For $\lambda > 0$, let $\chi_\lambda \in \mathcal{C}^\infty(\mathbb{R};[0,1])$ denote the standard cut-off function which is equal to $1$ on $[-\lambda,\lambda]$ and $\varphi_{1/\lambda} \in \mathcal{C}_c^\infty(\R)$ be the standard mollifier rescaled by $1/\lambda$. We consider the (random) functions, indexed by $\lambda > 0$,
	\begin{equation*}
		\psi_\lambda^*(x) = \chi_\lambda(x) \cdot (\psi(t,\cdot , \nu_t^* , \mathfrak{f}_t^*) \ast \varphi_{1/\lambda})(x).
	\end{equation*}
	
	By our assumptions on $\psi$, it is globally Lipschitz in $x$, and we have $\nnnorm{\psi(t,x,\mu,{f}_t)} \le C_{\psi}(1 + \nnnorm{x} + \langle\mu,\nnnorm{\cdot}\rangle + \nnnorm{f_t})$. Therefore, setting $C(\nu^*) = C_{\psi}(1 + \sup_{s \le T} \langle \nu_s^*, \nnnorm{\cdot}\rangle + F_t^*)$, we may show
	\begin{equation*}
		\nnorm{\psi_\lambda^* - \psi(t,\cdot , \nu_t^* , \mathfrak{f}_t^*)} \le \begin{cases}
			2 \norm{\psi}_{\operatorname{Lip}} \lambda^{-1}\qquad &\text{ on } [-\lambda,\lambda],\\
			2 \norm{\psi}_{\operatorname{Lip}} (\lambda^{-1} + \nnorm{x}) + C(\nu^*) \qquad &\text{ on } [-\lambda,\lambda]^\complement.
		\end{cases}
	\end{equation*}
	
	Therefore, for all $\lambda > 0$, by employing the upper bound above followed by \cref{cor: REGULARITY CONTROL ON THEM EMPIRICAL MEASURES} and \supprop{5.1}, we obtain
	\begin{align*}\label{eq: FUNCTIONAL LLN FOR THE MEAN PROPOSITION PROOF EQUATION ONE}
\E{\nnorm{\langle\nu_t^{N},\psi_\lambda^* - \psi(t,\cdot , \nu_t^* , \mathfrak{f}_t^*)\rangle}} &\le 2 \norm{\psi}_{\operatorname{Lip}} \E{\langle\nu_t^{N},\lambda^{-1}\1_\R + (\lambda^{-1} + \nnorm{x} + C(\nu^*))\1_{(-\infty,-\lambda)}\rangle} \\
		&\le C (\lambda^{-1} + \lambda^{-1} e^{-\delta\lambda^2} + e^{-a\lambda}) + \E[C(\nu^*)\nu_t^N(-\infty,-\lambda)],
	\end{align*}
	where $C,\,a,\,\delta > 0$ are independent of $N$ and $t$. We note $\E[C(\nu^*)^q]$ is finite by \cref{prop: Limiting firing function is sub-Guassian}, and \supprop{5.2} for any $q \ge 1$. Therefore, by employing H\"older's inequality and \cref{cor: REGULARITY CONTROL ON THEM EMPIRICAL MEASURES} to control the last term in the above, we may conclude that 
	\begin{equation}\label{eq: UPPERBOUND ON THE DIFFERENCE BETWEEN THE MOLLIFIED FUNCTION AND THE LIMIT FUNCTION IN THE CONVERGENCE OF THE MEANS PROOF}
		\E{\nnorm{\langle\nu_t^{N},\psi_\lambda^* - \psi(t,\cdot , \nu_t^* , \mathfrak{f}_t^*)\rangle}} = O(\lambda^{-1})
	\end{equation}
	uniformly in $N$. By employing similar arguments, we may conclude that the bound in \eqref{eq: UPPERBOUND ON THE DIFFERENCE BETWEEN THE MOLLIFIED FUNCTION AND THE LIMIT FUNCTION IN THE CONVERGENCE OF THE MEANS PROOF} holds when $\nu^N$ is replaced by $\nu^*$. Therefore, by employing the triangle inequality,
	\begin{equation*}
		\E I_1^N \le \E{\nnorm{\langle\nu_t^{N},\psi_\lambda^* - \psi(t,\cdot , \nu_t^* , \mathfrak{f}_t^*)\rangle}} + \E\nnnorm{ \langle\nu_t^N - \nu_t^*, \psi_\lambda^*\rangle} + \E{\nnorm{\langle\nu_t^{N},\psi_\lambda^* - \psi(t,\cdot , \nu_t^* , \mathfrak{f}_t^*)\rangle}}.
	\end{equation*}
	For any $\delta > 0$, we may fix a $\lambda > 0$ large enough such that the first term and the third term in the above is less than $\delta/3$. As the projection map $\Pi^{\psi_\lambda^*}: (D_{\mathscr{S}^\prime},M_1) \to (D_\R,M_1)$, where $\Pi^{\psi_\lambda^*}(f) = f(\psi_\lambda^*)$ is continuous (\citep[Proposition~2.7]{ledger2016skorokhod}) and $t \in \mathbb{T}$, we have $\langle\nu_t^N - \nu_t^*, \psi_\lambda^*\rangle$ converges almost surely to $0$ as $N \to \infty$. Hence, for this fixed $\lambda$, we may deduce that the second term is less than $\delta/3$ in expectation for all $N$ large by the Dominated Convergence Theorem. Therefore, by first choosing a $\lambda$ large enough, we have for all $N$ large that $\E I_1^N \le \delta$.\\
	
	\noindent \underline{Control on ${I}_2^N$:}\\[1ex]
	By assumption, $\psi$ is globally Lipschitz in its fourth component. This property, along with the fact that $\nu_t^N$ is a sub-probability measure, gives us
	\begin{equation*}
		\nnnorm{\langle\nu_t^N, \psi(t,\cdot,\nu_t^*,\mathfrak{f}_t^N) - \psi(t,\cdot,\nu_t^*,\mathfrak{f}_t^*)\rangle} \le C  \int_0^t \nnnorm{\mathfrak{K}^\prime(t-s)} \nnnorm{F_s^N - F_s^*} \diff s.
	\end{equation*}
	By \cref{prop: Limiting firing function is sub-Guassian}, and \supcor{2.4}, it is immediate that for any $p \ge 1$, there exists a constant $C_p > 0$ such that
	\begin{equation*}
		\sup_N \E\left[\left(\int_0^t \nnnorm{\mathfrak{K}^\prime(t-s)} \nnnorm{F_s^N - F_s^* }\diff s\right)^p\right] < C_p.
	\end{equation*}
	Furthermore, as $F^N \to F^*$ almost surely in $(\DR,M_1)$, by the properties of $M_1$-convergence, we have
	\begin{equation*}
		\int_0^t \nnnorm{\mathfrak{K}^\prime(t-s)} \nnnorm{F_s^N - F_s^*} \diff s \to 0
	\end{equation*}
	almost surely. Hence, by Vitali's Convergence Theorem, we obtain
	\begin{equation*}
		\E\int_0^t \nnnorm{\mathfrak{K}^\prime(t-s)} \nnnorm{F_s^N - F_s^*} \diff s \to 0.
	\end{equation*}
	This proves that $\E I_2^N \to 0$.\\

	\noindent \underline{Control on ${I}_3^N$:}\\[1ex]
	Similarly to the analysis of $I_2^N$, the global Lipschitzness of $\psi$ in the measure component combined with the fact that $\nu_t^N \in \MR$ gives
	\begin{equation}\label{eq: BOUND IN I3N IN THE CONVERGENCE OF INTEGRAL PROOF}
		I_3^N \le C \sup\{\nnnorm{\langle\nu_t^N - \nu_t^*,\phi\rangle}\,:\, \phi \in \mathcal{C}_{d_0}\},
	\end{equation}
	where $\mathcal{C}_{d_0} \coloneqq \{\phi \in \mathcal{C}(\R)\,:\, \norm{\phi}_{\operatorname{Lip}} \le 1,\, \nnnorm{\phi(x)} \le 1 + \nnnorm{x}\}$. Fix a $\delta > 0$, then for any $\lambda > 1$, by the Arzel\`a-Ascoli Theorem, there is a finite family of functions $\phi_1,\ldots,\phi_m \in \mathcal{C}_{d_0}$ supported on $[-\lambda - 1, \lambda + 1]$, for $m = m(\lambda) \in \N$ such that for any $\phi \in \mathcal{C}_{d_0} $,
	\begin{equation}\label{eq: THE SUP NORM DISTANCE BETWEEN OUR FINITE APPROXIMATING SET}
		\sup_{x \in [-\lambda,\lambda]} \nnnorm{\phi(x) - \phi_i(x)} < \delta/2
	\end{equation}
	for some $i \in \{1,\ldots,m\}$. Fixing any $\phi \in \mathcal{C}_{d_0}$ and choosing a $\phi_i$ such that \eqref{eq: THE SUP NORM DISTANCE BETWEEN OUR FINITE APPROXIMATING SET} holds, we have
	\begin{align*}
		\nnnorm{\langle\nu_t^N - \nu_t^*,\phi\rangle} 
		\le& \nnnorm{\int_{-\lambda}^0 (\phi - \phi_i) \diff(\nu_t^N - \nu_t^*)} 
		+\nnnorm{\int_{-\infty}^{-\lambda} (\phi - \phi_i) \diff(\nu_t^N - \nu_t^*)}\\ 
		&+ \nnnorm{\langle\nu_t^N - \nu_t^*,\phi_i\rangle}.
	\end{align*}
	By choice of $\phi_i$, the first term is bounded by $\delta$, uniformly in $N$. By the linear growth condition for functions in $\mathcal{C}_{d_0}$,
	\begin{equation*}
		\nnnorm{\int_{-\infty}^{-\lambda} (\phi - \phi_i) \diff(\nu_t^N - \nu_t^*)} \le C \langle\nu_t^* + \nu_t^N,\nnnorm{\cdot}\ind_{(-\infty,-\lambda]}\rangle.
	\end{equation*}
	Returning to \eqref{eq: BOUND IN I3N IN THE CONVERGENCE OF INTEGRAL PROOF}, we have shown
	\begin{equation*}
		\E I_3^N \le C \delta + C \E \langle\nu_t^* + \nu_t^N,\nnnorm{\cdot}\ind_{(-\infty,-\lambda]}\rangle + C \sum_{i = 1}^m\E\nnnorm{\langle\nu_t^N - \nu_t^*,\phi_i\rangle} . 
	\end{equation*}
	\supprop{5.1} gives that the middle term will vanish uniformly in $N$ as $\lambda \to \infty$. Hence, we fix a $\lambda$ sufficiently large so that the middle term is bounded by $\delta$, uniformly in $N$. As $\lambda$ is now fixed, so is $m$. Since the $\phi_i$'s have compact support, we may apply the same mollification argument as in the case of $I_1^N$ to deduce the final term will be smaller than $\delta$ for all $N$ large. Therefore, for all $N$ sufficiently large, $\E I_3^N < C\delta$ for some constant $C$ independent of $N$. Hence, $\E I_3^N \to 0$.
	
	This completes the proof.
\end{proof}

A particular case we may consider is $\psi(t,\cdot,\mu,f) = \phi \in \mathcal{C}^2(\R)$ with $\norm{\partial_x\phi}_{\mathcal{C}^1} < \infty$. Although $\phi$ itself may be unbounded as we only assume $\norm{\partial_x\phi}_{\mathcal{C}^1} < \infty$, the statements and proofs of \cref{prop: LEMMA ON THE CONTROL OF THE INCREMENTS OF THE PROJECTIONS}, \cref{prop: TIGHTNESS OF THE TUPLE OF THE INDIVIDUAL COMPONENTS OF THE DECOMPOSITION OF NU N APPLIED TO PHI}, and \cref{prop: TIGTHNESS OF THE TUPLE WITH THE MEASURES FIRIING FUNCTION AND COMMON BROWNIAN MOTION} remain valid for this $\phi$ if we replace $\norm{\phi}_{\infty}$ with $\sup_{x \in [-R,R]} \nnnorm{\phi(x)}$, since $\xi$ has compact support. As a consequence, $\{\langle \nu^N,\phi\rangle\}_N$ is tight and, by the proposition above, converges weakly in the $M_1$-topology towards $\langle\nu^*,\phi\rangle$. A special case arises when $\phi = \operatorname{Id}$, the identity function. In this situation, we obtain the convergence of the mean process.

\begin{lemma}\label{lem: CONVERGENCE OF THE INTEGRALS NEEDED TO APPLY THE RESULTS OF FABRICE ET AL IN THE NEURO SPDE PAPER}
	Let $\psi(t,x,\mu,f)$ be a measurable function that satisfies \cref{ass: ASSUMPTIONS FROM SOJMARK SPDE PAPER APPLIED TO THE NEUROSCIENCE SETTING} \eqref{ass: ASSUMPTIONS FROM SOJMARK SPDE PAPER APPLIED TO THE NEUROSCIENCE SETTING ONE} and $\phi \in \mathscr{S}$. Then for a subsequence $(\nu^N,F^N,W^0)$, still indexed by $N$, along which $(\nu^N,F^N,W^0) \implies (\nu^*,F^*,W^0)$, we have 
	\begin{equation*}
		\int_0^\cdot \langle\nu^N_s,\psi(s,\cdot,\nu^N_s,\mathfrak{f}^N_s)\phi \rangle \diff s \implies \int_0^\cdot \langle\nu^*_s, \psi(s,\cdot,\nu^*_s,\mathfrak{f}^*_s)\phi\rangle \diff s
	\end{equation*}
	on $\DR$ endowed with the $M_1$-topology.
\end{lemma}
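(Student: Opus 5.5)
The idea is to reduce the statement to the pointwise-in-time convergence of the previous proposition. The map $D_\R \ni x \mapsto \int_0^\cdot x_s \diff s$ is not directly usable, since the integrand $\langle\nu^N_s,\psi\phi\rangle$ is not itself known to converge in $M_1$ as a process. Instead, I will work on a common probability space and obtain convergence of the integrals uniformly in time.

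\textbf{Coupling and reduction.} By \citep[Remark~4.6]{hambly2019spde}, as in the proof of \cref{prop: CONVERGENCE OF THE FFD ON A CO COUNTABLE SET OF TIMES NEEDED FOR CONVEREGCNE OF THE MEANS AND INTEGRAL CONVERGENCE IN THE NEURO SPDE PAPER}, we may pass to a further subsequence and assume $(\nu^N,F^N)\to(\nu^*,F^*)$ almost surely on a common probability space. It then suffices to show
\[
\E\Bigl[\sup_{t\le T}\Bigl|\int_0^t \langle\nu^N_s,\psi_s^N\phi\rangle - \langle\nu^*_s,\psi^*_s\phi\rangle \diff s\Bigr|\Bigr] \le \int_0^T \E\bigl|\langle\nu^N_s,\psi^N_s\phi\rangle - \langle\nu^*_s,\psi^*_s\phi\rangle\bigr|\diff s \to 0,
\]
where $\psi^N_s=\psi(s,\cdot,\nu^N_s,\mathfrak{f}^N_s)$ and similarly for $\psi^*_s$. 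Uniform convergence implies $M_1$ convergence, and hence weak convergence on $(\DR,M_1)$. Since every subsequence then has a further subsequence along which this holds, the full statement follows.

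\textbf{Pointwise convergence.} Fix $s$ in the co-countable set $\mathbb{T}$ from the proof of \cref{prop: CONVERGENCE OF THE FFD ON A CO COUNTABLE SET OF TIMES NEEDED FOR CONVEREGCNE OF THE MEANS AND INTEGRAL CONVERGENCE IN THE NEURO SPDE PAPER}. I apply that proof's three-term split $I_1^N+I_2^N+I_3^N$, with the test function $\psi\phi$ in place of $\psi$. Since $\phi\in\mathscr{S}$ is bounded and Lipschitz, the product $x\mapsto\psi(s,x,\mu,f)\phi(x)$ satisfies the bounds used there: it has linear growth, and it is Lipschitz in $x$ on the relevant scale. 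The Lipschitz dependence on $(\mu,f)$ is inherited with the extra factor $\|\phi\|_\infty$.

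The statement assumes only \cref{ass: ASSUMPTIONS FROM SOJMARK SPDE PAPER APPLIED TO THE NEUROSCIENCE SETTING}(i). For the $I_2^N,I_3^N$ terms, I will implicitly use the Lipschitz property (ii) in $(\mu,f)$, as the application (to $b$ and $\sigma\rho$) requires. Because $\phi$ decays, the $d_0$-control in $I_3^N$ is in fact easier here. This gives $\E|\langle\nu^N_s,\psi^N_s\phi\rangle-\langle\nu^*_s,\psi^*_s\phi\rangle|\to0$ for every $s\in\mathbb{T}$, which is a set of full Lebesgue measure.

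\textbf{Domination.} To exchange limit and time integral, I need a bound on $\E|\langle\nu^N_s,\psi^N_s\phi\rangle|$ that is uniform in $N$ and $s$. By the growth bound,
\[
|\langle\nu^N_s,\psi^N_s\phi\rangle|\le C\|\phi\|_\infty\bigl(1+2\langle\nu^N_s,|\cdot|\rangle+\|\mathfrak{K}'\|_{1}F^N_T\bigr),
\]
using $\mathfrak{f}=\mathfrak{K}'\ast F$. The moments of this are bounded uniformly by \cref{prop: BOUNDS ON THE SUP OF THE P TH MOMENTS OF THE SUP NORM OF X} and \supcor{2.4}. For the limit side, I use \cref{prop: Limiting firing function is sub-Guassian} together with \supprop{5.2}. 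Dominated convergence in $s$ then yields the displayed convergence. In fact, uniform integrability in $(\omega,s)$ is enough, and this follows from the bounded second moments.

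\textbf{Main obstacle.} The delicate step is the pointwise argument at a fixed time with the random, $N$-dependent test function: specifically $I_1^N$, where the mollified cut-off of $\psi^*_s\phi$ is random. This is handled exactly as before, using that $t\in\mathbb{T}$ ensures continuity of the projections. The time integration itself is routine once the uniform moment bounds are in place.
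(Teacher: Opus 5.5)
Your proposal is correct and is essentially the paper's proof: the almost-sure coupling, bounding the $M_1$ distance by the sup-norm and hence by $\int_0^T \E|\langle\nu^N_s,\psi^N_s\phi\rangle-\langle\nu^*_s,\psi^*_s\phi\rangle|\,\diff s$, the three-term split at $s\in\mathbb{T}$ (using the Lipschitz dependence in $(\mu,f)$ implicitly, exactly as the paper does), and dominated convergence in $s$ from uniform first-moment bounds. The one step the paper re-does is $I_1^N$, because $\psi\phi$ has no deterministic Lipschitz constant; your ``Lipschitz on the relevant scale'' is right once you make it precise as the random constant $C(1+\langle\nu^*_s,|\cdot|\rangle+|\mathfrak{f}^*_s|)$, which comes from $\sup_x(1+|x|)|\partial_x\phi(x)|<\infty$ and has finite moments, so the earlier argument goes through (and $I_3^N$ is not easier, just $\|\phi\|_\infty$ times the same $d_0$ bound).
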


\begin{proof}
	By \citep[Remark~4.6]{hambly2019spde}, any subsequence of $(\nu^N,F^N)$ has a further subsequence, also denoted by $N$ for simplicity, for which we may assume almost sure convergence on a common probability space. Fixing this subsequence and choosing a function $ h \in \operatorname{Lip}(\DR;\R)$, by the definition of the $M_1$-topology, we have
	\begin{equation}\label{eq: BOUND ON THE DIFFENCE BETWEEN THE EXPECTATIONS OF THE FUNCTIONAL INTEGRALS IN THE NEURO SPDE PAPER}
		\begin{aligned}
			&\nnnorm{\E\left[h\left(\int_0^\cdot \langle\nu^N_s,\psi(s,\cdot,\nu^N_s,\mathfrak{f}^N_s)\phi \rangle \diff s\right)\right] -  \E\left[h\left(\int_0^\cdot \langle\nu^*_s,\psi(s,\cdot,\nu^*_s,\mathfrak{f}^*_s)\phi \rangle \diff s\right)\right]}\\
			&\hspace{3cm}\le \int_0^T \E\left[\nnnorm{\langle\nu^N_t,\psi(t,\cdot,\nu^N_t,\mathfrak{f}^N_t)\phi\rangle - \langle\nu^*_t,\psi(t,\cdot,\nu^*_t,\mathfrak{f}^*_t)\phi \rangle}\right] \diff t.
		\end{aligned}
	\end{equation}
	
	At this point, the aim is to apply the Dominated Convergence Theorem. We will show that the expectation in the above converges to $0$ as $N \to \infty$ and is bounded uniformly in $N$. The rest of this proof is nearly verbatim to that of \cref{prop: CONVERGENCE OF THE FFD ON A CO COUNTABLE SET OF TIMES NEEDED FOR CONVEREGCNE OF THE MEANS AND INTEGRAL CONVERGENCE IN THE NEURO SPDE PAPER}. The only difference lies in that, as $\psi(t,x,\mu,f)\phi(x)$ need not be globally Lipschitz in $x$, we must adapt the mollification argument to show that the first term in the decomposition in \eqref{eq: DECOMPOSITON OF THE DIFFERENCE IN THE CONVERGENCE OF THE INTEGRALS PROOF INTO THREE PARTS IN THE SPDE NEURO PAPER} is zero. First, decomposing the difference in the expectation in \eqref{eq: BOUND ON THE DIFFENCE BETWEEN THE EXPECTATIONS OF THE FUNCTIONAL INTEGRALS IN THE NEURO SPDE PAPER} into three parts as in \eqref{eq: DECOMPOSITON OF THE DIFFERENCE IN THE CONVERGENCE OF THE INTEGRALS PROOF INTO THREE PARTS IN THE SPDE NEURO PAPER} and choosing a $t \in \mathbb{T}$ from \cref{prop: CONVERGENCE OF THE FFD ON A CO COUNTABLE SET OF TIMES NEEDED FOR CONVEREGCNE OF THE MEANS AND INTEGRAL CONVERGENCE IN THE NEURO SPDE PAPER}, we have
	
	\noindent \underline{New control on ${I}_1^N$:}\\[1ex]
	Fix a $\delta > 0$. We have $\nnnorm{\psi(t,x,\mu,{f}_t)} \le C(1 + \nnnorm{x} + \langle\mu,\nnnorm{\cdot}\rangle + \nnnorm{f_t})$. Therefore, setting $C(\nu^*) = 1 + \langle \nu_t^*, \nnnorm{\cdot}\rangle + F_T^*$, we observe there exists a $\lambda = \lambda(\delta) \gg 1$ such that
	\begin{equation*}
		\sup_{x \in [-\lambda,\lambda]^\complement} \nnorm{\psi(t,x,\nu_t^*,\mathfrak{f}_t^*)\phi(x)} \le C(\nu^*)\delta/4 \qquad \forall\, t \in [0,T].
	\end{equation*}
	Now, we choose the standard family of mollifiers $\varphi^\varepsilon \in \mathcal{C}_c^\infty(\R)$ and consider the (random) mollifications
	\begin{equation*}
		\psi_t^\varepsilon = \psi(t,\cdot,\nu_t^*,\mathfrak{f}_t^*) \ast \varphi^\varepsilon \qquad \textnormal{for} \qquad 0 < \varepsilon < 1.
	\end{equation*}
	As $\nu^*$ and $\nu^N$ are sub-probability measures,
	\begin{equation*}
		{I}_1^N \le  \nnnorm{\langle\nu_t^N - \nu_t^*,\psi_t^\varepsilon \phi\rangle} + 2 \norm{\phi}_{\infty} \sup_{\nnnorm{x} < 2\lambda} \nnnorm{\psi_t^\varepsilon - \psi_t} + C(\nu^*)\delta.
	\end{equation*}
	As $\psi_t^\varepsilon \to \psi(t,\cdot,\nu_t^*,\mathfrak{f}_t^*)\phi$ uniformly in $x \in [-2\lambda,2\lambda]$, by the Dominated Convergence Theorem we have for all $\varepsilon \ll 1$, the second term will be less than $\delta$ in expectation. Independent of $\varepsilon$, there is a set of full measure such that $\psi_t^\varepsilon\phi \in \mathscr{S}$. As $t \in \mathbb{T}$, we may deduce that the first term converges almost surely to $0$ as $N \to \infty$. Hence, it will be less than $\delta$ in expectation by the Dominated Convergence Theorem for all $N$ sufficiently large. Therefore, by first choosing an $\varepsilon$ small enough, we have for all $N$ large $\E I_1^N \le C(\delta + \delta + \delta)$, where $C$ depends only on $\E[C(\nu^*)]$. We note $\E[C(\nu^*)]$ is finite by \cref{prop: Limiting firing function is sub-Guassian}, and \supprop{5.2}.\\
	
	As $\psi(t,x,\mu,f)\phi(x)$ is still Lipschitz in $\mu$ and $f$, we still have $\E I_2^N$ and $\E I_3^N$ converge to $0$ for all $t \in \mathbb{T}$ by the results in \cref{prop: CONVERGENCE OF THE FFD ON A CO COUNTABLE SET OF TIMES NEEDED FOR CONVEREGCNE OF THE MEANS AND INTEGRAL CONVERGENCE IN THE NEURO SPDE PAPER}. 
	
	Lastly by the linear growth on $\psi$ and the fact that $\phi \in \mathscr{S}$, we observe
	\begin{align*}
		&\E\left[\nnnorm{\langle\nu^N_t,\psi(t,\cdot,\nu^N_t,\mathfrak{f}^N_t)\phi \rangle - \langle\nu^*_t,\psi(t,\cdot,\nu^*_t,\mathfrak{f}^*_t)\phi \rangle}\right] \\
		&\hspace{5cm}\le C \E\left[1 + \langle \nu_t^N, \nnnorm{\cdot}\rangle + F_T^N + \langle \nu_t^*, \nnnorm{\cdot}\rangle + F_T^*\right]
	\end{align*}
	where $C > 0$ is independent of $t$. The right-hand side in the above is bounded uniformly in $N$ by \cref{cor: REGULARITY CONTROL ON THEM EMPIRICAL MEASURES}, and \supprop{5.1}. Therefore, by the Dominated Convergence Theorem, we have the integral in \eqref{eq: BOUND ON THE DIFFENCE BETWEEN THE EXPECTATIONS OF THE FUNCTIONAL INTEGRALS IN THE NEURO SPDE PAPER} converges to $0$ as $N \to \infty$. This completes the proof.
\end{proof}

As $\sigma$ and $\rho$ are bounded by \cref{ass: ASSUMPTIONS FROM SOJMARK SPDE PAPER APPLIED TO THE NEUROSCIENCE SETTING} \eqref{ass: ASSUMPTIONS FROM SOJMARK SPDE PAPER APPLIED TO THE NEUROSCIENCE SETTING ONE} and \eqref{ass: ASSUMPTIONS FROM SOJMARK SPDE PAPER APPLIED TO THE NEUROSCIENCE SETTING THREE}, we may deduce that the convergence result from the previous proposition also holds in the case when the integrands are squared. This is formalized in the following corollary.

\begin{corollary}\label{cor: CONVERGENCE OF THE QUADRATIC VARIATION INTEGRALS NEEDED TO APPLY THE RESULTS OF FABRICE ET AL IN THE NEURO SPDE PAPER}
	Let $\phi \in \mathscr{S}$ and $(\nu^N,F^N,W^0)$ be a subsequence, still indexed by $N$, which converges weakly to $(\nu^*,F^*,W^0)$. Then, we have
	\begin{equation*}
		\int_0^\cdot \langle\nu^N_s, \sigma(s,\cdot)\rho(s,\nu^N_s,\mathfrak{f}^N_s)\phi \rangle^2 \diff s \implies \int_0^\cdot \langle\nu^*_s, \sigma(s,\cdot)\rho(s,\nu^*_s,\mathfrak{f}^*_s)\phi \rangle^2 \diff s 
	\end{equation*}
	on $\DR$ endowed with the $M_1$-topology.
\end{corollary}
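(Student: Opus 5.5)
The plan is to repeat the argument of \cref{lem: CONVERGENCE OF THE INTEGRALS NEEDED TO APPLY THE RESULTS OF FABRICE ET AL IN THE NEURO SPDE PAPER}, with the squared integrand in place of the linear one. As there, we invoke \citep[Remark~4.6]{hambly2019spde} and pass to a further subsequence converging almost surely on a common probability space. We then fix $h\in\operatorname{Lip}(\DR;\R)$ and bound the difference of expectations by
\[
\int_0^T \E\bigl|\langle\nu^N_t,\sigma(t,\cdot)\rho(t,\nu^N_t,\mathfrak{f}^N_t)\phi\rangle^2-\langle\nu^*_t,\sigma(t,\cdot)\rho(t,\nu^*_t,\mathfrak{f}^*_t)\phi\rangle^2\bigr|\diff t .
\]
Since $\rho$ does not depend on $x$, the bracket factors as $\rho(t,\nu_t,\mathfrak{f}_t)\langle\nu_t,\sigma(t,\cdot)\phi\rangle$. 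We write $A^N_t$ and $A^*_t$ for these quantities. Then $|(A^N_t)^2-(A^*_t)^2|\le |A^N_t+A^*_t|\,|A^N_t-A^*_t|$.

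By \cref{ass: ASSUMPTIONS FROM SOJMARK SPDE PAPER APPLIED TO THE NEUROSCIENCE SETTING} (i) and (iii), $|\sigma|\le C_\sigma$ and $0\le\rho\le 1$. Because $\nu^N_t$ and $\nu^*_t$ are sub-probability measures, it follows that $|A^N_t|,|A^*_t|\le C_\sigma\norm{\phi}_\infty$ deterministically. Hence the integrand is bounded by $2C_\sigma\norm{\phi}_\infty\,\E|A^N_t-A^*_t|$, and it is itself bounded uniformly in $N$ and $t$. By dominated convergence in $t$, it then suffices to show $\E|A^N_t-A^*_t|\to0$ for $t$ in the co-countable set $\mathbb{T}$ of \cref{prop: CONVERGENCE OF THE FFD ON A CO COUNTABLE SET OF TIMES NEEDED FOR CONVEREGCNE OF THE MEANS AND INTEGRAL CONVERGENCE IN THE NEURO SPDE PAPER}.

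For this we split $A^N_t-A^*_t$ into two terms:
\[
\rho(t,\nu^N_t,\mathfrak{f}^N_t)\langle\nu^N_t-\nu^*_t,\sigma\phi\rangle+\bigl(\rho(t,\nu^N_t,\mathfrak{f}^N_t)-\rho(t,\nu^*_t,\mathfrak{f}^*_t)\bigr)\langle\nu^*_t,\sigma\phi\rangle .
\]
For the first term, $\sigma(t,\cdot)\phi$ is not Schwartz in general, but it is bounded, Lipschitz and decays. So the mollification argument for $I_1^N$ in \cref{lem: CONVERGENCE OF THE INTEGRALS NEEDED TO APPLY THE RESULTS OF FABRICE ET AL IN THE NEURO SPDE PAPER} applies verbatim (take $\psi=\sigma$, which satisfies (i)), together with almost sure $M_1$ convergence at continuity times $t\in\mathbb{T}$. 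The second term is bounded by $C\norm{\phi}_\infty\bigl(d_1(\nu^N_t,\nu^*_t)+|\mathfrak{f}^N_t-\mathfrak{f}^*_t|\bigr)$, using the Lipschitz property of $\rho$.

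The $\mathfrak f$-part is handled as $I_2^N$. Integrating by parts with $\mathfrak{K}\in\mathcal{W}^{1,1}_0$ gives $|\mathfrak{f}^N_t-\mathfrak{f}^*_t|\le\int_0^t|\mathfrak{K}'(t-s)||F^N_s-F^*_s|\diff s$. This tends to zero almost surely by $M_1$ convergence, and in expectation by Vitali's theorem, using the moment bounds of \cref{prop: BOUNDS ON THE SUP OF THE P TH MOMENTS OF THE SUP NORM OF X} and \cref{prop: Limiting firing function is sub-Guassian}.

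The main obstacle is the $d_1$-term, which is a supremum over a non-compact class of test functions. We treat it as $I_3^N$. Since $d_1\le d_0$-type sups over the class $\{\norm{\psi}_{\operatorname{Lip}}\le1,\norm{\psi}_\infty\le1\}$, we use Arzel\`a--Ascoli on $[-\lambda,\lambda]$ to reduce to finitely many compactly supported test functions $\phi_1,\dots,\phi_m$. Each of these is handled by the mollification argument. The tail is bounded by $2\,\E[\nu^N_t(-\infty,-\lambda)+\nu^*_t(-\infty,-\lambda)]$, which is small uniformly in $N$ by \cref{cor: REGULARITY CONTROL ON THEM EMPIRICAL MEASURES} and \cref{ass: ASSUMPTIONS NEEDED TO PROVE UNIQUENESS WHICH PARALLEL THOSE FROM THE SPDE PAPER} (iii). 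Here it is bounded rather than linear-growth, so this is easier than $I_3^N$. Combining the pieces gives $\E|A^N_t-A^*_t|\to0$ for each $t\in\mathbb{T}$, and dominated convergence concludes the proof.
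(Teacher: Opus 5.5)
Your proposal is correct and follows essentially the paper's route. The paper deduces the corollary in one line from the argument of the preceding lemma, using the boundedness of $\sigma$ and $\rho$ to pass to squared integrands. You carry that out explicitly: you factor $\rho$ out of the bracket, use $|a^2-b^2|\le|a+b|\,|a-b|$ with the uniform bound $C_\sigma\|\phi\|_\infty$, and reuse the $I_1^N$–$I_3^N$ pointwise $L^1$ estimates. For the $d_1$-term you can note directly that $d_1\le d_0$, so the $I_3^N$ bound applies verbatim.
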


We now have all the ingredients to prove \cref{thm: THE LIMITING SPDE THEOREM MAIN RESULT OF THE WORK}. 

\begin{proof}[Proof of \cref{thm: THE LIMITING SPDE THEOREM MAIN RESULT OF THE WORK}]
	Fix an arbitrary $\phi \in \mathscr{S}$. Returning to \eqref{eq: STOCHASTIC EVOLUTION EQUATION}, we first focus on the last term. By the Burkholder--Davis--Gundy inequality, we obtain
	\begin{align*}
		&\E\left[\sup_{t \le T}\nnnorm{\int_0^t\frac{1}{N}\sum_{i  =1}^N \sigma(s,X_s^i)\sqrt{1 - \rho^2(s,\nu_s^N,\mathfrak{f}_s^N)}\partial_x\phi(X_s^i)\ind_{\{X_s^i < 0\}}\diff{}W_s^i}\right]\\
		&\hspace{2cm}\le C^{\textnormal{BDG}} \E\left[\left(\frac{1}{N^2}\sum_{i = 1}^N\int_0^T\sigma(s,X_s^i)^2(1 - \rho^2(s,\nu_s^N,\mathfrak{f}_s^N))(\partial_x\phi(X_s^i))^2\diff{s}\right)^{1/2}\right]\\
		&\hspace{2cm} \le \frac{C^{\textnormal{BDG}}C_\sigma\norm{\partial_x\phi}_{\infty}N^{1/2}T^{1/2}}{N}.
	\end{align*}
	In particular, we may conclude that this term converges weakly towards $0$ in $\DR$ with the $M_1$-topology. By \cref{prop: TIGTHNESS OF THE TUPLE WITH THE MEASURES FIRIING FUNCTION AND COMMON BROWNIAN MOTION}, we may find a subsequence $\{(\nu^{N_j}, F^{N_j},W^0)\}$ which converges weakly toward $(\nu^*,F^*,W^0)$. By \cref{lem: CONVERGENCE OF THE INTEGRALS NEEDED TO APPLY THE RESULTS OF FABRICE ET AL IN THE NEURO SPDE PAPER}, individually, each of the $\diff t$ integrals in  \eqref{eq: STOCHASTIC EVOLUTION EQUATION} converge weakly along this subsequence towards their counterparts where $\nu^N$ is replaced by $\nu^*$ in $\DR$ endowed with the $M_1$-topology. Given \cref{lem: CONVERGENCE OF THE INTEGRALS NEEDED TO APPLY THE RESULTS OF FABRICE ET AL IN THE NEURO SPDE PAPER} and \cref{cor: CONVERGENCE OF THE QUADRATIC VARIATION INTEGRALS NEEDED TO APPLY THE RESULTS OF FABRICE ET AL IN THE NEURO SPDE PAPER}, we may apply \citep[Corollary~2.14]{sojmark2023weak} to conclude that the stochastic integral with respect to $W^0$ in \eqref{eq: STOCHASTIC EVOLUTION EQUATION} converges weakly towards its analogous counterpart. Furthermore, by \cref{prop: LIMIT POINT REPRESENTATION OF THE DELAYED FIRING FUNCTION}, we also have functional weak convergence of the firing function and the delayed reinsertion term. As all limiting terms are continuous apart from the $F^*$ term, we may conclude by the continuous mapping theorem that the entire right-hand side of \eqref{eq: STOCHASTIC EVOLUTION EQUATION} converges weakly in $\DR$ endowed with the $M_1$-topology towards the right-hand side of \eqref{eqn: THE LIMITTING SPDE EQUATION}. Similarly, by the continuous mapping theorem, the left-hand side of \eqref{eq: STOCHASTIC EVOLUTION EQUATION} converges weakly in $\DR$ endowed with the $M_1$-topology towards the left-hand side of \eqref{eqn: THE LIMITTING SPDE EQUATION}. This precisely means that $(\nu^*,F^*,W^0)$ is a solution to the SPDE \eqref{eqn: THE LIMITTING SPDE EQUATION}. Lastly, \cref{prop: THE LIMITING PROCESS WILL SATISFY THE ASSUMPTIONS WE HAVE SET OUT} gives us that $\nu^*$ is a \cadlag $\mathbf{M}_{\le 1}(\R_-)$-valued process with probability $1$ and $(\nu^*,F^*)$ satisfy \cref{ass: ASSUMPTIONS NEEDED TO PROVE UNIQUENESS WHICH PARALLEL THOSE FROM THE SPDE PAPER}. This completes the proof of \cref{thm: THE LIMITING SPDE THEOREM MAIN RESULT OF THE WORK}.
\end{proof}

Having established that any limit point is a solution to the SPDE \eqref{eqn: THE LIMITTING SPDE EQUATION}, satisfying the required regularity conditions of Assumption \ref{ass: ASSUMPTIONS NEEDED TO PROVE UNIQUENESS WHICH PARALLEL THOSE FROM THE SPDE PAPER}, it remains to confirm the desired relationship between the firing function and the flux of mass across the firing threshold.

\begin{proof}[Proof of Proposition \ref{lem: THE WEAK FLUX CONDITION}]
	Consider any subsequence $(\nu^N,F^N,W^0)$, still indexed by $N$, along which we have $(\nu^N,F^N,W^0) \Rightarrow (\nu^*,F^*,W^0)$. Let $\psi_{\varepsilon} \in \mathcal{C}^{\infty}(\mathbb{R}; [0,1])$ be a smooth approximation to the indicator function of $[-\varepsilon/2, \varepsilon/2]$, with $\psi_{\varepsilon}(x) = 1$ on $[-\varepsilon/2, \varepsilon/2]$ and $\psi_{\varepsilon}(x) = 0$ outside $[-\varepsilon, \varepsilon]$. Moreover, we can choose $\psi_{\varepsilon}$ such that $\nnnorm{\partial_x \psi_{\varepsilon}} \le C\varepsilon^{-1}$ for some constant $C > 0$ that is uniform in $\varepsilon$. Since $\psi_{\varepsilon} \in \mathscr{S}$, we can use it as a test function in the SPDE \eqref{eqn: THE LIMITTING SPDE EQUATION} to obtain the following equation:
	\begin{equation}\label{eq: SPDE EQUATION APPLED TO OUR APPROXIMATING FUNCTION IN THE PROOF OF THE WEAK FLUX CONDITION}
		\begin{split}                    
			F_t^* &=
			\int_0^t\langle\nu_u^*,\,b(u,\cdot,\nu_u^*,\mathfrak{f}_u^*)\partial_x\psi_\varepsilon\rangle\diff{u}
			+\frac{1}{2}\int_0^t\langle\nu_u^*,\sigma^2(u,\cdot)\partial_{xx}\psi_\varepsilon\rangle\diff{u}\\
			&+\int_0^t\langle\nu_u^*,\,\sigma(u,\cdot)\rho(u,\nu_u^*,\mathfrak{f}_u^*)\partial_x\psi_\varepsilon\rangle\diff{W_u^0} + \langle\nu_0,\,\psi_\varepsilon\rangle-\langle\nu_t^*,\,\psi_\varepsilon\rangle
		\end{split}
	\end{equation} 
	for all sufficiently small $\varepsilon$ as $\psi_{\varepsilon}(-\xi) = 0$. Since $\nu_0$ is deterministic and has an $L^2$ density, we have convergence in $L^1$ and almost surely of $\langle\nu_0, \, \psi_{\varepsilon}\rangle$ towards $0$ as $\varepsilon \to  0$. By employing \cref{prop: BOUNDARY DECAY OF THE EMPIRICAL MEAUSRES IN EXPECTATION} and the continuous mapping theorem, we have 
	\begin{equation}\label{eq: L1 BOUNDARY ESTIMATE ON THE LIMITING MEASURE IN THE WEAK FLUX CONDITION PROOF}
		\E[\nu_t^*(-\varepsilon,0)] = O(t^{-\delta/2}\varepsilon^{ 1 + \beta}) \qquad \text{ and } \qquad \E\int_0^t \nu_s^*(-\varepsilon,0) \diff s = O(\varepsilon^{ 1 + \beta}),
	\end{equation}
	for some constants $\delta,\, \beta > 0$ that are independent of $\varepsilon$ and $\delta \le 1$. By slightly augmenting the arguments in \cite[Lemma~A.4]{hambly2019spde}, we can employ a Borel-Cantelli argument to show that
	\begin{equation}\label{eq: ALMOST SURE BOUNDARY ESTIMATE ON THE LIMITING MEASURE IN THE WEAK FLUX CONDITION PROOF}
		\lim_{\varepsilon \downarrow 0} \varepsilon^{-p} \nu_t^*(-\varepsilon,0) = 0 \qquad \textnormal{ and } \qquad \lim_{\varepsilon \downarrow 0} \varepsilon^{-p} \int_0^t \nu_s^*(-\varepsilon,0) \diff s = 0
	\end{equation}
	almost surely for all $1 \le p \le 1 + \beta/2$, where $\beta$ is the constant from \eqref{eq: L1 BOUNDARY ESTIMATE ON THE LIMITING MEASURE IN THE WEAK FLUX CONDITION PROOF}. Therefore, by \eqref{eq: L1 BOUNDARY ESTIMATE ON THE LIMITING MEASURE IN THE WEAK FLUX CONDITION PROOF} and \eqref{eq: ALMOST SURE BOUNDARY ESTIMATE ON THE LIMITING MEASURE IN THE WEAK FLUX CONDITION PROOF}, $\langle\nu_t^*, \, \psi_{\varepsilon}\rangle$ converges to $0$ almost surely and in $L^1$ as $\varepsilon \downarrow 0$. Since the stochastic integral in \eqref{eq: SPDE EQUATION APPLED TO OUR APPROXIMATING FUNCTION IN THE PROOF OF THE WEAK FLUX CONDITION} is a martingale, it disappears when we take increments and conditional expectations. Therefore, we only need to control the terms on the first line of \eqref{eq: SPDE EQUATION APPLED TO OUR APPROXIMATING FUNCTION IN THE PROOF OF THE WEAK FLUX CONDITION}. For brevity, we show the almost sure convergence, but similar arguments apply to show convergence in $L^1$.
	
	By \cref{ass: ASSUMPTIONS FROM SOJMARK SPDE PAPER APPLIED TO THE NEUROSCIENCE SETTING}, we have $\nnnorm{b(u,x,\nu_u^*,\mathfrak{f}_u^*)} \le C_b(1 + \nnorm{x}+ \langle \nu_u^*,\nnorm{\cdot}\rangle + F_T^*)$. Therefore,
	\begin{equation*}
		\int_0^t \nnnorm{\langle\nu_u^*,\,b(u,\cdot,\nu_u^*,\mathfrak{f}_u^*)\partial_x\psi_\varepsilon\rangle}\diff{u} \le C \int_0^t (1 + \langle \nu_u^*,\nnorm{\cdot}\rangle + F_T^*) \varepsilon^{-1} \nu_u^*(-\varepsilon,0)\diff{u}.
	\end{equation*}
	By applying H\"older's inequality, we get
	\begin{align*}
		&\int_0^t \nnnorm{\langle\nu_u^*,\,b(u,\cdot,\nu_u^*,\mathfrak{f}_u^*)\partial_x\psi_\varepsilon\rangle}\diff{u}\\ 
		&\hspace{3cm}\le C \left(\int_0^t(1 + \langle \nu_u^*,\nnorm{\cdot}^q\rangle + (F_T^*)^q) \diff u\right)^{\frac{1}{q}} \left(\int_0^t \varepsilon^{-p}\nu_u^*(-\varepsilon,0) \diff u\right)^{\frac{1}{p}} 
	\end{align*}
	for all conjugate exponents $p$ and $q$. The first term above is finite in expectation for all $q \ge 1$ by \cref{prop: THE LIMITING PROCESS WILL SATISFY THE ASSUMPTIONS WE HAVE SET OUT}, hence is finite almost surely. The second term in the above converges to $0$ almost surely by \eqref{eq: ALMOST SURE BOUNDARY ESTIMATE ON THE LIMITING MEASURE IN THE WEAK FLUX CONDITION PROOF} for all $p > 1$ close enough to $1$. Therefore, by fixing an appropriate $p$, we have almost sure convergence to $0$ of the term with $b$ in \eqref{eq: SPDE EQUATION APPLED TO OUR APPROXIMATING FUNCTION IN THE PROOF OF THE WEAK FLUX CONDITION}.
	
	Lastly, we note that we can write
	\begin{equation*}
		\langle\nu_u^*,\sigma^2(u,\cdot)\partial_{xx}\psi_\varepsilon\rangle = \langle\nu_u^*,\partial_x[\sigma^2(u,\cdot)\partial_{x}\psi_\varepsilon]\rangle - \langle\nu_u^*,\partial_x[\sigma^2(u,\cdot)]\partial_{x}\psi_\varepsilon\rangle,
	\end{equation*}
	and the second term on the right-hand side converges to $0$ almost surely as $\varepsilon \to 0$ by the same arguments as above. Therefore, we have shown
	\begin{equation*}
		\E\left[\left.F_t^*- F_s^*\right| \mathcal{F}_s\right]  - \frac{1}{2} \int_s^t \langle\nu_u^*,\partial_x[\sigma^2(u,\cdot)\partial_{x}\psi_\varepsilon]\rangle \diff u = o(1) \qquad \text{as} \quad \varepsilon \to 0 \quad \text{almost surely.}
	\end{equation*}
	This completes the proof.
\end{proof}


\section{Uniqueness of the SPDE}\label{sec: Uniqueness}\label{sec: UNIQUENESS SECTION IN THE GENERALISED INTEGRATE AND FIRE NEURON MODELS}
This section is devoted to the proofs of \cref{thm: UNIQUENESS PROP} and \cref{thm: CONDITIONAL LLN STATING THAT WE CONVERGE TO A UNIQUE LIMIT POINT THAT DEPENDS ON THE COMMON NOISE}. Our approach builds upon the techniques developed in \citep{hamblyseanhalfline} and \citep{hambly2019spde}. For the parts of our estimates that are similar, we refer to those works, but we must give careful consideration to controlling the additional terms that arise from SPDE \eqref{eqn: THE LIMITTING SPDE EQUATION} compared to the problems in \citep{hamblyseanhalfline, hambly2019spde}.

We begin by showing that solutions to SPDE \eqref{eqn: THE LIMITTING SPDE EQUATION} have an $L^2$ density. Exploiting this, we then look to obtain an energy estimate in $H^{-1}$, the dual of $H_0^1 = \mathcal{W}^{1,2}(\R_-)$. This is a natural space to work in, as the space of finite signed measures (with the topology induced by the finite variation norm) is embedded into $H^{-1}$, we have control over solutions in $L^2$, and the nonlinearities take the form of integrals against the solution. For any two solutions, $\nu$ and $\tilde{\nu}$, to the SPDE \eqref{eqn: THE LIMITTING SPDE EQUATION} which satisfy \cref{ass: ASSUMPTIONS NEEDED TO PROVE UNIQUENESS WHICH PARALLEL THOSE FROM THE SPDE PAPER}, we derive an energy estimate in $H^{-1}$ for their difference $\Delta_t = \nu_t - \tilde{\nu}_t$. We do this by first considering a convenient mollification of $\Delta_t$, that is,
\begin{equation*}
	(\Teps\Delta_t)(x) \coloneqq \int_\R G_\varepsilon(x,y) \diff \Delta_t(y), \quad \text{ where } \quad G_\varepsilon(x,y) = p_\varepsilon(x-y) - p_{\varepsilon}(x+y)
\end{equation*}
with $p_\varepsilon(x) \coloneqq (2\pi\varepsilon)^{-1/2} e^{-\frac{x^2}{2\varepsilon}}$. Then, by \citep[Proposition~6.5]{hamblyseanhalfline}, we have
\begin{equation}\label{eq: BOUND ON THE H -1 NORM}
	\norm{\Delta_t}_{H^{-1}} \le \liminf_{\varepsilon \to  0} \norm{\partial_x^{-1}\mathcal{T}_{\varepsilon}\Delta_t}_2, \qquad \text{ where } \qquad (\partial_x^{-1}\mathcal{T}_{\varepsilon}\Delta_t)(x) \coloneqq \int_{-\infty}^x (\mathcal{T}_{\varepsilon}\Delta_t)(y) \diff y.
\end{equation}
For simplicity of notation, we use $\norm{\cdot}_2$ to denote either the $L^2(-\infty,0)$ or $L^2(\R)$ norm. Generally, the space will be clear from context.

\subsection{Existence of an \texorpdfstring{$L^2$}{ L2} Density Uniformly in Time}
\begin{proposition}[Uniform {$L^2$} energy estimate]\label{prop:L2 ENERGY ESTIMATE}
	Let $(\nu,F,W^0)$ be any process that solves the SPDE \eqref{eqn: THE LIMITTING SPDE EQUATION} and satisfies \cref{ass: ASSUMPTIONS NEEDED TO PROVE UNIQUENESS WHICH PARALLEL THOSE FROM THE SPDE PAPER}. Then $ (\Teps \nu_t)(x) :=\langle \nu_t , G_\varepsilon(x,\cdot ) \rangle $ satisfies
	\begin{equation*}
		\sup_{t \in [0,T]} \sup_{\varepsilon > 0} \norm{\mathcal{T}_\varepsilon\nu_t}_2^2 < \infty \qquad \text{ with probability } 1.
	\end{equation*}
\end{proposition}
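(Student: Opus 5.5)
We will test the SPDE \eqref{eqn: THE LIMITTING SPDE EQUATION} against $\phi = G_\varepsilon(x,\cdot)$ for each fixed $x<0$. This is admissible after a routine approximation, because $G_\varepsilon(x,\cdot)$ is smooth with Gaussian decay and vanishes at $y=0$. The boundary term $-\phi(0)\diff F_t$ therefore drops out. This is exactly why the odd reflection in $G_\varepsilon$ is used, and it lets us avoid any control on the flux. We obtain an It\^o equation for $\mathcal{T}_\varepsilon\nu_t(x)$:
\[
\diff\,\mathcal{T}_\varepsilon\nu_t(x) = \bigl\langle \nu_t, b\,\partial_y G_\varepsilon(x,\cdot) + \tfrac12\sigma^2\partial_{yy}G_\varepsilon(x,\cdot)\bigr\rangle \diff t + \bigl\langle \nu_t,\sigma\rho\,\partial_yG_\varepsilon(x,\cdot)\bigr\rangle \diff W^0_t + \bigl\langle\mures,G_\varepsilon(x,\cdot)\bigr\rangle\mathfrak{r}_t\diff t .
\]
We then apply It\^o's formula to $(\mathcal{T}_\varepsilon\nu_t(x))^2$ and integrate over $x\in(-\infty,0)$, which yields an evolution for $\norm{\mathcal{T}_\varepsilon\nu_t}_2^2$. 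This is the same structure as in \citep[Section~6]{hamblyseanhalfline}, and we follow their commutator estimates closely.

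The key steps are the following. First, since $\partial_{yy}G_\varepsilon = \partial_{xx}G_\varepsilon$ and $\partial_y G_\varepsilon(x,y) = -\partial_x p_\varepsilon(x-y) - \partial_x p_\varepsilon(x+y)$, we rewrite the second-order term as $\frac12\sigma^2(t,x)\partial_{xx}\mathcal{T}_\varepsilon\nu_t$ plus commutator terms. These commutators involve $(\sigma^2(t,y)-\sigma^2(t,x))$ and are controlled using the $C^2$ bounds on $\sigma$ from Assumption~\ref{ass: ASSUMPTIONS FROM SOJMARK SPDE PAPER APPLIED TO THE NEUROSCIENCE SETTING}. Integrating by parts in $x$ gives a negative dissipation term $-\frac12\int\sigma^2 (\partial_x\mathcal{T}_\varepsilon\nu_t)^2$. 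Second, the It\^o correction from the $W^0$ term is $\frac12\int\langle\nu_t,\sigma\rho\,\partial_y G_\varepsilon\rangle^2\diff x$, which is at most $\frac12(1-\gamma)^2\int \sigma^2(\partial_x \mathcal{T}_\varepsilon\nu_t)^2$ plus commutator terms. By the non-degeneracy $\rho\le 1-\gamma$, this is absorbed into the dissipation with a margin $\gamma$; the leftover margin also absorbs gradient terms produced by commutators via Young's inequality. Third, the drift term $\langle\nu_t,b\,\partial_y G_\varepsilon\rangle$ is handled as in \citep{hamblyseanhalfline}. We split $b(t,y)=b(t,x)+(b(t,y)-b(t,x))$; the first piece, after integration by parts, gives $\int \partial_x b\,(\mathcal{T}_\varepsilon\nu_t)^2$-type terms, and $\partial_x b$ is bounded. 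The commutator is bounded by $C\norm{\mathcal{T}_\varepsilon\nu_t}_2^2$, because $|y-x|\,|\partial p_\varepsilon(x-y)|\lesssim p_{c\varepsilon}(x-y)$. The coefficient of $b$ depends on $(\nu_t,\mathfrak f_t)$, but $\partial_x b$ does not grow, so constants are uniform. Fourth, the source term contributes $2\mathfrak{r}_t\int \mathcal{T}_\varepsilon\nu_t(x)\langle\mures,G_\varepsilon(x,\cdot)\rangle\diff x$. Because $\mures$ is supported in $[-R,-\gamma]$ and $\mathcal{T}_\varepsilon\nu_t\ge0$ need not be square-integrable a priori, we bound $\langle\mures,G_\varepsilon(x,\cdot)\rangle\le p_\varepsilon*\mures$. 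We do not use its $L^2$ norm, which blows up as $\varepsilon\to0$ if $\mures$ has atoms. Instead we integrate by parts onto $\mathcal{T}_\varepsilon\nu_t$ using $\partial_x^{-1}$ of the source, which is bounded by $1$. This yields $\mathfrak r_t\norm{\partial_x\mathcal{T}_\varepsilon\nu_t}_{L^1(\text{loc})}$-type terms. These are then controlled by Cauchy--Schwarz on a bounded interval, where the Gaussian tail of $\nu_t$ takes care of the rest, and absorbed into the dissipation with constant $C\mathfrak{r}_t^2$. Here $\mathfrak{r}_t\le\norm{\pref'}_{L^1}F_t+\ldots$ is finite pathwise since $\pref\in\mathcal{W}^{1,1}_0$.

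Collecting terms, we obtain
\[
\norm{\mathcal{T}_\varepsilon\nu_t}_2^2\le \norm{\mathcal{T}_\varepsilon\nu_0}_2^2+\int_0^t C(1+\mathfrak{r}_s^2)\bigl(1+\norm{\mathcal{T}_\varepsilon\nu_s}_2^2\bigr)\diff s + M^\varepsilon_t ,
\]
with a local martingale $M^\varepsilon$. The initial term is bounded uniformly in $\varepsilon$ by $\norm{V_0}_2^2$. The factor $C(1+\mathfrak r^2)$ is random but pathwise integrable, being bounded by $C(1+F_T^2)$. The main obstacle is to get a bound uniform in $\varepsilon$ and $t$ almost surely, rather than only in expectation, in the presence of this random coefficient and the martingale. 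Our first attempt is a stochastic Gronwall lemma applied after localizing at $\varrho_n=\inf\{t: F_t\ge n\}$ and multiplying by $\exp(-\int_0^t C(1+\mathfrak{r}_s^2)\diff s)$. This gives $\E\sup_{t\le T\wedge\varrho_n}\norm{\mathcal T_\varepsilon\nu_t}_2^{2}$ bounded uniformly in $\varepsilon$, using BDG for $M^\varepsilon$. The uniform-in-$\varepsilon$ supremum inside the probability then requires one more step. We will use that $\varepsilon\mapsto\norm{\mathcal{T}_\varepsilon\nu_t}_2$ is controlled along a dyadic sequence, using the semigroup property $\mathcal T_{\varepsilon+\delta}=P^{\mathrm{Dir}}_\delta\mathcal T_\varepsilon$ and contractivity of the Dirichlet heat semigroup. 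A bound on $\varepsilon_k=2^{-k}$ thus extends to all $\varepsilon$, and monotone convergence in $k$ gives the claim. Finally, we let $n\to\infty$, using that $F_T<\infty$ almost surely by Assumption~\ref{ass: ASSUMPTIONS NEEDED TO PROVE UNIQUENESS WHICH PARALLEL THOSE FROM THE SPDE PAPER}.
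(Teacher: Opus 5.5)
The gap lies in your choice of the Dirichlet kernel $G_\varepsilon(x,\cdot)$ as test function at the $L^2$ level. Only the second-order term transforms cleanly, since $\partial_{yy}G_\varepsilon=\partial_{xx}G_\varepsilon$. For the first-order terms,
\[
\langle \nu_t,\partial_y G_\varepsilon(x,\cdot)\rangle=-\partial_x\mathcal{T}_\varepsilon\nu_t(x)-2\langle \nu_t,p_\varepsilon'(x+\cdot)\rangle .
\]
So the $W^0$-coefficient is, up to commutators, $\rho\sigma$ times the gradient of the even (Neumann) reflection, not of $\mathcal{T}_\varepsilon\nu_t$. Your claim that the It\^o correction is at most $(1-\gamma)^2\int\sigma^2(\partial_x\mathcal{T}_\varepsilon\nu_t)^2$ plus commutators is therefore false. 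The correction also contains $4\rho_t^2\int_{-\infty}^0\langle\nu_t,\sigma_t p_\varepsilon'(x+\cdot)\rangle^2\,\mathrm{d}x$ and a cross term, and because these are squared they cannot be integrated by parts away.

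This is a boundary-layer quantity one derivative above the one handled in \cref{lem: BOUNDARY ESTIMATE IN THE UNIQUENESS PROOFS}. There, after applying $\partial_x^{-1}$ in the $H^{-1}$ estimate, it appears with $p_\varepsilon$ instead of $p_\varepsilon'$. Here it is of order $\varepsilon^{-3/2}\nu_t(-C\sqrt{\varepsilon},0)^2$. A point mass at distance $a\ll\sqrt{\varepsilon}$ from $0$ makes it arbitrarily large compared with both $\|\mathcal{T}_\varepsilon\nu_t\|_2^2$ and the Dirichlet dissipation, so no pathwise absorption is possible. \cref{ass: ASSUMPTIONS NEEDED TO PROVE UNIQUENESS WHICH PARALLEL THOSE FROM THE SPDE PAPER}\eqref{ass: ASSUMPTIONS NEEDED TO PROVE UNIQUENESS WHICH PARALLEL THOSE FROM THE SPDE PAPER FOUR} gives no uniform-in-$\varepsilon$ control either: the direct bound on its time-integrated expectation is only $o(\varepsilon^{-1+\beta/2})$.

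The same defect affects your commutator bound. The estimate $|x-y|\,|p_\varepsilon'(x\pm y)|\lesssim p_{2\varepsilon}(x\pm y)$ produces $\langle\nu_t,p_{2\varepsilon}(x-\cdot)\rangle$ and $\langle\nu_t,p_{2\varepsilon}(x+\cdot)\rangle$. These are free or Neumann smoothings, which do not lose mass near $0$ the way $\mathcal{T}_\varepsilon\nu_t$ does, so they are not dominated by $\|\mathcal{T}_\varepsilon\nu_t\|_2$. Hence your Gr\"onwall inequality for $\|\mathcal{T}_\varepsilon\nu_t\|_2^2$ does not close as written. The reflected drift term and these commutators might be patched by integrating by parts and invoking the boundary lemma, but the quadratic-variation term cannot.

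The paper avoids all of this by testing with the whole-line Gaussian $p_\varepsilon(x-\cdot)$, $x\in\R$. Then $\partial_y=-\partial_x$ exactly, and the noise and the dissipation involve the same gradient $\partial_x\bar{\mathcal{T}}_\varepsilon\nu_t$, so $\rho\le 1-\gamma$ absorbs the It\^o correction. All commutators are bounded by free smoothings $\bar{\mathcal{T}}_{2\varepsilon}\nu_t$, which are monotone in $\varepsilon$. The price is the flux term $-2\bar{\mathcal{T}}_\varepsilon\nu_t(x)p_\varepsilon(x)\,\mathrm{d}F_t$. It is nonpositive, because $F$ is increasing and $\bar{\mathcal{T}}_\varepsilon\nu_t\ge 0$, and at a jump of $F$ the square can only decrease. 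So it is simply discarded, and the odd reflection is not needed to avoid controlling the flux. The statement for $\mathcal{T}_\varepsilon$ then follows from $0\le\mathcal{T}_\varepsilon\nu_t\le\bar{\mathcal{T}}_\varepsilon\nu_t$ on $\R_-$.

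Several of your steps match the paper and carry over unchanged to the whole-line formulation:
\begin{itemize}
\item the reset source, handled by Cauchy--Schwarz on a bounded interval and Young's inequality into the dissipation, with $|\mathfrak{r}_t|\le\|p_{\mathrm{ref}}'\|_1F_t$;
\item the bound on the initial datum;
\item the final monotone-in-$\varepsilon$ passage to the supremum.
\end{itemize}
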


\begin{proof} 
	Fix $x \in \R$ and take $\phi(y) := p_\varepsilon(x - y)$ as a test function in \eqref{eqn: THE LIMITTING SPDE EQUATION} to obtain an equation for $(\tBar\nu_t)(x) \coloneqq \langle \nu_t , p_\varepsilon(x - \cdot) \rangle$. Proceeding similarly to  \citep[Proposition~5.3]{hambly2019spde} and \cite[Proposition~7.1]{hamblyseanhalfline}, It{\^o}'s formula and some manipulations yield
	\begin{align}\label{eq:P EPSILON APPLIED TO THE SPDE EQUATION AND DOING TAYLOR EXPANSION AROUND OF THE DIFFUSION COEFFICIENTS AND ALL SQUARED}
		\begin{split}
			\diff (\tBar \nu_t)^2=
			&-2 (\tBar \nu_t)(b_t \partial_x (\tBar \nu_t) - \partial_xb_t\Bar{\mathcal{H}}_{t,\varepsilon}^b + \Bar{\mathcal{E}}_{t,\varepsilon}^b)\diff{t}\\
			&+ (\tBar \nu_t) \partial_x(\sigma_t^2\partial_x (\tBar \nu_t)  - \partial_x \sigma_t^2\Bar{\mathcal{H}}_{t,\varepsilon}^{\sigma^2} + \Bar{\mathcal{E}}_{t,\varepsilon}^{\sigma^2})\diff{t}\\
			&-2\rho_t(\tBar \nu_t)(\sigma_t\partial_x (\tBar \nu_t)  - \partial_x \sigma_t\Bar{\mathcal{H}}_{t,\varepsilon}^{\sigma} + \Bar{\mathcal{E}}_{t,\varepsilon}^{\sigma})\diff{W_t^0} \\
			&+\rho_t^2(\sigma_t\partial_x (\tBar \nu_t)  - \partial_x \sigma_t\Bar{\mathcal{H}}_{t,\varepsilon}^{\sigma} + \Bar{\mathcal{E}}_{t,\varepsilon}^{\sigma})^2\diff{t} \\
			&+2(\tBar \nu_t) \langle \mures, p_\varepsilon(x - y)\rangle \mathfrak{r}_t\diff t
			-2(\tBar \nu_t) p_\varepsilon\diff{}F_t.
		\end{split}
	\end{align}
	where the error terms $\Bar{\mathcal{E}}$ and $\Bar{\mathcal{H}}$ (for brevity, the dependence on $x$ is omitted) are as defined in \citep[Lemma~A.2]{hambly2019spde}. Unlike the aforementioned works, the last line of \eqref{eq:P EPSILON APPLIED TO THE SPDE EQUATION AND DOING TAYLOR EXPANSION AROUND OF THE DIFFUSION COEFFICIENTS AND ALL SQUARED} has two additional finite variation terms due to the resetting of the neurons.
	
	Noting that the last term is negative, we can simply discard it. Turning instead to the first term in the last line of \eqref{eq:P EPSILON APPLIED TO THE SPDE EQUATION AND DOING TAYLOR EXPANSION AROUND OF THE DIFFUSION COEFFICIENTS AND ALL SQUARED}, by integrating over $\R$, we need to control an expression of the form
	\begin{equation}
		\int_\R \int_0^t (\tBar \nu_s)(x) p_\varepsilon(x+z)\int_0^s \pref^{\prime}(s - u) F_u \diff u \diff s \diff x
	\end{equation}
	uniformly in $z \in [\gamma, R]$. As $\tBar \nu_s = (p_\varepsilon \ast \nu_s)$, we first observe that
	\begin{equation}\label{eq: FIRST EQUATION IN THE PROCESS TO BOUND THE FINITE VARIATION TERM IN THE SPDE}
		\int_\R (\tBar \nu_s)(x) p_\varepsilon(x+z) \diff x = (p_\varepsilon \ast ( p_\varepsilon \ast \nu_s))(-z) = \Bar{\mathcal{T}}_{2\varepsilon} \nu_s(-z),
	\end{equation}
	which follows from the associativity of the convolution operator and the fact that $p_\varepsilon \ast p_\varepsilon = p_{2\varepsilon}$. By the Fundamental Theorem of Calculus,
	\begin{equation*}
		\nnorm{\Bar{\mathcal{T}}_{2\varepsilon}\nu_s(-z)} = \nnnorm{-\int_{-z}^{\varepsilon^{1/4}} \partial_x \Bar{\mathcal{T}}_{2\varepsilon}\nu_s(x) \diff x + \Bar{\mathcal{T}}_{2\varepsilon}\nu_s(\varepsilon^{1/4})} \le C \norm{\partial_x \Bar{\mathcal{T}}_{2\varepsilon}\nu_s}_2 + \nnorm{\Bar{\mathcal{T}}_{2\varepsilon}\nu_s(\varepsilon^{1/4})},
	\end{equation*}
	where the last inequality follows from an application of H\"older's inequality. The constant $C$ holds uniformly in $ \varepsilon < 1$ and $z \in [\gamma, R]$. By the definition of $p_\varepsilon$, $\nnorm{\Bar{\mathcal{T}}_{2\varepsilon}\nu_s(\varepsilon^{1/4})} \le (4\pi\varepsilon)^{-1/2}e^{-\frac{1}{4\varepsilon^{1/2}}}$. Furthermore, as $\partial_x \Bar{\mathcal{T}}_{2\varepsilon}\nu_s = p_\varepsilon \ast \partial_x \Bar{\mathcal{T}}_{\varepsilon}\nu_s$, convolution with $p_\varepsilon$ contracts the $L^2$-norm, \citep[Proposition~6.1]{hamblyseanhalfline}, and $ \partial_x \Bar{\mathcal{T}}_{\varepsilon}\nu_s$ is in $L^2(\R)$, we have $\norm{\partial_x \Bar{\mathcal{T}}_{2\varepsilon}\nu_s}_2 \le \norm{\partial_x \Bar{\mathcal{T}}_{\varepsilon}\nu_s}_2$. Therefore, we have shown that there is a constant $C$, which holds uniformly in $ \varepsilon < 1$ and $z \in [\gamma, R]$, such that
	\begin{equation}\label{eq: SECOND EQUATION IN THE PROCESS TO BOUND THE FINITE VARIATION TERM IN THE SPDE}
		\nnorm{\Bar{\mathcal{T}}_{2\varepsilon}\nu_s(-z)} \le C \left(\norm{\partial_x \Bar{\mathcal{T}}_{\varepsilon}\nu_s}_2 + \varepsilon^{-\frac{1}{2}}e^{-\frac{1}{4\varepsilon^{1/2}}}\right).
	\end{equation}
	Returning to \eqref{eq:P EPSILON APPLIED TO THE SPDE EQUATION AND DOING TAYLOR EXPANSION AROUND OF THE DIFFUSION COEFFICIENTS AND ALL SQUARED}, employing \eqref{eq: FIRST EQUATION IN THE PROCESS TO BOUND THE FINITE VARIATION TERM IN THE SPDE} and the upper bound in \eqref{eq: SECOND EQUATION IN THE PROCESS TO BOUND THE FINITE VARIATION TERM IN THE SPDE}, we obtain
	\begin{equation}\label{eq:BOUND ON THE FINITE VARIATION TERM IN THE SPDE}
		\begin{aligned}
			&\nnnorm{\int_\R \int_0^t \int_0^s (\tBar \nu_s)(x) p_\varepsilon(x+z) \pref^{\prime}(s - u) F_u \diff u \diff s \diff x}\\
			&\hspace{1cm}\le C \int_0^t \left(\norm{\partial_x \Bar{\mathcal{T}}_{\varepsilon}\nu_s}_2 + \varepsilon^{-\frac{1}{2}}e^{-\frac{1}{4\varepsilon^{1/2}}}\right) F_s \norm{\pref^{\prime}}_1 \diff s \\
			&\hspace{1cm}\le \theta \int_0^t\norm{\partial_x \Bar{\mathcal{T}}_{\varepsilon}\nu_s}_2^2 \diff s + C_\theta F_t^2 \norm{\pref^{\prime}}_1^2 + \varepsilon^{-\frac{1}{2}}e^{-\frac{1}{4\varepsilon^{1/2}}} F_t \norm{\pref^{\prime}}_1, 
		\end{aligned}
	\end{equation}
	where the last line follows from applying Young's inequality with free parameter $\theta$. 
	
	As in \citep[Proposition~3.5]{hambly2019spde}, integration by parts takes care of the first line of \eqref{eq:P EPSILON APPLIED TO THE SPDE EQUATION AND DOING TAYLOR EXPANSION AROUND OF THE DIFFUSION COEFFICIENTS AND ALL SQUARED}, since the tails of $\Bar{\mathcal{T}}_\varepsilon\nu_t$ vanish at $\pm\infty$ by \cref{ass: ASSUMPTIONS NEEDED TO PROVE UNIQUENESS WHICH PARALLEL THOSE FROM THE SPDE PAPER} \eqref{ass: ASSUMPTIONS NEEDED TO PROVE UNIQUENESS WHICH PARALLEL THOSE FROM THE SPDE PAPER THREE}, so
	\begin{equation}
		\notag -\int_\R \int_0^t 2b_s(\tBar \nu_s)\partial_x (\tBar \nu_s) \diff s \diff x 
		= \int_0^t \int_\R \partial_x b_s (\tBar \nu_s)^2 \diff s \diff x \le C_b \int_0^t \int_\R \norm{\tBar \nu_s}_2^2 \diff s. \label{eq:UPPER BOUND ON THE DRIFT TERM WITH B IN THE SAME FAITH AS THE SPDE PAPER}
	\end{equation}
	Thus, integrating over $x$ in \eqref{eq:P EPSILON APPLIED TO THE SPDE EQUATION AND DOING TAYLOR EXPANSION AROUND OF THE DIFFUSION COEFFICIENTS AND ALL SQUARED}, employing the above bound and \eqref{eq:BOUND ON THE FINITE VARIATION TERM IN THE SPDE}, using the fact that $\nnorm{\Bar{\mathcal{H}}_{t,\varepsilon}^g} \lesssim \Bar{\mathcal{T}}_{2\varepsilon}\nu_t$ by \citep[Lemma~8.2]{hamblyseanhalfline}, and employing Young's inequality with free parameter $\theta$, we obtain
	\begin{equation}\label{eq: UPPER BOUND ON THE SPDE TESTED AGAIANST THE GUASSIAN KERNEL SQUARED AND INTEGRATED AGAINST X}
		\begin{split}
			\begin{aligned}
				\norm{\tBar\nu_t}_2^2 \le& \norm{\tBar\nu_0}_2^2 + C_{\theta}\int_0^t \norm{\tBar\nu_s}_2^2 \diff s + C_{\theta}\int_0^t \norm{\Bar{\mathcal{T}}_{2\varepsilon}\nu_s}_2^2 \diff s \\
				&+ C_{\theta} \int_0^t \left(\norm{\Bar{\mathcal{E}}_{s,\varepsilon}^b}_2^2 + \norm{\Bar{\mathcal{E}}_{s,\varepsilon}^{\sigma^2}}_2^2 + \norm{\Bar{\mathcal{E}}_{s,\varepsilon}^{\sigma}}_2^2\right) \diff s \\
				&+\int_0^t \int_{\R} (\rho_s^2\sigma_s^2 + \theta\rho_s^2\sigma_s^2 - \sigma_s^2 + \theta)\nnorm{\partial_x(\tBar\nu_s)}^2 \diff x \diff s \\
				&+\int_0^t \int_{\R} \rho_s\partial_x \sigma_s (\tBar\nu_s)^2 + 2\rho_s(\tBar\nu_s)(\partial_x \sigma_s\Bar{\mathcal{H}}_{s,\varepsilon}^{\sigma} - \Bar{\mathcal{E}}_{s,\varepsilon}^{\sigma}) \diff x\diff W_s^0 \\
				&+C_\theta F_t^2 \norm{\pref^{\prime}}_1^2 + \varepsilon^{-\frac{1}{2}}e^{-\frac{1}{4\varepsilon^{1/2}}} F_t \norm{\pref^{\prime}}_1.
			\end{aligned}
		\end{split}
	\end{equation}
	Here we have used the Stochastic Fubini Theorem (using the tail decay of $\nu$) to swap the order of integration for the stochastic integral, and we have integrated by parts in the $\diff x$-integral. As $\rho$ is bounded away from $1$ and $\sigma$ is bounded away from $0$, we may find a $\theta$ sufficiently small such that the third term in \eqref{eq: UPPER BOUND ON THE SPDE TESTED AGAIANST THE GUASSIAN KERNEL SQUARED AND INTEGRATED AGAINST X} is negative and hence can be discarded. Now, first taking the supremum over $r \le t$ and then expectations in \eqref{eq: UPPER BOUND ON THE SPDE TESTED AGAIANST THE GUASSIAN KERNEL SQUARED AND INTEGRATED AGAINST X}, we get
	\begin{equation}\label{eq: UPPER BOUND ON THE SPDE TESTED AGAIANST THE GUASSIAN KERNEL SQUARED AND INTEGRATED AGAINST X AFTER TAKING THE SUPREMUM AND TAKING EXPECTATIONS}
		\begin{split}
			\begin{aligned}
				\E \sup_{r \le t} \norm{\tBar\nu_r}_2^2 \le& C_0\norm{\tBar\nu_0}_2^2 + tC_0  \E \sup_{r \le t} \norm{\tBar\nu_r}_2^2  + tC_0  \E \sup_{r \le t} \norm{\Bar{\mathcal{T}}_{2\varepsilon}\nu_r}_2^2\\
				&+ C_0 \E\int_0^t \norm{\Bar{\mathcal{E}}_{s,\varepsilon}^b}_2^2 + \norm{\Bar{\mathcal{E}}_{s,\varepsilon}^{\sigma^2}}_2^2 + \norm{\Bar{\mathcal{E}}_{s,\varepsilon}^{\sigma}}_2^2 \diff s\\
				&+C_0 \E[F_t^2] +  C_0 \varepsilon^{-\frac{1}{2}}e^{-\frac{1}{4\varepsilon^{1/2}}} \E[F_t],
			\end{aligned}
		\end{split}
	\end{equation}
	where we have used \citep[Lemma~8.5]{hamblyseanhalfline} for the stochastic integral. In turn,
	\begin{align}\label{eq: UPPER BOUND ON THE SPDE TESTED AGAIANST THE GUASSIAN KERNEL SQUARED AND INTEGRATED AGAINST X AFTER TAKING THE SUPREMUM AND TAKING EXPECTATIONS AND THEN TAKING THE LIMINF}
		f(t) \coloneqq \liminf_{\varepsilon \to  0}\E \sup_{r \le t} \norm{\tBar\nu_r}_2^2 &\le C_0\norm{V_0}_2^2 + 2t C_0  \liminf_{\varepsilon \to  0} \E \sup_{r \le t} \norm{\tBar\nu_r}_2^2  + C_0 \E[F_t^2]\notag \\
		&= C_0 \left(\norm{V_0}_2^2  + \E[F_t^2]\right) + 2tC_0f(t), 
	\end{align}
	where the inequality follows from applying \citep[Lemma~A.2]{hambly2019spde} and \citep[Proposition~6.1]{hamblyseanhalfline}, as $V_0 \in L^2(\R)$ by \cref{ass: ASSUMPTIONS FROM SOJMARK SPDE PAPER APPLIED TO THE NEUROSCIENCE SETTING} \eqref{ass: ASSUMPTIONS FROM SOJMARK SPDE PAPER APPLIED TO THE NEUROSCIENCE SETTING FOUR}. For $t \le T_0 \coloneqq 1/4C_0$, we thus have a bound on $f(t)$. By iteratively repeating the argument for $t \in [T_k,T_{k+1} \wedge T]$, where $T_k = k / 4 C_0$ and $k = 1,\ldots, \lceil 4C_0T\rceil$ analogously to \citep[Proposition~7.1]{hamblyseanhalfline}, we conclude that $\liminf_{\varepsilon \to  0}\E \sup_{r \le T} \norm{\tBar\nu_r}_2^2 < \infty$. Since $\Bar{\mathcal{T}}_{\tilde{\varepsilon}}\nu_t \ge \Bar{\mathcal{T}}_{{\varepsilon}}\nu_t$ for any $\tilde{\varepsilon} < \varepsilon$, and noting that $0 \le \mathcal{T}_{\varepsilon}\nu_t \le \tBar\nu_t$, this completes the proof.
\end{proof} 

Hence, we may conclude that any process that solves the SPDE \eqref{eqn: THE LIMITTING SPDE EQUATION} and satisfies \cref{ass: ASSUMPTIONS NEEDED TO PROVE UNIQUENESS WHICH PARALLEL THOSE FROM THE SPDE PAPER} has a density.

\begin{proposition}[$L^2$ density]\label{prop:L2_density}
	Let $(\nu,F,W^0)$ be any process that solves the SPDE \eqref{eqn: THE LIMITTING SPDE EQUATION} and satisfies \cref{ass: ASSUMPTIONS NEEDED TO PROVE UNIQUENESS WHICH PARALLEL THOSE FROM THE SPDE PAPER}. Then, with probability $1$, for every $t \in [0,T]$, there exists $V_t \in L^2(\R)$ such that $V_t$ is supported on $(-\infty, 0]$ and is a density of $\nu_t$. Furthermore, $\sup_{t \in [0,T]} \norm{V_t}_2 < \infty$ with probability $1$.
\end{proposition}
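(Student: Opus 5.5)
We deduce the statement from \cref{prop:L2 ENERGY ESTIMATE} by a weak compactness argument in $L^2$, carried out pathwise on the full-measure event where
\[
K := \sup_{t\in[0,T]}\sup_{\varepsilon>0}\norm{\Teps\nu_t}_2^2 < \infty .
\]
We also use the version of the bound for $\tBar\nu_t$: the proof of \cref{prop:L2 ENERGY ESTIMATE} in fact controls $\liminf_{\varepsilon\to0}\sup_{t}\norm{\tBar\nu_t}_2$, and $\tBar\nu_t$ increases as $\varepsilon\downarrow0$ on $(-\infty,0]$ up to the stated comparison.

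Fix $\omega$ in this event and $t\in[0,T]$. For any $\phi\in\mathcal{C}_c^\infty(-\infty,0)$ we have
\[
\langle \Teps\nu_t,\phi\rangle=\langle\nu_t,\,p_\varepsilon\ast\phi - p_\varepsilon\ast\check\phi\rangle ,
\]
where $\check\phi(x)=\phi(-x)$. This uses that $\nu_t$ is supported on $\R_-$ by \cref{ass: ASSUMPTIONS NEEDED TO PROVE UNIQUENESS WHICH PARALLEL THOSE FROM THE SPDE PAPER}\eqref{ass: ASSUMPTIONS NEEDED TO PROVE UNIQUENESS WHICH PARALLEL THOSE FROM THE SPDE PAPER ONE}. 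As $\varepsilon\to0$, $p_\varepsilon\ast\phi\to\phi$ uniformly, and $p_\varepsilon\ast\check\phi\to0$ uniformly on $(-\infty,-\eta]\cup$ any region away from $\operatorname{supp}\check\phi\subset(0,\infty)$, with uniform boundedness everywhere. Hence
\[
\langle\Teps\nu_t,\phi\rangle\to\langle\nu_t,\phi\rangle ,
\]
using bounded convergence together with $\nu_t(\{0\})$-type contributions that vanish because $\check\phi$ has support bounded away from $0$.

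Combining this with $|\langle\Teps\nu_t,\phi\rangle|\le K^{1/2}\norm{\phi}_2$ gives $|\langle\nu_t,\phi\rangle|\le K^{1/2}\norm{\phi}_2$ for all such $\phi$. By Riesz representation there is $V_t\in L^2(-\infty,0)$ with $\norm{V_t}_2\le K^{1/2}$ such that $\nu_t=V_t\,\diff x$ on the open set $(-\infty,0)$. Alternatively, we could extract a weak $L^2$ limit of $\Teps\nu_t$ along a subsequence and identify it by the same computation; either route works.

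The step requiring care is the atom at the boundary, $\nu_t(\{0\})$, since test functions in $\mathcal{C}_c^\infty(-\infty,0)$ do not see it. For a fixed $t$ we argue as follows. The $L^2$ bound on $\tBar\nu_t=p_\varepsilon\ast\nu_t$, uniformly in $\varepsilon$, rules out any atom. Indeed, an atom of mass $m$ at $0$ gives
\[
\norm{p_\varepsilon\ast\nu_t}_2\ge m\norm{p_\varepsilon}_2\sim m\,\varepsilon^{-1/4}\to\infty ,
\]
since $p_\varepsilon\ast\nu_t\ge m\,p_\varepsilon(\cdot)$ pointwise by nonnegativity of $\nu_t$. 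The same reasoning shows that $\nu_t$ has no singular part at all: weak-$L^2$ limits of $\tBar\nu_t$ converge to $\nu_t$ as measures on all of $\R$, with test functions in $\mathcal{C}_c(\R)$. This approach is cleaner, because it avoids the reflected kernel entirely.

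So I would run the argument with $\tBar\nu_t$. Take any sequence $\varepsilon_n\to0$ along which $\sup_t\norm{\tBar\nu_t}_2^2\le K'$, and extract a weakly convergent subsequence $\Bar{\mathcal{T}}_{\varepsilon_n}\nu_t\rightharpoonup V_t$ in $L^2(\R)$. Since $p_{\varepsilon_n}\ast\nu_t\to\nu_t$ vaguely, we get $\nu_t=V_t\,\diff x$ and $\norm{V_t}_2\le (K')^{1/2}$. The support statement then follows because $\nu_t((0,\infty))=0$, which gives $V_t=0$ a.e.\ on $(0,\infty)$. Uniformity in $t$ is automatic, since $K'$ does not depend on $t$.

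The main obstacle is ensuring that the almost-sure, uniform-in-$\varepsilon$ bound available from \cref{prop:L2 ENERGY ESTIMATE} really applies to $\tBar\nu_t$ and not only to $\Teps\nu_t$. If only the $\Teps$ bound is accepted, I would instead rule out the atom at $0$ via \cref{ass: ASSUMPTIONS NEEDED TO PROVE UNIQUENESS WHICH PARALLEL THOSE FROM THE SPDE PAPER}\eqref{ass: ASSUMPTIONS NEEDED TO PROVE UNIQUENESS WHICH PARALLEL THOSE FROM THE SPDE PAPER FOUR} combined with the càdlàg property of $\nu$. This route is more delicate for every $t$ simultaneously, so I would first try to extract the $\tBar$ bound from the proof of \cref{prop:L2 ENERGY ESTIMATE}.
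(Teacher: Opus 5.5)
Your argument is correct, and it is more self-contained than the paper's proof, which is a single line: combine \cref{prop:L2 ENERGY ESTIMATE} with Proposition~6.2 of \cite{hamblyseanhalfline}. That cited result converts a uniform $L^2$ bound on the Dirichlet mollifications $\Teps\mu$ into an $L^2$ density. You prove the compactness step by hand, via Riesz or weak $L^2$ limits tested against $\mathcal{C}_c$ functions. More importantly, you run it with the free-space mollification $\tBar\nu_t=p_\varepsilon\ast\nu_t$ instead of $\Teps\nu_t$, and that choice buys something. Since $G_\varepsilon(x,0)=0$, the bound $\sup_{t,\varepsilon}\norm{\Teps\nu_t}_2<\infty$ is blind to an atom of $\nu_t$ at the threshold, and that bound is all \cref{prop:L2 ENERGY ESTIMATE} literally states. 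On its own it therefore only gives a density on $(-\infty,0)$. Your estimate $\norm{p_\varepsilon\ast\nu_t}_2\ge m\norm{p_\varepsilon}_2=m(4\pi\varepsilon)^{-1/4}$ excludes such an atom, and identifying the weak limit against $\mathcal{C}_c(\R)$ gives the density on all of $\R$. The support property and the uniform-in-$t$ bound then follow as you say. The paper's route is shorter but leaves the exclusion of the boundary atom implicit in the citation.

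Your stated main obstacle is not a real one. The proof of \cref{prop:L2 ENERGY ESTIMATE} actually establishes $\liminf_{\varepsilon\to 0}\E\sup_{r\le T}\norm{\tBar\nu_r}_2^2<\infty$. It only then deduces the $\Teps$ bound, via $0\le\Teps\nu_t\le\tBar\nu_t$ on $(-\infty,0)$.

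Two points need fixing when you make this pathwise. First, the control is in expectation, not on $\liminf_{\varepsilon}\sup_t\norm{\tBar\nu_t}_2$ itself. You can either apply Fatou along a sequence realising the $\liminf$ (an $\omega$-dependent subsequence is harmless in your pathwise argument), or use that $\varepsilon\mapsto\norm{\tBar\nu_t}_2$ is non-increasing. The latter holds because $p_\varepsilon=p_{\varepsilon-\tilde\varepsilon}\ast p_{\tilde\varepsilon}$ for $\tilde\varepsilon<\varepsilon$ and convolution with a probability density contracts $L^2$; monotone convergence then gives $\E\sup_{\varepsilon>0}\sup_{t\le T}\norm{\tBar\nu_t}_2^2<\infty$, so any $\varepsilon_n\downarrow0$ works. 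Second, it is this norm monotonicity you should invoke, not a pointwise increase of $\tBar\nu_t$ as $\varepsilon\downarrow0$, which is false in general: $p_\varepsilon(x)$ decreases as $\varepsilon\downarrow 0$ once $x^2>\varepsilon$.

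Finally, discard the fallback via \cref{ass: ASSUMPTIONS NEEDED TO PROVE UNIQUENESS WHICH PARALLEL THOSE FROM THE SPDE PAPER}\eqref{ass: ASSUMPTIONS NEEDED TO PROVE UNIQUENESS WHICH PARALLEL THOSE FROM THE SPDE PAPER FOUR}. As written it concerns the open interval $(-\varepsilon,0)$, which does not see $\{0\}$. Even with $(-\varepsilon,0]$ it would only give $\nu_t(\{0\})=0$ for a.e.\ $t$. C\`adl\`ag paths in the weak topology cannot upgrade this to every $t$, because weak limits of atomless measures can have atoms.
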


\begin{proof}
	The result follows directly from \cref{prop:L2 ENERGY ESTIMATE} combined with \citep[Proposition~6.2]{hamblyseanhalfline}.
\end{proof}

Before proceeding, we introduce the following notation to simplify the computations to follow.

\begin{remark}[Notation]\label{rem: OSQ 1 NOTAITON}
	We will use the notation $\osq$ to denote any family of $L^2(-\infty,0)$-valued processes $\{(f_{t,\varepsilon})_{t\in [0,T]}\}_{\varepsilon > 0}$ satisfying
	\begin{equation*}
		\E\int_0^T \norm{f_{t,\varepsilon}}_{L^2(-\infty,0)} \diff t \to  0 \qquad \text{ as } \qquad \varepsilon \to  0.
	\end{equation*}
\end{remark}

The following lemma is crucial for handling the boundary effects in the smoothed estimate of $\partial_x^{-1}\Teps\Delta_T$. It ensures that these terms vanish as $\varepsilon \to 0$, which is essential to deduce uniqueness. The result employs  the behaviour of the solutions' mass near the boundary, \cref{ass: ASSUMPTIONS NEEDED TO PROVE UNIQUENESS WHICH PARALLEL THOSE FROM THE SPDE PAPER} \eqref{ass: ASSUMPTIONS NEEDED TO PROVE UNIQUENESS WHICH PARALLEL THOSE FROM THE SPDE PAPER THREE}.

\begin{lemma}[Boundary estimate]\label{lem: BOUNDARY ESTIMATE IN THE UNIQUENESS PROOFS}
	Let $\mu$ be any measure satisfying \cref{ass: ASSUMPTIONS NEEDED TO PROVE UNIQUENESS WHICH PARALLEL THOSE FROM THE SPDE PAPER} and let $g_t(y)$ be a (stochastic) function with $\nnorm{g_t(y)} \lesssim 1 + \nnorm{y} + M_t + F_t^\mu$, where $M_t = \langle\mu_t,\psi\rangle$ for some $\psi \in \mathcal{C}^2(\R)$ with $\norm{\partial_x \psi}_{\mathcal{C}^1} < +\infty$. Then
	\begin{equation*}
		\E \int_0^T\int_{-\infty}^0 \nnorm{\langle\mu_t,g_t(\cdot)p_{\varepsilon}(x + \cdot)\rangle}^2 \diff x \diff t \to  0 \qquad \text{ as } \qquad \varepsilon \to  0.
	\end{equation*}
\end{lemma}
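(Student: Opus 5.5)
The plan is to reduce the claim to the Dirichlet boundary decay in \cref{ass: ASSUMPTIONS NEEDED TO PROVE UNIQUENESS WHICH PARALLEL THOSE FROM THE SPDE PAPER} \eqref{ass: ASSUMPTIONS NEEDED TO PROVE UNIQUENESS WHICH PARALLEL THOSE FROM THE SPDE PAPER FOUR}, together with the exponential tails \eqref{ass: ASSUMPTIONS NEEDED TO PROVE UNIQUENESS WHICH PARALLEL THOSE FROM THE SPDE PAPER THREE} and the moment bounds on $F^\mu$ and $M$. The key observation concerns the kernel. For $x<0$ and $y\le 0$ we have $x+y<0$, so $p_\varepsilon(x+y)$ is only non-negligible when both $x$ and $y$ lie within a few multiples of $\sqrt{\varepsilon}$ of the origin. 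Hence $\langle\mu_t,g_t(\cdot)p_\varepsilon(x+\cdot)\rangle$ only sees mass of $\mu_t$ near the threshold.

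\textbf{Step 1: split the measure.} Fix a small exponent $\eta\in(0,1/2)$ and decompose $\mu_t$ into its restrictions to $(-\varepsilon^{\eta},0]$ and to $(-\infty,-\varepsilon^{\eta}]$.

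On the far part, $|x+y|\ge \varepsilon^{\eta}+|x|$, so
\[
p_\varepsilon(x+y)\le (2\pi\varepsilon)^{-1/2}e^{-(\varepsilon^{\eta}+|x|)^2/(2\varepsilon)}.
\]
Combined with $|g_t(y)|\lesssim 1+|y|+M_t+F_t^\mu$, the contribution is at most
\[
(1+\langle\mu_t,|\cdot|\rangle+M_t+F^\mu_t)\,\varepsilon^{-1/2}e^{-\varepsilon^{2\eta-1}/4}\,e^{-x^2/(4\varepsilon)}.
\]
Squaring, integrating over $x$ and $t$, and taking expectations gives a superpolynomially small term. This uses only finiteness of second moments of $\langle\mu_t,|\cdot|\rangle$, $M_t$ and $F^\mu_T$. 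These follow from the tails \eqref{ass: ASSUMPTIONS NEEDED TO PROVE UNIQUENESS WHICH PARALLEL THOSE FROM THE SPDE PAPER THREE}, from $\psi$ having linear growth, and from the sub-Gaussianity in \cref{prop: Limiting firing function is sub-Guassian}. For a general $\mu$ I would instead use the relation \eqref{eq: REWRITING THE FIRING FUNCTION ONLY IN TERMS OF THE MEAUSRE}, which bounds $F^\mu_T$ deterministically by a constant depending on $T$ and $\pref$, since $1-\mu_t(-\infty,0)\le 1$.

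\textbf{Step 2: the near part.} On $(-\varepsilon^\eta,0]$ the weight satisfies $|g_t(y)|\lesssim 1+M_t+F^\mu_t=:C_t$. By Cauchy--Schwarz with respect to the measure,
\[
|\langle\mu_t\ind_{(-\varepsilon^\eta,0]},g_tp_\varepsilon(x+\cdot)\rangle|^2\le C_t^2\,\mu_t(-\varepsilon^\eta,0)\int p_\varepsilon(x+y)^2\,\mu_t(\diff y).
\]
Integrating in $x$ gives $\int_\R p_\varepsilon^2=c\,\varepsilon^{-1/2}$, so this term is bounded by
\[
C_t^2\,\varepsilon^{-1/2}\,\mu_t(-\varepsilon^\eta,0)^2\le C_t^2\,\varepsilon^{-1/2}\,\mu_t(-\varepsilon^\eta,0).
\]
This crude bound loses too much: \eqref{ass: ASSUMPTIONS NEEDED TO PROVE UNIQUENESS WHICH PARALLEL THOSE FROM THE SPDE PAPER FOUR} only gives $\varepsilon^{\eta(1+\beta)}$, which beats $\varepsilon^{-1/2}$ only if $\eta(1+\beta)>1/2$, and that is impossible for small $\beta$ with $\eta<1/2$.

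I would therefore refine the estimate using the Gaussian localization in $y$ directly, rather than a fixed cutoff. Write
\[
\mu_t(\diff y)\,p_\varepsilon(x+y)\le \sum_{j\ge0}\mu_t\bigl(-(j+1)\sqrt\varepsilon,-j\sqrt\varepsilon\bigr)\,\varepsilon^{-1/2}e^{-(|x|+j\sqrt\varepsilon)^2/(2\varepsilon)}.
\]
The $L^2(\diff x)$ norm of each summand is of order
\[
\varepsilon^{-1/4}e^{-j^2/2}\,\mu_t\bigl(-(j+1)\sqrt\varepsilon,0\bigr).
\]
Squaring and summing, the main term is of order
\[
C_t^2\,\varepsilon^{-1/2}\sum_j e^{-j^2/4}\,\mu_t\bigl(-(j+1)\sqrt\varepsilon,0\bigr)^2.
\]
One factor of $\mu_t(\cdot)$ is bounded by $1$, and the other is handled by \eqref{ass: ASSUMPTIONS NEEDED TO PROVE UNIQUENESS WHICH PARALLEL THOSE FROM THE SPDE PAPER FOUR} at scale $(j+1)\sqrt\varepsilon$. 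This gives
\[
\varepsilon^{-1/2}\,o\bigl(((j+1)\sqrt\varepsilon)^{1+\beta}\bigr)=o(\varepsilon^{\beta/2})(j+1)^{1+\beta},
\]
which is summable against $e^{-j^2/4}$. The random prefactor $C_t^2$ must be separated before taking expectations. I would do this with H\"older: use $\mu_t(\cdot)^{2}\le\mu_t(\cdot)^{1+\kappa}$ for a small $\kappa$, put a high moment on $C_t$, and put the power $(1+\kappa)/(1+\kappa')$ on $\E\int_0^T\mu_t(-a,0)\diff t$. This costs only a slight reduction of $\beta$.

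\textbf{Main obstacle.} The main difficulty is this last H\"older step. Assumption \eqref{ass: ASSUMPTIONS NEEDED TO PROVE UNIQUENESS WHICH PARALLEL THOSE FROM THE SPDE PAPER FOUR} controls $\E\int\mu_t(-a,0)\diff t$ only to first order, and it must absorb both the $\varepsilon^{-1/2}$ and the unbounded factor $C_t$. If the exponent bookkeeping fails, I would fall back on the almost-sure version \eqref{eq: ALMOST SURE BOUNDARY ESTIMATE ON THE LIMITING MEASURE IN THE WEAK FLUX CONDITION PROOF}, which is obtained by a Borel--Cantelli argument as in \cite[Lemma~A.4]{hambly2019spde}. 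I would combine it with dominated convergence, using the uniform bound $\varepsilon^{-1/2}\mu_t(-\sqrt\varepsilon(j+1),0)\lesssim \|\mathcal{T}_\varepsilon\mu_t\|$-type control from \cref{prop:L2 ENERGY ESTIMATE} as the dominating function.
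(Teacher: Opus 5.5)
Your proposal is correct, but it follows a different route from the paper. The paper's proof is a two-line reduction. Condition (ii) of \cref{ass: ASSUMPTIONS NEEDED TO PROVE UNIQUENESS WHICH PARALLEL THOSE FROM THE SPDE PAPER} together with Gr\"onwall gives a deterministic bound $F^\mu_T\le C$, so $|g_t(y)|\lesssim 1+|y|+M_t$. The claim is then quoted from \cite[Lemma~5.5]{hambly2019spde}, whose boundary hypotheses are implied by the uniqueness class.

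You share the first step: your renewal-series bound on $F^\mu$ is the same fact. You then prove the kernel estimate by hand. The far part is killed by a factor of order $e^{-c\varepsilon^{2\eta-1}}$, together with moments of $\langle\mu_t,|\cdot|\rangle$ coming from the exponential tails (iii). On the near part, the blow-up $\|p_\varepsilon\|_2^2\sim\varepsilon^{-1/2}$ is beaten by the boundary decay (iv). Your version is self-contained and makes explicit which conditions are used. The paper's version is shorter because it reuses an estimate already proved in the half-line setting.

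There are three corrections to your write-up.

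\textbf{The crude bound works.} You wrongly discard the crude near-part bound. The condition $\eta(1+\beta)>\tfrac12$ holds for every $\eta\in(\tfrac{1}{2(1+\beta)},\tfrac12)$, and this interval is nonempty for every $\beta>0$. So the Cauchy--Schwarz bound $C_t^2(4\pi\varepsilon)^{-1/2}\mu_t(-\varepsilon^\eta,0)^2$ already suffices, and the shell decomposition, though correct, is unnecessary.

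\textbf{The H\"older step is harmless.} Since $F^\mu$ is deterministically bounded, $C_t\lesssim 1+\langle\mu_t,|\cdot|\rangle$ has moments of all orders by (iii). Using $\mu_t(A)^{2p}\le\mu_t(A)$, you get
\[
\varepsilon^{-1/2}\,\E\int_0^T C_t^2\,\mu_t(A)^2\diff t\le \varepsilon^{-1/2}\Bigl(\E\int_0^T C_t^{2q}\diff t\Bigr)^{1/q}\Bigl(\E\int_0^T\mu_t(A)\diff t\Bigr)^{1/p}\to 0
\]
for $A=(-\varepsilon^\eta,0)$, $1<p<1+\beta$ and $\eta\in(\tfrac{p}{2(1+\beta)},\tfrac12)$. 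Hence the fallback is not needed. This matters, because the almost sure boundary estimate you would fall back on was derived in the paper only for limit points of the particle system. It came from the pointwise-in-time bound on $\E[\nu^*_t(-\varepsilon,0)]$, and it is not available for an arbitrary $\mu$ in the uniqueness class.

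\textbf{The atom at $0$ must be excluded.} You silently pass from $(-\varepsilon^\eta,0]$ to $(-\varepsilon^\eta,0)$. An atom at $0$ would make the left-hand side of the lemma blow up like $\varepsilon^{-1/2}$. You therefore need $\mu_t(\{0\})=0$ for $\diff t\otimes\diff\prob$-a.e.\ $(t,\omega)$. This follows from the spatial concentration condition (v), taking $a=-h$, $b=h$ and letting $h\downarrow 0$.
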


\begin{proof}
	By \cref{ass: ASSUMPTIONS NEEDED TO PROVE UNIQUENESS WHICH PARALLEL THOSE FROM THE SPDE PAPER} \eqref{ass: ASSUMPTIONS NEEDED TO PROVE UNIQUENESS WHICH PARALLEL THOSE FROM THE SPDE PAPER TWO} and employing Gr\"onwall's inequality, there is a constant $C > 0$ such that $F_T^\mu < C$. Therefore, $\nnorm{g_t(y)} \lesssim 1 + \nnorm{y} + M_t$. Now, as $\mu$ satisfies the boundary regularity of \citep[Assumption~2.3]{hambly2019spde}, the proof follows directly from \citep[Lemma~5.5]{hambly2019spde}.
\end{proof}

\subsection{The Smoothed \texorpdfstring{$H^{-1}$}{ H-1} Estimate}
Equation \eqref{eq: REWRITING THE FIRING FUNCTION ONLY IN TERMS OF THE MEAUSRE} shows that the firing function may be reformulated exclusively in terms of the measure flow. As a result, we may compare two firing functions and rewrite it as a function of the distance between the corresponding measure flows using a suitable metric. We observe by \cref{ass: ASSUMPTIONS NEEDED TO PROVE UNIQUENESS WHICH PARALLEL THOSE FROM THE SPDE PAPER} \eqref{ass: ASSUMPTIONS NEEDED TO PROVE UNIQUENESS WHICH PARALLEL THOSE FROM THE SPDE PAPER TWO} and employing Jensen's inequality,
\begin{equation*}
	\nnnorm{F_t  -\tilde{F}_t}^2 \lesssim \nnnorm{\Delta_t(-\infty,0)}^2 + t\int_0^t \nnnorm{F_s  -\tilde{F}_s}^2 \diff s
\end{equation*}
for any $t > 0$. Consequently, a straightforward application of Gr\"onwall's inequality gives us
\begin{equation}\label{eq: SECOND EQUATION ON SHOWING THAT WE CAN BOUND THE FIRING FUNCTION IN TERMS OF THE MEASURE}
	\nnnorm{F_t  -\tilde{F}_t}^2 \lesssim \nnnorm{\Delta_t(-\infty,0)}^2 + t\int_0^t \nnnorm{\Delta_s(-\infty,0)}^2e^{t - s} \diff s.
\end{equation}
Given that $\nnnorm{\Delta_t(-\infty,0)} \le d_1(\nu_t,\tilde{\nu}_t)$ by definition of the $d_1$-metric, \eqref{eq: SECOND EQUATION ON SHOWING THAT WE CAN BOUND THE FIRING FUNCTION IN TERMS OF THE MEASURE} can be simplified to
\begin{equation}\label{eq: THIRD EQUATION ON SHOWING THAT WE CAN BOUND THE FIRING FUNCTION IN TERMS OF THE MEASURE}
	\nnnorm{F_t  -\tilde{F}_t}^2 \lesssim d_1(\nu_t,\tilde{\nu}_t)^2 + \int_0^t d_1(\nu_s,\tilde{\nu}_s)^2 \diff s.
\end{equation}
Finally, integrating \eqref{eq: THIRD EQUATION ON SHOWING THAT WE CAN BOUND THE FIRING FUNCTION IN TERMS OF THE MEASURE} over the interval $[0,T]$, we obtain
\begin{equation}\label{eq: RELATIONSHIP BETWEEN THE FIRING FUNCTIONS}
	\int_0^T \nnorm{F_s - \tilde{F}_s}^2 \diff s \le C_T \int_0^T d_1(\nu_s,\tilde{\nu}_s)^2 \diff s.
\end{equation}
This result effectively bounds the difference between firing functions in terms of the $d_1$-metric distance between their corresponding measure flows and is employed in the proof of the following proposition.

\begin{proposition}[$H^{-1}$ estimate for the mollifications]
	As $\varepsilon \downarrow 0$, we have
	\begin{align}\label{eq: SMOOTHED H-1 ESTIMATE}
		\E\norm{\ad\Teps\Delta_{t\wedge t_n}}_2^2 + &c_0 \E \int_0^{t\wedge t_n} \norm{\Teps\Delta_s}_2^2 \diff s \le
		c_n \E \int_0^{t\wedge t_n} d_0(\nu_s,\tilde{\nu}_s)\norm{\ad\Teps\Delta_s}_2 \diff s \notag \\
		&+c_n\E \int_0^{t\wedge t_n} d_1(\nu_s,\tilde{\nu}_s)^2\diff s + o(1)
	\end{align}
	for a fixed $c_0 > 0$ and with $c_n$ depending only on $n$, and $(t_n)_n$ is a sequence of stopping times such that $t_n \uparrow T$ as $n \to  \infty$.
\end{proposition}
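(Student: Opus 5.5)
We follow the scheme of \citep[Proposition~8.2]{hamblyseanhalfline} and \citep[Section~5]{hambly2019spde}, and focus on the reset and firing terms that are new here. Fix $x<0$ and test the SPDE \eqref{eqn: THE LIMITTING SPDE EQUATION} for both $\nu$ and $\tilde\nu$ against $\phi(y)=\int_{-\infty}^x G_\varepsilon(z,y)\diff z$. This gives an It\^o equation for $(\ad\Teps\Delta_t)(x)$. The firing term contributes $-\phi(0)\diff(F_t-\tilde F_t)$, which vanishes identically because $G_\varepsilon(z,0)=p_\varepsilon(z)-p_\varepsilon(z)=0$; this is exactly why we use the odd reflected kernel. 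We then apply It\^o's formula to $(\ad\Teps\Delta_t)(x)^2$, integrate over $x\in(-\infty,0)$, and interchange the integrals with the stochastic Fubini theorem, using the exponential tails in \cref{ass: ASSUMPTIONS NEEDED TO PROVE UNIQUENESS WHICH PARALLEL THOSE FROM THE SPDE PAPER}\eqref{ass: ASSUMPTIONS NEEDED TO PROVE UNIQUENESS WHICH PARALLEL THOSE FROM THE SPDE PAPER THREE}. Next we localise with stopping times $t_n=\inf\{t:\ \norm{V_t}_2+\norm{\tilde V_t}_2+F_t+\tilde F_t+\langle\nu_t+\tilde\nu_t,\nnorm{\cdot}\rangle\ge n\}\wedge T$, which increase to $T$ by \cref{prop:L2_density}. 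Up to $t_n$ the $\diff W^0$ integral is a true martingale and drops out in expectation.

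The drift and diffusion terms are handled as in \citep{hambly2019spde}. Each coefficient difference is split into a linear part and a nonlinear part. For the linear part we commute the coefficients through $\Teps$, producing the error terms $\mathcal{E},\mathcal{H}$; boundary contributions at $x=0$ of the form $\langle\mu_t,g_t p_\varepsilon(x+\cdot)\rangle$ are shown to be $\osq$ by \cref{lem: BOUNDARY ESTIMATE IN THE UNIQUENESS PROOFS}. The $\frac12\sigma^2\partial_{xx}$ term gives $-\int\sigma^2\nnorm{\Teps\Delta}^2$. Combined with the It\^o correction $\rho^2\sigma^2\nnorm{\Teps\Delta}^2$ and $\rho\le1-\gamma$, $\sigma\ge C_\sigma^{-1}$, this leaves the coercive term $-c_0\norm{\Teps\Delta_s}_2^2$. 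For the nonlinear parts:
\begin{itemize}
\item the difference $b(\cdot,\nu,\mathfrak f)-b(\cdot,\tilde\nu,\tilde{\mathfrak f})$, integrated against $\tilde\nu$, is bounded by $d_0(\nu_s,\tilde\nu_s)+\nnorm{\mathfrak f_s-\tilde{\mathfrak f}_s}$ times $\norm{V}_2\norm{\ad\Teps\Delta_s}_2$, which is at most $c_n\cdot\norm{\ad\Teps\Delta_s}_2$ before $t_n$;
\item the $\rho$-difference enters quadratically through the It\^o correction and yields $c_n\, d_1(\nu_s,\tilde\nu_s)^2+c_n\nnorm{\mathfrak f_s-\tilde{\mathfrak f}_s}^2$, after Young's inequality absorbs cross terms into $c_0\norm{\Teps\Delta}_2^2$.
\end{itemize}
Since $\mathfrak K\in\mathcal W^{1,1}_0$, we have $\mathfrak f_s-\tilde{\mathfrak f}_s=\int_0^s\mathfrak K'(s-u)(F_u-\tilde F_u)\diff u$, and \eqref{eq: RELATIONSHIP BETWEEN THE FIRING FUNCTIONS} with Jensen turns all $F$-differences into $\int d_1^2$. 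The cross term $\nnorm{\mathfrak f-\tilde{\mathfrak f}}\,\norm{\ad\Teps\Delta}_2$ is handled by Young's inequality as $d_1^2+\norm{\ad\Teps\Delta}_2^2$. The last piece is absorbed into the right-hand side, either by recognising $\norm{\ad\Teps\Delta}_2\lesssim d_0$ up to $o(1)$ or directly by Gr\"onwall later; we expect the former route, using that $\ad\Teps\Delta$ is a distribution function difference bounded by $d_0$ on the localised set.

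The reset term is new. It contributes
\[
2\int_{-\infty}^0(\ad\Teps\Delta_s)(x)\,\langle\mures,\textstyle\int_{-\infty}^xG_\varepsilon(z,\cdot)\diff z\rangle\,(\mathfrak r_s-\tilde{\mathfrak r}_s)\diff x\diff s .
\]
Because $\pref\in\mathcal W^{1,1}_0$, we can write $\mathfrak r_s-\tilde{\mathfrak r}_s=\int_0^s\pref'(s-u)(F_u-\tilde F_u)\diff u$. The middle factor is bounded by $1$ and supported away from the tails, since $\operatorname{supp}\mures\subseteq[-R,-\gamma]$. However, it does not decay as $x\to-\infty$, so we cannot use a crude $L^2$ bound. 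Instead we pair it against $\ad\Teps\Delta_s$ and use $\int\ad\Teps\Delta\cdot\ind_{(-\infty,-z)}\approx$ a quantity controlled by $d_0$ (integrated first moment differences). This gives a bound $C\,d_0(\nu_s,\tilde\nu_s)\nnorm{\mathfrak r_s-\tilde{\mathfrak r}_s}+o(1)$, and then $c_n\int d_1^2$ after Young's inequality and \eqref{eq: RELATIONSHIP BETWEEN THE FIRING FUNCTIONS}, since $d_0^2$ is dominated by $c_n d_1$-type quantities on the localised set only up to moments. If this pairing fails, the fallback is to truncate at $x<-L$ using the tail assumption and bound the remainder by $\norm{\ad\Teps\Delta_s}_2$ on $[-L,0]$, giving the $d_0\norm{\ad\Teps\Delta}$ form.

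I expect the \textbf{reset term} to be the main obstacle, because of its lack of spatial decay and its nonlocal-in-time dependence on $F-\tilde F$. Collecting everything, taking expectations at $t\wedge t_n$ and letting $\varepsilon\downarrow0$ on the error terms gives \eqref{eq: SMOOTHED H-1 ESTIMATE}.
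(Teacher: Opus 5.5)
Your architecture matches the paper's. Both use the reflected kernel so that the $-\phi(0)\diff F$ term vanishes, It\^o's formula on $(\ad\Teps\Delta_t)^2$, coercivity from $\rho\le 1-\gamma$ and $\sigma\ge C_\sigma^{-1}$, localisation, and conversion of $\mathfrak f-\tilde{\mathfrak f}$ into $\int d_1^2$ through \eqref{eq: RELATIONSHIP BETWEEN THE FIRING FUNCTIONS}. The problem is the reset term, which you single out as the main obstacle: your treatment rests on a false premise, and your main route for it does not give the stated inequality.

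The factor $m_\varepsilon(x)=\langle\mures,\ad G_\varepsilon(x,\cdot)\rangle$ does decay as $x\to-\infty$. For a reset level $-z$ with $z\in[\gamma,R]$, one has $\ad G_\varepsilon(x,-z)=\Phi_\varepsilon(x+z)-\Phi_\varepsilon(x-z)$, where $\Phi_\varepsilon$ is the $\mathcal N(0,\varepsilon)$ distribution function. On $\R_-$ this is a smoothed $\ind_{(-z,0)}(x)$, not $\ind_{(-\infty,-z)}(x)$. Hence $0\le m_\varepsilon\le 1$, $m_\varepsilon$ has a Gaussian tail below $-R$, and $\norm{m_\varepsilon}_{L^1(\R_-)}\le R+\pi$ uniformly in $\varepsilon<1$; this is the paper's Step~2. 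So $m_\varepsilon$ is uniformly bounded in $L^2(\R_-)$, and the ``crude'' Cauchy--Schwarz bound you rule out is exactly what works:
\[
\Bigl|\int_0^t\!\int_{-\infty}^0(\ad\Teps\Delta_s)\,m_\varepsilon\,(\mathfrak r_s-\tilde{\mathfrak r}_s)\diff x\diff s\Bigr|\lesssim\int_0^t\norm{\ad\Teps\Delta_s}_2^2\diff s+\norm{\pref^{\prime}}_1^2\int_0^t\nnorm{F_s-\tilde F_s}^2\diff s .
\]
With the correct shape, your pairing idea would also work, and even with $d_1$ in place of $d_0$. Indeed $\int_{-\infty}^0(\ad\Teps\Delta_s)\,m_\varepsilon\diff x=\langle\Delta_s,\Psi_\varepsilon\rangle$, where $\Psi_\varepsilon(y)=\int_{-\infty}^0\ad G_\varepsilon(x,y)\,m_\varepsilon(x)\diff x$ is bounded by $R+\pi$ and is $2$-Lipschitz.

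As written, however, your main route produces $d_0(\nu_s,\tilde\nu_s)\nnorm{\mathfrak r_s-\tilde{\mathfrak r}_s}$ and, after Young's inequality, a $d_0^2$ term. That term is not on the right-hand side of \eqref{eq: SMOOTHED H-1 ESTIMATE} and is not dominated by $d_1^2$, since the $d_0$ test functions grow linearly; ``up to moments'' is not an inequality. Your fallback mislabels its output: it gives $\nnorm{\mathfrak r_s-\tilde{\mathfrak r}_s}\norm{\ad\Teps\Delta_s}_2$, not a $d_0\norm{\ad\Teps\Delta_s}_2$ term, and it needs no truncation.

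Relatedly, all the $\norm{\ad\Teps\Delta_s}_2^2$ terms must be removed by Gr\"onwall's inequality at $t\wedge t_n$. These come from the reset term, from Young's inequality on the $\mathfrak f$ cross term, and from the linear part of $b$. This Gr\"onwall step is where $c_n=Cne^{CnT}$ comes from. The route you say you prefer, $\norm{\ad\Teps\Delta_s}_2\lesssim d_0(\nu_s,\tilde\nu_s)$, is not available. $\ad\Teps\Delta_s$ approximates $\Delta_s(-\infty,\cdot\,]$, and $d_0$ controls its $L^1$ norm (hence the square of its $L^2$ norm), not its $L^2$ norm. A term linear in $d_0$ on the right would also be useless in the subsequent uniqueness argument.
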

\begin{proof}
	
	For any fixed $x < 0$, we may use $y \mapsto G_{\varepsilon}(x,y)$ as a test function in \eqref{eqn: THE LIMITTING SPDE EQUATION} to obtain an equation for
	$(\Teps \nu_t)(x) =\langle \nu_t , G_\varepsilon(x,\cdot ) \rangle $.  Using that
	\begin{equation*}
		\partial_y G_{\varepsilon}(x,y) = - \partial_x G_{\varepsilon}(x,y) - 2 \partial_x p_\varepsilon (x+y)
		\quad \text{ and } \quad
		\partial_{yy} G_{\varepsilon}(x,y) = \partial_{xx} G_{\varepsilon}(x,y),
	\end{equation*}
	and integrating over the domain $(-\infty, x)$ to introduce the antiderivative (as well as invoking \citep[Lemma~8.3]{hamblyseanhalfline} to swap the order of the space and time integrals), we get
	\begin{align}\label{eqn: THE LIMITTING SPDE EQUATION APPLIED TO THE DIRICHLET HEAT KERNEL AND MAKING THE DERIVATIVES BE FUNCTIONS OF X WITH THE ANTIDERIVATIVE}
		\begin{split}
			\diff{}\ad\Teps\nu_t =&
			- \langle\nu_t,b_t G_{\varepsilon}(x,\cdot)\rangle\diff{t}
			+\frac{1}{2}\partial_{x}\langle\nu_t,\sigma_t^2 G_{\varepsilon}(x,\cdot)\rangle\diff{t}\\
			&-\rho_t \langle\nu_t,\sigma_t G_{\varepsilon}(x,\cdot)\rangle\diff{W_t^0}
			+ \langle \mures, \ad G_{\varepsilon}(x,\cdot)\rangle \mathfrak{r}_t\diff t\\
			&-2 \langle\nu_t,b_t p_{\varepsilon}(x+\cdot)\rangle\diff{t}
			-2\rho_t\langle\nu_t,\sigma_t p_{\varepsilon}(x+\cdot)\rangle\diff{W_t^0}.
		\end{split}
	\end{align}
	Employing \citep[Lemma~A.1]{hambly2019spde}, this becomes
	\begin{align}\label{eqn: THE LIMITTING SPDE EQUATION APPLIED TO THE DIRICHLET HEAT KERNEL AND MAKING THE DERIVATIVES BE FUNCTIONS OF X WITH THE ANTIDERIVATIVE AND THE OSQ1 TERMS}
		\begin{split}
			\diff{}\ad\Teps\nu_t =&
			- b_t \Teps\nu_t\diff{t}
			+\frac{1}{2}\partial_{x}\left(\sigma_t^2 \Teps\nu_t + \mathcal{E}_{t,\varepsilon}^{\sigma^2}\right)\diff{t}
			-\rho_t \sigma_t \Teps\nu_t \diff{W_t^0}\\
			&+ \langle \mures, \ad G_{\varepsilon}(x,\cdot)\rangle \mathfrak{r}_t\diff t
			+\osq\diff{t}
			+\osq\diff{W_t^0}, 
		\end{split}
	\end{align}
	where we have used the $\osq$ notation defined in \cref{rem: OSQ 1 NOTAITON}, and $\mathcal{E}_{t,\varepsilon}^g$ are the error terms from applying this transformation. Note that the $\mathcal{E}_{t,\varepsilon}^b$ and $\mathcal{E}_{t,\varepsilon}^{\sigma}$ terms are $\osq$ and are captured in the last line. Write $\Delta \coloneqq \nu - \tilde{\nu}$, where $\tilde{\nu}$ is any other solution to \eqref{eqn: THE LIMITTING SPDE EQUATION} satisfying \cref{ass: ASSUMPTIONS NEEDED TO PROVE UNIQUENESS WHICH PARALLEL THOSE FROM THE SPDE PAPER}, and define $\tilde{\mathcal{E}}_{t,\varepsilon}^b$ analogously to ${\mathcal{E}}_{t,\varepsilon}^b$ but with $\tilde{\nu}$ instead. Taking the difference in \eqref{eqn: THE LIMITTING SPDE EQUATION APPLIED TO THE DIRICHLET HEAT KERNEL AND MAKING THE DERIVATIVES BE FUNCTIONS OF X WITH THE ANTIDERIVATIVE AND THE OSQ1 TERMS} and applying It\^o's formula to $(\ad\Teps\Delta_t)^2$, we arrive at
	\begin{align}\label{eqn: APPLYING ITOS FORMULA TO THE LIMITTING SPDE EQUATION APPLIED TO THE DIRICHLET HEAT KERNEL AND MAKING THE DERIVATIVES BE FUNCTIONS OF X WITH THE ANTIDERIVATIVE AND THE OSQ1 TERMS AND THE DIFFERENCE OF TWO SOLUTIONS}
		\begin{split}
			\diff{}(\ad\Teps\Delta_t)^2 =&
			-2(\ad\Teps\Delta_t) (\tilde{b}_t \Teps\Delta_t + \delta_t^b \Teps \nu_t)\diff{t}\\
			&+(\ad\Teps\Delta_t)\partial_{x}(\sigma_t^2 \Teps\Delta_t + \mathcal{E}_{t,\varepsilon}^{\sigma^2}  - \mathcal{E}_{t,\varepsilon}^{\tilde{\sigma}^2})\diff{t}\\
			&+ 2 (\ad\Teps\Delta_t)\langle \mures, \ad G_{\varepsilon}(x,\cdot)\rangle (\mathfrak{r}_t - \tilde{\mathfrak{r}}_t)\diff t\\
			&+\sigma_t^2 (\tilde{\rho}_t\Teps\Delta_t + \delta_t^\rho \Teps\nu_t )^2\diff t + \sigma_t(\tilde{\rho}_t\Teps\Delta_t + \delta_t^\rho \Teps\nu_t ) \cdot \osq \diff t\\
			&+ (\ad\Teps\Delta_t) \cdot \osq \diff t+ \osq^2\diff{t}\\ 
			&- 2 \sigma_t (\ad\Teps\Delta_t) (\tilde{\rho}_t\Teps\Delta_t + \delta_t^\rho \Teps\nu_t ) + (\ad\Teps\Delta_t) \cdot \osq \diff{W_t^0},
		\end{split}
	\end{align}
	where $\tilde{g}_t = g(t,x,\tilde{\nu}_{t},\tilde{\mathfrak{f}}_t)$, $\tilde{\mathfrak{r}}_t = \int_0^t \pref (t -s) \mathrm{d} \tilde{F}_s $ and we set $\delta_t^g \coloneqq g(t,x,{\nu}_{t},\mathfrak{f}_t) - g(t,x,\tilde{\nu}_{t},\tilde{\mathfrak{f}}_t)$. Integrating over $\R_-$ we recover the $L^2$-norm of $\ad\Teps\Delta_t$, so it remains to control all the terms on the right-hand side of \eqref{eqn: APPLYING ITOS FORMULA TO THE LIMITTING SPDE EQUATION APPLIED TO THE DIRICHLET HEAT KERNEL AND MAKING THE DERIVATIVES BE FUNCTIONS OF X WITH THE ANTIDERIVATIVE AND THE OSQ1 TERMS AND THE DIFFERENCE OF TWO SOLUTIONS}. We proceed in six steps. To simplify the expressions, we shall use the notation
	\begin{align*}
		\nnorm{M}_{t,*} &\coloneqq \sup_{s \le t} \{1 + \nnorm{M_s} + \nnorm{\tilde{M}_s}\},
		\quad \nnorm{F}_{t,*} \coloneqq \sup_{s \le t} \{1 + \nnorm{F_s} + \nnorm{\tilde{F}_s}\},
		\quad \textnormal{and} \\ 
		\norm{\mathcal{T}\nu}_{t,*} &\coloneqq \sup_{s \le t} \sup_{\varepsilon > 0} \norm{\Teps\nu_s}_2.
	\end{align*}

	We note that the structure of the following estimates parallels that of \citep[Proposition~5.3]{hambly2019spde}
	and \citep[Proposition~7.1]{hamblyseanhalfline}, but there are several new terms to deal with.
	
	\medskip
	
	\noindent \underline{Step 1.}
	We control the first two lines of
	\eqref{eqn: APPLYING ITOS FORMULA TO THE LIMITTING SPDE EQUATION APPLIED TO THE DIRICHLET HEAT KERNEL AND MAKING THE DERIVATIVES BE FUNCTIONS OF X WITH THE ANTIDERIVATIVE AND THE OSQ1 TERMS AND THE DIFFERENCE OF TWO SOLUTIONS}
	together.
	Using $2(\ad\Teps\Delta_t)(\Teps\Delta_t) = \partial_x(\ad\Teps\Delta_t)^2$
	and the boundary conditions
	$\tilde{\mathcal{E}}_{t,\varepsilon}^g(0) = \mathcal{E}_{t,\varepsilon}^g(0) = \Teps\Delta_t(0) = 0$
	(which follow from $G_\varepsilon(x,0) = G_\varepsilon(0,y) = 0$),
	we integrate by parts and use the linear growth condition on $b$,
	the bound $\nnorm{\delta_t^b} \le C_b(d_0(\nu_t,\tilde{\nu}_t) + \nnorm{\mathfrak{f}_t - \tilde{\mathfrak{f}}_t})$,
	and Young's inequality with parameter $\theta$ to obtain
	\begin{align*}
		-2\int_{-\infty}^0 &(\ad\Teps\Delta_t)\tilde{b}_t\Teps\Delta_t \diff x
		- 2\int_{-\infty}^0 \delta_t^b (\ad\Teps\Delta_t)\Teps\nu_t \diff x \\
		&+ \int_{-\infty}^0 (\ad\Teps\Delta_t)\partial_x\!\bigl(\sigma_t^2\Teps\Delta_t
		+ \mathcal{E}_{t,\varepsilon}^{\sigma^2} - \mathcal{E}_{t,\varepsilon}^{\tilde{\sigma}^2}\bigr)\diff x \\
		\le\;& -\norm{\sigma_t\Teps\Delta_t}_2^2 + 3\theta\norm{\Teps\Delta_t}_2^2
		+ C_\theta(1 + \nnorm{M}_{t,*}^2 + \nnorm{F}_{t,*}^2)\norm{\ad\Teps\Delta_t}_2^2 \\
		&+ C_b\norm{\mathcal{T}\nu}_{t,*}\, d_0(\nu_t,\tilde{\nu}_t)\norm{\ad\Teps\Delta_t}_2
		+ C_\theta\norm{\mathcal{T}\nu}_{t,*}^2\nnorm{\mathfrak{f}_t - \tilde{\mathfrak{f}}_t}^2 \\
		&+ C_\theta\norm{\mathcal{E}_{t,\varepsilon}^{\sigma^2}}_2^2
		+ C_\theta\norm{\mathcal{E}_{t,\varepsilon}^{\tilde{\sigma}^2}}_2^2.
	\end{align*}
	Note that the term involving $\nnorm{\mathfrak{f}_t - \tilde{\mathfrak{f}}_t}^2$ satisfies
	\begin{equation*}
		\int_0^t \norm{\mathcal{T}\nu}_{s,*}^2 \nnorm{\mathfrak{f}_s - \tilde{\mathfrak{f}}_s}^2 \diff s
		\le \norm{\mathcal{T}\nu}_{t,*}^2 \norm{\mathfrak{K}^{\prime}}_1^2 \int_0^t \nnorm{F_s - \tilde{F}_s}^2 \diff s.
	\end{equation*}

	\noindent \underline{Step 2.} To gain control over the third term in \eqref{eqn: APPLYING ITOS FORMULA TO THE LIMITTING SPDE EQUATION APPLIED TO THE DIRICHLET HEAT KERNEL AND MAKING THE DERIVATIVES BE FUNCTIONS OF X WITH THE ANTIDERIVATIVE AND THE OSQ1 TERMS AND THE DIFFERENCE OF TWO SOLUTIONS}, we first seek to obtain control on the $L^1(\R_-)$-norm of $\ad G_\varepsilon(x,-z)$ uniformly in $\varepsilon < 1$ and $z \in [\gamma,R]$. First, we observe that for any $z > 0$,
	\begin{equation*}
		\int_{-\infty}^0 \ad G_\varepsilon(x,-z) \diff x= \int_{-\infty}^0\int_{-\infty}^x G_{\varepsilon}(y,-z)\diff y \diff x \le \int_{-\infty}^0\int_{-\infty}^x p_{\varepsilon}(y + z) \diff y \diff x,
	\end{equation*}
	where the inequality follows from the definition of $G_{\varepsilon}$. The inner integral above represents the probability that a normal random variable with mean $-z$ and variance $\varepsilon$ is smaller than $x$. By employing the well-known result $\prob[\mathcal{N}(0,1) > x] \le \frac{1}{2}e^{-\frac{x^2}{2}}$ for any $x > 0$, we know that if $x < -R$, then $\prob[\mathcal{N}(-z,\varepsilon) < x] \le 2^{-1}e^{-\frac{(x+z)^2}{2\varepsilon}}$ for any $z \in [\gamma,R]$. Therefore,
	\begin{equation*}
		\int_{-\infty}^0 \ad G_\varepsilon(x,-z) \diff x \le R + \int_{-\infty}^{-R} 2^{-1}e^{-\frac{(x+z)^2}{2\varepsilon}} \diff x < R + \pi
	\end{equation*}
	for any $\varepsilon < 1$ and $z \in [\gamma,R]$. Hence, $\norm{\langle \mures, \ad G_{\varepsilon}(x,\cdot)\rangle}_{L^1(\R_-)} < R + \pi$. Now, as $\langle \mures, \ad G_{\varepsilon}(x,\cdot)\rangle$ is non-negative and bounded by 1, by first applying the Cauchy-Schwarz inequality to the space integral and then Young's inequality with parameter $1/2$ to the $\diff s$-integral, we have
	\begin{align*}
		&2  \nnnorm{\int_0^t\int_{-\infty}^0 (\ad\Teps\Delta_s)\langle \mures, \ad G_{\varepsilon}(x,\cdot)\rangle \int_0^s \pref^{\prime}(s -u) (F_u - \tilde{F}_u)\diff u \diff x \diff s}\\
		&\hspace{3cm}\lesssim \int_0^t \norm{\ad\Teps\Delta_s}_2\norm{\langle \mures, \ad G_{\varepsilon}(x,\cdot)\rangle}_{L^1(\R_-)}\\
		&\hspace{3.5cm}\times\nnnorm{\int_0^s \pref^{\prime}(s -u) (F_u - \tilde{F}_u)\diff u} \diff s \\
		&\hspace{3cm}\lesssim \int_0^t \norm{\ad\Teps\Delta_s}_2^2 \diff s + \int_0^t \nnnorm{\int_0^s \pref^{\prime}(s -u) (F_u - \tilde{F}_u)\diff u}^2 \diff s \\
		&\hspace{3cm}\lesssim \int_0^t \norm{\ad\Teps\Delta_s}_2^2 \diff s + \norm{\pref^{\prime}}_1^2\int_0^t \nnnorm{F_s - \tilde{F}_s}^2 \diff s.
	\end{align*}
	
	\noindent \underline{Step 3.} To gain control over the fourth term in \eqref{eqn: APPLYING ITOS FORMULA TO THE LIMITTING SPDE EQUATION APPLIED TO THE DIRICHLET HEAT KERNEL AND MAKING THE DERIVATIVES BE FUNCTIONS OF X WITH THE ANTIDERIVATIVE AND THE OSQ1 TERMS AND THE DIFFERENCE OF TWO SOLUTIONS}, we observe that by expanding the brackets and applying Young's inequality with parameter $\theta$ to control the cross terms, we have
	\begin{align*}
		\int_{-\infty}^0\sigma_t^2 (\tilde{\rho}_t\Teps\Delta_t + \delta_t^\rho \Teps\nu_t )^2 \diff x  
		\le& C_\theta \norm{\mathcal{T}\nu}_{t,*}^2(d_1(\nu_t,\tilde{\nu}_t)^2 + \nnorm{\mathfrak{f}_t - \tilde{\mathfrak{f}}_t}^2)  \\
		&+ \norm{\sigma_t\tilde{\rho}_t\Teps\Delta_t}_2^2 + \theta\norm{\Teps\Delta_t}_2^2,\\
		\int_{-\infty}^0 \nnorm{\sigma_t(\tilde{\rho}_t\Teps\Delta_t + \delta_t^\rho \Teps\nu_t ) \cdot \osq }\diff x 
		\le& C_\theta \norm{\mathcal{T}\nu}_{t,*}^2(d_1(\nu_t,\tilde{\nu}_t)^2 + \nnorm{\mathfrak{f}_t - \tilde{\mathfrak{f}}_t}^2) \\
		&+ \theta\norm{\Teps\Delta_t}_2^2 + \norm{\osq}_2^2 .
	\end{align*}
	We used the fact that $\nnorm{\delta_t^\rho}^2 \lesssim d_1(\nu_t,\tilde{\nu}_t)^2 + \nnorm{\mathfrak{f}_t - \tilde{\mathfrak{f}}_t}^2$ in the above.
	
	\noindent \underline{Step 4.} As the stochastic integral in \eqref{eqn: APPLYING ITOS FORMULA TO THE LIMITTING SPDE EQUATION APPLIED TO THE DIRICHLET HEAT KERNEL AND MAKING THE DERIVATIVES BE FUNCTIONS OF X WITH THE ANTIDERIVATIVE AND THE OSQ1 TERMS AND THE DIFFERENCE OF TWO SOLUTIONS} is a true martingale, it vanishes when taking expectations. Therefore, by first taking expectations, then integrating over $x < 0$, and then applying Fubini's Theorem, it follows from Steps 1-4 and Young's inequality with parameter $\theta$ that
	\begin{align}
		\E\norm{\ad\Teps\Delta_t}_2^2 \le&
		C_\theta \E \int_0^t (\nnorm{M}_{s,*}^2 + \nnorm{F}_{s,*}^2)\norm{\ad\Teps\Delta_s}_2^2 \diff s\nonumber\\
		&+ C_\theta \E \int_0^t \norm{\mathcal{T}\nu}_{s,*}^2 \nnorm{\mathfrak{f}_s - \tilde{\mathfrak{f}}_s}^2 \diff s + C \norm{\pref^{\prime}}_1^2 \E \int_0^t \nnnorm{F_s - \tilde{F}_s}^2 \diff s \nonumber\\
		&+ C_b \E \int_0^t \norm{\mathcal{T}\nu}_{s,*}d_0(\nu_s,\tilde{\nu}_s)\norm{\ad\Teps\Delta_s}_2 \diff s\nonumber\\
		&+ C_\theta \E \int_0^t \norm{\mathcal{T}\nu}_{s,*}^2 d_1(\nu_s,\tilde{\nu}_s)^2 \diff s \nonumber\\
		&+ \E \int_0^t\int_{-\infty}^0 \{\sigma_s^2\tilde{\rho}_s^2 - \sigma_s^2 + \theta\} \nnorm{\Teps\Delta_s}^2 \diff x \diff s + o(1) \label{eq: FIRST UPPER BOUND ON THE EXPECTED VALUE OF THE ANTIDERIVATIVE L2 NORM SQUARED}
	\end{align}
	as $\varepsilon \downarrow 0$ for a constant $C_\theta$ which depends only on the parameter $\theta$. We also remark that, as the stochastic integral in \eqref{eqn: APPLYING ITOS FORMULA TO THE LIMITTING SPDE EQUATION APPLIED TO THE DIRICHLET HEAT KERNEL AND MAKING THE DERIVATIVES BE FUNCTIONS OF X WITH THE ANTIDERIVATIVE AND THE OSQ1 TERMS AND THE DIFFERENCE OF TWO SOLUTIONS} is a true martingale, \eqref{eq: FIRST UPPER BOUND ON THE EXPECTED VALUE OF THE ANTIDERIVATIVE L2 NORM SQUARED} holds for any bounded stopping time by Steps 1-4 and the Optional Stopping Theorem.
	
	\noindent \underline{Step 5.} As $\rho$ is bounded away from 1 and $\sigma$ is bounded away from $0$, we may take the free parameter $\theta$ to be small enough such that for every $s$ and $x$,
	\begin{equation*}
		\sigma(s,x)^2\rho(s,\tilde{\nu},\tilde{\mathfrak{f}}_s)^2 - \sigma(s,x)^2 + \theta \le -c_0
	\end{equation*}
	for a fixed constant $c_0 > 0$. Now, we consider the stopping times
	\begin{equation*}
		t_n \coloneqq \inf\{t > 0\,:\, \nnorm{M}_{t,*}^2 > n,\, \nnorm{F}_{t,*}^2 > n, \, \textnormal{or} \, \norm{\mathcal{T}\nu}_{t,*} ^2 > n\} \wedge T
	\end{equation*}
	for $n \ge 1$. By \cref{prop:L2 ENERGY ESTIMATE}, $t_n \uparrow T$ almost surely. We observe that at the time $t\wedge t_n$,
	\begin{align*}
		C_\theta \E \int_0^{t\wedge t_n} \norm{\mathcal{T}\nu}_{s,*}^2 \nnorm{\mathfrak{f}_s - \tilde{\mathfrak{f}}_s}^2 \diff s &\le C_\theta n \E \int_0^{t\wedge t_n} \nnnorm{\int_0^s \mathfrak{K}^{\prime}(s - u)(F_u - \tilde{{F}}_u) \diff u}^2 \diff s\\
		&\le  C_\theta n \norm{\mathfrak{K}^{\prime}}_1^2 \E \int_0^{t\wedge t_n} \nnorm{F_s - \tilde{F}_s}^2 \diff s,
	\end{align*}
	where the last inequality follows from an application of Fubini's Theorem and Jensen's inequality. Hence, evaluating \eqref{eq: FIRST UPPER BOUND ON THE EXPECTED VALUE OF THE ANTIDERIVATIVE L2 NORM SQUARED} at $t\wedge t_n$, we obtain
	\begin{align}\label{eq: FIRST UPPER BOUND ON THE EXPECTED VALUE OF THE ANTIDERIVATIVE L2 NORM SQUARED AT THE STOPPING TIME}
		\begin{split}
			&\E\norm{\ad\Teps\Delta_{t\wedge t_n}}_2^2 + c_0 \E \int_0^{t\wedge t_n} \norm{\Teps\Delta_s}_2^2 \diff s \\
			&\le Cn\E \int_0^{t\wedge t_n} \norm{\ad\Teps\Delta_s}_2^2 \diff s +Cn\E \int_0^{t\wedge t_n} \nnnorm{F_s - \tilde{F}_s}^2 +  d_1(\nu_s,\tilde{\nu}_s)^2\diff s \\
			&\quad+ Cn \E \int_0^{t\wedge t_n} d_0(\nu_s,\tilde{\nu}_s)\norm{\ad\Teps\Delta_s}_2 \diff s + o(1).
		\end{split}
	\end{align}
	Then \eqref{eq: SMOOTHED H-1 ESTIMATE} follows from employing \eqref{eq: RELATIONSHIP BETWEEN THE FIRING FUNCTIONS} and a simple application of Gr\"onwall's inequality, where $c_n \coloneqq Cne^{CnT}$ (for a possibly larger C than that in \eqref{eq: FIRST UPPER BOUND ON THE EXPECTED VALUE OF THE ANTIDERIVATIVE L2 NORM SQUARED AT THE STOPPING TIME}).
\end{proof}

\subsection{Uniqueness of the SPDE -- Proof of \texorpdfstring{\cref{thm: UNIQUENESS PROP}}{ Uniqueness}}
At this point, we have all the ingredients to prove \cref{thm: UNIQUENESS PROP}. We have already shown that every solution to the SPDE \eqref{eqn: THE LIMITTING SPDE EQUATION} will have a density in $L^2$. By a simple modification of \citep[Lemma~5.2]{hambly2019spde}, as our process lives on $(-\infty,0)$, we have a universal constant $c >0$ such that for any $s \le T$, $\delta \in (0,1)$, and $\lambda > 0$,
\begin{align*}
	d_0(\nu_s,\tilde{\nu}_s) &\le c\lambda(1 + \delta^{-1})\norm{\partial_x^{-1} \Teps\Delta_s}_2 + c\delta^{\frac{1}{2}}\norm{\frac{\diff \Delta_s}{\diff x}}_2 + f_s(\lambda) + g_s(\varepsilon),\\
	d_1(\nu_s,\tilde{\nu}_s) &\le c\sqrt{\lambda + \delta^{-1}}\norm{\partial_x^{-1} \Teps\Delta_s}_2 + c\delta^{\frac{1}{2}}\norm{\frac{\diff \Delta_s}{\diff x}}_2 + f_s(\lambda) + g_s(\varepsilon),
\end{align*}
where $(f_s(\lambda))_{s \le T}$ is a process such that for all $a > 0$, $\E\int_0^T f_s(\lambda)^2 \diff s \le c_ae^{-a\lambda}$ for some $c_a > 0$, and $(g_s(\varepsilon))_{s \le T}$ is a process such that $\E\int_0^T g_s(\varepsilon)\diff s = o(1)$ as $\varepsilon \downarrow 0$. Substituting the bounds for $d_0(\nu_s,\tilde{\nu}_s)$ and $d_1(\nu_s,\tilde{\nu}_s)$ into \eqref{eq: SMOOTHED H-1 ESTIMATE} and then applying Young's inequality to all the terms except for the $\norm{\partial_x^{-1} \Teps\Delta_s}_2^2$ term, we obtain
\begin{align}\label{eq: SMOOTHED H-1 ESTIMATE EXPANDED TO REMOVE THE D TERMS}
	\begin{split}
		\E\norm{\ad\Teps\Delta_{t\wedge t_n}}_2^2 &\le
		\delta c_n^\prime \E \int_0^{t\wedge t_n} \norm{\frac{\diff \Delta_s}{\diff x}}_2^2\diff s - c_0 \E \int_0^{t\wedge t_n} \norm{\Teps\Delta_s}_2^2 \diff s \\
		&+c_n^\prime(1 + \lambda + \lambda \delta) \int_0^{t} \E\norm{\partial_x^{-1} \Teps\Delta_{s\wedge t_n}}_2^2\diff s +c_{a,n}^\prime e^{-a\lambda}+ o(1)
	\end{split}
\end{align}
as $\varepsilon \downarrow 0$. The Monotone Convergence Theorem gives
\begin{equation*}
	\E \int_0^{t\wedge t_n} \norm{\Teps\Delta_s}_2^2 \diff s \to  \E \int_0^{t\wedge t_n} \norm{\frac{\diff \Delta_s}{\diff x}}_2^2\diff s 
\end{equation*}
as $\varepsilon \downarrow 0$ because $\norm{\Teps\Delta_s}_2 \nearrow \norm{\frac{\diff \Delta_s}{\diff x}}_2$. Therefore, fixing $\delta = c_0/2c_n^{\prime}$, we have that, for all $\varepsilon > 0$ small enough, the difference of the first two terms will be negative. Thus, applying Gr\"onwall's inequality to \eqref{eq: SMOOTHED H-1 ESTIMATE EXPANDED TO REMOVE THE D TERMS}, we get
\begin{equation*}
	\E\norm{\ad\Teps\Delta_{t\wedge t_n}}_2^2 \le c_{a,n}^\prime e^{c_n^\prime(1 + \lambda + \lambda \delta)T}e^{-a\lambda} + o(1)
\end{equation*}
as $\varepsilon \downarrow 0$. Hence, it follows from Fatou's lemma and \eqref{eq: BOUND ON THE H -1 NORM} that
\begin{equation*}
	\E \norm{\Delta_{t \wedge t_n}}_{H^{-1}} \le c_{a,n,T}^\prime e^{\lambda(1 + \delta)T-a\lambda}.
\end{equation*}
Therefore, setting $a = 2(1 + \delta^{-1})T$ and sending $\lambda \to  \infty$, we obtain $\E \norm{\Delta_{t \wedge t_n}}_{H^{-1}} = 0 $ for all $t \in [0,T]$. Lastly, sending $n \uparrow \infty$, we have $t_n \uparrow \infty$, and consequently, we have $\nu_t = \tilde{\nu}_t$ for all $t \in [0,T]$. Finally, by \eqref{eq: RELATIONSHIP BETWEEN THE FIRING FUNCTIONS}, we must then also have $F = \tilde{F}$. This completes the proof of \cref{thm: UNIQUENESS PROP}.$\hfill \Box$

\subsection{Strong Solutions of the SPDE -- Proof of \texorpdfstring{\cref{thm: CONDITIONAL LLN STATING THAT WE CONVERGE TO A UNIQUE LIMIT POINT THAT DEPENDS ON THE COMMON NOISE}}{ Strong Solutions}}\label{sec: proof_thm_2.6}

We begin by confirming that, for any limit point $(\nu^*, F^*, W^0)$ from the particle system  \eqref{eq: GENERALISED FINITE PARTICLE SYSTEM IN THE INTEGRATE AND FIRE NEURON MODELS}, we can indeed realise $\nu^*$ as a random variable with values in $D_{\mathbf{M}_{\le 1}}$ with respect to the Kolmogorov $\sigma$-algebra. Set $\prob^* \coloneqq \operatorname{Law}(\nu^*, F^*, W^0)$. By the first part of the proof of Proposition \ref{prop: THE LIMITING PROCESS WILL SATISFY THE ASSUMPTIONS WE HAVE SET OUT}, there exists a set $A \subset D_{\mathscr{S}^\prime}$ such that $\prob^*(A\times D_\R\times \mathcal{C}_\R)= 1$ and $\nu^*$ is an element of $D_{\mathbf{M}_{\le 1}}$ on $A\times D_\R\times \mathcal{C}_\R$. Next, we argue that the map
\begin{align*}
	(\nu,F,\omega) \mapsto \begin{cases}
		\nu\; \text{if} \;\nu \in A\\
		0\; \text{otherwise}
	\end{cases}
\end{align*}
from $D_{\mathscr{S}^\prime} \times D_\R\times \mathcal{C}_\R$ to $D_{\mathbf{M}_{\le 1}}$ is measurable. Since $D_{\mathbf{M}_{\le 1}}$ has the Kolmogorov $\sigma$-algebra and $\mathbf{M}_{\le 1}$ the Borel $\sigma$-algebra for the topology of weak convergence, the finite intersections of the sets $U_{t,\phi,x,\delta} = \{\mu \in D_{\mathbf{M}_{\le 1}}: \nnnorm{\mu_t(\phi) - x} < \delta\}$ for $\phi \in \mathcal{C}_b$, $x \in \R$, $t \in [0,T]$, and $\delta > 0$ form a $\pi$-system which generates the $\sigma$-algebra on $D_{\mathbf{M}_{\le 1}}$. For any $\phi \in \mathcal{C}_b$, we may find a sequence $\phi^n \in \mathscr{S}$ which is uniformly bounded and converges pointwise to $\phi$. Thus, when $|x| \ge \delta$,
\begin{align*}
	\{\nu^*\mathbbm{1}_{A\times D_\R\times \mathcal{C}_\R} \in U_{t,\phi,x,\delta}\} &=  \left(A \cap B\right)\times D_\R\times \mathcal{C}_\R \qquad \text{where}\\
	B & = \bigcup_{M \in \N} \bigcup_{N \in \N} \bigcap_{n > N} \left\{\nu \in  D_{\mathscr{S}^\prime}: \nnnorm{\nu_t(\phi^n) - x} < \delta - 1/M\right\}
\end{align*}
and similarly when $|x| < \delta$. Sets of this form are measurable in $D_{\mathscr{S}^\prime} \times D_\R\times \mathcal{C}_\R$.

Consequently, setting $(\hat{\nu},\hat{F},\hat{W}^0) = (\nu^*, F^*, W^0)\mathbbm{1}_{A\times D_\R\times \mathcal{C}_\R}$, we obtain a random variable $(\hat{\nu},\hat{F},\hat{W}^0)$ with values in $D_{\mathbf{M}_{\le 1}}\times D_\R\times \mathcal{C}_\R$ which satisfies the SPDE \eqref{eqn: THE LIMITTING SPDE EQUATION} in \cref{thm: THE LIMITING SPDE THEOREM MAIN RESULT OF THE WORK}, with probability $1$, along with the regularity conditions of \cref{ass: ASSUMPTIONS NEEDED TO PROVE UNIQUENESS WHICH PARALLEL THOSE FROM THE SPDE PAPER}.

Now, consider any two solutions  $(\nu, F, W^0)$ and $(\tilde{\nu}, \tilde{F}, \tilde{W}^0)$ of the SPDE \eqref{eqn: THE LIMITTING SPDE EQUATION} in the sense of \cref{thm: THE LIMITING SPDE THEOREM MAIN RESULT OF THE WORK}, possibly defined on different probability spaces. With the topology of weak convergence of measures, we have that $\mathbf{M}_{\le 1}$ is a Polish space. Hence, $D_{\mathbf{M}_{\le 1}}$ equipped with the $J_1$ Skorokhod topology is a Polish space (\citep[Theorem~3.5.6]{ethier2009markov}) satisfying that the Borel $\sigma$-algebra coincides with the Kolmogorov $\sigma$-algebra (\citep[Proposition~3.7.1]{ethier2009markov}). As above, we can take $(\nu,F)$ and $(\tilde{\nu},\tilde{F})$ to be valued in $D_{\mathbf{M}_{\le 1}}\times D_\R$, so we are in the setting of \cite{kurtz2014yamada}. Write $\mathbb{P}^1=\operatorname{Law}(\nu,F,W^0)$ and $\mathbb{P}^2=\operatorname{Law}(\tilde\nu,\tilde F,\tilde W^0)$.
As in the proof of \citep[Theorem~1.5]{kurtz2014yamada}, we can find two Borel maps $\Psi^i:\mathcal{C}_\R\times[0,1]\to D_{\mathbf{M}_{\le 1}}\times D_\R$
such that, for a Brownian motion $W$ and $\Xi^i  \sim \operatorname{Unif}[0,1]$  with
$(W,\Xi^1,\Xi^2)$ mutually independent, we have $(\Psi^i(W,\Xi^i),W)\sim\mathbb{P}^i$, $i=1,2$. By the independence and the joint law equivalences, we can deduce that each $\Psi^i(W,\Xi^i)$ is adapted to the filtration generated by $W$ and $(\Xi^1,
\Xi^2)$, for which $W$ is a Brownian motion, and that they solve the SPDE and satisfy \cref{ass: ASSUMPTIONS NEEDED TO PROVE UNIQUENESS WHICH PARALLEL THOSE FROM THE SPDE PAPER}. Thus, \cref{thm: UNIQUENESS PROP} applies and gives $\Psi^1(W,\Xi^1) =\Psi^2(W,\Xi^2)$ with probability $1$. Firstly, this implies $\mathbb{P}^1 = \mathbb{P}^2$, which proves the uniqueness in law. Secondly, by the almost sure equality and the mutual independence of $(\Xi^1, \Xi^2, W)$, it follows from \citep[Lemma~A.2]{kurtz2007yamada} that $\Psi^i(W,\Xi^i)=Q(W)$ for a measurable map $Q: \mathcal{C}_\R \to D_{\mathbf{M}_{\le 1}}\times D_\R$. In particular, $\mathbb{P}^1 =\operatorname{Law}(Q(W^0),W^0)$, so $(\nu,F)=Q(W^0)$ with probability 1, as desired. Using that the Borel $\sigma$-algebra for $(D_{\mathscr{S}^\prime},M_1)$ coincides with the Kolmogorov $\sigma$-algebra (\citep[Proposition~2.7(iv)]{ledger2016skorokhod}) and that the Kolmogorov $\sigma$-algebra on $D_{\mathbf{M}_{\le 1}}$ is generated by the topology of weak convergence on $\mathbf{M}_{\le 1}$, we have that the inclusion map $I: D_{\mathbf{M}_{\le 1}}\times D_\R \rightarrow D_{\mathscr{S}^\prime} \times D_\R $ is measurable, so we can take $Q$ to map to $D_{\mathscr{S}^\prime}\times D_\mathbb{R}$.

Since there is a unique law, we conclude that $(\nu^N,F^N,W^0)$ converges weakly to $(\nu,F,W^0)$ with $(\nu,F)=Q(W^0)$, and the convergence of the mean process $M^N$ then follows from \cref{sec: SUBSECTION THAT SHOWS THE CONVERGENCE TO THE LIMITTING SPDE}. Thus, the proof is complete.\qed


\paragraph{Funding details:} Part of this work was supported by the Engineering and Physical Sciences Research Council [EP/S023925/1].

\paragraph{Disclosure statement:} The authors report there are no competing interests to declare. 

\paragraph{Declaration of generative AI use:} The authors report generative AI was not used in their research or preparation of this manuscript.

\paragraph{Data availability statement:} No data were created or analysed in this work. Data sharing is not applicable to this article.


\printbibliography

\end{document}